%Typeset with AMSLaTeX format file
%%Colding-Minicozzi

%On 1/25/12 I changed the notion of tangent vector in Einunique52 using exponential for the function v
%%% On 6/8/12, file 263 removed the $D$ from $\cF$.  File 262 is identical to 263 except for this.

%Preamble
%Style section
%\documentstyle[12pt]{amsart}
\documentclass[12pt]{amsart}
\usepackage{epsfig,color}

\headheight=6.15pt \textheight=8.75in \textwidth=6.5in
\oddsidemargin=0in \evensidemargin=0in \topmargin=0in

%Declaration section

%Option 4+ I added cor's
%Theorems, Definitions, and Lemmas in the plain style
%Definitions and Lemmas are jointly numbered within sections

%Option 4+ I added cor's
%Theorems, Definitions, and Lemmas in the plain style
%Definitions and Lemmas are jointly numbered within sections
%\makeatletter
%\@addtoreset{equation}{section}
%\renewcommand{\theequation}{\the\c@section.\the\c@equation}
\makeatother

\theoremstyle{definition}

\def\fnum{equation} 
\newtheorem{Thm}[\fnum]{Theorem}
\newtheorem{Cor}[\fnum]{Corollary}

\newtheorem{Lem}[\fnum]{Lemma}

\newtheorem{Rem}[\fnum]{Remark}
\newtheorem{Pro}[\fnum]{Proposition}

%%numbering for equations
\numberwithin{equation}{section}
 %makes rm mean roman

\newcommand{\Vol}{{\text{Vol}}}
\newcommand{\V}{{\text{V}}}

\newcommand{\nn}{{\bf{n}}}
\newcommand{\Id}{{\bf{Id}}}
\newcommand{\Ric}{{\text{Ric}}}

\newcommand{\Tr}{{\text{Tr}}}

\newcommand{\II}{{\text{II}}}

\newcommand{\Hess}{{\text {Hess}}}

\def\RR{{\bold R}}

\def\SS{{\bold S}}

\newcommand{\dv}{{\text {div}}}
\newcommand{\e}{{\text {e}}}

\newcommand{\cT}{{\mathcal{T}}}
\newcommand{\cTDiff}{{\mathcal{T}_{\mathcal{D}}}}
\newcommand{\cTperp}{{\mathcal{T}_{\perp}}}

\newcommand{\cA}{{\mathcal{A}}}
\newcommand{\ca}{{\mathcal{T}}}
\newcommand{\cB}{{\mathcal{B}}}

\newcommand{\cE}{{\mathcal{E}}}
\newcommand{\cF}{{\mathcal{R}}}

\newcommand{\cL}{{\mathcal{L}}}
\newcommand{\cO}{{\mathcal{O}}}

\newcommand{\cN}{{\mathcal{N}}}

\newcommand{\cD}{{\mathcal{D}}}
\newcommand{\cR}{{\mathcal{R}}}
\newcommand{\cU}{{\mathcal{U}}}

\newcommand{\eqr}[1]{(\ref{#1})}

\title[Uniqueness of tangent cones]{On uniqueness of tangent cones for Einstein manifolds}

 \author{Tobias Holck Colding}%
\address{MIT, Dept. of Math.\\
77 Massachusetts Avenue, Cambridge, MA 02139-4307.}

\author{William P. Minicozzi II}%
\address{Johns Hopkins University\\
Dept. of Math.\\
3400 N. Charles St.\\
Baltimore, MD 21218.}

\thanks{The authors
were partially supported by NSF Grants DMS  11040934, DMS
0906233,  and NSF FRG grants DMS 
 0854774 and DMS 0853501}

%%\date{\today}

\email{colding@math.mit.edu and minicozz@math.jhu.edu}

\begin{document}

\maketitle

\begin{abstract}
We show that for any Ricci-flat manifold with Euclidean volume growth the tangent cone at infinity is unique if one tangent cone has a smooth cross-section.   Similarly, for any noncollapsing limit of Einstein manifolds with uniformly bounded Einstein constants, we show that local tangent cones are unique if one tangent cone has a smooth cross-section. 
\end{abstract}

\setcounter{section}{-1}

\section{Introduction}

By Gromov's compactness theorem, \cite{GLP}, \cite{G}, if $M$ is an $n$-dimensional manifold with nonnegative Ricci curvature, then any sequence of rescalings $(M,r_i^{-2}g)$, where $r_i\to \infty$, has a subsequence that converges in the Gromov-Hausdorff topology to a length space.   Any such limit is said to be a tangent cone at infinity of $M$.  Compactness follows from that
\begin{equation}
r^{-n}\,\Vol (B_r(x))
\end{equation}
is monotone nonincreasing in the radius $r$ of the ball $B_r(x)$ for any fixed $x\in M$ by the Bishop-Gromov volume comparison.    As $r$ tends to $0$, this quantity on a smooth manifold converges to the volume of the unit ball in $\RR^n$ and, as $r$ tends to infinity, it converges to a nonnegative number $\V_M$.  If $\V_M>0$, then $M$ is said to have Euclidean volume growth and, by \cite{ChC1}, any tangent cone at infinity  is a metric cone.\footnote{A metric cone $C(X)$ with cross-section $X$ is a warped product metric $dr^2+r^2\,d^2_X$ on the space $(0,\infty)\times X$.  For tangent cones at infinity of manifolds with $\Ric\geq 0$ and $\V_M>0$, by \cite{ChC1} any cross-secton is a length space with diameter $\leq \pi$.}

An important well-known question is whether the cross-section of the tangent cone at infinity of a Ricci-flat manifold with $\V_M>0$ depends on the convergent sequence of blow-downs or is unique and independent of the sequence.  Our main theorem is the following:

\begin{Thm}[Uniqueness at $\infty$]    \label{t:main}
Let $M^n$ be a Ricci-flat manifold with Euclidean volume growth.  If one tangent cone at infinity has a smooth cross-section, then the tangent cone 
at infinity is unique.\footnote{In fact, we prove that the scale invariant distance to the tangent cone converges to zero like $(\log r)^{-\beta}$ for some $\beta > 0$, where $r$ is the distance to a fixed point.} 
\end{Thm}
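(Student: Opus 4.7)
My plan is to adapt the Łojasiewicz--Simon scheme, which has been used to prove uniqueness of blowups in variational problems since Simon's theorem for minimal submanifolds. The key geometric object is the volume ratio $V(r)=\omega_n^{-1}r^{-n}\,\Vol(B_r(x))$ based at a fixed $x\in M$: by Bishop--Gromov it is monotone nonincreasing in $r$ with limit $\V_M>0$, and it is \emph{constant} in $r$ precisely when the annulus is a metric cone. So $-V'(r)$, or rather an integrated $L^2$ version weighted over an annulus, measures how far the geometry at scale $r$ is from being a cone and plays the role of the squared norm of the gradient of a functional $F$ whose critical points are exactly cones.

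Setting $s=\log r$, I would view the rescaled pointed family $(M,r^{-2}g,x)$ as a flow in $s$, and ask how it behaves near a critical point $C(\Sigma)$ of $F$ under the assumption that $\Sigma$ is smooth. The scale-invariant Gromov--Hausdorff distance $d_s$ to $C(\Sigma)$ satisfies a differential inequality of the form $|d_s'|\le c\,\|\nabla F\|$, so if I can prove a Łojasiewicz--Simon estimate
\begin{equation*}
|F-F(C(\Sigma))|^{1-\beta}\;\le\;C\,\|\nabla F\|
\end{equation*}
on a neighborhood of $C(\Sigma)$, then the finiteness of $\int_{s_0}^{\infty}\|\nabla F\|\,ds$ follows by the standard Simon ODE argument, which gives both uniqueness of the limit and a decay rate $d_s = O(s^{-\beta/(1-2\beta)})$, matching the claimed $(\log r)^{-\beta}$ bound.

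The core of the argument is therefore the Łojasiewicz--Simon inequality at the smooth cone $C(\Sigma)$. The plan is to realize nearby geometries as graphs of symmetric $2$-tensors over $C(\Sigma)$, separate variables into the radial direction and the cross-section, and write the Ricci-flat condition as a nonlinear elliptic system on $\Sigma$. When $\Sigma$ is smooth and compact, the linearization is a self-adjoint, elliptic, Fredholm operator (an Einstein-type operator on $\Sigma$), and the nonlinearity is analytic in the data. By the standard infinite-dimensional Łojasiewicz argument (projecting onto the finite-dimensional kernel, solving the complementary equation analytically, and applying finite-dimensional Łojasiewicz to the reduced function), one obtains the desired inequality with some $\beta\in(0,1/2]$ depending on the spectrum on $\Sigma$.

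The main obstacle, as is always the case for Łojasiewicz--Simon, is to implement this analytic framework in a genuinely geometric, diffeomorphism-invariant setting \emph{and} to connect the resulting analytic gradient to the intrinsic quantity $V'(r)$, so that the decay of $V'$ and the decay of $d_s$ feed each other. A second, more Ricci-specific difficulty is that the convergence to the tangent cone is a priori only Gromov--Hausdorff and only on annuli, whereas the Łojasiewicz step needs smooth (say $C^{2,\alpha}$) proximity on a definite annulus. Here I would invoke the Cheeger--Colding $\varepsilon$-regularity theorem: the smoothness of $\Sigma$ forces smooth convergence on any compact subset of $\reg(C(\Sigma))$ once one is close enough in Gromov--Hausdorff distance, which is exactly the regularity needed to apply the Łojasiewicz estimate and to iterate it at each scale $s$ along the flow.
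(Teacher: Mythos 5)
There is a genuine gap, and it sits at the very point your scheme depends on: the choice of the monotone quantity. You propose to use the Bishop--Gromov ratio $V(r)$ and to treat $-V'(r)$ (suitably integrated) as $\|\nabla F\|^2$ for an analytic functional $F$ whose critical points are cones, and to run a Simon-type ODE argument in $s=\log r$ against the scale-invariant Gromov--Hausdorff distance $d_s$. But $V(r)$ is built from the distance function, which is only Lipschitz --- not $C^1$, let alone analytic --- so there is no analytic functional on a Banach space of cross-sectional data whose value and gradient are comparable, to first order, with $V$ and $V'$; and $d_s$ is even less regular (it is not even clear it is monotone), so the differential inequality $|d_s'|\le c\,\|\nabla F\|$ that drives your ODE argument has no justification. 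This is exactly the obstruction the paper identifies (cf.\ also p.\ 496 of Cheeger--Tian). The proof there replaces $V$ by $A(r)=r^{1-n}\int_{b=r}|\nabla b|^3$, built on the level sets of the Green's function $b$, which does depend analytically and whose derivative is an explicit weighted $L^2$ integral of the trace-free Hessian of $b^2$. The Lojasiewicz--Simon inequality is then proved not for $A$ itself but for an auxiliary weighted Einstein--Hilbert type functional $\cF$ on pairs (metric, weight) on the cross-section, and the real work is in the bridge estimates (properties (4) and (5)): $|\nabla_1\cF|^2$ evaluated on the level-set data is bounded by $\int_{R/2\le b\le 3R/2} b^{-n}\,|\Hess_{b^2}-\tfrac{\Delta b^2}{n}g|^2$, and conversely $A(R)\le \cF + C\int_{R/2\le b\le 3R/2} b^{-n}\,|\Hess_{b^2}-\tfrac{\Delta b^2}{n}g|^2$, both requiring elliptic mean-value estimates for the trace-free Hessian. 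You flag "connecting the analytic gradient to the intrinsic quantity" as the main obstacle, but you offer no mechanism, and with $V$ (or $d_s$) in place of $A$ none can exist.

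There is a second gap in the Lojasiewicz step itself. You assert that the linearization is a self-adjoint, elliptic, Fredholm operator on $\Sigma$ and that the standard infinite-dimensional argument applies. For the relevant functional this fails in two ways: the diffeomorphism group of the cross-section produces an infinite-dimensional kernel, which must be removed by a gauge-fixing mechanism (the paper uses the Ebin--Palais slice theorem together with a York-type splitting into conformal and transverse-traceless parts, and verifies that the off-diagonal blocks of the second variation vanish); and the second variation, as for the Einstein--Hilbert functional, has infinitely many eigenvalues of both signs, so Simon's original theorem --- which requires convexity in the highest-order term --- does not apply and a generalized Lojasiewicz--Simon inequality for merely Fredholm, indefinite linearizations has to be proved. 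Finally, even granting a decay rate for the monotone quantity, summability of the scale-wise distances $\Theta_{2^j}$ is not automatic: the paper needs the Cheeger--Colding comparison $\Theta_r^{2+\mu}\le C_\mu\,[\,Q(r/2)-Q(8r)\,]$ together with a weighted H\"older summation lemma, whereas your ODE formulation quietly assumes a pointwise-in-scale comparison between $d_s'$ and the gradient that is not available for a purely metric quantity.
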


In fact, we prove an effective version of uniqueness that is considerably stronger.   Theorem \ref{t:main} settles in the affirmative a very strong form of conjecture 1.12 in \cite{CN1}.

\vskip2mm
The results of this paper were announced in \cite{C2} and again in \cite{CM3}.

\vskip2mm
Theorem \ref{t:main} describes the asymptotic structure of Einstein manifolds with Euclidean volume growth and vanishing Ricci curvature. 
These arise in a number of different fields, including string theory, general relativity, and complex and algebraic geometry, amongst others, and there   is a extensive literature of examples; see, e.g., 
\cite{BGS}, \cite{DS}, \cite{K1}, \cite{K2}, \cite{MS1}, \cite{MS2}, \cite{MSY1}, \cite{MSY2}, \cite{TY1} and \cite{TY2}.
Most  examples fall into several different classes, including ALE spaces (like the
 Eguchi-Hanson metric and, more generally, non-collapsing gravitational instantons, etc.), K\"ahler-Einstein metrics constructed by blowing up divisors, or cones over Sasaki-Einstein manifolds.

\vskip2mm
Our arguments will also show that local tangent cones of limits of noncollapsing Einstein metrics are unique:

\begin{Thm}[Local uniqueness]
Let $(M_i,x_i)$ be a sequence of pointed $n$-dimensional Einstein metrics with uniformly bounded Einstein constants and $\Vol (B_1(x_i))\geq v>0$.  

If $(M_{\infty}, x_{\infty})$ is a Gromov-Hausdorff limit of $(M_i,x_i)$ and one tangent cone at $y\in M_{\infty}$ has a smooth cross-section, then the tangent cone at $y$ is unique.
\end{Thm}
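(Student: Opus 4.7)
The plan is to reduce the local statement to the same analysis that drives Theorem \ref{t:main}, identifying where the noncollapsing Einstein hypotheses replace the Ricci-flat hypothesis. Fix $y\in M_\infty$ and consider the rescalings $(M_\infty,r^{-1}d,y)$ as $r\to 0$. Since the $M_i$ are Einstein with $|\Ric_{M_i}|\leq \Lambda$, the rescaled Einstein constant is $r^2\Lambda\to 0$, so the rescaled limits are Ricci-flat on their regular part in the Cheeger--Colding sense, and Bishop--Gromov gives that the normalized volume $\Theta(r)=r^{-n}\Vol(B_r(y))$ is monotone nonincreasing up to an additive error $O(r^2\Lambda)$ that vanishes at small scales. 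The noncollapsing hypothesis $\Vol(B_1(x_i))\geq v>0$ propagates to a uniform positive lower bound on $\Theta$, so by \cite{ChC1} every tangent cone at $y$ is a metric cone $C(X)$ with $\diam(X)\leq \pi$. In short, the picture at $y\in M_\infty$ is formally identical to the picture at infinity treated in Theorem \ref{t:main}.

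With this reduction in hand, I would apply the same \L{}ojasiewicz--Simon argument used for Theorem \ref{t:main}. The hypothesis that one tangent cone $C(X_0)$ has smooth cross-section means that $X_0$ is a smooth Einstein manifold with $\Ric_{X_0}=(n-2)g_{X_0}$, in particular a critical point of the normalized volume functional whose linearization is Fredholm. For each small $r$ the annular region $A_{r,2r}(y)\subset M_\infty$ is close in Gromov--Hausdorff distance to the corresponding annulus in some cone $C(X_r)$; the smoothness of $X_0$ lets one represent $X_r$ as a smooth graph over $X_0$ whenever the two are close enough, and the \L{}ojasiewicz--Simon inequality at the critical point $X_0$ yields an estimate of the form
\begin{equation}
\bigl|r\,\Theta'(r)\bigr|\geq c\,\bigl(\Theta(r)-\Theta(0^+)\bigr)^{2-\alpha}
\end{equation}
for some $\alpha,c>0$, so long as $A_{r,2r}(y)$ remains in the smooth neighborhood of $C(X_0)$.

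Combining the monotone decay of $\Theta$ with this differential inequality and integrating gives, by a standard ODE argument, an explicit logarithmic rate $(\log 1/r)^{-\beta}$ for the scale-invariant Gromov--Hausdorff distance between $A_{r,2r}(y)$ and the corresponding annulus in $C(X_0)$, exactly as in the footnote to Theorem \ref{t:main}. This rate is summable along a dyadic sequence $r_k=2^{-k}$, so the rescaled annuli form a Cauchy sequence in the Gromov--Hausdorff topology and the tangent cone at $y$ is unique. The main obstacle is executing the \L{}ojasiewicz--Simon step on a singular limit space rather than on a smooth manifold: one must either perform the estimate on the smooth annuli in $M_i$ (using $\epsilon$-regularity near the smooth cone $C(X_0)$ to produce genuine diffeomorphisms) and then pass to the Gromov--Hausdorff limit, or work directly on the regular part of $M_\infty$. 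The smooth cross-section hypothesis is exactly what makes this feasible, since it provides both the nondegeneracy needed to linearize the volume functional and the $C^{k,\alpha}$-regularity needed for the graphical representation of nearby cross-sections to survive the passage to the limit.
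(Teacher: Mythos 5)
There is a genuine gap, and it sits at the analytic core of your argument. You run the Lojasiewicz--Simon step on the Bishop--Gromov ratio $\Theta(r)=r^{-n}\Vol(B_r(y))$, asserting a differential inequality $|r\,\Theta'(r)|\geq c\,(\Theta(r)-\Theta(0^+))^{2-\alpha}$ coming from the cross-section being ``a critical point of the normalized volume functional whose linearization is Fredholm.'' This is precisely the step the paper identifies as impossible to carry out with the scale-invariant volume: the derivative of $\Theta$ involves areas of geodesic spheres, and the distance function is only Lipschitz, not $C^1$, let alone analytic, so there is no analytic functional on a Banach space of cross-sections whose value at scale $r$ is $\Theta(r)$ and whose gradient controls $\Theta'$. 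The whole point of the paper is to replace $\Theta$ by the quantity $A(r)=r^{1-n}\int_{b=r}|\nabla b|^3$ built from level sets of the Green's function (which is analytic and satisfies the monotonicity formula of \cite{C2}), to introduce the weighted Einstein--Hilbert functional $\cF$ that agrees with $A$ to first order (properties (1)--(5) of subsection \ref{ss:5key}), and to prove the Lojasiewicz--Simon inequality for $\cF$. Even for the correct functional, Fredholmness of the linearization is not automatic: diffeomorphism invariance produces an infinite-dimensional kernel and the second variation has infinitely many eigenvalues of both signs, and the paper must invoke the Ebin--Palais slice theorem, the York decomposition, and the vanishing of the off-diagonal blocks of $L_{\cF}$ (Theorem \ref{p:fker}) to reduce to a finite-dimensional Lojasiewicz inequality. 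Your proposal asserts the conclusion of all of this without supplying any of it, and for a functional for which it is false or at best unknown.

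Your reduction of the local statement (rescaled Einstein constant tends to zero, noncollapsing persists, tangent cones are metric cones by \cite{ChC1}) is fine, and your remark that one should do the estimates on the smooth annuli of the $M_i$ and pass to the Gromov--Hausdorff limit is indeed the structural route the paper intends: the local theorem follows from a local analogue of the effective uniqueness statement, Theorem \ref{t:effective}, whose essential feature is that the constants $\epsilon,\delta,\beta,C$ are independent of the scales $r_1,r_2$, so the decay of $Q$ and the summability of $\Theta_r$ proved on each smooth $M_i$ (where the Green's function machinery is available and the Einstein term is a controlled perturbation at small scales) survive the limit. Working ``directly on the regular part of $M_\infty$'' is not a viable substitute as stated, since the monotone quantity and the level-set functional are defined via elliptic equations on the smooth manifolds; it is exactly the scale-uniform effectivity, not an argument on the singular space, that carries the conclusion to $M_\infty$.
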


Similar to the case of tangent cones at infinity, the above statement follows from a stronger effective version of uniqueness of local tangent cones.

\vskip2mm
It is well-known that uniqueness may fail without the two-sided bound on the Ricci curvature.  Namely, there exist a large number of examples of manifolds with nonnegative Ricci curvature and Euclidean volume growth and nonunique tangent cones at infinity; see \cite{P2}, \cite{ChC1}, \cite{CN2}.  In fact, by \cite{CN2}, it is known that any smooth family of metrics on a fixed closed manifold can occur as cross-sections of tangent cones at infinity of a single manifold with nonnegative Ricci curvature and Euclidean volume growth provided the following two necessary assumptions are satisfied for any element in the family:
\begin{enumerate}
\item The Ricci curvature is $\geq$ than that of the round unit $(n-1)$-dimensional sphere.\label{e:exun1}\footnote{Strictly speaking, for the construction in \cite{CN2}, one must assume strict inequality for the Ricci curvature.}
\item The volume is equal to a fixed constant. \label{e:exun2}
\end{enumerate}
Since the space of cross-sections of tangent cones at infinity of a given manifold with nonnegative Ricci curvature and Euclidean volume growth is connected and closed under the Gromov-Hausdorff topology, it follows that if a smooth family of closed manifolds occurs as cross-sections, then so does any metric space in the closure.  

  \vskip2mm
   There is a rich history of uniqueness results for geometric problems and equations. Ê In perhaps its simplest form, the issue of uniqueness or not comes up already in a 1904 paper entitled ``On a continuous curve without tangents constructible from elementary geometry" by the Swedish mathematician Helge von Koch.  In that paper, Koch described what is now known as the Koch curve or Koch snowflake.  It is one of the earliest fractal curves to be described and, as suggested by the title, shows that there are continuous curves that do not have a tangent in any point.  On the other hand, when a set or a curve has a well-defined tangent or well-defined blow-up at every point, then much regularity is known to follow.  Tangents at every point, or uniqueness of blow-ups, is a `hard' analytical fact that most often is connected with a PDE,  as opposed to say Rademacher's theorem, where tangents are shown to exist almost everywhere for any Lipschitz functions. 
  
 Uniqueness is a key question for the regularity of Geometric PDE's; for instance, as explained in \cite{W}: ``Whether nonuniqueness of tangent cones ever happens remains perhaps the most fundamental open question about singularities of minimal varieties''.  Two of the most prominent early works on uniqueness of tangent cones are Leon Simon's hugely influential paper \cite{S1} from 1983, where he proves uniqueness for tangent cones of minimal surfaces with smooth cross-section. The other is Allard-Almgren's 1981 \cite{AA} paper where uniqueness of tangent cones with smooth cross-section is proven under an additional integrability assumption on the cross-section; see also \cite{S2} and \cite{H} for more references about uniqueness. ÊEarlier work on uniqueness for Ricci-flat metrics includes Cheeger-Tian's 1994 paper \cite{ChT}, where uniqueness is shown if all tangent cones have smooth cross-sections and all are integrable.\footnote{In addition to integrability of all cross-sections,  \cite{ChT} assumed that the sectional curvatures  decay at least quadratically at infinity.   This can be seen (by \cite{C1}) to be equivalent to that all tangent cones at infinity have smooth cross-sections.}
 
 In each of these geometric problems, existence of tangent cones comes from  monotonicity, while the approaches to uniqueness rely on showing that the monotone quantity approaches its limit at a definite rate.  However, estimating the rate of convergence seems to require either integrability and/or a great deal of regularity (such as analyticity).   For instance, for minimal surfaces or harmonic maps,
  the classical monotone quantities are highly regular and are well-suited to this type of argument.    This is not at all the case in the current setting where the Bishop-Gromov is of very low regularity and ill suited: the distance function is Lipschitz,  but is not even $C^1$, let alone analytic.  This is a major point (cf. page 496 of \cite{ChT}).  In contrast, the functional $A$ (that we describe below) is defined on the level sets of an analytic function (the Green's function) and does depend analytically and, furthermore, its derivative has the right properties.  In a sense, the scale invariant volume is already a regularization of the quantity that, if one could, one would most of all like to work with.  Namely, one would like to work directly with the scale invariant Gromov-Hausdorff distance between
  the manifold and the cone that best approximates it on the given scale and try to prove directly some kind of decay (in the scale)  for this quantity.  However, not only is it not clear that it is monotone, but as a purely metric quantity it is even less regular than the scale invariant volume.

  \vskip4mm
Throughout, $C$ will denote a constant which will be allowed to change from line to line.  When the dependence is important, we will be more explicit.    $M^n$ will always be an open $n$-dimensional Ricci-flat manifold with Euclidean volume growth where $n \geq 3$.  
Moreover, $d_{GH}(X,Y)$ will denote the Gromov-Hausdorff distance between metric spaces $X$ and $Y$.

\vskip4mm
\subsection{Proving uniqueness}
  Next we will try to explain  the key points in the proof of uniqueness; a much more detailed discussion can be found in Section \ref{s:s1}.
  
  Let $p\in M$ be a fixed point in a Ricci flat manifold with Euclidean volume growth.  We would like to show that the tangent cone at infinity is unique; that is, does not depend on the sequence of blow-downs.  To show this,
   let $\Theta_r$ be the scale invariant Gromov-Hausdorff distance between the annulus $B_{4r}(p)\setminus B_{r}(p)$ and the corresponding annulus centered at the vertex of the cone that best approximates the annulus.  (By scale invariant distance, we mean the distance 
   between the annuli after the metrics are rescaled so that the annuli have unit size; see \eqr{e:scaleinvd}.)  The first key point is to find a positive quantity $A=A(r)$ that is a function of the distance to $p$, is monotone $A\downarrow$ and so for some positive constant $C$ 
 \begin{align}  \label{e:crucial}
 -A' (r)\geq C\,\frac{\Theta_r^2}{r} \, .
\end{align}
(The quantity $A$ with this property was found in \cite{C2}.  Perelman's monotone $W$ functional is also potentially a candidate, but it comes from integrating over the entire space which introduces so many other serious difficulties that it cannot be used.)    In fact, we shall use that for $Q$ roughly equal to $-r\,A'(r)$, $Q$ is monotone nonincreasing and 
\begin{align}  \label{e:crucial2}
 [Q(r/2)-Q(8r)] \geq C\,\Theta_r^2 \, .
\end{align}
We claim that uniqueness of tangent cones is implied by showing that $A$ converges to its limit at infinity at a sufficiently fast rate or, equivalently, that $Q$ decays sufficiently fast to zero.  Namely, by the triangle inequality, uniqueness is implied by proving that
 \begin{align}
 \sum_k \Theta_{2^k}<\infty \, .
\end{align}
This, in turn, is implied by the Cauchy-Schwarz inequality by showing that for some $\epsilon>0$
\begin{align} \label{e:summa}
 \sum_k\Theta_{2^k}^2\,k^{1+\epsilon}<\infty \, ,
\end{align}
as
\begin{align} 
 \sum_k k^{-1-\epsilon}<\infty \, .
\end{align}
Equation \eqr{e:summa} follows, by \eqr{e:crucial2}, from showing that
\begin{align}  
 \sum [Q(2^{k-1})-Q(2^{k+3})]\,k^{1+\epsilon}<\infty \, .
\end{align}
This is implied by proving that for a slightly larger $\epsilon$
\begin{align}  
 Q(r)\leq  \frac{C}{(\log r)^{1+\epsilon}} \, .
\end{align}
All the work in this paper is then to establish  this crucial decay for $Q$.  This decay follows easily
from showing that for some $\alpha < 1 $
 \begin{align}			\label{e:o3}
	    Q(2\,r)^{2-\alpha} &  \leq   C \, \left( Q(r/2) - Q(2\,r) \right) 
 \, .
\end{align}
The proof of this comes from an infinite dimensional Lojasiewicz inequality that essentially gives
\begin{align}			\label{e:o4}
	    |A(r)-A(\infty)|^{2-\alpha} &  \leq   C \, |\nabla A|^2=-C \, r\, A'
 \, .
\end{align}
(Here the middle equation can be ignored as we won't    explain the meaning of $\nabla A$ until later.)
The left-hand side of \eqr{e:o3} is easily seen (using that $Q$ is monotone) 
to be bounded from above by the left-hand side of \eqr{e:o4}.  To get that the right-hand side of \eqr{e:o4} is bounded from above by the right-hand side of \eqr{e:o3} is more subtle and uses that the quantity $Q(r)$ is defined slightly differently.

The proof of uniqueness has three parts.  The first is to find the right quantities and set up the general scheme described above.  The second will be to find a way to actually implement this general scheme.  The third will be to prove the 
 infinite dimensional Lojasiewicz inequality for a functional $\cF$ that approximates $A$ to first order.  
 $\cF$ will actually be defined on the space of metrics and weights.  To explain
 how $\cF$ is chosen, 
recall that a  
 Lojasiewicz inequality  describes analytic functions in a  neighborhood of a critical point.  The inequality  asserts that the difference in values of such a function at a critical point versus a nearby point is bounded in terms of the norm of the gradient.  In particular, any other nearby critical point must have the same value.  
  In our case, the analytic function will be a linear combination of a weighted Einstein-Hilbert functional on the level sets plus the $A$ functional.  The Einstein-Hilbert functional enters into this picture since in a Ricci-flat cone the cross-section is a Einstein manifold and, thus, a critical point for the Einstein-Hilbert functional.

Finally, note that although $Q\geq 0$ and $Q\downarrow$, the rate of decay on $Q$ implies only that 
\begin{align}
\Theta_{2^k}\leq \left(\sum_{j\geq k} \Theta_{2^j}^2\right)^{\frac{1}{2}}
\end{align}
decays like $k^{-\frac{1}{2}-\epsilon}$ which in itself is of course not summable.   Uniqueness comes from the decay of $Q$ together with that 
\begin{align}  \label{e:crucial2a}
\Theta_r^2\leq C\, [Q(r/2)-Q(8r)] \, ,
\end{align}
which gives that 
\begin{align}
	\sum_{j\geq k} \Theta_{2^j} \leq C \, k^{- \bar{\beta}}
\end{align}
for a power $\bar{\beta} > 0$.

  \subsection{Effective uniqueness}
  
  In this subsection, we will describe how our main uniqueness will follow from a stronger effective version.
  
  Let $M^n$ be a Ricci-flat $n$-manifold and $N$ a smooth closed Einstein $(n-1)$-manifold with $\Ric=(n-2)$.

\begin{Thm}[Effective uniqueness]	\label{t:effective}
There exist $\epsilon$, $\delta$, $\beta > 0$ and $C>1$ such that if
$A(r_1/C ) - A(C r_2) < \delta$ for some $0<r_1 < r_2$ and 
every $r \in [r_1/C , C r_1]$ satisfies
\begin{align}
d_{GH}(   B_{  2r}(x) \setminus B_r(x) , 
 B_{  2r}(v) \setminus B_{r}(v) ) < \epsilon\, r\, ,
\end{align}
where $x\in M$ and $v$ is the vertex of the cone $C(N)$, then:
\begin{enumerate}
\item[(E1)] Every $r \in [r_1 , r_2]$ satisfies
\begin{align}
d_{GH}(   B_{  2r}(x) \setminus B_{r}(x) , 
 B_{  2r}(v) \setminus B_{r}(v) ) < 4 \, \epsilon\, r \, .
\end{align}
\item[(E2)]
There exists a cone $C(N_0)$ with vertex $\tilde v$ such that for $r$ between $r_1$ and $r_2$  
\begin{align}	\label{e:effv1}
d_{GH}(   B_{  4r}(x) \setminus B_{r}(x) , 
 B_{  4r}( \tilde v) \setminus B_{r}(\tilde v) ) < C \, r\, \left(\log  \frac{r}{r_1} \right)^{-\beta}    \, .
\end{align}
\end{enumerate}
\end{Thm}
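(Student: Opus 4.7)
\smallskip

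\textbf{Proof proposal.} My plan follows the three-part blueprint laid out in the introduction. The starting point is the monotone functional $A(r)$ together with the rescaled derivative $Q(r)\sim -r A'(r)$, which together satisfy the discrete differentiation inequality
\[
\Theta_r^{2}\ \le\ C\bigl[Q(r/2)-Q(8r)\bigr],
\]
where $\Theta_r$ is the scale-invariant Gromov--Hausdorff distance between the annulus $B_{2r}(x)\setminus B_r(x)$ and its best approximating cone annulus. Under the hypotheses of the theorem, the initial annulus at $x$ at scales in $[r_1/C,Cr_1]$ is $\epsilon$-close to the annulus in the model cone $C(N)$, and the total drop $A(r_1/C)-A(Cr_2)<\delta$ is small.

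First I would establish a continuity/openness statement, conclusion (E1): as long as the $A$-drop over a subinterval remains below $\delta$ and the closeness at the left endpoint is below $\epsilon$, the GH distance to the (slowly varying) best approximating cone at every intermediate scale stays below $4\epsilon$. This is a bootstrap using the inequality above together with the triangle inequality for GH distance, and it keeps the geometry in a small neighbourhood of the smooth Einstein cross-section $N$, where the analytic machinery below is applicable.

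Second, and most importantly, I would prove the infinite-dimensional Lojasiewicz-type inequality
\[
Q(2r)^{2-\alpha}\ \le\ C\bigl[Q(r/2)-Q(2r)\bigr]\qquad\text{for some }\alpha<1.
\]
The strategy is to work with a functional $\cF$ defined on the space of metrics-and-weights on a fixed cross-sectional manifold diffeomorphic to $N$, combining a weighted Einstein--Hilbert functional with the $A$ functional itself. The Einstein metric on $N$ is a critical point of the Einstein--Hilbert term, and smoothness of $N$ makes $\cF$ analytic on a suitable Banach space neighbourhood; a Simon-type Lojasiewicz inequality for $\cF$ at this critical point gives
\[
|A(r)-A(\infty)|^{2-\alpha}\ \le\ C\,\bigl(-r A'(r)\bigr),
\]
which translates, via the definition of $Q$ as a small-window discrete derivative rather than a pointwise derivative, to the required inequality for $Q$. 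Proving this Lojasiewicz inequality, and in particular showing that the purely metric quantity $A$ can be compared to a functional defined on level sets of the (analytic) Green's function so that the low regularity of distance does not obstruct the argument, is the main obstacle.

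Third I would convert the differential Lojasiewicz inequality into polynomial-logarithmic decay $Q(r)\le C(\log(r/r_1))^{-(1+\epsilon)}$ by an elementary discrete ODE comparison on the sequence $r_k=2^k r_1$. Plugging this decay back into $\Theta_{r_k}^2\le C[Q(r_k/2)-Q(8r_k)]$ and applying Cauchy--Schwarz as in the introduction gives
\[
\sum_{j\ge k}\Theta_{2^j r_1}\ \le\ C\,k^{-\beta}
\]
for some $\beta>0$. By the triangle inequality for Gromov--Hausdorff distance this tail bound makes the sequence of best approximating cones at scales $2^k r_1$ a Cauchy sequence; its limit is the cone $C(N_0)$ with vertex $\tilde v$ appearing in (E2), and the same tail estimate gives the claimed quantitative bound~\eqref{e:effv1}.
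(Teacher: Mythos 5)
Your proposal follows essentially the same route as the paper: (E1) via the bootstrap/openness argument with the hypotheses playing the role of the starting conditions, then the Lojasiewicz--Simon inequality for the weighted Einstein--Hilbert-type functional $\cF$ yielding $Q(2r)^{2-\alpha}\le C\,(Q(r/2)-Q(2r))$, the resulting logarithmic decay of $Q$, and summation of the $\Theta$'s through the Cheeger--Colding comparison to obtain the effective Cauchy bound among rescaled annuli and the fixed cone in (E2). The only cosmetic differences are that the paper's rigorous summation uses $\Theta_r^{2+\mu}\le C_\mu\,[Q(r/2)-Q(8r)]$ with a H\"older (rather than Cauchy--Schwarz) estimate for series, and in the effective setting the fixed cone $C(N_0)$ is simply the best approximating cone at the top scale rather than a limit of cones.
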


Note that the cone $C(N_0)$ in this theorem is independent of $r$.   Moreover, the Gromov-Hausdorff distance could be replaced by the $C^k$ norm in \eqr{e:effv1} by appealing to \cite{C1}.
The key in the above theorem is that the constants do not depend on $r_1$ and $r_2$.  As a consequence, we get the uniqueness theorem stated above.

\vskip1mm
Remarks:
\begin{itemize}
\item It seems very likely that, by arguing similarly, one could also replace the right-hand side of \eqr{e:effv1} by
$  C\, r \, \left[ A(r_1)-A(r_2)\right]^{\beta}$.
\item There is also a local version of this that we will not state here.
\end{itemize}

\subsection{Key technical difficulties for the Lojasiewicz-Simon inequality}

The classical Lojasiewicz-Simon inequality
  is proven by using Lyapunov-Schmidt reduction to reduce it to a finite dimensional Lojasiewicz inequality on the kernel of the second variation operator.  It is critical that the kernel is
   finite dimensional.  In \cite{S1},  the finite dimensionality came from the functional being strictly convex in the first derivative (which was the highest order), so that there are only finitely many eigenvalues (counting multiplicity) below any fixed level.

There are two key difficulties for proving a Lojasiewicz-Simon inequality for the $\cF$ functional:
\begin{enumerate}
\item There is an infinite dimensional kernel for the second variation operator.  \label{e:item1} 
\item The second variation operator   has infinitely many positive and negative eigenvalues.  \label{e:item2} 
\end{enumerate}
The reason for \eqr{e:item1} is that the infinite dimensional gauge group of diffeomorphisms preserves the functional.  \eqr{e:item2} is similar to the situation for the Einstein-Hilbert functional, where the highest order part of the second variation operator has opposite signs depending on whether the variation is conformal or orthogonal to the conformal variations.  \eqr{e:item1}  is far more serious.   

Geometric functionals are invariant under changes of coordinates, so \eqr{e:item1} could potentially arise in any geometric problem, including the original ones considered in \cite{S1}, such as uniqueness for minimal surfaces. 
This is overcome in \cite{S1} by working in canonical coordinates, such as writing the surfaces as normal graphs.  Similarly,   in \cite{Ya}, the author makes a canonical choice of frames to ``gauge away''  \eqr{e:item1}
for the Yang-Mills functional and then directly apply \cite{S1}.  In our setting, the action of the diffeomorphism group is more complicated and even \eqr{e:item2} already makes it impossible to appeal directly to \cite{S1}.

We will deal with \eqr{e:item1}  by using the Ebin-Palais slice theorem to mod out by the diffeomorphism group.{\footnote{The diffeomorphism group also created difficulties in \cite{ChT}, where they use a different version of the slice theorem.}}
This will allow us to restrict to variations that are transverse to the action of the group.  We will then analyze the second variation operator separately, depending on whether the variation is in the conformal direction (up to a diffeomorphism) or it is orthogonal to both the conformal variations and to the action of the group. We will show that, if we write the operator in block form, then the off-diagonal blocks vanish and the kernel is finite dimensional in each diagonal block.  This will be enough to carry through the Lyapunov-Schmidt reduction
and prove the Lojasiewicz-Simon inequality.

\subsection{Normalizations}

Our normalization is that the Ricci curvature of the
$(n-1)$-dimensional unit sphere $\SS^{n-1}$ is $(n-2)$ and the
  scalar curvature   is $(n-1) \, (n-2)$.

\section{The proof of uniqueness}  \label{s:s1}

 As mentioned in the introduction, the starting point for uniqueness is a monotonicity formula from \cite{C2}, where the monotone quantity $A(r)$ is non-increasing in $r$, is constant  on   cones, and where the derivative   $A'(r)$   measures   distance  to being a cone on a given scale.  We will  show that $A(r)$ goes to its limit $A(\infty)$ fast enough to ensure  uniqueness of the tangent cone.
The key   is to show that
\begin{enumerate}
\item[($\star$)] $A'(r)$ controls   $A(r) - A(\infty)$.
\end{enumerate}
   Iterating ($\star$) will show that $A'(r)$, and thus the distance to being a cone, converges to zero 
at a rate    that implies uniqueness.

In order to prove  ($\star$), we will need to introduce an
auxiliary  functional $\cF$.  To explain this, recall that 
the Lojasiewicz inequality, \cite{L}, for an analytic function $f$ on $\RR^n$ with a critical point $x$ gives some $\alpha < 1$ so that
\begin{align}
	\left| f (x) - f(y) \right|^{2-\alpha} \leq \left| \nabla f  (y) \right|^2 
\end{align}
for all $y$ close to $x$.  Leon Simon proved an infinite dimensional version of this for certain analytic functionals on Banach spaces
in \cite{S1}.   We will construct an analytic functional $\cF$ that approximates $A$ to first order  and satisfies a 
Lojasiewicz-Simon inequality (these properties are  (1)--(5)  in
 subsection \ref{ss:5key}).  Using $\cF$, we can prove ($\star$).

In this section, we will prove the uniqueness of the tangent cones assuming properties  (1)--(5).  The rest of the paper will be devoted to proving these properties.

\subsection{Monotonicity}

We will next define the monotone quantity $A(r)$.
Let $G$ be a Green's 
function\footnote{Our Green's functions will be normalized so that on Euclidean space of dimension $n\geq 3$ the Green's function is $r^{2-n}$.}  on $M$ with a pole at a fixed point $x \in M$ and define
\begin{align}
	b = G^{ \frac{1}{2-n}} \, . 
\end{align}
With this normalization, Stokes' theorem implies that 
\begin{align}	\label{e:stok}
	  r^{1-n} \, \int_{b=r} |\nabla b| =  \Vol (\partial B_1 (0)) \, .
\end{align}
Following \cite{C2}, define a scale-invariant quantity $A(r)$ by
\begin{align}
	A(r) = r^{1-n} \, \int_{b=r} |\nabla b|^3 \, .
\end{align}
Since $M$ is Ricci-flat the third monotonicity formula of \cite{C2} gives that
\begin{align}
	  A'(r) &= - \frac{1}{2}  \, r^{n-3} \, \int_{ r \leq b}  b^{2-2n}\,     \left| \Hess_{b^2} - \frac{\Delta b^2}{n} \, g
	   \right|^2 \, . 
\end{align}
 In particular, $A$ is monotone non-increasing and, thus, has a
  limit\footnote{In fact, an easy calculation
   shows (see \cite{C1}) that $A_{\infty}=b_{\infty}^2\, \Vol (\partial B_1(0))$; where $b_{\infty}$ is defined below.} 
 \begin{align}
 	A_{\infty} = \lim_{r\to \infty} \, A(r) \, .
\end{align}
As a consequence, we have that
\begin{align}	\label{e:intde}
	A(R) - A_{\infty}  = \frac{1}{2} \, \int_{R}^{\infty}  r^{n-3} \,
	\int_{ r \leq b}  b^{2-2n}\,     \left| \Hess_{b^2} - \frac{\Delta b^2}{n} \, g
	   \right|^2
	 \, dr \, .
\end{align}

\subsection{A brief introduction to the $\cF$ functional}  
We will next briefly explain what the functional $\cF$ is that will appear in our Lojasiewicz-Simon inequality.   This discussion can safely be ignored as we will later return to the precise definition, including the weighted space that $\cF$ is defined on.  At any rate,  when restricted to the level set $b=r$ the functional $\cF$ will be given by
\begin{align}
	\cF(r)=\cF &= \frac{1}{2-n} \, \left( A -  \frac{r^{3-n}}{n-2} \, \int_{b=r}  R_{b=r}\,  |\nabla b|    \right)\\
	&=\frac{r^{1-n}}{2-n} \, \int_{b=r}\left(|\nabla b|^2-\frac{r^2\,R_{b=r}}{n-2}\right)\, |\nabla b|\, .\notag
\end{align}
Here $R_{b=r}$ is the intrinsic scalar curvature of the level set $b=r$.  The idea behind this functional is that $\cF$ defined this way is a weighted analog of the classical Einstein-Hilbert functional.    In particular, when $\cF$ is restricted to an appropriate weighted space, then the critical points will precisely be weighted Einstein metrics.  

It may be helpful to illustrate this with an example.  Suppose that $M$ is $n$-dimensional Euclidean space $\RR^n$ so that $b$ is the distance function $|x|$. Since the scalar curvature of the sphere of radius $r$ is $(n-1) (n-2) r^{-2}$, we get
\begin{align}
	\cR(r) = \frac{r^{1-n}}{2-n} \, \int_{|x|=r} \left( 1 - \frac{ r^2 \, (n-1) (n-2) r^{-2} }{n-2} \right) = 
	r^{1-n} \, \int_{|x|=r} 1 = A(r) \, .
\end{align}
This is a special case of that $\cR$ and $A$ agree on cones with a constant weight (see (1) below in the subsection after the next one).

\subsection{Asymptotic convergence}

By \cite{ChC1},  every tangent cone at infinity of $M$ is a metric cone.  Below, $C(N)$ will always be a fixed cone with vertex $v$ over a smooth $(n-1)$-dimensional Einstein metric $g_0$ on the cross-section $N$ with 
\begin{align}
	\Ric_{g_0} = (n-2) \, g_0 \, .
\end{align}
 Moreover, 
 $\delta=\delta (N)> 0$ will be a fixed small constant and   we will work on scales $R> 0$ so that
\begin{align}  \label{e:conedel}
	 d_{GH}(B_{2r}(x) \setminus B_{r}(x) , B_{2r}(v) \setminus B_{r} (v))
	 < \delta\,  r   {\text{ for all }}  r \in \left[  \frac{R}{4} , 2R   \right] \, ,
\end{align}
where $d_{GH}$ is the Gromov-Hausdorff distance.
In particular,  by \cite{C1},  the annulus $B_{2R}(x)\setminus B_{\frac{R}{2}}(x)$ in $M$ is $C^k$ close to one in the cone $C(N)$.

We claim that as long as annuli in $M$ are close to annuli in the cone (in the sense explained above around \eqr{e:conedel}), then 
\begin{align}	
|\nabla b|\text{ is close to }b_{\infty}\, .\label{e:closetob}
\end{align}
Here the positive constant $b_{\infty}$ is defined by
\begin{align}
b_{\infty}=\left(\frac{\V_M}{\Vol (B_1(0))}\right)^{\frac{1}{n-2}}\, ,
\end{align}
where $\V_M > 0$ is the asymptotic volume ratio
\begin{align}
\V_M=\lim_{r\to \infty}r^{-n} \,  \Vol (B_r(x))\, .
\end{align}
To see \eqr{e:closetob}, note that by page 1374 of \cite{CM2} for $\epsilon> 0$ fixed, there exists $r_0> 0$ so that for $r\geq r_0$
\begin{align}
\sup_{\partial B_r(x)}\left|\frac{b}{r}-\left(\frac{\V_M}{\Vol (B_1(0))}\right)^{\frac{1}{n-2}}\right|&<\epsilon\, ,\\
\int_{B_r(x)}\left||\nabla b|^2-\left(\frac{\V_M}{\Vol (B_1(0))}\right)^{\frac{2}{n-2}}\right|^2&<\epsilon\, \Vol (B_r(x))\, .
\end{align}
%It follows from \cite{C1} that since the cross-section $N$ is smooth, there exists $\delta=\delta (N)$ such that if 
%\begin{align}
%d_{GH}(B_{4r}(x),B_{4r} (v))< \delta\, r\, ,
%\end{align}
%then 
Since the annulus in $M$ is $C^k$ close to one in the cone $C(N)$ (by \cite{C1}) 
and $b$ satisfies an elliptic equation, we get
estimates for higher derivatives of $b$.  Namely, the integral bound on $\left||\nabla b|^2-\left(\frac{\V_M}{\Vol (B_1(0))}\right)^{\frac{2}{n-2}}\right|$
gives the following pointwise bound (for a slightly larger $\epsilon$)
\begin{align}
\sup_{B_{2R}(x)\setminus B_{\frac{R}{2}}(x)}\left||\nabla b|^2-\left(\frac{\V_M}{\Vol (B_1(0))}\right)^{\frac{2}{n-2}}\right|^2&<\epsilon \, .
\end{align}

\subsection{The functional $\cF$ and the Lojasiewicz-Simon inequality}	\label{ss:5key}

We will next bring in the auxiliary  functional $\cF$ and list its five key properties.

Given $R > 0$, we let $g_R$ denote the induced metric on the level set $\{ b = R \}$ in $M$.  It follows from the previous subsection that
if we are in an annulus that is close to one in $C(N)$, then $\{ b = R \}$ is diffeomorphic to $N$.  Moreover, the metric
$R^{-2} \, g_R$ is close to the metric $b_{\infty}^{-2} \, g_0$ and, in fact, \eqr{e:stok} implies that
\begin{align}
	\int_{b=R} |\nabla b| \, d\mu_{R^{-2} \, g_R} = R^{1-n} \, \int_{b=R} |\nabla b|  = \Vol (\partial B_1 (0)) \, .
\end{align}
Define $\cA$ to be the set of $C^{2,\beta}$ metrics $g$ and positive $C^{2,\beta}$ functions $w$ on $N$. 
 Let $\cA_1$ be 
\begin{align}
	\cA_1 = \left\{ (g,w) \in \cA \, | \, \int_N w \, d\mu_g = \Vol (\partial B_1 (0)) \right\} \, .
\end{align}
The set $\cA_1$ includes $(R^{-2} \, g_R , |\nabla b|)$ as well as $(b_{\infty}^{-2} \, g_0 , b_{\infty})$.

\vskip2mm
We will construct a functional $\cF: \cA_1 \to \RR$ that satisfies:
\begin{enumerate}
\item $\cF ( b_{\infty}^{-2} \, g_0 , b_{\infty}) = A_{\infty}$.
\item $(b_{\infty}^{-2} \, g_0 , b_{\infty})$ is a critical point for $\cF$ on $\cA_1$.
\item $\cF$ satisfies the Lojasiewicz-Simon inequality for some $\alpha < 1$
\begin{align}
	\left| \cF (g, w) - \cF (b_{\infty}^{-2} \, g_0 , b_{\infty}) \right|^{2-\alpha} \leq \left| \nabla_1 \cF \right|^2 (g, w)\, , 
\end{align}
where $\nabla_1 \cF$ is the restriction of $\nabla \cF$ to $\cA_1$ and $(g,w)$ is near  $(b_{\infty}^{-2} \, g_0 , b_{\infty})$.
\item We have 
\begin{align}
	\left| \nabla_1 \cF (   R^{-2} \, g_R, |\nabla b|) \right|^2 \leq C \, \int_{\frac{R}{2} \leq b \leq \frac{3\,R}{2}}  b^{-n} \,   \left| \Hess_{b^2} - \frac{\Delta b^2}{n} \, g
	   \right|^2 \, .
\end{align}
\item   We  have 
\begin{align}
A(R) \leq \cF ( R^{-2} \, g_R , |\nabla b| )  +  C \,
 \int_{\frac{R}{2} \leq b \leq \frac{3\,R}{2}}  b^{-n}\,  \left| \Hess_{b^2} - \frac{\Delta b^2}{n} \, g
	   \right|^2    
\, .
\end{align}
\end{enumerate}

\vskip2mm
Roughly speaking, (1) and (2) show that $\cF$ agrees with $A$ to first order at infinity, while (4) and (5) show that
they are equivalent to first order on $( R^{-2} \, g_R , |\nabla b| ) $.  At first, this may appear surprising since $\cR$ will contain the scalar curvature and, thus, depends on more derivatives of the metric.  However, we will see that the trace-free Hessian satisfies an elliptic equation and, thus, elliptic  estimates  will allow us to bound these higher derivatives by lower order ones (see Theorem \ref{t:meanval} below).

  We will construct $\cF$ to satisfy (1) and (2) in Section \ref{s:functs}.  Properties (4) and (5) are proven in Section \ref{s:4and5}.  The remainder of the paper proves  the Lojasiewicz-Simon inequality (3)  for $\cF$.
%here

\begin{Rem}
Roughly speaking, one can think of (4) and (5) as effective forms of (2) and (1), respectively.  Namely, when the manifold is conical, then (4) and (5) imply (1) and (2), but with inequalities instead of equalities.  The precise dependence in the error terms will be critical for our arguments.
\end{Rem}
 
  \subsection{Decay}
  
  We will show next that (1)--(5) above implies that the tangent cone at infinity is unique.  We will first show decay of the following natural monotone non-increasing scale-invariant integral 
 \begin{align}
 	Q(r) = \int_{r \leq b} b^{-n} \,   \left| \Hess_{b^2} - \frac{\Delta b^2}{n} \, g
	   \right|^2  
 \end{align}
   that roughly measures $- r \, A'(r)$.  One important reason why we work with $Q$ instead of $r\, A'$ is that $Q(r)$ is obviously monotone.
 
 \vskip2mm
 Precisely, we will show that (1)--(5) implies the following crucial decay estimate:

 \begin{Pro}  \label{p:decay}
Set $\beta = \frac{1}{1-\alpha} - 1> 0$.  There exists  $C$ so that 
  if every $R \in (r , s)$ satisfies \eqr{e:conedel}, then
     \begin{align}	 \label{e:fu}
  	 Q(s) \leq \frac{C}{|\log (s/r)|^{\beta +1}}  \, .
  \end{align}
 \end{Pro}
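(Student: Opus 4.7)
The plan is to convert the structural properties (1)--(5) into a dyadic recursion for $Q$ and then iterate it via a discrete Lojasiewicz argument. Write $\cE(R) := Q(R/2) - Q(3R/2) = \int_{R/2 \leq b \leq 3R/2} b^{-n}\bigl|\Hess_{b^2} - \tfrac{\Delta b^2}{n} g\bigr|^2$, so that (4) and (5) at the point $(R^{-2}g_R, |\nabla b|)$ read $|\nabla_1 \cF|^2 \leq C\,\cE(R)$ and $A(R) \leq \cF(R^{-2}g_R,|\nabla b|) + C\,\cE(R)$. Combining (1), (3) and (4) gives $|\cF(R^{-2}g_R, |\nabla b|) - A_\infty|^{2-\alpha} \leq C\,\cE(R)$; feeding this into (5) and using (1) once more, and noting that in the cone-close regime $\cE(R)^{1/(2-\alpha)}$ dominates $\cE(R)$ itself (since $1/(2-\alpha)<1$ and $\cE(R)$ is small by \eqr{e:conedel}), I obtain
\begin{align}
    A(R) - A_\infty \leq C\,\cE(R)^{1/(2-\alpha)} \, .
\end{align}

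For the matching lower bound I would apply Fubini to \eqr{e:intde}:
\begin{align}
    A(R) - A_\infty = \frac{1}{2(n-2)}\int_{b \geq R} b^{-n}\Bigl(1 - (R/b)^{n-2}\Bigr)\Bigl|\Hess_{b^2} - \tfrac{\Delta b^2}{n} g\Bigr|^2 \geq c_n\, Q(2R) \, ,
\end{align}
using $1 - (R/b)^{n-2} \geq 1 - 2^{2-n} > 0$ on $\{b \geq 2R\}$. Combining the two displays and using $Q(3R/2) \geq Q(2R)$, I arrive at the master inequality
\begin{align}
    Q(2R)^{2-\alpha} \leq C\,\bigl(Q(R/2) - Q(2R)\bigr) \, ,
\end{align}
valid at every $R$ in the cone-close range.

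Now fix a base scale $R_* \sim r$ (chosen so that $[R_*/4, 4R_*] \subset (r,s)$) and set $q_k := Q(2^k R_*)$ and $p := 1 - \alpha$, so the master inequality becomes $q_{k+1}^{1+p} \leq C(q_{k-1} - q_{k+1})$ for $k$ up to $k_{\max} \sim \log_2(s/r)$. Setting $a_m := q_{2m+1}^{-p}$, a mean value argument on $t \mapsto t^{-p}$ combined with the recursion shows $a_m - a_{m-1} \geq c_0 > 0$ past a fixed threshold: either $q_{2m+1} \geq q_{2m-1}/2$, in which case MVT on $t^{-p}$ together with $q_{2m-1} - q_{2m+1} \geq q_{2m+1}^{1+p}/C$ yields $a_m - a_{m-1} \geq c_0$; or $q_{2m+1} < q_{2m-1}/2$, forcing $a_m \geq 2^p a_{m-1}$, which multiplies $a$ by a fixed factor $>1$ and in particular contributes $\geq c_0$ once $a_{m-1}$ has crossed a fixed threshold (this happens after finitely many such steps, since the recursion forces $q_k \to 0$). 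Telescoping gives $a_m \geq c_0 (m - m_0)$, hence $q_{2m+1} \leq C m^{-1/p} = C m^{-(\beta+1)}$; taking $m \sim \tfrac{1}{2}\log_2(s/r)$ yields \eqr{e:fu}.

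The main obstacle is the derivation of the master inequality: the energy $\cE(R)$ appearing in (4) and (5) is localized to the annulus $\{R/2 \leq b \leq 3R/2\}$, so one must carefully reconcile it with the tail integral defining $Q$, which forces the scale shift from $R$ to $2R$ and the Fubini step on \eqr{e:intde}. Once that is in place the discrete Lojasiewicz iteration is essentially automatic.
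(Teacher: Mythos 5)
Your argument is correct and follows essentially the same route as the paper: you derive the key inequality $Q(2R)^{2-\alpha} \leq C\,\bigl(Q(R/2)-Q(2R)\bigr)$ from (1)--(5) together with \eqr{e:intde} exactly as in Lemma \ref{l:fp} and Corollary \ref{c:o3}, and then run the same discrete Lojasiewicz iteration. The only (harmless) deviations are that you get the lower bound $A(R)-A_{\infty}\geq c_n\,Q(2R)$ by an exact Fubini computation instead of the dyadic summation of Lemma \ref{l:sums}, and your two-case iteration (whose uniform threshold step uses the universal upper bound on $Q$ in the cone-close regime) plays the role of the paper's elementary algebraic Lemma \ref{l:alg}.
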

 
 \subsection{Proving decay} 
 
 As described in the overview, the key for
 proving the decay in  Proposition \ref{p:decay} is to establish  the   inequality \eqr{e:o3} bounding $Q(2r)$ in terms of the decay of $Q$ from $r/2$ to $2r$.  This will be done in a series of lemmas culminating in Corollary
 \ref{c:o3}.
  
  \begin{Lem}	\label{l:fp}
  If $R$ satisfies \eqr{e:conedel}, then 
  \begin{align}	 	\label{e:mydealq}
	  \left(   \int_{R}^{\infty}  r^{n-3} \,
	\int_{ r \leq b}  b^{2-2n}\,     \left| \Hess_{b^2} - \frac{\Delta b^2}{n} \, g
	   \right|^2
	 \, dr  \right)^{2-\alpha}   &\leq   C \, \int_{\frac{R}{2} \leq b \leq \frac{3\,R}{2}}  b^{-n}\,  \left| \Hess_{b^2} - \frac{\Delta b^2}{n} \, g
	   \right|^2  \, .
\end{align}
  \end{Lem}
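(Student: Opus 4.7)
The plan is to combine the four key properties (1), (3), (4), and (5) of the functional $\cF$ together with the integral identity \eqr{e:intde} to reduce the estimate to a straightforward algebraic chain of inequalities.

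First I would rewrite the left-hand side of \eqr{e:mydealq} purely in terms of $A$. By \eqr{e:intde} we have
\begin{align}
	\int_{R}^{\infty}  r^{n-3} \, \int_{ r \leq b}  b^{2-2n}\,     \left| \Hess_{b^2} - \tfrac{\Delta b^2}{n} \, g \right|^2 \, dr = 2\,\bigl( A(R) - A_\infty \bigr)\, ,
\end{align}
so the goal becomes
\begin{align}
	\bigl( A(R) - A_\infty \bigr)^{2-\alpha} \leq C \, E(R) \, ,
\end{align}
where I abbreviate $E(R) = \int_{R/2 \leq b \leq 3R/2} b^{-n} \,|\Hess_{b^2} - \tfrac{\Delta b^2}{n}\,g|^2$.

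Next, I would split $A(R) - A_\infty$ into a ``functional part'' and an ``error part''. Using property (5) together with property (1) (which identifies $A_\infty$ with $\cF(b_\infty^{-2}\, g_0,\,b_\infty)$) gives
\begin{align}
	A(R) - A_\infty \leq \bigl[\cF(R^{-2} g_R, |\nabla b|) - \cF(b_\infty^{-2} g_0, b_\infty)\bigr] + C\, E(R)\, .
\end{align}
To bound the bracketed term I would invoke the Lojasiewicz-Simon inequality (3); the hypothesis \eqr{e:conedel} together with the discussion of \eqr{e:closetob} guarantees that the pair $(R^{-2} g_R, |\nabla b|)$ lies in the neighborhood of $(b_\infty^{-2} g_0, b_\infty)$ where (3) applies. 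Combining (3) with property (4) yields
\begin{align}
	\bigl|\cF(R^{-2} g_R, |\nabla b|) - \cF(b_\infty^{-2} g_0, b_\infty)\bigr|^{2-\alpha} \leq |\nabla_1 \cF|^2(R^{-2} g_R,|\nabla b|) \leq C\, E(R)\, .
\end{align}
Taking $(2-\alpha)$-th roots, $\cF(R^{-2} g_R, |\nabla b|) - \cF(b_\infty^{-2} g_0, b_\infty) \leq C\, E(R)^{1/(2-\alpha)}$.

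Finally, since $E(R)$ is small by \eqr{e:conedel} (so in particular $E(R) \leq E(R)^{1/(2-\alpha)}$, as $1/(2-\alpha) < 1$), I can absorb the residual $C\, E(R)$ term into the Lojasiewicz contribution:
\begin{align}
	A(R) - A_\infty \leq C\, E(R)^{1/(2-\alpha)}\, .
\end{align}
Raising both sides to the power $2-\alpha$ gives $(A(R)-A_\infty)^{2-\alpha} \leq C\, E(R)$, which is exactly the required inequality.

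The main subtlety, rather than any calculation, is ensuring that the Lojasiewicz-Simon step (3) is legitimately applicable — this is where the hypothesis \eqr{e:conedel} really enters, via the $C^k$-closeness of the annulus in $M$ to the model annulus in $C(N)$ and the $C^0$-closeness of $|\nabla b|$ to $b_\infty$ summarized in \eqr{e:closetob}. Everything else is a bookkeeping chain of properties (1), (4), (5), together with the smallness of $E(R)$ to compare linear and sublinear powers of $E(R)$.
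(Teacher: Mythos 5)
Your proposal is correct and follows essentially the same route as the paper: rewrite the left side via \eqr{e:intde}, use (1) and (5) to pass from $A(R)-A_\infty$ to the $\cF$-difference plus an error $C\,E(R)$, control the $\cF$-difference by (3) and (4), and absorb the error using that $E(R)$ is bounded (the paper does the absorption after raising to the power $2-\alpha$ via convexity, whereas you take $(2-\alpha)$-th roots first — an immaterial difference). The only small caveat is that your absorption step $E(R)\leq E(R)^{1/(2-\alpha)}$ literally needs $E(R)\leq 1$; a uniform upper bound on $E(R)$ (which is all the paper uses) suffices after adjusting the constant.
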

  
  \begin{proof}
  Using \eqr{e:intde}, then  (1) and then (5)  gives
\begin{align}	 	\label{e:mydeal1}
	  &   \frac{1}{2} \, \int_{R}^{\infty}  r^{n-3} \,
	\int_{ r \leq b}  b^{2-2n}\,     \left| \Hess_{b^2} - \frac{\Delta b^2}{n} \, g
	   \right|^2
	 \, dr      =   A(R) - A_{\infty}   =   A(R) -   \cF (  b_{\infty}^{-2} \, g_0 , b_{\infty})   \notag \\
	 &\qquad  \leq
	  \cF (  R^{-2} g_R , |\nabla b|) - \cF ( b_{\infty}^{-2} \, g_0 , b_{\infty}) +  C \, \int_{\frac{R}{2} \leq b \leq \frac{3\,R}{2}}  b^{-n}\,  \left| \Hess_{b^2} - \frac{\Delta b^2}{n} \, g
	   \right|^2    \, .
\end{align}
On the other hand, 
(3) and (4) give that
\begin{align}	\label{e:3and4}
	\left|  \cF ( R^{-2} g_R , |\nabla b|) - \cF ( b_{\infty}^{-2} \, g_0 , b_{\infty}) \right|^{2-\alpha} &\leq 
	\left| \nabla_1 \cF ( R^{-2} g_R ,|\nabla b|) \right|^2  \notag 
	  \\
	& \leq   C \, \int_{\frac{R}{2} \leq b \leq \frac{3\,R}{2}}  b^{-n}\,  \left| \Hess_{b^2} - \frac{\Delta b^2}{n} \, g
	   \right|^2  
\end{align}
Raising \eqr{e:mydeal1} to the power $2-\alpha$, using the convexity of $t \to t^p$ for $p \geq 1$ so that
\begin{align}
	(a + b)^{p} \leq 2^{p-1} \, (a^{p} + b^{p}) {\text{ for $a,b \geq 0$  and $p \geq 1$ }}
\end{align}
with $p= 2-\alpha$,  and then using \eqr{e:3and4} gives
\begin{align}	 	\label{e:mydealq}
	  \left(   \int_{R}^{\infty}  r^{n-3} \,
	\int_{ r \leq b}  b^{2-2n}\,     \left| \Hess_{b^2} - \frac{\Delta b^2}{n} \, g
	   \right|^2
	 \, dr  \right)^{2-\alpha}   &\leq   C \, \int_{\frac{R}{2} \leq b \leq \frac{3\,R}{2}}  b^{-n}\,  \left| \Hess_{b^2} - \frac{\Delta b^2}{n} \, g
	   \right|^2   \notag \\
	   &+ C\, \left(     \int_{\frac{R}{2} \leq b \leq \frac{3\,R}{2}}  b^{-n}\,  \left| \Hess_{b^2} - \frac{\Delta b^2}{n} \, g
	   \right|^2   
	   \right)^{2-\alpha} \, .
\end{align}
 Since $2-\alpha > 1$ and we always work on annuli where $ \int_{\frac{R}{2} \leq b \leq \frac{3\,R}{2}}  b^{-n}\,  \left| \Hess_{b^2} - \frac{\Delta b^2}{n} \, g
	   \right|^2   $ is bounded, we conclude that
\begin{align}	 	%\label{e:mydealq}
	  \left(   \int_{R}^{\infty}  r^{n-3} \,
	\int_{ r \leq b}  b^{2-2n}\,     \left| \Hess_{b^2} - \frac{\Delta b^2}{n} \, g
	   \right|^2
	 \, dr  \right)^{2-\alpha}   &\leq   C \, \int_{\frac{R}{2} \leq b \leq \frac{3\,R}{2}}  b^{-n}\,  \left| \Hess_{b^2} - \frac{\Delta b^2}{n} \, g
	   \right|^2  \, .
\end{align}
\end{proof}

 \begin{Lem}	\label{l:sums}
Given $R> 0$, we have
  \begin{align}
 	 \int_{R}^{\infty}  r^{n-3} \,
	\int_{ r \leq b}  b^{2-2n}\,    \left| \Hess_{b^2} - \frac{\Delta b^2}{n} \, g
	   \right|^2 \, dr &\geq   4^{2-n} \, Q(2\,R) \, .
 \end{align}
 \end{Lem}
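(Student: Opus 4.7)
\textbf{Proof Proposal for Lemma \ref{l:sums}.}

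The plan is to swap the order of integration by Fubini, restrict the outer variable to the region where the desired quantity $Q(2R)$ lives, and then evaluate the resulting inner $r$--integral with a cheap one-sided estimate. Write $H := \bigl| \Hess_{b^2} - \tfrac{\Delta b^2}{n} g \bigr|^2$ for brevity. By Tonelli's theorem,
\begin{align}
\int_R^{\infty} r^{n-3} \int_{r \leq b} b^{2-2n}\, H \, dr
= \int_{b \geq R} \left( \int_R^b r^{n-3}\, dr \right) b^{2-2n}\, H.
\end{align}

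Next I would discard the part with $R \leq b \leq 2R$, which is nonnegative, and for the remaining region $\{b \geq 2R\}$ I would shrink the inner interval from $[R,b]$ down to $[b/2,b]$ (valid because $b/2 \geq R$ there). Since $n \geq 3$, the integrand $r^{n-3}$ is nondecreasing in $r$, so for $r \in [b/2,b]$ one has $r^{n-3} \geq (b/2)^{n-3}$, giving
\begin{align}
\int_{b/2}^b r^{n-3}\, dr \,\geq\, (b/2)^{n-3}\cdot (b/2) \,=\, 2^{2-n}\, b^{n-2}.
\end{align}
Substituting this into the Fubini identity, the factor $b^{n-2}\cdot b^{2-2n}$ collapses to $b^{-n}$, yielding
\begin{align}
\int_R^{\infty} r^{n-3} \int_{r \leq b} b^{2-2n}\, H \, dr
\,\geq\, 2^{2-n} \int_{b \geq 2R} b^{-n}\, H
\,=\, 2^{2-n}\, Q(2R).
\end{align}

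Finally, since $n \geq 3$ and $4 \geq 2$, one has $2^{2-n} \geq 4^{2-n}$, which gives the stated bound. I do not anticipate a real obstacle here: the lemma is a pure change-of-variables estimate, and the only point requiring any care is choosing the sub-interval $[b/2,b]$ (rather than the full $[R,b]$) so that the resulting weight matches $b^{-n}$ exactly and the constant comes out cleanly; attempting instead to keep the outer variable $r$ small (e.g.\ $r \in [R,2R]$) while shifting $b$ leads to the wrong weight balance between $R^{n-2}$ and $b^{2-2n}$ and does not reproduce a clean multiple of $Q(2R)$.
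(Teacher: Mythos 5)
Your proof is correct. The paper reaches the same inequality by a dyadic decomposition rather than by Fubini: it splits the outer integral into the intervals $[2^jR,\,2^{j+1}R]$, bounds $r^{n-3}$ below by $(2^jR)^{n-3}$ there, shrinks the inner region $\{b\geq r\}$ to the annulus $\{2^{j+1}R \leq b \leq 2^{j+2}R\}$, and uses $\left(2^jR/b\right)^{n-2} \geq 4^{2-n}$ on that annulus; summing over $j$, the disjoint annuli reassemble $Q(2R)$. Your route instead swaps the order of integration and, for each fixed point with $b \geq 2R$, keeps only $r \in [b/2,\,b]$ — the same underlying observation (one needs $r$ comparable to $b$ on a set of $r$'s of length comparable to $b$), packaged continuously rather than dyadically. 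The Tonelli version is arguably cleaner: it avoids the bookkeeping of the dyadic sum, covers $\{b \geq 2R\}$ exactly, and even yields the slightly better constant $2^{2-n}$, which you then relax to the stated $4^{2-n}$; the paper's version never needs to interchange integrals explicitly, though for a nonnegative integrand Tonelli is harmless, as you note. Your closing remark about why keeping $r \in [R,2R]$ would fail is also accurate: there $R^{n-2}b^{2-2n} = b^{-n}(R/b)^{n-2}$ has no uniform lower bound as $b \to \infty$, so no clean multiple of $Q(2R)$ can come out.
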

 
 \begin{proof}
 Within this proof, set $f =  \left| \Hess_{b^2} - \frac{\Delta b^2}{n} \, g
	   \right|^2$ to simplify notation.  We have
 \begin{align}
 	 \int_{R}^{\infty}  r^{n-3} \,
	\int_{ r \leq b}  b^{2-2n}\,     f \, dr &= \sum_{j=0}^{\infty} \int_{2^j\,R}^{2^{j+1}\,R} \, 
	r^{n-3} \,
	\int_{ r \leq b}  b^{2-2n}\,     f 
	 \, dr \notag \\
	 &\geq \sum_{j=0}^{\infty} \int_{2^j\,R}^{2^{j+1}\,R} \, 
	\left( 2^j \right)^{n-3} \,
	\int_{ 2^{j+1}\, R \leq b \leq 2^{j+2}\, R} b^{2-2n}\,     f 
	 \, dr \\
	 &=  \sum_{j=0}^{\infty}  	  \left( 2^j\, R \right)^{n-2} \,
	\int_{ 2^{j+1}\, R \leq b \leq 2^{j+2}\, R}  b^{2-2n}    f 
	     \notag  \, .
 \end{align}
 On the interval $2^{j+1} \, R\leq b \leq 2^{j+2}\, R$, we have that
 \begin{align}
 	\left( 2^j \, R\right)^{n-2} \,
	   b^{2-2n}  =  b^{-n} \left( \frac{2^j\, R}{b} \right)^{n-2} \geq 4^{2-n} \, b^{-n} \, .
 \end{align}
 We conclude that
 \begin{align}
 	 \int_{R}^{\infty}  r^{n-3} \,
	\int_{ r \leq b}  b^{2-2n}\,     f \, dr &\geq  4^{2-n} \, 
	\sum_{j=0}^{\infty}  	 	\int_{ 2^{j+1}\, R \leq b \leq 2^{j+2}\, R}  b^{ - n}    f  = 4^{2-n} \, Q(2\, R) \, .
 \end{align}

 \end{proof}
 
Combining Lemmas \ref{l:fp} and  \ref{l:sums}     gives the inequality \eqr{e:o3}:

\begin{Cor}		\label{c:o3}
 If $r$ satisfies \eqr{e:conedel}, then 
 \begin{align}		\label{e:mydeal2}
	    Q(2\,r)^{2-\alpha}    \leq     C \, \left( Q(r/2) - Q(2\,r) \right) 
 \, .
\end{align}
\end{Cor}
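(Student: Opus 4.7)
The plan is to chain together Lemma \ref{l:fp} and Lemma \ref{l:sums} with the choice $R = r$, and then exploit the monotonicity of $Q$ to replace the resulting cutoff at $3r/2$ by one at $2r$ on the right-hand side.

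First I would set $R = r$ in Lemma \ref{l:sums} to obtain
\begin{align}
    \int_{r}^{\infty}  \rho^{n-3} \,
	\int_{ \rho \leq b}  b^{2-2n}\,    \left| \Hess_{b^2} - \frac{\Delta b^2}{n} \, g
	   \right|^2 \, d\rho &\geq   4^{2-n} \, Q(2\,r) \, ,
\end{align}
and then raise both sides to the power $2-\alpha > 0$. Combining this with Lemma \ref{l:fp}, applied at the same scale $R = r$ (which satisfies \eqr{e:conedel} by assumption), produces
\begin{align}
    \left( 4^{2-n} \, Q(2r) \right)^{2-\alpha} &\leq  C \, \int_{\frac{r}{2} \leq b \leq \frac{3r}{2}}  b^{-n}\,  \left| \Hess_{b^2} - \frac{\Delta b^2}{n} \, g
	   \right|^2 \, .
\end{align}
Absorbing the constant $4^{(2-n)(2-\alpha)}$ into $C$ and rewriting the integral on the right as a difference, namely
\begin{align}
    \int_{\frac{r}{2} \leq b \leq \frac{3r}{2}}  b^{-n}\,  \left| \Hess_{b^2} - \frac{\Delta b^2}{n} \, g
	   \right|^2  = Q(r/2) - Q(3r/2) \, ,
\end{align}
gives $Q(2r)^{2-\alpha} \leq C \, (Q(r/2) - Q(3r/2))$.

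To close the argument, I would use that $Q$ is manifestly monotone non-increasing in $r$, since the integrand is non-negative and the domain of integration shrinks with $r$. In particular, $Q(3r/2) \geq Q(2r)$, so
\begin{align}
    Q(r/2) - Q(3r/2) \leq Q(r/2) - Q(2r) \, ,
\end{align}
and \eqr{e:mydeal2} follows immediately. There is no substantive obstacle here; the corollary is a direct combination of the two preceding lemmas, with the only subtlety being the inner/outer radius bookkeeping and the use of monotonicity to bridge the gap between the natural cutoff at $3r/2$ produced by Lemma \ref{l:fp} and the cutoff at $2r$ required by the statement.
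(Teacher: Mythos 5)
Your proposal is correct and follows essentially the same route as the paper: combine Lemma \ref{l:fp} and Lemma \ref{l:sums} at the scale $R=r$ and absorb the factor $4^{(2-n)(2-\alpha)}$ into $C$. Your final step (writing the annular integral as $Q(r/2)-Q(3r/2)$ and then using monotonicity $Q(3r/2)\geq Q(2r)$) is just a trivially equivalent rephrasing of the paper's step of enlarging the region of integration from $\{\tfrac{r}{2}\leq b\leq \tfrac{3r}{2}\}$ to $\{\tfrac{r}{2}\leq b\leq 2r\}$ and identifying it with $Q(r/2)-Q(2r)$.
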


\begin{proof}
Combining Lemmas \ref{l:fp} and  \ref{l:sums}   gives
\begin{align}		 
	    Q(2\,r)^{2-\alpha} &  \leq   C \, \int_{\frac{r}{2} \leq b \leq 2r}  b^{-n}\,  \left| \Hess_{b^2} - \frac{\Delta b^2}{n} \, g
	   \right|^2 = C \, \left( Q(r/2) - Q(2\,r) \right) 
 \, .
\end{align}
\end{proof}
 
 The decay estimate for $Q(r)$, i.e., Proposition \ref{p:decay}, will follow easily 
  from Corollary \ref{c:o3} and 
 the following elementary algebraic fact:
 
  \begin{Lem}	\label{l:alg}
  If $0 < a <  b \leq 1$,  $\alpha \in (0,1)$, and   
  $a^{2-\alpha} \leq C' \, (b-a)$, then 
  \begin{align}
  	a^{\alpha - 1} - b^{\alpha -1} \geq C \, ,
  \end{align}
  where $C$ depends on $\alpha$ and $C'$.
  \end{Lem}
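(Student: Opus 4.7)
The plan is to prove the lemma by a short case split based on the ratio $b/a$, exploiting the fact that $a \leq b \leq 1$ together with $\alpha - 1 < 0$ gives $a^{\alpha - 1} \geq 1$ for free.

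In the regime $b \geq 2 a$, the bound is essentially immediate: monotonicity of $t \mapsto t^{\alpha - 1}$ gives
\begin{align}
  a^{\alpha - 1} - b^{\alpha - 1} \geq a^{\alpha - 1}\left( 1 - 2^{\alpha - 1} \right) \geq 1 - 2^{\alpha - 1}.
\end{align}
This case does not use the hypothesis $a^{2-\alpha} \leq C'(b - a)$ at all, and yields a constant depending only on $\alpha$.

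In the complementary regime $a < b < 2a$, I would apply the mean value theorem to $h(t) = t^{\alpha - 1}$ on $[a, b]$ to produce some $\xi \in (a,b)$ with $a^{\alpha - 1} - b^{\alpha - 1} = (1-\alpha)\, \xi^{\alpha - 2}(b-a)$. Since $\xi \leq b \leq 2a$ and $\alpha - 2 < 0$, we have $\xi^{\alpha - 2} \geq 2^{\alpha - 2}\, a^{\alpha - 2}$. Inserting the hypothesis $b - a \geq a^{2-\alpha}/C'$ makes the powers of $a$ cancel exactly, yielding the constant $(1-\alpha)\, 2^{\alpha - 2}/C'$. Taking $C$ to be the minimum of the two constants from the two cases closes the argument.

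The subtlety, and essentially the only point where one has to think, is the choice of threshold for the case split. A global use of the mean value theorem only gives $\xi^{\alpha - 2} \geq b^{\alpha - 2}$, and then the hypothesis produces a bound proportional to $(a/b)^{2-\alpha}/C'$, which degenerates as $a/b \to 0$. The split is designed so that the hypothesis is invoked only where $\xi$ is comparable to $a$, making the $a$-powers balance exactly; in the other regime, the trivial bound $a^{\alpha - 1} \geq 1$ already suffices.
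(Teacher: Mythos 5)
Your proof is correct, and it is in fact more careful than the paper's own argument. The paper proves the lemma in one line: it writes $a^{\alpha-1} - b^{\alpha-1} = (1-\alpha)\int_a^b t^{\alpha-2}\,dt$, estimates the integral by $(b-a)\,\max\{t^{\alpha-2} : t \in (a,b)\} = (b-a)\,a^{\alpha-2}$, and then invokes the hypothesis to conclude the bound $(1-\alpha)/C'$. Read literally, that chain only bounds the quantity from above (the estimate by the maximum of the integrand goes the wrong way for a lower bound), and indeed the specific constant $(1-\alpha)/C'$ claimed there can fail: with $\alpha = 1/2$, $a = 1/4$, $b = 1$, $C' = 1/6$ the hypothesis holds with equality, yet $a^{\alpha-1} - b^{\alpha-1} = 1 < 3 = (1-\alpha)/C'$. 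What makes the lemma true is exactly the point you isolate: the hypothesis should only be invoked where the intermediate point is comparable to $a$, i.e.\ when $b < 2a$, while for $b \geq 2a$ one uses $a^{\alpha-1} \geq 1$ (from $a \leq 1$, $\alpha - 1 < 0$) and needs no hypothesis at all. Your case split at $b = 2a$ — equivalently, truncating the integral to $[a, \min(b, 2a)]$ and using the minimum of the integrand there — gives the valid constant $C = \min\{1 - 2^{\alpha-1},\ (1-\alpha)\,2^{\alpha-2}/C'\}$, which depends only on $\alpha$ and $C'$ and is all that the applications of the lemma (the iteration in Proposition \ref{p:decay}) require.
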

  
  \begin{proof}
  Since $\alpha < 1$ and $0 < a < b \leq 1$, the fundamental theorem of calculus gives
  \begin{align}
  	a^{\alpha - 1} - b^{\alpha -1} &=  (1-\alpha) \, \int_a^b t^{\alpha -2} \, dt \leq (1-\alpha)
	\, (b-a) \, \max\,  \left\{ t^{\alpha -2} \, | \, t \in (a,b) \right\} \notag \\
	&=  (1-\alpha)
	\, (b-a) \, a^{\alpha - 2} \geq \frac{(1-\alpha)}{C'} \, , 
  \end{align}
  where the last inequality used the hypothesis that $a^{2-\alpha} \leq C' \, (b-a)$. 
  \end{proof}

  \begin{proof}[Proof of 
  Proposition \ref{p:decay}]
  Given $j$ so that $r=2\, (4^j)$ satisfies \eqr{e:conedel}, then 
   \eqr{e:mydeal2} gives 
    \begin{align}		%\label{e:mydeal2}
	    Q(4^{j+1})^{2-\alpha}    \leq     C' \, \left( Q(4^{j}) - Q(4^{j+1}) \right) 
 \, ,
\end{align}
where $C'$ is independent of $j$.
Applying  Lemma \ref{l:alg} with $a = Q(4^{j+1})$ and $b= Q(4^{j})$ gives
    \begin{align}
  	Q(4^{j+1})^{\alpha - 1} - Q(4^{j})^{\alpha -1} \geq C \, .
  \end{align}
  Therefore, if  $r=2\, (4^j)$ satisfies \eqr{e:conedel} for $j_1 \leq j \leq j_2$, then iterating this gives
    \begin{align}	\label{e:fu0}
  	Q(4^{j_2 +1})^{\alpha - 1} \geq Q(4^{j_1 +1})^{\alpha -1}  + C\, (j_2 - j_1)  %\geq C\, (j_2 - j_1)
	 \, .
  \end{align}
  If we set $\beta = \frac{1}{1-\alpha} - 1$, then $\beta > 0$ and \eqr{e:fu0} 
  gives
    \begin{align}	%\label{e:fu0}
  	Q(4^{j_2 +1})  \leq   C \, (j_2 - j_1)^{\frac{1}{\alpha -1}} =  C \, (j_2 - j_1)^{-\beta -1}
	 \, .
  \end{align}
Using the monotonicity of $Q$, we conclude that if every $R \in (r , s)$ satisfies \eqr{e:conedel}, then
     \begin{align}	 %\label{e:fu}
  	 Q(s) \leq \frac{C}{|\log (s/r)|^{\beta +1}}  \, ,
  \end{align}
  completing the proof.

  \end{proof}

 \subsection{Distance to cones}
 
Let the point $y \in M$ be the pole for the Green's function.  Following definition $4.2$ in \cite{C2}, define the quantity $\Theta_r$ to be the scale invariant Gromov-Hausdorff distance from the annulus 
\begin{align}	\label{e:dbinf}
	B_{\frac{4r}{b_{\infty} }}(y) \setminus 
B_{ \frac{r}{b_{\infty} } }(x) \subset M
\end{align}
 to the corresponding annulus centered at the vertex in the closest metric cone.  
Here, we have divided by $b_{\infty}$ since  the function $b$ is not asymptotic to the distance function $r$, but rather to $b_{\infty} \, r$.  
 Thus, if $\Theta_r < \epsilon$, then there is a cone $C_r$ so that
\begin{align}	\label{e:scaleinvd}  %here
	d_{GH} \, \left( B_{    \frac{4r}{b_{\infty}}     }(y) \setminus 
B_{ \frac{r}{b_{\infty} } } (x) \subset M , B_{ \frac{4r}{b_{\infty} } } \setminus 
B_{ \frac{r}{b_{\infty} }  } \subset C_r \right) < \epsilon \,   \frac{r}{b_{\infty} }   \, ,
\end{align}
where the balls in $C_r$ are centered at the vertex of the cone $C_r$.

We need the following fact which follows from \cite{ChC1}:  Given $\mu > 0$, there exists $C_{\mu}$ so that
\begin{align}	\label{e:chc1}
	\Theta_{r}^{2+\mu} \leq C_{\mu} \, \left[ Q(r/2) - Q(8r) \right] \, .
\end{align}
(In the current case, where we already know that we are close to a fixed Ricci-flat cone with smooth cross-section, this can also be proven directly using the estimates from Section \ref{s:4and5}.)

\vskip2mm
The last properties of $\Theta_r$ that we will need  are the following criteria for uniqueness (cf. Theorem $4.6$ in \cite{C2}) and an effective version of it that follows afterwards:

\begin{Lem}	\label{l:uni}
If $\sum_{j=1}^{\infty} \, \Theta_{2^j} < \infty$, then $M$ has a unique tangent cone at infinity.
\end{Lem}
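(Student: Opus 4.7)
The plan is to use summability of $\Theta_{2^j}$ to produce a single limit cone $C_\infty$ such that any blow-down sequence $(M,r_i^{-2}g)$ with $r_i\to\infty$ Gromov--Hausdorff converges to $C_\infty$. The approach is to select approximating cones $C_j$ at the dyadic scales $r=2^j$, exploit the overlap of consecutive annuli in the definition of $\Theta_r$ to show that the cross-sections $\{\Sigma_j\}$ of the $C_j$ are Cauchy under Gromov--Hausdorff distance, and finally compare any blow-down to this distinguished sequence.

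For each $j$, choose a metric cone $C_j=C(\Sigma_j)$ with vertex $v_j$ that realizes the approximation in \eqr{e:scaleinvd} up to a factor of $2$. The annuli
\begin{align}
A_j=B_{4\cdot 2^j/b_\infty}(y)\setminus B_{2^j/b_\infty}(x),\qquad A_{j+1}=B_{8\cdot 2^j/b_\infty}(y)\setminus B_{2\cdot 2^j/b_\infty}(x)
\end{align}
overlap on the sub-annulus $B_{4\cdot 2^j/b_\infty}(y)\setminus B_{2\cdot 2^j/b_\infty}(x)$. Applying the triangle inequality to the two Gromov--Hausdorff approximations of this overlap by $C_j$ and $C_{j+1}$ gives
\begin{align}
d_{GH}\bigl(B_{4\cdot 2^j/b_\infty}(v_j)\setminus B_{2\cdot 2^j/b_\infty}(v_j),\, B_{4\cdot 2^j/b_\infty}(v_{j+1})\setminus B_{2\cdot 2^j/b_\infty}(v_{j+1})\bigr)\leq C\cdot\frac{2^j}{b_\infty}\bigl(\Theta_{2^j}+\Theta_{2^{j+1}}\bigr).
\end{align}
Because each cone has the warped-product form $dr^2+r^2 g_{\Sigma_j}$ on this annulus, rescaling by $b_\infty/2^j$ transfers this estimate to the cross-sections:
\begin{align}
d_{GH}(\Sigma_j,\Sigma_{j+1})\leq C\bigl(\Theta_{2^j}+\Theta_{2^{j+1}}\bigr).
\end{align}

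The hypothesis $\sum_j\Theta_{2^j}<\infty$ now makes $\{\Sigma_j\}$ a Cauchy sequence in the Gromov--Hausdorff topology on compact length spaces of diameter $\leq\pi$; by Gromov's precompactness it converges to a unique limit $\Sigma_\infty$, and we set $C_\infty=C(\Sigma_\infty)$. For any sequence $r_i\to\infty$, pick $j_i$ with $2^{j_i}\leq r_i\,b_\infty<2^{j_i+1}$: the rescaled annulus around $y$ in $(M,r_i^{-2}g)$ is GH-close (on each bounded set) to the corresponding annulus in $C_{j_i}$ up to a bounded rescaling, while $C_{j_i}\to C_\infty$ and the tail $\Theta_{r_i}\to 0$ as $j_i\to\infty$. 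Hence every subsequential Gromov--Hausdorff limit of $(M,r_i^{-2}g,y)$ is isometric to $C_\infty$, establishing uniqueness. The only nonroutine point is the passage from annular closeness of $C_j$ and $C_{j+1}$ to closeness of their cross-sections; this is the main (and relatively mild) obstacle, and it relies precisely on the rigid conical structure of both approximating spaces, which lets a GH $\epsilon$-approximation of $[1,2]\times_r\Sigma$ be projected down to a GH $\epsilon$-approximation of the base $\Sigma$.
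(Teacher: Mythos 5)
Your overall skeleton (dyadic scales, triangle inequality on overlapping annuli, summability of $\Theta_{2^j}$ giving a Cauchy sequence and hence a unique limit cone) is the same as the paper's, but you route the argument through the cross-sections $\Sigma_j$ of the comparison cones, and the step you describe as the "only nonroutine point" is a genuine gap — and it is precisely the crux of your variant. A Gromov--Hausdorff $\epsilon$-approximation between the annuli of $C_j$ and $C_{j+1}$ does not come respecting the warped-product structure: to "project to the base" you must first show that the approximation almost preserves the radial coordinate (which itself has to be recovered metrically from the annulus), and then convert chord distances at a fixed radius into cross-section distances. The law-of-cosines relation $d_{C}\bigl((1,x),(1,y)\bigr)^2=2-2\cos d_{\Sigma}(x,y)$ has an inverse that is only H\"older near $d_\Sigma=\pi$, so the naive conversion yields only $d_{GH}(\Sigma_j,\Sigma_{j+1})\leq C\,(\Theta_{2^j}+\Theta_{2^{j+1}})^{1/2}$; since summability of $\Theta_{2^j}$ does not give summability (or even vanishing tails) of the square roots, your Cauchy argument would then collapse. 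The linear bound you assert is in fact true, but proving it requires using that the cross-sections are length spaces (chaining short segments, on which chord and arc distances are uniformly comparable) on top of the radius-recovery step — a lemma whose proof is comparable in length to the entire argument, not a routine projection.

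The paper's proof sidesteps all of this: because the annulus entering $\Theta_{2^j}$, namely $B_{4\cdot 2^j}\setminus B_{2^j}$ (up to the factor $b_\infty$), contains both dyadic annuli $A_j$ and $A_{j+1}$ of $M$, the two rescaled annuli $\bar{A}_j$ and $\bar{A}_{j+1}$ are compared to annuli in the \emph{same} cone $C_j$, and the triangle inequality gives directly $d_{GH}(\bar{A}_j,\bar{A}_{j+1})\leq 3\,\Theta_{2^j}$. Thus the rescaled annuli of $M$ themselves form the Cauchy sequence, and since every tangent cone at infinity is a metric cone by \cite{ChC1} and two cones agreeing on an annulus coincide, uniqueness follows with no comparison of cross-sections and no radius recovery. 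Note also that your closing step leans on the same fact implicitly: closeness of a single rescaled annulus of $(M,r_i^{-2}g)$ to $C_{j_i}$ only pins down the limit on that one annulus, and you need the cone structure of the limit (or closeness at all larger dyadic scales, using the tail of the series) to conclude that the whole tangent cone is $C_\infty$; this should be stated explicitly rather than folded into "GH-close on each bounded set".
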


  \begin{proof}
  To keep notation simple within this proof, we will argue as if $b_{\infty} =1$.
  For each $j$, we get a cone
  $C_j$ so that
  \begin{align}	\label{e:GH0}
  	d_{GH} \, (B_{4\, 2^j} (x) \setminus B_{2^j}(x) \subset M , B_{4 \, 2^j} \setminus B_{2^j}   \subset C_j) \leq 2  \, \Theta_{2^j} 
	\, 2^j \, .
\end{align}
Let $A_j$ denote the annulus $B_{  2^{j+1}} (x) \setminus B_{2^j}(x) \subset M$ and define the rescaled annuli $\bar{A}_j$ by
\begin{align}
	\bar{A}_j = 2^{-j} \, A_j \, .
\end{align}
Since two cones that agree on an annulus   must be   equal, it suffices to prove that the sequence
$\bar{A}_j$ is Cauchy with respect to Gromov-Hausdorff distance.  This will follow from  the triangle inequality once we  show that the sequence $d_{GH}(\bar{A}_j , \bar{A}_{j+1})$ is summable. 

The bound \eqr{e:GH0}    implies that
  \begin{align}	\label{e:GH}
  	d_{GH} \, (\bar{A}_j , B_{2} \setminus B_{1}   \subset C_j)  
	&= 2^{-j} \,  d_{GH} \, (A_j , B_{2^{j+1}} \setminus B_{2^j}   \subset C_j) \leq 2  \, \Theta_{2^j}  \, , \\
	d_{GH} \, (\bar{A}_{j+1} , B_{2} \setminus B_{1}   \subset C_j)  
	&= 2^{-j-1} \,  d_{GH} \, (A_{j+1} , B_{2^{j+1}} \setminus B_{2^j}   \subset C_j) \leq  \Theta_{2^j}
	 \, .
\end{align}
Combining these bounds with the triangle inequality gives 
\begin{align}
	d_{GH} (\bar{A}_j , \bar{A}_{j+1}) \leq d_{GH} \, (\bar{A}_j , B_{2} \setminus B_{1}   \subset C_j) +
	d_{GH} \, (\bar{A}_{j+1} , B_{2} \setminus B_{1}   \subset C_j) \leq 3 \, \Theta_{2^j} \, .
\end{align}
It follows that the sequence $d_{GH}(\bar{A}_j , \bar{A}_{j+1})$ is summable, completing the proof.
  \end{proof}
  
  We will also use the following effective version of Lemma \ref{l:uni}:
  
  \begin{Lem}	\label{l:uni2}
  Fix $R> 0$.
Let $A_j$ denote the annulus $B_{  2^{j+1}\, R} (x) \setminus B_{2^j \, R}(x) \subset M$ and define the rescaled annuli $\bar{A}_j$ by
\begin{align}
	\bar{A}_j = \frac{1}{2^{j}\, R}  \, A_j \, .
\end{align}
Given integers $j_1 < j_2$, then
\begin{align}
	\sup \, \, \left\{ d_{GH} (\bar{A}_i  , \bar{A}_j) \, | \, j_1 \leq i,j \leq j_2 \right\} \leq
	3\, \sum_{j=j_1}^{j_2} \, \Theta_{2^j \, R \, b_{\infty}}   \, .
\end{align}
\end{Lem}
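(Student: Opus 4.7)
The plan is to mimic the telescoping argument from Lemma \ref{l:uni}, but track the $b_\infty$ factor carefully and restrict all estimates to the finite window $[j_1,j_2]$. The key observation is that a single $\Theta_{2^jRb_\infty}$ controls \emph{two} consecutive rescaled annuli, because the annulus appearing in definition \eqref{e:dbinf} for this choice of radius is $B_{4\cdot 2^j R}(y)\setminus B_{2^j R}(x)$, which covers both $A_j$ and $A_{j+1}$.

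First, for each $j_1 \leq j \leq j_2 - 1$ I would use the definition of $\Theta_r$ at $r = 2^{j}R\,b_\infty$ and \eqref{e:scaleinvd} to produce a metric cone $C_j$ with vertex $v_j$ satisfying
\begin{align}
d_{GH}\bigl(B_{4\cdot 2^jR}(y)\setminus B_{2^jR}(x)\,,\ B_{4\cdot 2^jR}(v_j)\setminus B_{2^jR}(v_j)\bigr) \leq 2\,\Theta_{2^jRb_\infty}\,(2^jR).
\end{align}
Restricting to the two sub-annuli $A_j$ (radii $[2^jR,2^{j+1}R]$) and $A_{j+1}$ (radii $[2^{j+1}R,2^{j+2}R]$), rescaling by $2^{-j}R^{-1}$ for $\bar A_j$ and by $2^{-j-1}R^{-1}$ for $\bar A_{j+1}$, and comparing each with the appropriate piece of the cone $C_j$ (which is scale-invariant so the rescaled cone annulus is the same model $B_2\setminus B_1\subset C_j$ up to rescaling), I obtain exactly as in the proof of Lemma \ref{l:uni} the bounds
\begin{align}
d_{GH}(\bar A_j,\ B_{2}\setminus B_{1}\subset C_j) &\leq 2\,\Theta_{2^jRb_\infty}, \\
d_{GH}(\bar A_{j+1},\ B_{2}\setminus B_{1}\subset C_j) &\leq \Theta_{2^jRb_\infty}.
\end{align}
The triangle inequality then yields
\begin{align}
d_{GH}(\bar A_j,\bar A_{j+1}) \leq 3\,\Theta_{2^jRb_\infty}.
\end{align}

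Next, for any pair $j_1 \leq i \leq k \leq j_2$, a second application of the triangle inequality along the chain $\bar A_i,\bar A_{i+1},\dots,\bar A_k$ gives
\begin{align}
d_{GH}(\bar A_i,\bar A_k) \leq \sum_{j=i}^{k-1} d_{GH}(\bar A_j,\bar A_{j+1}) \leq 3\sum_{j=j_1}^{j_2}\Theta_{2^jRb_\infty}.
\end{align}
Taking the supremum over all such $i,k$ (the case $i=k$ is trivial, and swapping $i,k$ only changes signs) yields the claimed bound.

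The only genuinely delicate step is the first one, namely verifying that rescaling $A_j$ and $A_{j+1}$ by their \emph{own} scales each lands them inside the same cone $C_j$ up to $\Theta_{2^jRb_\infty}$ in the Gromov-Hausdorff sense; this is just bookkeeping with the $b_\infty$ factor built into definition \eqref{e:dbinf}, but is the one place where a careless choice of rescaling parameter would produce an extra factor. Everything else is a direct specialization of the argument already carried out in Lemma \ref{l:uni}, restricted from an infinite tail to the finite range $[j_1,j_2]$.
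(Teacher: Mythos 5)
Your proposal is correct and is essentially the paper's own argument: the paper proves Lemma \ref{l:uni2} simply by saying it follows as in Lemma \ref{l:uni}, and your write-up is exactly that telescoping argument (one cone $C_j$ controlling both $\bar A_j$ and $\bar A_{j+1}$, giving $d_{GH}(\bar A_j,\bar A_{j+1})\leq 3\,\Theta_{2^jRb_\infty}$, then chaining by the triangle inequality), with the $b_{\infty}$ bookkeeping from \eqr{e:dbinf} made explicit.
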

 
 \begin{proof}
 This follows as in the proof of Lemma \ref{l:uni}.
 \end{proof}
 \subsection{Uniqueness}

 Uniqueness will follow by combining Lemma \ref{l:uni2} with the following modification of Theorem $4.6$ in \cite{C2}.
 
 \begin{Pro}	\label{p:pbar}
 There exist   $\bar{C}$, $\bar{\beta} > 0$ so that
  if every $r \in (R , 2^m \, R)$ satisfies \eqr{e:conedel}, 
then 
\begin{align}
	\sum_{j=j_1}^{m} \, \Theta_{2^j \, R} \leq \bar{C} \, j_1^{-\bar{\beta}} \, .
\end{align}
 \end{Pro}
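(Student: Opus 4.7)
The plan is to combine \eqr{e:chc1}, namely $\Theta_r^{2+\mu} \leq C_\mu\,[Q(r/2)-Q(8r)]$, with the logarithmic decay $Q(s) \leq C\,|\log(s/r)|^{-(\beta+1)}$ from Proposition \ref{p:decay}, by means of a weighted H\"older inequality and a summation by parts. The heuristic is the one already given in the introduction: taking $\mu=0$ in \eqr{e:chc1} would yield only $\sum \Theta^2 < \infty$, which is not quite strong enough. Inserting a polynomial weight $j^\gamma$ into the sum and paying for it with the polynomial decay of $Q$ will convert the (possibly divergent) sum of $\Theta$'s into a tail that shrinks polynomially in $j_1$.

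First I would fix $\mu \in (0,\beta)$ (concretely $\mu = \beta/2$), set $p = 2+\mu$ and $q = p/(p-1) = (2+\mu)/(1+\mu)$, and pick $\gamma$ in the open interval $(1/q,\,(\beta+1)/p)$; this interval is nonempty precisely because $\mu < \beta$. H\"older's inequality then gives
\[
\sum_{j=j_1}^m \Theta_{2^j R} \leq \left(\sum_{j=j_1}^m \Theta_{2^j R}^{\,p}\, j^{p\gamma}\right)^{1/p} \left(\sum_{j=j_1}^m j^{-q\gamma}\right)^{1/q},
\]
and, since $q\gamma > 1$, the second factor is at most $C\,j_1^{1/q - \gamma}$.

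For the first factor, \eqr{e:chc1} yields
\[
\sum_{j=j_1}^m \Theta_{2^j R}^{\,p}\, j^{p\gamma} \leq C \sum_{j=j_1}^m j^{p\gamma}\,\bigl[Q(2^{j-1}R) - Q(2^{j+3}R)\bigr],
\]
and a one-line summation by parts (shift the index of the $Q(2^{j+3}R)$-sum by $4$ and use $k^{p\gamma} - (k-4)^{p\gamma} \leq C\,k^{p\gamma - 1}$) bounds the right-hand side by $C \sum_{k \geq j_1} k^{p\gamma - 1}\,Q(2^{k-1}R)$ plus boundary terms of the same order. Proposition \ref{p:decay}, whose hypothesis holds on $(R,2^mR)$, gives $Q(2^{k-1}R) \leq C\,(k-1)^{-(\beta+1)}$, so the sum is controlled by $C\sum_{k \geq j_1} k^{p\gamma - \beta - 2}$. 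Since $\gamma < (\beta+1)/p$, this exponent is less than $-1$ and the tail evaluates to $\leq C\,j_1^{p\gamma - \beta - 1}$. Taking the $1/p$-th power and multiplying by the bound on the second H\"older factor, the $\gamma$-terms in the exponents cancel and leave $C\,j_1^{1/q - (\beta+1)/p} = C\,j_1^{(\mu - \beta)/(2+\mu)}$. Thus I would take $\bar\beta = (\beta-\mu)/(2+\mu) > 0$.

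I do not foresee a real obstacle: all the analytic substance has already been distilled into Proposition \ref{p:decay} and the Cheeger--Colding estimate \eqr{e:chc1}. The one mildly delicate point is that \eqr{e:chc1} requires $\mu > 0$, so one loses a power relative to a naive Cauchy--Schwarz bound; this forces $\mu < \beta$ and thereby degrades $\bar\beta$ to roughly $\beta/4$ instead of the ``ideal'' $\beta$. Tracking the boundary terms in the Abel summation is routine but deserves some care because the weights $j^{p\gamma}$ are growing.
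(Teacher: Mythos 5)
Your proposal is correct and follows essentially the same route as the paper: H\"older's inequality with exponent $2+\mu$ and weights $j^{\gamma}$, the Cheeger--Colding estimate \eqr{e:chc1}, the decay from Proposition \ref{p:decay}, and an Abel summation that the paper packages as Lemma \ref{l:elser}, with the same admissible window $\frac{1+\mu}{2+\mu} < \gamma < \frac{1+\beta}{2+\mu}$ forcing $\mu < \beta$. The only cosmetic differences are that you harvest extra decay from the tail $\sum_{j\geq j_1} j^{-q\gamma}$ (the paper treats that factor as a constant and gets its power of $j_1$ solely from the weighted $Q$-sum) and that you perform the summation by parts inline rather than citing the lemma.
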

 
 \begin{proof}
 By scaling, we may assume that $R=1$.
 
 Given any $\mu > 0$, $\gamma > 0$, and $j_1 < j_2$,
H\"older's inequality for series  gives 
 \begin{align}
  	  \sum_{j=j_1}^{j_2} \, \Theta_{2^j}  \leq  \left( 
	\sum_{j=j_1}^{j_2} \, \Theta_{2^j}^{2+\mu }  
		j^{\gamma \, (2+\mu )} \right)^{ \frac{1}{2+\mu } }  \, \left(  \sum_{j=1}^{\infty} \,  
		\left(j^{-\gamma} \right)^{ \frac{2+\mu }{1+\mu }} \right)^{ \frac{1+\mu }{2+\mu }} \, .
\end{align}
 The series in the last term is  summable whenever we have
 \begin{align}	\label{e:c1}
 	\left( \frac{2+\mu }{1+\mu } \right) \, \gamma > 1 \, .
 \end{align}
   To bound the remaining term, we bring in \eqr{e:chc1} 
  	  to get 
  \begin{align}	\label{e:sectg}
  	 	\sum_{j=j_1}^{j_2} \, \Theta_{2^j}^{2+\mu}  
		j^{\gamma \, (2+\mu)}  \leq  C_{\mu} \, \sum_{j_1=1}^{\infty} \,\left[  Q (2^{j-1})
		- Q(2^{j+3}) \right]  \, 
		j^{\gamma \, (2+\mu )} \, .
\end{align}
By assumption,  every $r \in (1 , 2^{j_2})$ satisfies \eqr{e:conedel}, so
Proposition \ref{p:decay} gives for $j \leq j_2$ 
  \begin{align}
  	Q(2^j) \leq C \, j^{-1 - \beta} \, ,  %here
\end{align}
so
Lemma \ref{l:elser} below applies as long as
\begin{align}	\label{e:c2}
	\gamma \, (2+\mu )  <  1+ \beta  \, .
\end{align}
Since $\beta > 0$, we can choose $\mu > 0$ and $\gamma > 0$ so that both \eqr{e:c1}
and \eqr{e:c2} are satisfied.  
Therefore,  we get that \eqr{e:sectg} is bounded by
  \begin{align}	\label{e:sectg2}
  	 	\sum_{j=j_1}^{j_2} \, \Theta_{2^j}^{2+\mu}  
		j^{\gamma \, (2+\mu)}  \leq  C_{\mu} \sum_{j=j_1}^{\infty} \,\left[  Q (2^{j-1})
		- Q(2^{j+3}) \right]  \, 
		j^{\gamma \, (2+\mu )} \leq C \,  j_1^{\gamma \, (2+\mu )-1-\beta}
		 \, .
\end{align}
 \end{proof}
 
 The preceding proposition used the following elementary lemma for sequences:
 
 \begin{Lem}	\label{l:elser}
 Suppose that $\beta > 0$ and $\{ a_j\} $ is a monotone non-increasing sequence with
 \begin{align}	\label{e:aj}
 		0 \leq a_j \leq C \, j^{-1- \beta} \, .
\end{align}
For any positive integers $k$ and $m$ and constant $\nu \in [1, 1+\beta) $, then we have
\begin{align}
	\sum_{j=m}^{\infty} \, \left[ a_j - a_{j+k} \right] \, j^{\nu}
	\leq  C \, k \,   \frac{\beta + 1}{  \beta + 1 - \nu }  \, m^{\nu - 1 - \beta}
	 < \infty \, .
\end{align}
 \end{Lem}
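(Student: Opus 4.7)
The plan is to rewrite the sum by reindexing so that each $a_j$ appears with a coefficient that is easy to estimate, and then to use the assumed decay $a_j \leq C\,j^{-1-\beta}$ together with the constraint $\nu < 1 + \beta$ to ensure integrability against the resulting weight.

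First, I would split the telescoping sum by shifting the index in the second half. Writing
\begin{align}
	\sum_{j=m}^{\infty}[a_j - a_{j+k}]\,j^{\nu}
	= \sum_{j=m}^{\infty} a_j\,j^{\nu} - \sum_{j=m+k}^{\infty} a_j\,(j-k)^{\nu}
	= \sum_{j=m}^{m+k-1} a_j\,j^{\nu} + \sum_{j=m+k}^{\infty} a_j\bigl[j^{\nu} - (j-k)^{\nu}\bigr],
\end{align}
reduces matters to estimating the two pieces on the right. The $k$ "boundary" terms $a_j\,j^{\nu}$ for $j\in[m,m+k-1]$ are each bounded by $C\,j^{\nu-1-\beta}\leq C\,m^{\nu-1-\beta}$ using \eqref{e:aj} and the monotonicity of $j\mapsto j^{\nu-1-\beta}$ (since $\nu-1-\beta<0$), giving a contribution of at most $C\,k\,m^{\nu-1-\beta}$.

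For the remaining sum, I would apply the mean value theorem: since $\nu\geq 1$,
\begin{align}
	j^{\nu} - (j-k)^{\nu} \leq \nu\,k\,j^{\nu-1}
\end{align}
for $j\geq m+k$. Plugging in \eqref{e:aj} this bounds the tail by
\begin{align}
	\nu\,k\,\sum_{j=m+k}^{\infty} a_j\,j^{\nu-1} \leq C\,\nu\,k\,\sum_{j=m+k}^{\infty} j^{\nu-2-\beta}.
\end{align}
Since $\nu<1+\beta$ we have $\nu-2-\beta<-1$, and comparison with the integral $\int_{m+k-1}^{\infty} x^{\nu-2-\beta}\,dx$ bounds the series by $\frac{(m+k-1)^{\nu-1-\beta}}{\beta+1-\nu}\leq \frac{m^{\nu-1-\beta}}{\beta+1-\nu}$, again using that the exponent is negative.

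Combining the two pieces gives
\begin{align}
	\sum_{j=m}^{\infty}[a_j-a_{j+k}]\,j^{\nu} \leq C\,k\,m^{\nu-1-\beta}\left(1 + \frac{\nu}{\beta+1-\nu}\right) = C\,k\,\frac{\beta+1}{\beta+1-\nu}\,m^{\nu-1-\beta},
\end{align}
which is the claimed inequality. There is no real obstacle here; the only care needed is in tracking the sign of the exponent $\nu-1-\beta$ (which is negative, so passing from $m+k-1$ down to $m$ in the base is legitimate) and in ensuring the strict inequality $\nu<1+\beta$ is used to get a finite sum with the stated constant $\frac{\beta+1}{\beta+1-\nu}$.
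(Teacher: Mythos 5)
Your argument follows essentially the same route as the paper: isolate the $k$ boundary terms, bound the increment $j^{\nu}-(j-k)^{\nu}$ by $\nu\,k\,j^{\nu-1}$, and compare the resulting tail with $\int m^{\nu-2-\beta}$; the constants also match. The one step that does not survive scrutiny as written is the very first identity, where you split the series as
\begin{align}
	\sum_{j=m}^{\infty}[a_j - a_{j+k}]\,j^{\nu}
	= \sum_{j=m}^{\infty} a_j\,j^{\nu} - \sum_{j=m+k}^{\infty} a_j\,(j-k)^{\nu} \, .
\end{align}
Under the hypotheses you only know $a_j\,j^{\nu}\leq C\,j^{\nu-1-\beta}$ with $\nu-1-\beta\in[-\beta,0)$, and this exponent need not be below $-1$ (take, e.g., $\beta<1$ and $\nu=1$, with $a_j$ comparable to $j^{-1-\beta}$); then each of the two series on the right diverges and the manipulation is an $\infty-\infty$. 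The paper avoids exactly this by working with the partial sums $\sum_{j=m}^{N}$, where the reindexing is a finite, legitimate rearrangement producing the same two pieces you keep plus an extra term $-\sum_{j=N+1}^{N+k} a_j\,(j-k)^{\nu}$, which is nonpositive (indeed it tends to $0$ since $a_j\,j^{\nu}\leq C\,j^{\nu-1-\beta}\to 0$) and can be discarded before letting $N\to\infty$. So your final decomposition and all subsequent estimates are correct, but you should derive that decomposition via truncation at a finite $N$ rather than by subtracting two possibly divergent series; with that repair the proof is complete and coincides with the paper's.
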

 
 \begin{proof}
Given $N > m$, we have
 \begin{align}	\label{e:hwg}
	\sum_{j=m}^{N} \, \left[ a_j - a_{j+k} \right] \, j^{\nu} &=
	\sum_{j=m}^N a_j \, j^{\nu} - \sum_{j=m+k}^{N+k} a_j (j-k)^{\nu} \notag \\
		&= \sum_{j=m}^{m+k-1}a_j \, j^{\nu} - \sum_{j=N+1}^{N+k} a_j (j-k)^{\nu}
	+ \sum_{j=m+k}^N a_j \left( j^{\nu} - (j-k)^{\nu} \right)
	 \, .
\end{align}
Using   \eqr{e:aj} and noting that $j^{\nu- 1 - \beta}$ is decreasing in $j$, 
the first sum is bounded by
 \begin{align}	 
	 \sum_{j=m}^{m+k-1}a_j \, j^{\nu} \leq C \,  \sum_{j=m}^{m+k-1}  j^{\nu- 1 - \beta}
	 \leq C \, k \,  m^{\nu- 1 - \beta}
	 \, .
\end{align}
To prove the lemma, we have to handle the last sum in \eqr{e:hwg}.
Since $\nu \geq 1$, the fundamental theorem of calculus gives
\begin{align}
	j^{\nu} - (j-k)^{\nu} = \nu \,  \int_{j-k}^j   t^{\nu - 1} \, dt \leq k \, \nu \, j^{\nu - 1} \, .
\end{align}
Putting this in, then using \eqr{e:aj}, and then noting that $\nu - 2 - \beta < 0$  gives
\begin{align}
\sum_{j=m+k}^{N} a_j \left( j^{\nu} - (j-k)^{\nu} \right) &\leq k \, \nu \, \sum_{j=m+k}^{N} a_j \, j^{\nu-1}  
\leq C \, k \, \nu \,  \sum_{j=m+k}^{\infty}  j^{\nu-2-\beta}  \notag \\
&\leq  C \, k \, \nu \,  \int_{m}^{\infty}  t^{\nu-2-\beta} \, dt =  \frac{C \, k \, \nu \, m^{\nu - 1 - \beta}}{  \beta + 1 - \nu } 
\, ,
\end{align}
where we used that  $\nu - 2 - \beta < -1$.
 \end{proof}

\vskip2mm
We are now ready to prove uniqueness assuming that we have a functional $\cF$ that satisfies (1)--(5).  The rest of the paper will then be devoted to constructing $\cF$ and proving (1)--(5).

\begin{proof}[Proof of Theorem \ref{t:main} assuming (1)--(5)]
We   start by choosing constants $\delta > 0$, $j_1$ and $\epsilon > 0$:
\begin{itemize}
\item Fix  $\delta > 0$, so that (1)--(5) hold on any scale $r$ that satisfies \eqr{e:conedel}.

\item Proposition \ref{p:pbar}
 gives $\bar{C}$, $\bar{\beta} > 0$ so that
  if every $r \in (R , 2^m \, R)$ satisfies \eqr{e:conedel}, 
then 
\begin{align}
	\sum_{j=j_1}^{m} \, \Theta_{2^j \, R} \leq \bar{C} \, j_1^{-\bar{\beta}} \, .
\end{align}
Fix an integer $j_1 = j_1 (\bar{C} , \bar{\beta})$ so that  $\bar{C} \, j_1^{-\bar{\beta}}  < \delta / 100$.
\item Using \cite{ChC1}, fix $\epsilon > 0$ so that if $A (r/2) - A(8r) < \epsilon$, then $\Theta_r < \delta / 100$.
\end{itemize}
Suppose now that $R > 0$ and an integer $m \geq j_1$ satisfy:
\begin{enumerate}
\item[(A)] Every $r \in (R , 2^{j_1} \, R)$ satisfies  \eqr{e:conedel} with $\delta/100$ in place of $\delta$.
\item[(B)] $A(R/2) - A( 2^{m+3} \, R) < \epsilon$.

\end{enumerate}

Suppose that $k \in [j_1 , m-1]$ .
If    $r \in (    R  , 2^{k} \, R)$ satisfies
\eqr{e:conedel} with $\delta_k \leq \delta/2$ in place of $\delta$, then (B) and the triangle inequality give that
$r \in (    R  , 2^{k+1} \, R)$ satisfies
\eqr{e:conedel} with 
\begin{align}
	\delta_k + 3\delta/100 < \delta 
\end{align}
 in place of $\delta$.  In particular, we can apply 
Proposition \ref{p:pbar} on this stretch  to get that
\begin{align}	\label{e:pbar}
	  \sum_{j=j_1}^{m} \, \Theta_{2^j \, R} \leq \bar{C} \, j_1^{-\bar{\beta}} <    \delta / 100 \, .
\end{align}
Consequently, Lemma \ref{l:uni2} and the triangle inequality give that
$r \in (    R  , 2^{k+1} \, R)$ satisfies
\eqr{e:conedel} with $4\delta/100 < \delta $ in place of $\delta$.  Since this bound is independent of $k$, we conclude that it holds on the entire interval  $(R , 2^{m} \, R)$.

We can use this to prove both the global uniqueness theorem (Theorem \ref{t:main}) and the effective version.  To prove 
Theorem \ref{t:main}, use the monotonicity of $A$ to pick some large $R$ so that (B) holds for every $m$.  It follows that
\eqr{e:conedel}  holds on the entire interval  $(R ,\infty)$ and \eqr{e:pbar} gives for $\bar{j} \geq j_1$ that
\begin{align}	\label{e:cfhere}
	 \sum_{j=\bar{j}}^{\infty} \, \Theta_{2^j \, R} \leq \bar{C} \, \bar{j}^{-\bar{\beta}} < \infty \, .
\end{align}
This implies uniqueness by Lemma \ref{l:uni}; combining it with Lemma \ref{l:uni2} gives the rate of convergence.

\end{proof}

We will next describe the modifications   needed for the effective version of uniqueness.

\begin{proof}[Proof of Theorem \ref{t:effective}]
The first claim (E1) follows as in the proof of the uniqueness theorem, with (A) and (B) in the proof now given by the assumptions instead of by taking $R$ sufficiently large.
Furthermore, arguing as there (see \eqr{e:cfhere} and Lemma \ref{l:uni2}) gives an ``effective Cauchy bound'' for
$r_1 < r < s < r_2$:
\begin{align}
	d_{GH} \, \left( \frac{1}{r} \, \left( B_{2r} (x) \setminus B_r(x) \right) , \frac{1}{s} \, \left( B_{2s} (x) \setminus B_s(x) \right) \right) \leq C \, \left( \log \frac{r}{r_1} \right)^{ - \bar{\beta} } \, .
\end{align}
Thus, we get that the maximal scale-invariant 
distance between any of these annuli    decays as claimed. 
Finally,   \eqr{e:pbar}  gives that $\Theta_r $ also 
decays like a power of $\log \frac{r}{r_1} $ so these annuli are close to an annulus in a fixed cone.
\end{proof}

 \section{Functionals on the space of metrics and measures}	\label{s:functs}

In this section, we will define the functional $\cF$   and verify properties (1) and (2) of $\cF$.
Recall that    $g_0$ is a fixed Einstein metric on an $(n-1)$-dimensional manifold
$N$ with $\Ric_{g_0} = (n-2) \, g_0$,    $\cA$ is the set of $C^{2,\beta}$ metrics $g$ and positive $C^{2,\beta}$  functions $w$, 
and  $\cA_1 \subset \cA$ are the ones satisfying the weighted volume constraint 
\begin{align}
	\cA_1 = \left\{ (g,w) \in \cA \, | \, \int_N w \, d\mu_g = \Vol (\partial B_1 (0)) \right\} \, .
\end{align}
As we saw, $(b_{\infty}^{-2} \, g_0 , b_{\infty}) \in \cA_1$.
The tangent space $\ca$ to $\cA$ is given by the set of symmetric $2$-tensors $h$ and functions $v$, with $(h,v)$ being tangent to the path{\footnote{This normalization simplifies some later computations.}}
\begin{align}
	(g+t\, h , w \, \e^{ t \, v} ) \, .
\end{align}
  The linear space $\ca$  comes with a natural inner product
\begin{align}
	\langle (h_1 , v_1) , (h_2 , v_2) \rangle_{(g,w)} = \int_N 
	\left\{ \langle h_1 , h_2 \rangle_g + v_1 \, v_2 
	\right\} \, w \, d\mu_g
	\, .
\end{align}

\begin{Lem}	\label{l:obvious}
The variation $(h,v)$ is tangent to $\cA_1$ at $(g,w)$ if and only if
\begin{align}
	\int_N \left( \frac{1}{2} \, \Tr (h) +v \right) \, w \, d\mu_g = 0  \, .
\end{align}
\end{Lem}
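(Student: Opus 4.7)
The plan is to differentiate the defining constraint $\int_N w \, d\mu_g = \Vol(\partial B_1(0))$ along the path $(g+th,\, w\,\e^{tv})$ at $t=0$, and check that the vanishing of this derivative is exactly the stated integral identity. Since $\cA_1$ is the level set of the smooth functional $(g,w)\mapsto \int_N w\,d\mu_g$ on $\cA$, and this functional has nonzero differential everywhere (take $v\equiv 1$, $h\equiv 0$), the level set is a smooth codimension-one submanifold of $\cA$, so its tangent space at $(g,w)$ is precisely the kernel of that differential.

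First I would recall the classical variation formula for the Riemannian volume element,
\begin{equation}
 \frac{d}{dt}\bigg|_{t=0} d\mu_{g+th} \;=\; \tfrac{1}{2}\,\Tr_g(h)\, d\mu_g,
\end{equation}
which follows from expanding $\det(I+tA)=1+t\Tr(A)+O(t^2)$ applied to the matrix of $h$ with respect to a $g$-orthonormal frame. Second, the choice of multiplicative parametrization $w\,\e^{tv}$ makes the weight variation particularly clean:
\begin{equation}
 \frac{d}{dt}\bigg|_{t=0} w\,\e^{tv} \;=\; w\, v.
\end{equation}

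Combining these two formulas via the product rule gives
\begin{equation}
 \frac{d}{dt}\bigg|_{t=0}\int_N w\,\e^{tv}\, d\mu_{g+th}
 \;=\; \int_N \Bigl( w\,v + \tfrac{1}{2}\,w\,\Tr_g(h)\Bigr)\, d\mu_g
 \;=\; \int_N \Bigl(\tfrac{1}{2}\Tr(h)+v\Bigr)\, w\, d\mu_g.
\end{equation}
The path $(g+th, w\,\e^{tv})$ lies in $\cA_1$ to first order at $t=0$ if and only if this derivative vanishes, which is the claimed identity.

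There is really no main obstacle here beyond bookkeeping: both variation formulas are standard, the multiplicative parametrization of the weight is tailored to avoid $\log w$ factors, and the submanifold structure of $\cA_1$ is immediate from the nondegeneracy of the differential. The only thing worth being careful about is making sure the inner product on $\ca$ and the parametrization of the weight used here agree with the conventions introduced just above the lemma, so that the conclusion is stated in terms of $\tfrac{1}{2}\Tr(h)+v$ rather than in some other normalization.
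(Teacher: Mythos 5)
Your proposal is correct and follows essentially the same route as the paper: differentiate the weighted volume element along the path $(g+th, w\,\e^{tv})$, which yields $\left( (w\e^{tv}) \, d\mu_{g+th} \right)' = \left( \frac{1}{2} \, \Tr (h) + v \right) \, w \, d\mu_g$, and integrate. The extra remark about nondegeneracy of the differential of $A_1$ (so that the tangent space to the level set is the kernel of its differential) is a harmless and reasonable addition.
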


\begin{proof}
This follows immediately from integrating
\begin{align}	\label{e:wtd}
	\left( (w\e^{tv}) \, d\mu_{g+th} \right)' &= \left( \frac{1}{2} \, \Tr (h) + v \right) \, w \, d\mu_g \, .
\end{align}
\end{proof}

The functional $\cF$ will be a linear combination of 
 two natural functionals on $\cA$   given by
\begin{align}
	A (g,w) &= \int_N  w^3 \, d\mu_g \, , \\
	B(g,w) &= \int_N  R_g \, w \, d\mu_g \,   .
\end{align}
where $R_g$ is the scalar curvature of the metric $g$.  The coefficients of $A$ and $B$   will be chosen
so that $\cF$ satisfies (1) and (2).   
 
\vskip2mm
The next proposition computes the first derivatives of $A$ and $B$  at $(g,w)$.

\begin{Pro}		\label{p:fvar1}
Given one parameter families $g+th$ and $w\,\e^{tv}$, we get  
\begin{align}
	A' & = \int_N \left\{ w^2 \, \left( \frac{1}{2} \, \Tr (h) +v  \right)  + 2 w^2 \, v
	 \right\} \, w \, d\mu_g \, , \\
	 B' & = \int_N \left\{   - \langle \Ric_g , h \rangle + \langle h , \frac{ \Hess_w}{w} \rangle - \Tr (h)\frac{\Delta w}{w}  + 
	R_g \, \left( \frac{1}{2} \, \Tr (h) + v \right) \right\} \, w \, d\mu_g    	\, .
\end{align}
\end{Pro}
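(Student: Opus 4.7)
The approach is direct: differentiate each integrand at $t=0$ using the product rule, combining Lemma \ref{l:obvious} (specifically the identity \eqref{e:wtd} in its proof) for the variation of the weighted measure $w\,d\mu_g$ with the standard variation formulas for $w^3$ and for the scalar curvature $R_g$.

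For the first formula, I would write $w^3\, d\mu_g = w^2 \cdot (w\, d\mu_g)$. The variation of $w^2$ along $w \mapsto w\,e^{tv}$ is immediate: $\frac{d}{dt}\big|_{t=0}(w\,e^{tv})^2 = 2w^2\,v$. Combined with \eqref{e:wtd}, which gives $(w\,d\mu_{g+th})' = (\tfrac{1}{2}\Tr(h)+v)\,w\,d\mu_g$ at $t=0$, the product rule yields
\begin{equation*}
A' = \int_N \left\{ w^2 \left(\tfrac{1}{2}\Tr(h)+v\right) + 2w^2 v \right\} w\, d\mu_g,
\end{equation*}
which is the stated identity. This step is essentially a bookkeeping exercise.

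For $B$, I would view $B = \int_N R_g \cdot (w\, d\mu_g)$, so that
\begin{equation*}
B' = \int_N R_g'\, w\, d\mu_g + \int_N R_g \left(\tfrac{1}{2}\Tr(h)+v\right) w\, d\mu_g,
\end{equation*}
using \eqref{e:wtd} for the second term. For the first, I would invoke the standard formula
\begin{equation*}
R_g' = -\langle \Ric_g, h\rangle + \dv\,\dv\, h - \Delta\Tr(h).
\end{equation*}
The main work is to absorb the two divergence-type terms into expressions involving $w$: integrating by parts twice converts $\int_N (\dv\,\dv\, h)\, w\, d\mu_g$ into $\int_N \langle h, \Hess_w\rangle\, d\mu_g = \int_N \langle h, \Hess_w / w\rangle\, w\, d\mu_g$, and similarly $\int_N \Delta\Tr(h)\, w\, d\mu_g = \int_N \Tr(h)\, \Delta w\, d\mu_g = \int_N \Tr(h)(\Delta w/w)\, w\, d\mu_g$. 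Collecting the four resulting pieces and the $R_g$-term gives the claimed formula for $B'$.

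The only mildly delicate step is the integration by parts that produces the $\Hess_w/w$ and $\Delta w/w$ factors; one must be careful with signs (two integrations by parts for $\dv\,\dv\, h$ produce a plus sign) and with the fact that $N$ is closed so no boundary terms appear. Everything else is an application of well-known variational identities, so I do not expect any genuine obstacle beyond keeping track of the weights and signs.
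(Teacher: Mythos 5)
Your proposal is correct and follows essentially the same route as the paper: differentiate using the weighted-measure formula \eqr{e:wtd}, invoke the standard first variation of scalar curvature (Lemma \ref{l:topping}), and integrate by parts twice on the closed manifold $N$ to move the derivatives off $h$, producing the $\langle h , \Hess_w \rangle$ and $\Tr(h)\,\Delta w$ terms with the same signs. No gaps.
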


\begin{proof}
Since $\left[ \left(w\,\e^{tv} \right)^2 \right]' = 2 \, w^2 \, v$, the first claim follows from the formula \eqr{e:wtd}
for the derivative of the weighted volume form.
 Using Lemma \ref{l:topping} and \eqr{e:wtd}, the variation of $B$ is  
\begin{align}
	B' &= \int_N \left\{ R_g' + R_g \, \left( \frac{1}{2} \, \Tr (h) + v \right) \right\} \, w \, d\mu_g \notag  \\
	&= \int_N \left\{ \left( - \langle \Ric_g , h \rangle + \delta^2 \, h - \Delta \Tr (h) \right) + 
	R_g \, \left( \frac{1}{2} \, \Tr (h) + v \right) \right\} \, w \, d\mu_g \, .
\end{align}
This almost gives what we want, except that two of the terms have derivatives applied to $h$.  We will integrate by parts to take these off.
Namely, Stokes' theorem gives that
\begin{align}
	\int_N w \, \Delta \, \Tr (h)   \, d\mu_g &= \int_N \Tr (h) \, \Delta w \, d\mu_g \, , \\
	\int_N w \, \delta^2 \, h \, d\mu_g &= - \int_N \langle \nabla w , \delta \, h \rangle \, d\mu_g = \int_N \langle h , \Hess_w \rangle \, d\mu_g \, .
\end{align}
\end{proof}

The next corollary uses the first variation formulas to choose a linear combination $\cF$ of $A$ and $B$   so that $\cF (b_{\infty}^{-2} \, g_0 , b_{\infty}) = A_{\infty}$ and  $(g_0 , b_{\infty})$ is a critical point, i.e., (1) and (2) hold.

\begin{Cor}	\label{c:wtd}
Given $b_{\infty} > 0$, 
the pair $(b_{\infty}^{-2} \, g_0,b_{\infty})$ is a critical point for the functional 
\begin{align}
	\cF \equiv \frac{1}{2-n} \, \left( A - \frac{B }{(n-2)}    \right)
\end{align}
 restricted to the subset $\cA_1$ and, moreover, $\cF (b_{\infty}^{-2} \, g_0 , b_{\infty}) = A_{\infty}$.
\end{Cor}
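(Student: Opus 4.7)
The plan is to do both assertions by direct calculation, plugging the pair $(b_\infty^{-2}g_0, b_\infty)$ into the definition of $\cF$ for the value and into the first variation formulas of Proposition \ref{p:fvar1} for criticality. The simplifying inputs are that $w=b_\infty$ is a positive constant (so $\Hess_w=0$ and $\Delta w =0$), that $g_0$ is Einstein with $\Ric_{g_0}=(n-2)g_0$ and hence $R_{g_0}=(n-1)(n-2)$, and that scalar curvature scales by $R_{c g}=c^{-1}R_g$ while Ricci is scale invariant.

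First I would compute $\cF(b_\infty^{-2}g_0,b_\infty)$. Writing $d\mu_{b_\infty^{-2}g_0}=b_\infty^{1-n}\,d\mu_{g_0}$ and $R_{b_\infty^{-2}g_0}=b_\infty^2(n-1)(n-2)$, a short calculation gives
\begin{align}
	A(b_\infty^{-2}g_0,b_\infty)&=b_\infty^{4-n}\,\Vol_{g_0}(N),\\
	B(b_\infty^{-2}g_0,b_\infty)&=b_\infty^{4-n}(n-1)(n-2)\,\Vol_{g_0}(N),
\end{align}
so $A-\tfrac{B}{n-2}=(2-n)\,b_\infty^{4-n}\Vol_{g_0}(N)$ and therefore $\cF(b_\infty^{-2}g_0,b_\infty)=b_\infty^{4-n}\Vol_{g_0}(N)$. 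The constraint $(b_\infty^{-2}g_0,b_\infty)\in\cA_1$ reads $b_\infty^{2-n}\Vol_{g_0}(N)=\Vol(\partial B_1(0))$, hence the above simplifies to $b_\infty^{2}\Vol(\partial B_1(0))$, which by the footnote equals $A_\infty$.

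For the critical point assertion, I would substitute $(g,w)=(b_\infty^{-2}g_0,b_\infty)$ into Proposition \ref{p:fvar1}. Since $w$ is constant, the Hessian and Laplacian terms vanish, and since $\Ric_g=(n-2)b_\infty^2\,g$ and $R_g=b_\infty^2(n-1)(n-2)$, one finds
\begin{align}
	A'&= b_\infty^{3}\int_N\Bigl(\tfrac{1}{2}\Tr(h)+3v\Bigr)\,d\mu_g,\\
	\tfrac{1}{n-2}\,B'&= b_\infty^{3}\int_N\Bigl(\tfrac{n-3}{2}\Tr(h)+(n-1)v\Bigr)\,d\mu_g.
\end{align}
Subtracting, the coefficient of $\Tr(h)$ becomes $\tfrac{1-(n-3)}{2}=\tfrac{4-n}{2}$ and the coefficient of $v$ becomes $3-(n-1)=4-n$, so
\begin{align}
	A'-\tfrac{1}{n-2}B'=(4-n)\,b_\infty^{3}\int_N\Bigl(\tfrac{1}{2}\Tr(h)+v\Bigr)\,d\mu_g.
\end{align}
Multiplying by $b_\infty^{-2}=w^{-2}$ we see this is a constant multiple of $\int_N\bigl(\tfrac{1}{2}\Tr(h)+v\bigr)\,w\,d\mu_g$, which vanishes for any $(h,v)$ tangent to $\cA_1$ by Lemma \ref{l:obvious}. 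Hence $\cF'=\frac{1}{2-n}\bigl(A'-\tfrac{1}{n-2}B'\bigr)=0$ on $T_{(g,w)}\cA_1$.

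The only nontrivial step is verifying that the particular linear combination $A-\tfrac{1}{n-2}B$ is precisely the one whose first variation lands in the span of $(\tfrac{1}{2}\Tr h + v)$; this is where the coefficient $\tfrac{1}{n-2}$ is forced. Everything else is bookkeeping with conformal scaling. I do not anticipate an obstacle beyond carefully tracking the scale factors $b_\infty^{1-n}$ in the volume form and $b_\infty^{2}$ in the scalar curvature when changing between $g$ and $g_0$.
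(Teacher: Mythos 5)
Your proposal is correct and follows essentially the same route as the paper: both verify the value by direct substitution using $R_{b_\infty^{-2}g_0}=b_\infty^2(n-1)(n-2)$, the weighted volume constraint, and $A_\infty=b_\infty^2\,\Vol(\partial B_1(0))$, and both establish criticality by plugging the Einstein condition and constant weight into Proposition \ref{p:fvar1} and invoking the tangency condition of Lemma \ref{l:obvious}. The only cosmetic difference is that the paper uses the constraint to reduce $A'$ and $B'$ separately to multiples of $\int_N \Tr(h)$ before cancelling, whereas you keep the combination and show it is a multiple of $\int_N\bigl(\tfrac{1}{2}\Tr(h)+v\bigr)\,d\mu$, which is the same computation reorganized.
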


\begin{proof}
To simplify notation, set $\bar{g} = b_{\infty}^{-2} \, g_0$.
Since $g_0$ is Einstein with $\Ric_{g_0} = (n-2) \, g_0$,  
\begin{align}
	R_{\bar{g}} &= b_{\infty}^2 \, R_{g_0} = b_{\infty}^2 \, (n-1) \, (n-2) \, , \\
	\Ric_{\bar{g}} &= b_{\infty}^2 \,   (n-2) \, \bar{g} \, .
\end{align}
   Hence, 
at $(\bar{g} , b_{\infty})$, Proposition \ref{p:fvar1} gives that 
\begin{align}
	A' &    = 2\, b_{\infty}^3 \,  \int_N v
	  \,   \, d\mu_{\bar{g}} 
	=
	- b_{\infty}^3 \,  \int_N  \Tr (h)  \, d\mu_{\bar{g}} 
	   \,, \\
	B' & = - b_{\infty} \, \int_N \langle \Ric_{\bar{g}} , h \rangle \, d\mu_{\bar{g} }
	=
	(2-n) \, b_{\infty}^3 \int_N  \Tr (h) \,  d\mu_{\bar{g}}  \, ,  
\end{align}
where the first two equations used that the integral of $\Tr (h) + 2v$ is zero because of the weighted volume constraint.
This gives the first claim.  

 For the second claim, observe that  
 \begin{align}
 	 \left( A - \frac{B }{(n-2)}    \right) (\bar{g} , b_{\infty}) &=
	 \int_N \left\{ b_{\infty}^2   - \frac{b_{\infty}^2  \, (n-1)(n-2) }{(n-2)} \right\} b_{\infty} \, d\mu_{\bar{g}} \notag \\
	 &=(2-n) \,  b_{\infty}^2   \int_M b_{\infty} \, d\mu_{\bar{g}} = 
	 (2-n) \, b_{\infty}^2 \, \Vol (\partial B_1 (0)) = (2-n) \, A_{\infty} \, .
 \end{align}
\end{proof}

\subsection{The gradient of $\cF$}

We will next compute the gradient of $\cF$ as a functional on the full space of metrics $g$ and weights $w$.  The starting point is the following lemma that computes the directional derivative of $\cF$.

\begin{Lem}	\label{l:gradF}
Given one parameter families $g+th$ and $w\,\e^{tv}$, we have
\begin{align}
	(2-n) \, \cF' &= \int_N 
	\left\{ \left( 3\, w^2  - \frac{R_g}{n-2} \right) \, \left( \frac{1}{2} \, \langle g , h \rangle_g +v  \right)    +    \langle \left( \frac{\Ric_g}{n-2} - w^2 \, g\right) , h \rangle_g
	  \right\} \, w \, d\mu_g \notag \\
	  &\qquad + \frac{1}{n-2} \, \int_N 
	    \langle \left((\Delta w) \, g  -  \Hess_w \right) , h \rangle_g
	  \,   d\mu_g  
	  \, .  
\end{align}
\end{Lem}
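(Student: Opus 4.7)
The plan is to derive the identity by a direct algebraic rearrangement starting from the first-variation formulas of Proposition \ref{p:fvar1}. Since $\cF = \frac{1}{2-n}\bigl(A - \frac{B}{n-2}\bigr)$, we have $(2-n)\cF' = A' - \frac{1}{n-2} B'$, and both $A'$ and $B'$ are already computed in Proposition \ref{p:fvar1}. The task is therefore to reorganize the resulting integrand so that it splits naturally into the two structural pieces appearing in the target formula: terms that couple to the weighted-volume variation $\tfrac12\langle g,h\rangle_g + v$, and terms that are linear in $h$ alone. This grouping is precisely what will make the formula useful for the Lyapunov-Schmidt reduction later, since, by Lemma \ref{l:obvious}, the combination $\tfrac12\langle g,h\rangle_g + v$ is what measures the failure of $(h,v)$ to be tangent to $\cA_1$.

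The key step is an elementary algebraic identity to regroup the $A'$ part. From Proposition \ref{p:fvar1}, $A'$ contributes
\begin{equation*}
\int_N \Bigl\{ w^2\bigl(\tfrac12\Tr(h)+v\bigr) + 2w^2 v\Bigr\}\, w\, d\mu_g,
\end{equation*}
and the trick is to write this integrand as $3w^2(\tfrac12\Tr(h)+v) - w^2\,\Tr(h)$, i.e.,
\begin{equation*}
w^2\bigl(\tfrac12\Tr(h)+v\bigr) + 2w^2 v = 3w^2\bigl(\tfrac12\Tr(h)+v\bigr) - \langle w^2 g, h\rangle_g,
\end{equation*}
so that the last term assembles with the Ricci term coming from $B'$ into $\langle \frac{\Ric_g}{n-2} - w^2 g,\, h\rangle_g$. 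This is the bookkeeping step that produces the coefficient $3w^2$ in front of the conformal combination in the statement.

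Next, I would process $B'$. The scalar-curvature part of the integrand, $R_g(\tfrac12\Tr(h)+v)$, immediately combines with the $A'$ conformal combination to give the desired coefficient $3w^2 - \frac{R_g}{n-2}$. For the two remaining terms $\langle h, \tfrac{\Hess_w}{w}\rangle$ and $-\Tr(h)\tfrac{\Delta w}{w}$, the factor of $\frac{1}{w}$ inside the integrand cancels against the $w$ in the weighted measure $w\, d\mu_g$, leaving
\begin{equation*}
\frac{1}{n-2}\int_N \langle (\Delta w)\,g - \Hess_w,\; h\rangle_g\, d\mu_g,
\end{equation*}
where I have also rewritten $\Tr(h)\,\Delta w = \langle (\Delta w)\,g, h\rangle_g$. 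Assembling the three groups — the $(\tfrac12\Tr(h)+v)w\,d\mu_g$ terms, the $\langle \cdot, h\rangle_g\, w\, d\mu_g$ terms, and the unweighted $\langle \cdot, h\rangle_g\, d\mu_g$ term — yields the claimed formula.

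There is no real obstacle here; the entire proof is algebraic manipulation of two known first variations. The only point that requires care is the regrouping in the $A'$ contribution described above, since it is this shift that exposes $w^2 g$ as the correct counterpart to $\frac{\Ric_g}{n-2}$ and makes the critical-point equation visible in the gradient.
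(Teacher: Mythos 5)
Your proposal is correct and follows essentially the same route as the paper: it starts from Proposition \ref{p:fvar1}, regroups the $A'$ integrand as $3w^2\left(\tfrac12\Tr(h)+v\right)-\langle w^2 g,h\rangle_g$, combines the scalar-curvature term of $B'$ with the conformal combination, and cancels the $1/w$ factors against the weight to produce the unweighted $\frac{1}{n-2}\int_N\langle(\Delta w)\,g-\Hess_w,h\rangle_g\,d\mu_g$ term. The paper's proof is the same bookkeeping, written with the abbreviation $\phi=\tfrac12\Tr(h)+v$.
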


\begin{proof}
It is convenient to set $\phi =  \left( \frac{1}{2} \, \Tr (h) +v  \right) $.
Proposition \ref{p:fvar1} gives  
\begin{align}
	A' & = \int_N \left\{ w^2 \, \phi  + 2 w^2 \, v
	 \right\} \, w \, d\mu_g = \int_N \left\{ 3\, w^2 \, \phi  -   w^2 \, \langle g , h \rangle 
	 \right\} \, w \, d\mu_g \, , \\
	 B' & = \int_N \left\{   - \langle \Ric_g , h \rangle + \langle h , \frac{ \Hess_w}{w} \rangle - \Tr (h)\frac{\Delta w}{w}  + 
	R_g \, \phi \right\} \, w \, d\mu_g  	\, .
\end{align}
Using the  equations for $A'$ and $B'$ gives
\begin{align}
	\left( A - \frac{B}{n-2} \right)' &= \int_N 
	\left\{ \left( 3\, w^2  - \frac{R_g}{n-2} \right) \, \phi  +    \langle \left( \frac{\Ric_g}{n-2} - w^2 \, g\right) , h \rangle 
	  \right\} \, w \, d\mu_g \notag \\
	  &\qquad + \frac{1}{n-2} \, \int_N 
	    \langle \left((\Delta w) \, g  -  \Hess_w \right) , h \rangle 
	  \,   d\mu_g \, .
\end{align}
 
\end{proof}

The previous lemma computed the directional derivative of $\cF$.  To get the gradient, we need to write it in terms of inner products for a fixed background  metric $\bar{g}$.

\begin{Lem}	\label{l:gradadjust}
If $h$ and $J$ are symmetric $2$-tensors, while $g$ and $\bar{g}$ are metrics, then
\begin{align}
	\langle h , J \rangle_g = \langle h ,  \Psi (J) \rangle_{\bar{g}} \, ,
\end{align}
where $\Psi$ is the mapping  defined by
$
	[\Psi (J)]_{ij} = \bar{g}_{ik} g^{kn} J_{nm} g^{m\ell} \bar{g}_{\ell j} \, .
$  If $g = \bar{g} + t \, h$, then
\begin{align}
	\frac{d}{dt} \, \big|_{t=0} \, \Psi (J)_{ij}  = J_{ij}' - h_{ip} \,  \bar{g}^{pn}  \, J_{nj}  
	-    J_{im}   \bar{g}^{mp} \, h_{pj}  \, .
\end{align}
\end{Lem}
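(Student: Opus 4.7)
The plan is to verify both assertions by straightforward index computations; no geometric input is needed beyond the standard variation of the inverse metric.

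For the first identity, I would unfold the definitions in coordinates. By definition,
\begin{align}
\langle h, J\rangle_g = g^{ik}\, g^{jl}\, h_{ij}\, J_{kl}, \qquad
\langle h, \Psi(J)\rangle_{\bar g} = \bar g^{ik}\, \bar g^{jl}\, h_{ij}\, [\Psi(J)]_{kl}.
\end{align}
Substituting $[\Psi(J)]_{kl} = \bar g_{kn}\, g^{nm}\, J_{mp}\, g^{pq}\, \bar g_{ql}$ and using $\bar g^{ik}\bar g_{kn} = \delta^i_n$ and $\bar g^{jl}\bar g_{ql} = \delta^j_q$, all four $\bar g$ factors collapse, leaving $g^{im}\, g^{jp}\, h_{ij}\, J_{mp}$, which is $\langle h, J\rangle_g$.

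For the derivative formula, at $t=0$ we have $g=\bar g$, and the factors of $\Psi(J)$ that depend on $t$ are the two inverse metrics $g^{kn}, g^{m\ell}$ (through $g=\bar g+th$) and $J$ itself. I would apply the product rule to obtain three terms, and evaluate at $t=0$. The key input is the standard identity
\begin{align}
\left.\frac{d}{dt}\right|_{t=0} g^{kn} = -\bar g^{ka}\, h_{ab}\, \bar g^{bn},
\end{align}
which follows from differentiating $g^{kn} g_{nm} = \delta^k_m$. The middle term — obtained by differentiating $J$ — becomes $\bar g_{ik}\bar g^{kn} J'_{nm}\bar g^{m\ell}\bar g_{\ell j} = J'_{ij}$ after the same $\delta$-collapses as above. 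The first and third terms, from differentiating the two inverse metrics, collapse in exactly the same way to $-h_{ip}\bar g^{pn} J_{nj}$ and $-J_{im}\bar g^{mp} h_{pj}$ respectively, matching the stated formula.

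There is no real obstacle here — the lemma is a purely algebraic identity plus the variation of the inverse metric. The only care required is bookkeeping: keeping track of which indices are contracted with $g$ versus $\bar g$ and applying $\bar g^{ab}\bar g_{bc} = \delta^a_c$ consistently. Symmetry of $h$ and $J$ is not used in any substantive way; it only guarantees that $\Psi(J)$ is again symmetric, which is consistent with pairing against the symmetric tensor $h$ in the inner product.
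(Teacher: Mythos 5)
Your proposal is correct and follows essentially the same route as the paper: a direct index computation for the first identity, collapsing the $\bar g^{ab}\bar g_{bc}=\delta^a_c$ factors, and then the product rule together with the standard variation $\left(g^{m\ell}\right)'=-g^{mp}h_{pq}g^{q\ell}$ of the inverse metric for the derivative formula. No gaps.
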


\begin{proof}
Expanding the first expression out, we have
\begin{align}
	\langle h , J \rangle_g =h_{ij} J_{kn} g^{ik} g^{jn}   \, .
\end{align}
On the other hand, we get
\begin{align}
	 \langle h ,  \Psi (J) \rangle_{\bar{g}} &= h_{pq} \bar{g}^{pi} \bar{g}^{qj} \, [\Psi (J)]_{ij} = h_{pq} \bar{g}^{pi} \bar{g}^{qj} \, \bar{g}_{ik} g^{kn} J_{nm} g^{m\ell} \bar{g}_{\ell j} \notag \\
	 &= h_{pq}   \, \delta_{pk} g^{kn} J_{nm} g^{m\ell} \delta_{\ell q} =   h_{k \ell}    g^{kn} J_{nm} g^{m\ell}
	 \, .
\end{align}
Suppose now that we have a one-parameter family of metrics $g = \bar{g} + t \, h$ and  both $\Psi$ and $J$ depend on $t$.    Differentiating at $t=0$ and using that $\Psi$ is the identity at $t=0$ gives
\begin{align}
	\left[ \Psi (J)_{ij}  \right]' & = J_{ij}' +  \bar{g}_{ik} \left(  g^{kn} \right)' J_{nj}  
	+    J_{im} \left( g^{m\ell} \right)' \bar{g}_{\ell j} \notag \\
	&=  J_{ij}' - h_{ip} \,  \bar{g}^{pn}  \, J_{nj}  
	-    J_{im}   \bar{g}^{mp} \, h_{pj} \, , 
\end{align}
where the last equality used that  $\left( g^{m\ell} \right)'  = - g^{mp} h_{pq} g^{q\ell}$ (and the corresponding equation for the derivative of $g^{kn}$).
\end{proof}

 We will apply Lemma \ref{l:gradadjust} with $\bar{g}$ equal to the background metric $\bar{g} = b_{\infty}^{-2} \, g_0$.  The next corollary uses the lemma to calculate the gradient of $\cF$ on the space of all variations; later, we will project this onto $\cA_1$.

\begin{Cor}	\label{c:gradF}
The gradient of $\cF$ at $(g,w)$ is given by
\begin{align}
	(2-n) \, \nabla \cF = \left(   \frac{1}{2} \phi_1   \Psi (g) + \Psi (J) , \, \phi_1   \right) \, \nu 
	\, ,
\end{align}
where we define functions $\nu$ and  $\phi_1$   by
\begin{align}
	\nu &= \frac{ w \, \sqrt{ \det (g)}}{b_{\infty} \, \sqrt{\det (b_{\infty}^{-2} \, g_0)}} \, , \\
	\phi_1 & = 3\, w^2  - \frac{R_g}{n-2} \, ,  
\end{align}
and we define the $2$-tensor $J = J_1 + J_2  $ by
\begin{align}
	J_1 &= \frac{\Ric_g}{n-2} - w^2 \, g \, , \\
	J_2 &= \frac{1}{n-2} \, \left(  \frac{\Delta w}{w} \, g - \frac{ \Hess_w}{w} \right)   \, .
\end{align}
\end{Cor}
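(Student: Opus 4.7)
The plan is to massage the directional-derivative formula of Lemma \ref{l:gradF} until it reads as a genuine $L^2$-pairing against $(h,v)$ with respect to the fixed reference geometry $\bar{g}=b_{\infty}^{-2}\,g_0$, at which point the gradient can simply be read off the integrand.

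First, I would collect everything into a single integral with a clean split between the $h$-part and the $v$-part. Expanding $\phi = \tfrac{1}{2}\langle g,h\rangle_g + v$ inside the term $\phi_1\,\phi$ separates it into a symmetric-tensor piece $\tfrac{1}{2}\phi_1\,\langle g,h\rangle_g$ and a scalar piece $\phi_1\,v$. At the same time, observe that $\tfrac{1}{n-2}\bigl((\Delta w)\,g - \Hess_w\bigr) = w\,J_2$, which brings the second integral in Lemma \ref{l:gradF} under the common weight $w\,d\mu_g$. The outcome is
\begin{align*}
(2-n)\,\cF'[h,v] \;=\; \int_N \Big\{\big\langle \tfrac{1}{2}\phi_1\,g + J_1 + J_2,\; h\big\rangle_g + \phi_1\,v\Big\}\, w\, d\mu_g.
\end{align*}

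Second, I would change everything over to the fixed reference structure so as to express $\cF'$ as a pairing in $(\bar{g},d\mu_{\bar{g}})$. For each of the symmetric $2$-tensors $T \in \{g, J_1, J_2\}$, Lemma \ref{l:gradadjust} converts $\langle T,h\rangle_g = \langle \Psi(T), h\rangle_{\bar{g}}$. For the measure, the definition of $\nu$ rewrites $w\,d\mu_g = b_{\infty}\,\nu\,d\mu_{\bar{g}}$. After both substitutions, the directional derivative takes the form
\begin{align*}
(2-n)\,\cF'[h,v] \;=\; b_{\infty}\int_N \nu\,\Big\{\big\langle \tfrac{1}{2}\phi_1\,\Psi(g) + \Psi(J),\; h\big\rangle_{\bar{g}} + \phi_1\,v\Big\}\, d\mu_{\bar{g}},
\end{align*}
from which the claimed formula $(2-n)\nabla\cF = \bigl(\tfrac{1}{2}\phi_1\,\Psi(g)+\Psi(J),\; \phi_1\bigr)\,\nu$ follows by reading off the $L^2$-adjoint with respect to the fixed reference inner product on $\cA$ at $(\bar{g},b_{\infty})$.

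The only real obstacle is bookkeeping: one must keep the overall constant $b_{\infty}$ from the volume-form conversion consistent with whatever normalization of the inner product on $\cA$ is used at the reference point, and one must verify that Lemma \ref{l:gradadjust} is applied with the same base metric $\bar{g}$ across all three tensor terms (so that the single map $\Psi$ really does apply uniformly). No genuine analysis is needed beyond the two linear-algebraic identities already in hand.
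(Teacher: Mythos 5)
Your proposal is correct and follows essentially the same route as the paper: starting from Lemma \ref{l:gradF}, absorbing the $(\Delta w)\,g-\Hess_w$ term as $w\,J_2$ under the common weight $w\,d\mu_g$, converting the measure via $\nu$ and the tensor pairings via Lemma \ref{l:gradadjust}, and reading off the gradient with respect to the fixed reference inner product. The bookkeeping point you flag about the $b_{\infty}$ normalization is exactly how the paper's convention works out, so no changes are needed.
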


\begin{proof}
Given one parameter families $g+th$ and $w\,\e^{tv}$, 
Lemma \ref{l:gradF} gives that
\begin{align}
	(2-n) \, \cF' &= \int_N 
	\left\{ \phi_1  \, \left( \frac{1}{2} \langle g , h \rangle_g +v  \right)    +    \langle J  , h \rangle_g
	  \right\} \, w \, d\mu_g   \notag \\
	  &=\int_N 
	\left\{   \frac{1}{2}  \phi_1  \, \langle   g , h \rangle_g    +    \langle J  , h \rangle_g
	+ \phi_1  \, v
	  \right\} \, \nu \,  b_{\infty} \,  d\mu_{b_{\infty}^{-2} g_0} 
	  \, .  
\end{align}
Lemma \ref{l:gradadjust} gives the corollary.

\end{proof}

For the next corollary, it is useful to define the functional $A_1$ by
\begin{align}
	A_1 (g,w) = \int_N w \, d\mu_g \, .
\end{align}
The next corollary computes the gradient of $A_1$.

 \begin{Cor}	\label{c:gA1}
The gradient of $A_1$ at $(g,w)$ is given by $\nabla A_1 = \left( \frac{1}{2} \, \Psi (g) , 1 \right) \, \nu $ where
 \begin{align}
	\nu &= \frac{ w \, \sqrt{ \det (g)}}{b_{\infty} \, \sqrt{\det (b_{\infty}^{-2} \, g_0)}}  \, .
\end{align}
 \end{Cor}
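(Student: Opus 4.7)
The plan is to proceed exactly as in the proof of Corollary \ref{c:gradF}, just with a much simpler integrand. First, I would compute the directional derivative $A_1'$ along the one-parameter family $(g+th, we^{tv})$. By the formula \eqr{e:wtd} for the variation of the weighted volume form, one has $A_1' = \int_N \bigl( \tfrac{1}{2} \Tr(h) + v \bigr) \, w \, d\mu_g$.

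Next, I would rewrite the trace as an inner product in the metric $g$, namely $\Tr(h) = \langle g, h \rangle_g$, and then apply Lemma \ref{l:gradadjust} with $J = g$ and $\bar g = b_\infty^{-2} g_0$ to convert this into an inner product with respect to the fixed background metric: $\langle g, h \rangle_g = \langle h, \Psi(g) \rangle_{\bar g}$. At the same time, I would convert the volume form via $d\mu_g = \tfrac{\sqrt{\det g}}{\sqrt{\det \bar g}} \, d\mu_{\bar g}$, so that $w \, d\mu_g = \nu \cdot b_\infty \, d\mu_{\bar g}$, with $\nu$ exactly as defined in the statement.

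Substituting both identities yields $A_1' = \int_N \bigl\{ \tfrac{1}{2} \langle h, \Psi(g) \rangle_{\bar g} + v \bigr\} \, \nu \cdot b_\infty \, d\mu_{\bar g}$. Matching this against the background inner product (the same one used implicitly in Corollary \ref{c:gradF}) lets me read off the $h$-component of $\nabla A_1$ as $\tfrac{1}{2} \Psi(g) \, \nu$ and the $v$-component as $\nu$, giving the claimed formula $\nabla A_1 = \bigl( \tfrac{1}{2} \Psi(g), 1 \bigr) \, \nu$.

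I do not anticipate any real obstacle: the argument is a direct specialization of the calculation already carried out for $\cF$, and in fact is a subcase of it, since $A_1$ depends only on the volume form and so its variation involves only the $\phi = \tfrac{1}{2} \Tr(h) + v$ piece and none of the Ricci, Hessian, or scalar curvature terms. The only thing to be careful about is consistent bookkeeping of the factor $b_\infty$ when passing from $d\mu_g$ to $d\mu_{\bar g}$, which is the reason $\nu$ carries a $b_\infty^{-1}$ in its definition.
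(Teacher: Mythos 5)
Your proposal is correct and matches the paper's own proof essentially verbatim: the paper likewise differentiates $A_1$ to get $\int_N \left( \frac{1}{2} \, \langle g , h \rangle_g + v \right) w \, d\mu_g$, rewrites the weighted volume form as $\nu \, b_{\infty} \, d\mu_{b_{\infty}^{-2} g_0}$, and invokes Lemma \ref{l:gradadjust} to replace $\langle g , h \rangle_g$ by $\langle h , \Psi(g) \rangle_{\bar{g}}$ before reading off the gradient. Your remark about the bookkeeping of $b_{\infty}$, and the fact that the gradient is taken with respect to the fixed background $L^2$ inner product, is exactly the right point of care.
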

 
 \begin{proof}
 Given one parameter families $g+th$ and $w\,\e^{tv}$,  
differentiating $A_1$ gives
\begin{align}
	A_1'   &= \int_N \left( \frac{1}{2} \, \langle g , h \rangle_g +v \right) \, w \, d\mu_g   = \int_N \left( \frac{1}{2} \, \langle g , h \rangle_g +v \right) \, \nu \, b_{\infty}    \, d\mu_{b_{\infty}^{-2} g_0}
	  \, .
\end{align}
 Lemma \ref{l:gradadjust} gives the corollary.

\end{proof}

\section{Proving properties (4) and (5)}	\label{s:4and5}
	
In this section, we will  show that when $\cF$ is applied to the level sets of $b$, then it satisfies properties (4) and (5).  A key for both of these will be to show in the next subsection that an $L^2$ bound on the trace-free Hessian of $b^2$ implies scale-invariant $C^1$ bounds.

As in section \ref{s:s1},  will assume throughout this section that we are working on a scale $R$ where the Hessian of $b^2$ is almost diagonal and $|\nabla b|$ is almost constant.

  \subsection{$C^1$ bounds on the trace free Hessian}

  \begin{Thm}	 \label{t:meanval}
  There exists a constant $C$ so that
  \begin{align}
  	\| \Hess_{b^2} - \frac{\Delta b^2}{n} \, g \|^2_{C^1 (b=R)} \leq 
  	 C \,
 \int_{\frac{R}{2} \leq b \leq \frac{3\,R}{2}}  b^{-n}\,  \left| \Hess_{b^2} - \frac{\Delta b^2}{n} \, g
	   \right|^2    
 \, ,
  \end{align}
  where $\| \cdot \|_{C^1 (b=R)}$ is the scale-invariant $C^1$-norm on $M$ at $b=R$.
  \end{Thm}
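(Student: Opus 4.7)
The plan is to prove Theorem~\ref{t:meanval} by deriving a linear elliptic equation for the trace-free tensor $T := \Hess_{b^2} - \frac{\Delta b^2}{n}\, g$ on the Ricci-flat manifold $M$ and then applying standard interior regularity on the annulus $\{R/2 \leq b \leq 3R/2\}$, which we know is $C^k$-close to a fixed annulus in the smooth Ricci-flat cone $C(N)$ by the hypothesis \eqr{e:conedel} combined with \cite{C1}.

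The first step is to extract a good equation for $T$. Since $G = b^{2-n}$ is harmonic, a direct calculation gives $\Delta b = (n-1)\,|\nabla b|^2/b$ and hence $\Delta b^2 = 2n\,|\nabla b|^2$, so $T = \Hess_{b^2} - 2\,|\nabla b|^2\, g$. By commuting covariant derivatives in $\nabla^2\nabla^2 b^2$, applying the Bochner formula to $b^2$, and exploiting that $\Ric \equiv 0$, I would derive an equation of the schematic form
\begin{align*}
\Delta T \;=\; \mathrm{Rm} \ast T \,+\, B \ast \nabla T,
\end{align*}
in which every term is of first order in $T$ or $\nabla T$. The essential feature is that the equation is homogeneous in $T$ up to curvature couplings and contains no inhomogeneous forcing; this is consistent with the fact that $T \equiv 0$ on the exact cone.

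After rescaling by $\tilde g = R^{-2} g$, the annulus becomes a fixed-geometry domain on which the curvature and its derivatives are uniformly bounded (since the rescaled metric is close in $C^k$ to a smooth Einstein cross-section), so the equation satisfied by $T$ in the rescaled metric is a linear elliptic system with coefficients bounded independently of $R$. A standard Kato-type inequality then gives the subsolution estimate $\tilde\Delta |T|^2 \geq -C\,|T|^2$, and Moser iteration (or a classical mean value inequality) yields
\begin{align*}
\sup_{\{3R/4 \leq b \leq 5R/4\}} |T|^2 \;\leq\; C\, R^{-n} \int_{\{R/2 \leq b \leq 3R/2\}} |T|^2 \;\leq\; C \int_{\{R/2 \leq b \leq 3R/2\}} b^{-n}\,|T|^2,
\end{align*}
where the second inequality uses $b \sim R$ on this set. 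With $|T|$ now bounded pointwise on a slightly smaller annulus, Schauder (or $W^{2,p}$) estimates applied to the same elliptic equation control $R \cdot \sup |\nabla T|$ by $\sup |T|$ on a further shrunken neighborhood of $\{b=R\}$; restricting to the level set gives the claimed scale-invariant $C^1$ bound.

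I expect the main obstacle to be the first step: isolating the $T$-homogeneous structure of the Bochner identity for $\Hess_{b^2}$. A naive calculation produces many terms involving $\Hess_{|\nabla b|^2}$, $\nabla|\nabla b|^2$, and contractions of the Riemann tensor with $\nabla b$; to absorb these into the schematic form above one must repeatedly invoke both the harmonicity of $G$ (to rewrite radial derivatives of $|\nabla b|^2$) and the vanishing of $\Ric$ (to convert commutators of covariant derivatives into pure Riemann contributions) in order to see that the inhomogeneous-looking terms are in fact absorbed into $T$ itself or into curvature couplings. Once the equation is organized correctly, the scale-invariance of the conclusion and the bounded geometry of the rescaled annulus reduce the remainder to a routine exercise in elliptic regularity.
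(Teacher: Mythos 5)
Your proposal is correct and follows essentially the same route as the paper: the paper derives exactly this kind of identity for $B_b=\Hess_{b^2}-\frac{\Delta b^2}{n}\,g$ (its equation \eqr{e:318}, whose right-hand side is linear in $B_b$, $\nabla B_b$ and a curvature coupling, plus a quadratic term $|B_b|^2\,g$ that is absorbed using the a priori bound coming from cone-closeness), and then converts the scale-invariant $L^2$ bound into pointwise $C^0$ and $C^1$ bounds by a differential inequality, the mean value inequality, a Caccioppoli argument, and a second Bochner/mean-value step. Your substitution of interior Schauder/$W^{2,p}$ estimates for the paper's Caccioppoli-plus-Bochner iteration in the gradient step is only a cosmetic difference, and your schematic equation's omission of the quadratic term is harmless for the same absorption reason.
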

  
  Here, ``scale-invariant'' means measured with respect to the rescaled metric $R^{-2} \, g_R$, where $g_R$ is the induced metric on the level set $b=R$.  Namely,  at $b=R$
  \begin{align}
  	 \left| \Hess_{b^2} - \frac{\Delta b^2}{n} \, g
	   \right|^2 + R^2 \,  \left| \nabla \left\{  \Hess_{b^2} - \frac{\Delta b^2}{n} \, g \right\}
	   \right|^2   \leq C \,  \int_{\frac{R}{2} \leq b \leq \frac{3\,R}{2}}  b^{-n}\,  \left| \Hess_{b^2} - \frac{\Delta b^2}{n} \, g
	   \right|^2  \, . \notag 
  \end{align}
       
     We will need the following Bochner type formula for the Hessian in the proof.
     
     \begin{Lem}	    \label{l:hessb}
     We have
       \begin{align}	 
 	\left(  \Delta \Hess_w \right)_{jk}  &
	= (\Delta w)_{jk}  + 2\, R_{ij\ell k}w_{i\ell}  \, .
 \end{align}
     \end{Lem}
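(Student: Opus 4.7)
The plan is to expand $(\Delta \Hess_w)_{jk} = g^{ab}\nabla_a\nabla_b\nabla_j\nabla_k w$ and reorder the four covariant derivatives so that after contracting with $g^{ab}$ the rearranged term becomes $g^{ab}\nabla_j\nabla_k\nabla_a\nabla_b w = (\Delta w)_{jk}$, with every curvature commutator collected into the claimed right-hand side. The only tool needed is the Ricci identity applied to $(0,1)$- and $(0,2)$-tensors, together with the Ricci-flat assumption on $M$.

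I would carry this out by three successive commutations, using that two consecutive covariant derivatives applied directly to the function $w$ commute. First, commute $\nabla_b$ past $\nabla_j$ acting on the $1$-tensor $\nabla_k w$; this produces a single term of the form $R_{bjk}{}^{\ell}\,\nabla_\ell w$. Next, commute $\nabla_a$ past $\nabla_j$ acting on the $(0,2)$-tensor $\nabla_k \nabla_b w$; this produces two terms of the form $R\cdot \Hess_w$, one with indices tied to the pair $(j,k)$ and one to the pair $(j,b)$. Finally, commute $\nabla_a$ past $\nabla_k$ acting on $\nabla_b w$ to pick up a last term of the form $R\cdot \nabla w$. The remaining unperturbed derivatives are now in the order $\nabla_j\nabla_k\nabla_a\nabla_b w$, so after contraction with $g^{ab}$ they give exactly $(\Delta w)_{jk}$.

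When the error terms are contracted with $g^{ab}$, every curvature piece whose index structure contains a trace of the form $g^{ab}R_{a\cdot b\cdot}$ collapses to a Ricci contraction and vanishes by the standing Ricci-flat hypothesis; similarly, the pieces involving $\nabla R$ collapse to $\nabla \Ric$ via the second Bianchi identity and also vanish. The only contribution that survives has the form $g^{ab}R_{ajk}{}^{\ell}\, w_{\ell b}$, and by the Riemann symmetries $R_{abcd}=-R_{bacd}=-R_{abdc}=R_{cdab}$ together with the symmetry of $w_{i\ell}$, this equals a constant multiple of $R_{ij\ell k}\, w_{i\ell}$. The coefficient $2$ arises because the second commutation step contributes two structurally identical pieces that, after using the Riemann symmetries, add rather than cancel. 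The main care required is purely this bookkeeping: verifying that the Ricci-trace terms do cancel with the correct signs, and that the two non-trace Riemann contributions line up to produce precisely the factor $2$ (rather than $0$ or some other integer). This is standard but tedious; no conceptual obstacle arises.
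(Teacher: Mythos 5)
Your overall route is the same as the paper's: expand $(\Delta \Hess_w)_{jk}$, commute covariant derivatives until the contracted pair sits innermost, and use Ricci-flatness twice --- once to kill the Ricci traces $g^{ab}R_{a\cdot b\cdot}$ and once, via the contracted second Bianchi identity, to kill the $\nabla R$ terms. So there is no conceptual divergence from the paper, which does exactly this in an orthonormal frame. However, your ledger for the crucial factor $2$ is wrong as written. Of the two pieces produced when you commute $\nabla_a$ past $\nabla_j$ on the $(0,2)$-tensor $\nabla_k\nabla_b w$, namely $R_{ajk}{}^{\ell}\,w_{\ell b}$ and $R_{ajb}{}^{\ell}\,w_{k\ell}$, the second one (the piece ``tied to the pair $(j,b)$'') becomes a Ricci contraction after tracing with $g^{ab}$ and therefore vanishes; so that commutation contributes only \emph{one} surviving Riemann--Hessian term, not the two you credit it with.

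The second copy comes instead from a term missing from your three-step list: when $\nabla_a$ falls on the curvature factor $R_{bjk}{}^{\ell}\nabla_{\ell}w$ generated by your first commutation, you get $(\nabla_a R_{bjk}{}^{\ell})\nabla_{\ell}w + R_{bjk}{}^{\ell}\,w_{a\ell}$; the first piece dies by the second Bianchi identity together with $\Ric=0$, as you note, but the second piece survives the trace $a=b$ and is, by the symmetry of the Hessian, identical in form to $g^{ab}R_{ajk}{}^{\ell}\,w_{\ell b}$. These two terms --- one from the Hessian-level commutator, one from the derivative hitting the first commutator's curvature factor --- are precisely the two terms $R_{ij\ell k}w_{i\ell}+R_{ijnk}w_{in}$ appearing in the paper's frame computation, and together they give the coefficient $2$. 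With this correction your bookkeeping closes and the argument coincides with the paper's proof; as written, following your accounting literally would not produce the factor $2$.
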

     
     \begin{proof}
     
Let $w$ be a function and  $e_i$ an orthonormal frame.
  The definition of the curvature tensor gives
 \begin{align}
 	w_{ijk} - w_{ikj} =  \nabla_{[e_k , e_j]} \nabla w   + R(e_k,e_j) \nabla w   \, .
 \end{align}
  To simplify notation, we will assume that the $e_i$'s are coordinate vector fields (so that the brackets all vanish) and that we are working at a point where
 $\nabla_{e_i} e_j = 0$ for every $i,j$.  
 
 Since $\nabla_{e_i} e_i = 0$ at this point, the Laplacian of the Hessian is 
 \begin{align}
 	 \Delta \Hess_w = \nabla_{e_i} \nabla_{e_i} \nabla \nabla w \, ,
 \end{align}
and combining this with $\nabla_{e_i} e_j = 0$ at the point gives  
  \begin{align}	\label{e:kla}
 	\left(  \Delta \Hess_w \right)_{jk}  =   \langle  \nabla_{e_i} \nabla_{e_i} \nabla_{e_j} \nabla w , e_k \rangle
	- \langle   \nabla_{ \nabla_{e_i} \nabla_{e_i} e_j} \nabla w , e_k \rangle
	   \, .
 \end{align}
 Using   the definition of the curvature and the properties of the $e_i$'s, we get at this point
   \begin{align}
 	\langle  \nabla_{e_i} \nabla_{e_i} \nabla_{e_j} \nabla w , e_k \rangle  &=   \langle  \nabla_{e_i} \left( \nabla_{e_j} \nabla_{e_i} \nabla w +
	\nabla_{[e_i , e_j]} \nabla w + R(e_i , e_j) \nabla w ) \right)  , e_k \rangle \notag \\
	&=  \langle  \nabla_{e_j}   \nabla_{e_i} \nabla_{e_i} \nabla w 
	+\nabla_{e_i}
	\nabla_{[e_i , e_j]} \nabla w
	+ R(e_i , e_j) (\nabla_{e_i} \nabla w)
	+ \nabla_{e_i} (R(e_i , e_j) \nabla w)    , e_k \rangle \\
	&=  \langle  \nabla_{e_j}   \nabla_{e_i} \nabla_{e_i} \nabla w 
	+\nabla_{e_i}
	\nabla_{[e_i , e_j]} \nabla w , e_k \rangle  + R_{ij\ell k}w_{i\ell}  + R_{ijnk} w_{in} \, , \notag 
 \end{align}
 where the last equality   used that   $\Ric = 0$ and, by the second Bianchi identity and $\Ric =0$,  
 \begin{equation}
 	(\nabla R)_{i i j n k} = 0 \, .
 \end{equation}
 Since $[e_i , e_j]$ vanishes at the point, we have
 $\nabla_{e_i}
	\nabla_{[e_i , e_j]} \nabla w = 
	\nabla_{ \nabla_{e_i} [e_i , e_j]} \nabla w$ and we get
   \begin{align}	\label{e:lefthalf}
 	\langle  \nabla_{e_i} \nabla_{e_i} \nabla_{e_j} \nabla w , e_k \rangle  &
	=  \langle  \nabla_{e_j}   \nabla_{e_i} \nabla_{e_i} \nabla w 
	+ 
	\nabla_{\nabla_{e_i} [e_i , e_j]} \nabla w , e_k \rangle  + 2\, R_{ij\ell k}w_{i\ell}   \, .
 \end{align}

 On the other hand, $\Ric = 0$ implies that $\nabla \Delta w = \Delta \nabla w$, so we have
 \begin{align}
 	(\Delta w)_{jk} &= \langle \nabla_{e_j} \nabla \Delta w , e_k \rangle =
	\langle  \nabla_{e_j}   \Delta \nabla w , e_k \rangle   = \langle  \nabla_{e_j}  \left( \nabla_{e_i} \nabla_{e_i} \nabla w - \nabla_{ \nabla_{e_i} e_i } \nabla w 
	\right) , e_k \rangle
	\notag \\
	&= \langle  \nabla_{e_j}   \nabla_{e_i} \nabla_{e_i} \nabla w 
	- 
	\nabla_{\nabla_{e_j} \nabla_{e_i} e_i  } \nabla w , e_k \rangle
	\, .
 \end{align}
 Combining this with \eqr{e:kla} and \eqr{e:lefthalf} gives
 \begin{align}
 	\left(  \Delta \Hess_w \right)_{jk}  - (\Delta w)_{jk}  &= 2\, R_{ij\ell k}w_{i\ell}
	+ \langle 
	\nabla_{\nabla_{e_i} [e_i , e_j]} \nabla w-
	   \nabla_{ \nabla_{e_i} \nabla_{e_i} e_j} \nabla w 
	+ \nabla_{\nabla_{e_j} \nabla_{e_i} e_i  } \nabla w , e_k \rangle \, . \notag
 \end{align}
 To complete the proof, we observe that
 \begin{align}
 	\nabla_{e_i} [e_i , e_j] - \nabla_{e_i} \nabla_{e_i} e_j + \nabla_{e_j} \nabla_{e_i} e_i =
	- \nabla_{e_i} \nabla_{e_j} e_i +
	\nabla_{e_j} \nabla_{e_i} e_i = 0 
 \end{align}
 since $M$ is Ricci flat.
     \end{proof}

    \begin{proof}[Proof of Theorem \ref{t:meanval}]
    Set $B_b = \Hess_{b^2} - \frac{\Delta b^2}{n} \, g$, so that $B_b$ is trace free.
    Since  $\Delta b^2 = 2n \, |\nabla b|^2$, we have
    \begin{align}
    	B_b \equiv \Hess_{b^2} -  2 \, |\nabla b|^2 \, g \, .
    \end{align}
     Since $M$ is Ricci flat, a computation from \cite{C2} (see
   Lemma \ref{l:deltanu}) gives
    \begin{align}
    	b^2 \, \Delta |\nabla b|^2 = \frac{1}{2} \, |B_b|^2 + (2n-4) \, B_b(\nabla b , \nabla b) \, .
    \end{align}
    Lemma \ref{l:k1} gives
    \begin{align}
    	b \, \nabla |\nabla b|^2 = B_b(\nabla b) \, ,
    \end{align}
    so we know that
    \begin{align}
    	  \nabla b \otimes \nabla |\nabla b|^2 + b \, \Hess_{|\nabla b|^2} = \nabla (B_b(\nabla b)) \, .
    \end{align}
    We rewrite this as
     \begin{align}
    	  b^2 \, \Hess_{|\nabla b|^2} = b \, \nabla (B_b(\nabla b)) - \nabla b \otimes B_b(\nabla b) \, .
    \end{align}
   Thus, using Lemma \ref{l:hessb}, we compute
    \begin{align}
    	b^2 \, \Delta \Hess_{b^2} &= b^2 \, \Hess_{\Delta b^2} + 2\, b^2 \,  R_{ij\ell k}(b^2)_{i\ell}= 2n \, b^2 \, \Hess_{|\nabla b|^2} + 2\, b^2 \,  R_{ij\ell k} (b^2)_{i\ell} \notag \\
	&= 
	2n \, \left\{ b \, \nabla (B_b(\nabla b)) - \nabla b \otimes B_b(\nabla b) \right\} + 2\, b^2 \,  R_{ij\ell k} (B_b)_{i\ell}
	  \, ,
    \end{align}
    where the last equality also used that $\Ric = 0$ to get that
    \begin{align}
    	R_{ij\ell k} (B_b)_{i\ell} - R_{ij\ell k} (b^2)_{i\ell} = - 2 |\nabla b|^2 \,  R_{ij\ell k} \, g_{i \ell} = 0 \, .
    \end{align}
    On the other hand, the metric is parallel so we have
    \begin{align}
    	\Delta \, \left( 2 \, |\nabla b|^2 \, g \right) &= 2 \, g \, \Delta  |\nabla b|^2 =  \frac{g}{b^2} \, 
	\left( |B_b|^2 + 4(n-2) \, B_b(\nabla b , \nabla b) \right)
		\, .
    \end{align}
    Combining these, we see that
    \begin{align}	\label{e:318}
    	b^2 \, \Delta B_b &= 2n \, \left\{ b \, \nabla (B_b(\nabla b)) - \nabla b \otimes B_b(\nabla b) \right\} -
	\left\{  |B_b|^2 + 4(n-2) \, B_b(\nabla b , \nabla b) \right\} \, g \notag \\
	&\qquad + 2\, b^2 \,  R_{ij\ell k} (B_b)_{i\ell} \, .
    \end{align}
  Using this,  noting that $B_b$ is trace-free (so its inner product with $g$ is zero), and using that $b^2 \, R_{ij\ell k}$ is bounded by
  a constant $C$ (since we are close to a fixed cone), 
  we get the differential inequality
  \begin{align}
  	\frac{1}{2} \, b^2 \, \Delta \, \left| B_b \right|^2 &= b^2 \, \left| \nabla B_b \right|^2 +  \langle b^2 \, \Delta B_b , B_b \rangle  \notag   \\
	&\geq b^2 \,  \left| \nabla B_b \right|^2 
	- 2n \,    | B_b | \left\{  b\,| \nabla B_b | \, |\nabla b|  + |B_b| b \, |\Hess_b| + |\nabla b|^2 \,  |B_b| \right\}
	- C \, \left| B_b \right|^2 \, .
  \end{align}
Using the a priori bounds for $|\nabla b|$ and  $b \, |\Hess_b|$, and  the absorbing inequality, we get
 \begin{align}
  	\frac{1}{2} \, b^2 \, \Delta \, \left| B_b \right|^2 & \geq b^2 \,  \left| \nabla B_b \right|^2 
	- C_1    | B_b |\,  b\,| \nabla B_b |  
	- C_2 \, \left| B_b \right|^2 \geq \frac{1}{2} \, b^2 \,  \left| \nabla B_b \right|^2  - C_2' \, \left| B_b \right|^2 \, .
  \end{align}
 We will use this twice.  First,  this differential inequality allows us to use  the meanvalue inequality to get  the desired pointwise bound for $\left| B_b \right|^2$.  
 Second, using a cutoff function $\eta \geq 0 $ with support in the annular region and  arguing as in the reverse Poincar\'e inequality, we have  
 \begin{align}
 	0 &= \int \dv \, \left(  \eta^2 \, \nabla \left| B_b \right|^2 \right) \geq  
	\int  \left(\eta^2 \,   |\nabla B_b|^2 - 2 C_2'  \eta^2 \frac{|B_b|^2}{b^2}    - 4\, \eta \, |\nabla \eta| \, |B_b| \, \left|     \nabla B_b \right|  \right) \notag \\
	&\geq \int  \left(\frac{1}{2} \, \eta^2 \,   |\nabla B_b|^2 - 2 C_2' \eta^2 \frac{|B_b|^2}{b^2}    - 8 \,   |\nabla \eta|^2 \, |B_b|^2    \right) \, .
 \end{align}
 Since we are on the scale $R$, we have $|\nabla \eta| \leq \frac{C}{R}$ and $b \approx R$, so this yields
 \begin{align}
 	R^2 \, \int_{\frac{3R}{4} \leq b \leq \frac{5R}{4}}  \left| \nabla B_b \right|^2  \leq C \, 
	\int_{\frac{3R}{2} \leq b \leq \frac{3R}{2}}  \left|   B_b \right|^2
 \end{align}
 We will again use the meanvalue inequality to go from this integral bound to a pointwise bound for $|\nabla B_b|$.  We start with the ``Bochner formula'' for $\Delta \, \left| \nabla B_b \right|^2$ 
 \begin{align}	\label{e:quoteboch}
 	\Delta \, \left| \nabla B_b \right|^2 \geq 2\, \left| \nabla \, \nabla B_b \right|^2 - \frac{C}{b^2}  \, \left| \nabla B_b \right|^2
	+ 2 \, \langle \nabla B_b , \nabla \Delta B_b \rangle \, ,
 \end{align}
 where the constant $C$ comes from a scale-invariant curvature bound for $M$ which holds because it is $C^3$ close to a fixed cone on this scale.  Bringing in the formula \eqr{e:318} for $\Delta B_b$ and the a priori bounds that hold since $M$ is close to conical on this scale, we see that
 \begin{align}
 	b^2 \, \left| \nabla \Delta B_b \right| \leq C \, \left\{ \left| \nabla B_b \right| +
	b \, \left| \nabla \nabla B_b \right| +  \frac{|B_b|}{b}   \right\}  \, .
 \end{align}
 Using this in the Bochner formula \eqr{e:quoteboch} 
 and using the absorbing inequality as before, then allows us to 
  use the meanvalue inequality   to get the desired bound on $b\, \left| \nabla B_b \right|$.
    \end{proof}

\subsection{The proof of property (4)}

As in the previous section, the functional $\cF$ is given by
\begin{align}
	\cF \equiv \frac{1}{2-n} \,  \left( A - \frac{B }{(n-2)}    \right) \, .
\end{align}
 
 The next proposition verifies property (4) for the functional $\cF$.
 
\begin{Pro}	\label{p:prop4}
There exists $C$ so that
\begin{align}
	\left| \nabla_1 \cF (   R^{-2} \, g_R, |\nabla b|) \right|^2 \leq C \, \int_{\frac{R}{2} \leq b \leq \frac{3\,R}{2}}  b^{-n} \,   \left| \Hess_{b^2} - \frac{\Delta b^2}{n} \, g
	   \right|^2 \, .
\end{align}
\end{Pro}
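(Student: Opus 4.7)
The plan is to evaluate $\nabla\cF$ at $(R^{-2}g_R, |\nabla b|)$ via Corollary \ref{c:gradF}, translate all intrinsic operators on the level set $\{b=R\}$ into ambient quantities on $M$, and then use that $M$ is Ricci-flat to express everything in terms of $b$, $|\nabla b|^2$, and the trace-free Hessian $B_b = \Hess_{b^2} - \tfrac{\Delta b^2}{n}g$.  Theorem \ref{t:meanval} then converts the resulting pointwise estimates on $\{b=R\}$ into the desired $L^2$ bound on the annulus $\{R/2\le b\le 3R/2\}$.

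Concretely, Corollary \ref{c:gradF} gives $(2-n)\,\nabla\cF = (\tfrac{1}{2}\phi_1\,\Psi(g)+\Psi(J),\,\phi_1)\,\nu$ at $(g,w)=(R^{-2}g_R,|\nabla b|)$, where $\phi_1 = 3|\nabla b|^2 - R_{R^{-2}g_R}/(n-2)$ and $J = J_1+J_2$.  I would first use the Gauss equation on $\{b=R\}$, whose unit normal is $\nabla b/|\nabla b|$ and whose second fundamental form equals $\Hess_b/|\nabla b|$ restricted to tangent directions.  Since $\Ric_M=0$, Gauss reduces to $R_{g_R} = H^2-|A|^2$ together with the analogous formula for $\Ric_{g_R}$.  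Combining these with $\Hess_{b^2}|_T = 2b\,\Hess_b|_T$ and $\Delta b^2 = 2n|\nabla b|^2$ expresses $A$, hence $R_{g_R}$ and $\Ric_{g_R}$, purely in terms of $b$, $|\nabla b|^2$ and $B_b$.  A short calculation then yields the pointwise bounds
\begin{equation*}
\phi_1 = (4-n)|\nabla b|^2 + B_b\bigl(\tfrac{\nabla b}{|\nabla b|},\tfrac{\nabla b}{|\nabla b|}\bigr) + O(|B_b|^2),\qquad J_1 = O(|B_b|).
\end{equation*}
For $J_2 = \tfrac{1}{n-2}\bigl((\Delta_{g_R}|\nabla b|/|\nabla b|)g - \Hess_{g_R}|\nabla b|/|\nabla b|\bigr)$, I would use Lemma \ref{l:k1} ($b\nabla|\nabla b|^2 = B_b(\nabla b)$) to express the ambient gradient of $|\nabla b|$ in terms of $B_b$, differentiate once more to bring in $\nabla B_b$, and translate intrinsic to ambient operators via $\Delta_{g_R}f = \Delta_M f - \Hess_M f(\tfrac{\nabla b}{|\nabla b|},\tfrac{\nabla b}{|\nabla b|}) - H\,\tfrac{\nabla b}{|\nabla b|}(f)$, concluding that $J_2 = O(|B_b| + R|\nabla B_b|)$ pointwise on $\{b=R\}$.

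Next I would eliminate the non-vanishing leading order of $\phi_1$.  Since $\cA_1$ is the level set of the single functional $A_1$ (Lemma \ref{l:obvious}) whose gradient is given by Corollary \ref{c:gA1}, the restricted gradient $\nabla_1\cF$ is the orthogonal projection of $\nabla\cF$ onto the kernel of $\langle\cdot,\nabla A_1\rangle$, and therefore $|\nabla_1\cF|^2 \le |\nabla\cF - c\,\nabla A_1|^2$ for every real constant $c$.  Choosing $c = (n-4)b_\infty^2/(n-2)$ so that $(2-n)c = (4-n)b_\infty^2$ exactly cancels the conical value of $\phi_1$, the weight component of the difference becomes $(4-n)(|\nabla b|^2-b_\infty^2) + B_b(\tfrac{\nabla b}{|\nabla b|},\tfrac{\nabla b}{|\nabla b|}) + O(|B_b|^2)$ and the metric component is $\Psi(J)$ up to lower order, both pointwise $O(|B_b| + R|\nabla B_b|)$.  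The residual error $||\nabla b|^2-b_\infty^2|$ is itself pointwise bounded by an integral of $|B_b|$ via Lemma \ref{l:k1} along radial rays together with the asymptotic $|\nabla b|\to b_\infty$.

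Finally, Theorem \ref{t:meanval} supplies the scale-invariant pointwise estimate
\begin{equation*}
|B_b|^2 + R^2|\nabla B_b|^2 \le C\int_{R/2\le b\le 3R/2} b^{-n}\,|B_b|^2 \quad\text{at }b=R,
\end{equation*}
and integrating the resulting pointwise control on $|\nabla\cF - c\,\nabla A_1|^2$ over the level set against the weighted volume form $|\nabla b|\,d\mu_{R^{-2}g_R}$ --- whose total mass equals the constant $\Vol(\partial B_1(0))$ by Stokes' theorem --- yields the proposition.  The main obstacle is the $J_2$ piece, because it is the only part of the gradient that requires pointwise control on $\nabla B_b$ rather than just on $B_b$; this is exactly why Theorem \ref{t:meanval} is formulated in scale-invariant $C^1$ form.
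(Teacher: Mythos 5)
Your overall strategy (evaluate $\nabla\cF$ via Corollary \ref{c:gradF}, convert the intrinsic curvature and Hessian terms on $\{b=R\}$ into expressions in $B_b$ and $\nabla B_b$, then close with Theorem \ref{t:meanval}) is the same as the paper's, but there is a genuine gap in how you remove the leading-order constant from $\phi_1$. You replace the true projection by $\|\nabla_1\cF\|\le\|\nabla\cF-c\,\nabla A_1\|$ with the \emph{fixed} constant $(2-n)c=(4-n)b_\infty^2$, which leaves the residual $(4-n)\,(|\nabla b|^2-b_\infty^2)\,\nabla A_1$. This term is not controlled by $\int_{R/2\le b\le 3R/2}b^{-n}|B_b|^2$: as you yourself note, bounding $\big||\nabla b|^2-b_\infty^2\big|$ requires integrating $b\,\nabla|\nabla b|^2=B_b(\nabla b)$ along radial rays all the way out to infinity, so it is governed by the tail $\int_{b\ge R}b^{-n}|B_b|^2$ (essentially $Q(R)$), not by the fixed annulus. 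Concretely, if the annulus $\{R/2\le b\le 3R/2\}$ were an exact Ricci-flat cone over a nearby Einstein cross-section whose volume differs from that of $(N,g_0)$, the right-hand side of the proposition would vanish while $\big||\nabla b|^2-b_\infty^2\big|$ would not, so your intermediate chain of inequalities fails. This is not cosmetic: the whole decay scheme (Corollary \ref{c:o3}) needs $|\nabla_1\cF|^2$ bounded by the \emph{local} quantity $Q(r/2)-Q(2r)$; a bound by the tail $Q(R)$ would be useless for the Lojasiewicz iteration.

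The repair is exactly what the paper's Lemma \ref{l:boundgF} does: since the projection subtracts $\langle\phi_1\nabla A_1,\nabla A_1\rangle\,\nabla A_1/\|\nabla A_1\|^2$, the relevant quantity is $\phi_1$ minus its weighted average \emph{over the level set itself}, i.e.\ the oscillation of $\phi_1$ on $\{b=R\}$, not $\phi_1$ minus its conical value. The oscillation of $|\nabla b|^2$ over the level set is controlled by the tangential gradient, $2R\,\nabla^T|\nabla b|=(B_b(\nn))^T$ (Corollary \ref{c:k1}), hence locally by $|B_b|$, and the remaining part of $\phi_1$ is controlled by $|\Ric_g-(n-2)w^2g|$, where the radial curvature term in the Gauss equation forces an appeal to Lemma \ref{l:riccitan} and hence to $R\,|\nabla B_b|$ as well (so, incidentally, $J_1$ is $O(|B_b|+R|\nabla B_b|)$ rather than $O(|B_b|)$; the $J_2$ block is not the only place where $\nabla B_b$ enters). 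With that change, your argument matches the paper's proof.
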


To prove this, we will first give a pointwise bound for $\nabla_1 \cF$ for metrics $g$ that are in a fixed neighborhood of $b_{\infty}^{-2} \, g_0$.

\begin{Lem}	\label{l:boundgF}
If $(g,w)$ is in a sufficiently small neighborhood of $(b_{\infty}^{-2} \, g_0 , b_{\infty})$, then
\begin{align}
	\left| \nabla_1 \cF \right| \leq C \, \sup \left( \left| \Ric_g - (n-2) w^2 \, g \right| + 
	\left| \Hess_w \right| +|\nabla w|   \right) \, .
\end{align}
\end{Lem}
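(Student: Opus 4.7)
The plan is to use the explicit formulas for $\nabla\cF$ and $\nabla A_1$ from Corollaries~\ref{c:gradF} and~\ref{c:gA1}, then exploit that $\nabla_1\cF$ is the $L^2$-orthogonal projection of $\nabla\cF$ onto the hyperplane $(\nabla A_1)^\perp$, so that $\|\nabla_1\cF\| \leq \|\nabla\cF - \lambda\,\nabla A_1\|$ for \emph{any} real constant $\lambda$. The Lagrange multiplier $\lambda$ will be chosen to cancel an otherwise uncontrolled term. Working directly in the $g$-inner product (where the $\Psi$ and $\nu$ factors from Corollary~\ref{c:gradF} disappear), Lemma~\ref{l:gradF} rewrites as
\begin{align*}
(2-n)\,\nabla\cF = \left( \tfrac12\,\phi_1\, g + J,\; \phi_1 \right), \qquad \nabla A_1 = \left( \tfrac12\, g,\; 1 \right),
\end{align*}
with $\phi_1 = 3w^2 - R_g/(n-2)$ and $J = J_1 + J_2$ as in Corollary~\ref{c:gradF}.

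The algebraic heart of the argument is a trace identity. Taking the trace of $(n-2)\,J_1 = \Ric_g - (n-2)\,w^2\,g$ gives $(n-2)\,\Tr(J_1) = R_g - (n-1)(n-2)\,w^2$, and therefore
\begin{align*}
\phi_1 = (4-n)\,w^2 - \Tr(J_1).
\end{align*}
So the only part of $\phi_1$ not directly controlled by $|\Ric_g - (n-2) w^2 g|$ is the piece $(4-n) w^2$, which lies in the direction of $\nabla A_1$. I would therefore choose $\lambda = (4-n)(2-n)^{-1}\,\bar w^2$, where $\bar w^2$ is any constant between $\min_N w^2$ and $\max_N w^2$, obtaining
\begin{align*}
(2-n)\bigl(\nabla\cF - \lambda\,\nabla A_1\bigr) = \bigl[(4-n)(w^2 - \bar w^2) - \Tr(J_1)\bigr]\,\nabla A_1 + (J, 0).
\end{align*}

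To finish, each piece admits a pointwise estimate by the three quantities on the right-hand side of the lemma. By definition $|J_1| \leq (n-2)^{-1}|\Ric_g - (n-2)\,w^2\, g|$; the bound $|J_2| \leq C\,|\Hess_w|$ follows from $|\Delta w|\leq \sqrt{n-1}\,|\Hess_w|$ together with the uniform lower bound on $w$ in the neighborhood; and for the oscillation of $w^2$,
\begin{align*}
|w^2(x) - \bar w^2|\leq 2\,\sup_N w\,\cdot \diam(N)\,\cdot \sup_N|\nabla w| \leq C\,\sup_N|\nabla w|,
\end{align*}
which uses that $N$ has bounded diameter and $w$ is bounded there. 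Since $|\nabla A_1|$ is likewise uniformly bounded in the neighborhood, assembling these gives a pointwise estimate for $|\nabla\cF - \lambda\,\nabla A_1|$; the $L^2$ bound $|\nabla_1\cF|$ claimed then follows by integrating over the compact $N$.

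The main obstacle is spotting the cancellation. At the critical point $(b_\infty^{-2}g_0, b_\infty)$ one has $\phi_1 = (4-n)\,b_\infty^2$, which is nonzero in general, so $\nabla\cF$ itself is \emph{not} small there, and any hope of controlling $\nabla_1\cF$ by the three small quantities must exploit the Lagrange-multiplier flexibility. The trace identity $\phi_1 = (4-n) w^2 - \Tr(J_1)$ realigns the large piece $(4-n) w^2$ with $\nabla A_1$, so a single constant $\lambda$ absorbs it, leaving only the oscillation of $w^2$ (controlled by $\sup|\nabla w|$) and the trace of the Einstein deficit.
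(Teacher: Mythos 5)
Your proposal is correct and follows essentially the same route as the paper: you split $(2-n)\,\nabla \cF$ into $\phi_1\,\nabla A_1 + (J,0)$, use the freedom in the constrained projection to remove the non-small constant part of $\phi_1$, and bound what remains pointwise by $\left| \Ric_g - (n-2) w^2 g\right|$, $\left|\Hess_w\right|$ and $|\nabla w|$ — your ``choose any $\lambda$'' trick together with the trace identity $\phi_1 = (4-n)w^2 - \Tr(J_1)$ is exactly the explicit form of the paper's step that bounds the projected $\phi_1\,\nabla A_1$ term by the oscillation of $\phi_1$. The only cosmetic difference is that you compute gradients in the $(g,w)$-inner product rather than the fixed background one with the $\Psi$ and $\nu$ factors; in the small $C^{2,\beta}$ neighborhood the two inner products are uniformly equivalent, so the resulting bound on the constrained gradient differs only by the constant $C$.
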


\begin{proof}
Within this proof, we will write $| \cdot |$ for pointwise norms and $\| \cdot \|$ for $L^2$ norms, while $\langle \cdot , \cdot \rangle$ 
will be the $L^2$ inner product.

The   space $\cA_1$ is a level set of   $A_1$, so  
the projection $\nabla_1 \cF$ of the gradient $\nabla \cF$ is  
\begin{align}	\label{e:na1F}
	\nabla_1 \cF = \nabla \cF - \langle \nabla \cF , \nabla A_1 \rangle \, \frac{ \nabla A_1}{\|\nabla A_1\|^2} \, ,
\end{align}
where   Corollary \ref{c:gA1} gives that
\begin{align}
	\nabla A_1 = \left( \frac{1}{2} \, \Psi ({g}) , 1 \right) \, \nu \, .
\end{align}
By 
Corollary \ref{c:gradF}, 
the gradient of $\cF$ at $(g,w)$ is given by
\begin{align}
	(2-n) \, \nabla \cF = \phi_1 \,  \nabla A_1  
	+ \left( \Psi (J) ,  0 \right) \, \nu 
	\, .
\end{align}
Here   $\nu$, $\phi_1$  and $J = J_1 + J_2  $ are given by
\begin{align}
	\nu &= \frac{ w \, \sqrt{ \det (g)}}{b_{\infty} \, \sqrt{\det (b_{\infty}^{-2} \, g_0)}} \, , \\
	\phi_1 & = 3\, w^2  - \frac{R_g}{n-2} \, , \\
	J_1 &= \frac{\Ric_g}{n-2} - w^2 \, g \, , \\
	J_2  &= \frac{1}{n-2} \, \left(  \frac{\Delta w}{w} \, g - \frac{ \Hess_w}{w} \right) 
	 \, .
\end{align}

Since $\Psi$ is a bounded operator, $w$ is bounded above and below, and $\nu$ is bounded, we get the pointwise bound
\begin{align}	\label{e:t2a}
	\left| \left( \Psi (J) ,  0 \right) \, \nu \right| \leq C \,  |J|   \leq C \, \left( \left| \Ric_g - (n-2) w^2 \, g \right| + 
	\left| \Hess_w \right|  
	\right) \,  .
\end{align}
To bound  $\nabla_1 \cF$, we   combine the above with a bound on the projection of $ \phi_1 \,  \nabla A_1  $ given by
\begin{align}		\label{e:t1a}
	 \phi_1 \,  \nabla A_1 - \langle \phi_1 \,  \nabla A_1 , \nabla A_1 \rangle \, \frac{ \nabla A_1}{\|\nabla A_1\|^2} 
	 =  \left( \phi_1  - 
	 \langle \phi_1 \,  \frac{ \nabla A_1}{\|\nabla A_1\|} , \frac{ \nabla A_1}{\|\nabla A_1\|} \rangle   \right) \, \nabla A_1 \, .
\end{align}
However, since $\nabla A_1$ is bounded, we can bound this by
\begin{align}		\label{e:t1a}
	C\,  \left| \phi_1  - 
	 \frac{ \int_N \phi_1 \left| \nabla A_1 \right|^2 }{\int_N |\nabla A_1|^2}     \right|  \leq C \, \left( \sup \phi_1 - \inf \phi_1 \right) 
	 \, .
\end{align}
Using the definition of $\phi_1$, we can bound this by a multiple of the supremum $|\nabla w| + 
\left| \Ric_g - (n-2) w^2 \, g \right|$.
\end{proof}

\begin{proof}[Proof of Proposition \ref{p:prop4}]
Set $g = R^{-2} \, g_R$, where $g_R$ is the induced metric on the level set $b = R$ and set $w = |\nabla b|$, where $\nabla$ is the gradient in $M$; $\nabla^T$ will denote the tangential gradient on
the level set.  We can assume that $g$ is close to $b_{\infty}^{-2} \, g_0$ and $w$ is close to $b_{\infty}$.  

It follows from Lemma
\ref{l:boundgF} that
\begin{align}	\label{e:p4r}
	\left| \nabla_1 \cF \right| \leq C \,  \sup  \left(   \left| \Ric_g - (n-2) w^2 \, g \right| + |\nabla_g w|_g + \left| \Hess_{w,g} \right|_g 
	 \right) \, .
\end{align}
 
To complete the proof, we will show that the right hand side of \eqr{e:p4r}
can be bounded by the scale-invariant $C^1$ norm of the trace-free Hessian $B_b$ of $b^2$ and then appeal to Theorem \ref{t:meanval}. 
The first observation is that at $b=R$
\begin{align}
	|\nabla_g w|_g^2 = R^2 \, \left| \nabla^T w \right|^2 = R^2 \, \left| \nabla^T |\nabla b| \right|^2 = \frac{1}{4} \, \left| (B_b (\nn))^T \right|^2 \, , 
\end{align}	
so we see that  $|\nabla_g w|_g$ is bounded by the $C^0$ norm of trace-free Hessian of $b^2$.  Similarly, differentiating
the equation
$2R \, \nabla^T |\nabla b|  = B_b (\nn)$ shows  that the tangential Hessian of $w$ is bounded by the $C^1$ norm of $B_b$.
Finally, Lemma \ref{l:riccitan} gives the desired bound on
$ \left| \Ric_g - (n-2) w^2 \, g \right| $.

\end{proof}

\subsection{The proof of property (5)}

We will let $g_R$ denote the induced metric on the level set $\{ b = R\}$  in the manifold $M$.  The main result in this section is the following proposition which verifies property (5):

\begin{Pro}	\label{p:p5}
There exists $C$ so that
\begin{align}
	A(R) \leq \cF (  R^{-2} \, g_R , |\nabla b| )  +  C \,
 \int_{\frac{R}{2} \leq b \leq \frac{3\,R}{2}}  b^{-n}\,  \left| \Hess_{b^2} - \frac{\Delta b^2}{n} \, g
	   \right|^2    
	 \,   .
\end{align}
\end{Pro}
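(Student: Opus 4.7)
The approach is to expand $A(R)-\cF(R^{-2}g_R,|\nabla b|)$ into a single integral over $\{b=R\}$ of a quantity involving the trace-free Hessian $B_b := \Hess_{b^2} - \frac{\Delta b^2}{n}\, g$. This integrand splits into a linear part in $B_b$ that turns out to be non-positive thanks to the monotonicity of $A$, and a quadratic part that is controlled pointwise by $|B_b|^2$ and then converted to the $L^2$ bound via Theorem~\ref{t:meanval}.

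First I would compute, using the formula for $\cF$ from the introduction and Corollary~\ref{c:wtd}, the algebraic identity
\[
A(R)-\cF(R^{-2}g_R,|\nabla b|) \;=\; \frac{R^{1-n}}{(n-2)^2}\,\int_{b=R}|\nabla b|\,\bigl[(n-1)(n-2)|\nabla b|^2 - R^2\,R_{b=R}\bigr]\,d\sigma_R.
\]
Next, since $M$ is Ricci flat, the Gauss equation reduces to $R_{b=R}=H^2-|\II|^2$, where $H,\II$ are the mean curvature and second fundamental form of $\{b=R\}$ with outward unit normal $\nn=\nabla b/|\nabla b|$. Using $\II=\Hess_b/|\nabla b|$ on the tangent space $T\Sigma$ together with $\Hess_{b^2}=2|\nabla b|^2 g + B_b$ (which uses $\Delta b^2=2n|\nabla b|^2$), one computes on $T\Sigma$
\[
\II \;=\; \frac{|\nabla b|}{b}\,g|_{T\Sigma} \;+\; \frac{1}{2b\,|\nabla b|}\,B_b|_{T\Sigma}.
\]
The trace-free condition $\Tr_g B_b=0$ gives $\Tr_{T\Sigma} B_b = -B_b(\nn,\nn) = -B_b(\nabla b,\nabla b)/|\nabla b|^2$. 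A direct computation of $H^2-|\II|^2$ then yields
\[
(n-1)(n-2)|\nabla b|^2 - R^2\,R_{b=R} \;=\; (n-2)\,\frac{B_b(\nabla b,\nabla b)}{|\nabla b|^2} \;+\; Q_2,\qquad |Q_2|\le C\,|B_b|^2.
\]

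The crucial step is that the linear-in-$B_b$ contribution to $A(R)-\cF$ is non-positive. By Lemma~\ref{l:k1}, $B_b(\nabla b)=b\,\nabla|\nabla b|^2$, so at $b=R$ the integrand $B_b(\nabla b,\nabla b)/|\nabla b|$ equals $R\,\partial_{\nn}|\nabla b|^2$, and the linear contribution becomes
\[
\frac{R^{2-n}}{n-2}\,\int_{b=R}\partial_{\nn}|\nabla b|^2\,d\sigma_R.
\]
To evaluate this integral I would apply the divergence theorem to $|\nabla b|^2\,\nabla b$ on $\{b\le R\}$ (the boundary term at the pole vanishes since $|\nabla b|\to 1$ there); combining with $\Delta b=(n-1)|\nabla b|^2/b$ and co-area gives
\[
R^{n-1}A(R) \;=\; \int_{b\le R}\Bigl[\tfrac{B_b(\nabla b,\nabla b)}{b}+\tfrac{(n-1)|\nabla b|^4}{b}\Bigr]\,dV,
\]
and differentiating in $R$ yields $\int_{b=R}\partial_{\nn}|\nabla b|^2\,d\sigma_R = R^{n-1}A'(R)$. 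Consequently the linear contribution equals $\frac{R\,A'(R)}{n-2}$, which is $\le 0$ by the monotonicity of $A$ from \cite{C2}, and may simply be discarded.

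Finally, the quadratic contribution satisfies $|Q_2|\le C|B_b|^2$; since $|\nabla b|$ is pinched near $b_\infty$ on the annulus and $\mathrm{Area}(\{b=R\})\sim R^{n-1}$, this contribution is at most $C\cdot\max_{\{b=R\}}|B_b|^2$. Theorem~\ref{t:meanval} then bounds this $C^0$ norm by $\int_{R/2\le b\le 3R/2} b^{-n}|B_b|^2$, which is exactly the right-hand side of the proposition. The main obstacle is the linear term: a Cauchy--Schwarz bound on $\int|B_b|$ would only be linear in $|B_b|$, far too weak; the sign cancellation via $A'(R)\le 0$ is what makes the estimate work.
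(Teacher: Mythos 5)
Your proposal is correct and follows essentially the same route as the paper: expand $A(R)-\cF$ via the Gauss equation for the level set (the paper's Lemmas \ref{l:cFonR} and \ref{l:R1}), recognize the term linear in $B_b$ as $\tfrac{R\,A'(R)}{n-2}\le 0$ using $B_b(\nabla b)=b\,\nabla|\nabla b|^2$ and monotonicity, and absorb the quadratic remainder with Theorem \ref{t:meanval}. The only difference is cosmetic: you re-derive the identity $\int_{b=R}\partial_{\nn}|\nabla b|^2=R^{n-1}A'(R)$ (which the paper simply asserts) and lump all quadratic terms into one $O(|B_b|^2)$ error rather than tracking the sign of the $|B_b(\nn)|^2$ term, which is harmless.
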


The next lemma expresses 
$\cF (  R^{-2} \, g_R , |\nabla b| )  $ in terms of $A(R)$ and an integral that vanishes when $B_b$ is zero.
This must be  since $\cF$ and $A$ agree on  cones.  To prove the proposition, we must 
show that the error terms  either have the right sign or
are at least quadratic in $B_b$.

\begin{Lem}	\label{l:cFonR}
We can write $\cF (R^{-2} \, g_R , |\nabla b|)$ as
\begin{align}
	 A(R) + \frac{R^{1-n}}{n-2} \int_{b=R}  \left\{ -     \, B_b(\nn , \nn)   + 
	  \frac{ 2\, \left| B_b(\nn)  \right|^2 - \left| B_b \right|^2 }{4(n-2) \, |\nabla b|^2}
	     \right\} |\nabla b|  \, .
\end{align}
\end{Lem}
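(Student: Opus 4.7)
The plan is to evaluate $\cF$ on $(R^{-2}g_R,|\nabla b|)$ using the reduced formula for $\cF$ already recorded in the introduction, reduce the difference $\cF - A(R)$ to an integral involving the intrinsic scalar curvature $R_{g_R}$ of the level set, and then invoke the Gauss equation together with the Ricci-flatness of $M$ to rewrite everything algebraically in terms of $B_b = \Hess_{b^2} - \frac{\Delta b^2}{n}\,g$.

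Concretely, starting from
\begin{align}
\cF(r)=\frac{r^{1-n}}{2-n}\int_{b=r}\left(|\nabla b|^{2}-\frac{r^{2}R_{b=r}}{n-2}\right)|\nabla b|
\end{align}
together with $A(R)=R^{1-n}\int_{b=R}|\nabla b|^{3}$, a routine algebraic rearrangement (multiplying out the factor $1/(2-n)$ and collecting coefficients of $|\nabla b|^3$) gives
\begin{align}
\cF(R^{-2}g_{R},|\nabla b|)-A(R)=\frac{R^{1-n}}{n-2}\int_{b=R}\left(\frac{R^{2}R_{g_{R}}}{n-2}-(n-1)|\nabla b|^{2}\right)|\nabla b|.
\end{align}
Thus the lemma reduces to the pointwise identity
\begin{align}
\frac{R^{2}R_{g_{R}}}{n-2}-(n-1)|\nabla b|^{2}=-B_{b}(\nn,\nn)+\frac{2|B_{b}(\nn)|^{2}-|B_{b}|^{2}}{4(n-2)|\nabla b|^{2}}
\end{align}
at every point of $\{b=R\}$.

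To prove this identity I would apply the Gauss equation for the hypersurface $\{b=R\}\subset M$: because $M$ is Ricci-flat, both the ambient scalar curvature and $\Ric_{M}(\nn,\nn)$ vanish, so $R_{g_{R}}=H^{2}-|A|^{2}$, where $A$ and $H$ are the second fundamental form and mean curvature with respect to $\nn=\nabla b/|\nabla b|$. From $\Hess_{b^{2}}=2b\,\Hess_{b}+2\,db\otimes db$ and the definition of $B_{b}$, restricting to tangent vectors $X,Y$ yields
\begin{align}
A(X,Y)=\frac{\Hess_{b}(X,Y)}{|\nabla b|}=\frac{1}{2R|\nabla b|}\bigl(B_{b}(X,Y)+2|\nabla b|^{2}\langle X,Y\rangle\bigr).
\end{align}

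Finally I would compute $H^{2}$ and $|A|^{2}$. Since $B_{b}$ is trace-free in the ambient metric, its tangential trace equals $-B_{b}(\nn,\nn)$, giving $H=\frac{-B_{b}(\nn,\nn)}{2R|\nabla b|}+\frac{(n-1)|\nabla b|}{R}$. For $|A|^{2}$ the crucial combinatorial step, obtained by expanding $|B_{b}|^{2}$ in an orthonormal frame $\{e_{1},\dots,e_{n-1},\nn\}$, is
\begin{align}
|B_{b}|_{\Sigma}^{2}=|B_{b}|^{2}-2|B_{b}(\nn)|^{2}+B_{b}(\nn,\nn)^{2}.
\end{align}
Substituting these into $H^{2}-|A|^{2}$, the $B_{b}(\nn,\nn)^{2}$ terms cancel, the $|\nabla b|^{4}$ contributions collapse to $(n-1)(n-2)|\nabla b|^{2}/R^{2}$, and the remaining cross terms combine to give precisely the claimed right-hand side. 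The only real obstacle is bookkeeping: the Gauss equation produces several competing terms in $H^2$ and $|A|^2$, and one must track signs and use trace-freeness of $B_b$ in exactly the right places, but no analytic input beyond Ricci-flatness and the definition of $B_b$ is needed.
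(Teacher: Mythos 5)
Your proposal is correct and is essentially the paper's argument: the paper likewise evaluates $\cF$ at $(R^{-2}g_R,|\nabla b|)$ via the scalar-curvature formula and reduces the lemma to the pointwise identity of Lemma \ref{l:R1}, which is proved in the appendix from the Gauss equation ($R_{g^T}=H^2-|\II|^2$ by Ricci-flatness), the expression of the second fundamental form through $\Hess_{b^2}=B_b+2\,|\nabla b|^2 g$, and the orthonormal-frame decomposition of $|B_b|^2$. The only difference is organizational: you redo that appendix computation in-line, working with the full $\II$ rather than splitting into $\II_0$ and $H$ as the paper does, which yields the same cancellations.
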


\begin{proof}
   We have 
\begin{align}
	A(R) = R^{1-n} \,  \int_{b=R} |\nabla b|^3 \, .
\end{align}
On the other hand, we have
\begin{align}
	\cF (R^{-2} \, g_R , |\nabla b|) =
	 \frac{1}{n-2} \, R^{1-n} \,  \int_{b=R} \left\{ \frac{ R^2 \, R_R }{(n-2)} -|\nabla b|^2    \right\} \, |\nabla b| \, , 
\end{align}
where   the scalar curvature $R_R$ of the level set 
is given by Lemma \ref{l:R1}
   \begin{align}
 	  b^2 \, |\nabla b|^2 \, R_R &  = 
	  (n-1)(n-2) \, |\nabla b|^4 -  (n-2) |\nabla b|^2 \, B_b(\nn , \nn)   - 
	\frac{1}{4} \,  \left| B_b \right|^2 
	  +  \frac{1}{2} \, \left| B_b(\nn)  \right|^2  
	 \, .
 \end{align}
 We see that at $b=R$
  \begin{align}
 	 \frac{ R^2 \, R_R }{(n-2)} -|\nabla b|^2 &=  (n-2) \, |\nabla b|^2 -     \, B_b(\nn , \nn)   + 
	  \frac{ 2\, \left| B_b(\nn)  \right|^2 - \left| B_b \right|^2 }{4(n-2) \, |\nabla b|^2} 
	 \, .
 \end{align}
After dividing by $(n-2)$ the first term on the right gives us $A(R)$, giving the lemma.
 
\end{proof}

\begin{proof}[Proof of Proposition \ref{p:p5}]
Using Lemma
\ref{l:cFonR}, we can write 
$\cF (R^{-2} \, g_R , |\nabla b|)$
as
\begin{align}
	 A(R) + \frac{R^{1-n}}{n-2} \int_{b=R}  \left\{ -     \, B_b(\nn , \nn)   + 
	  \frac{ 2\, \left| B_b(\nn)  \right|^2 - \left| B_b \right|^2 }{4(n-2) \, |\nabla b|^2}
	      \right\} |\nabla b|  \, .
\end{align}
Since $  |\nabla b| \, B_b(\nn , \nn) = b\, \langle \nabla |\nabla b|^2 , \nn \rangle$, 
we see that
\begin{align}
	R^{1-n} \, \int_{b=R}  \left\{ -     \, B_b(\nn , \nn)     \right\} |\nabla b| = -
	 R^{2-n} \, \int_{b=R} \langle \nabla |\nabla b|^2 , \nn \rangle = -  R\,  A'(R) \geq 0 \, .
\end{align}
Substituting this back into \eqr{e:fromcF} and throwing away the (only helpful) $\left| B_b (\nn) \right|^2$ term gives
\begin{align}	\label{e:fromcF}
	\cF (  R^{-2} \, g_R , |\nabla b|)- A(R) &\geq - \frac{R^{1-n}}{n-2} \int_{b=R}  \left\{ 
	  \frac{  \left| B_b \right|^2 }{4(n-2) \, |\nabla b|^2}
	   \right\} |\nabla b|   \, .
\end{align}
We conclude that
\begin{align}
	A(R) \leq \cF (R^{-2} \, g_R , |\nabla b| )  + C \, R^{1-n} \, \int_{b=R} \left| \Hess_{b^2} - \frac{\Delta b^2}{n-1} \, g \right|^2 
	 \,   .
\end{align}
Finally, the proposition follows by using Theorem \ref{t:meanval} to estimate the last term.
\end{proof}

\section{Second variation of $\cF$ and the linearization of the gradient of $  \cF$}

The rest of the paper will be devoted to proving the  Lojasiewicz-Simon inequality (3) for $\cF$.   We will need to understand the linearization $L_{\cF}$ of the gradient $\nabla_1 \cF$ of 
the functional $\cF$ restricted to $\cA_1$.  This is  equivalent to understanding the second variation of $\cF$.  
The operator $L_{\cF}$ will behave quite differently on different subspaces of variations, just as for the second variation of the classical Einstein-Hilbert scalar curvature functional.

\vskip2mm
  Throughout this section, we will assume that
 \begin{align}
 	(b_{\infty}^{-2} \, g_0 + t \, h, b_{\infty} \, \e^{tv_t}) \in \cA_1
\end{align}
 is a  variation.
As in the previous section, $g_0$ is an Einstein metric with $\Ric_{g_0} = (n-2) \, g_0$ and $b_{\infty}$ is a positive constant.
Where it is clear, we will omit the subscript $t$ from $g$ and $v$.

We will first compute the second variations of $A$ and $B$  and then combine these to get
 the second variation of $\cF$ on two important subspaces.  Roughly speaking, this will determine the two on-diagonal blocks of $L_{\cF}$.  In the last subsection, we will show that the remaining (off-diagonal) blocks of $L_{\cF}$ vanish.

\subsection{The second variation of $A$}

\begin{Lem}	\label{l:A2}
The second variation $A''  = \frac{d^2}{dt^2} \big|_{t=0} \, A(b_{\infty}^{-2} \, g_0 + th , b_{\infty} \, \e^{tv_t})$ is
\begin{align}
	  b_{\infty}^3 \, \int_N \left\{ 4 \,v \,  \left( \frac{1}{2} \, \Tr (h) +2 v  \right) +
	   \left( \frac{1}{2} \, \Tr (h) +v  \right)^2  + 6   v' - \frac{|h|^2}{2} + \frac{\Tr (h')}{2}
	 \right\} \,    d\mu_{ b_{\infty}^{-2} \, g_0}  
	\, .
\end{align}
\end{Lem}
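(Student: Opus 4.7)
The plan is to compute $A''$ directly by differentiating $A(t) = \int_N w_t^3 \, d\mu_{g_t}$ twice under the integral. Fix the background $\bar g = b_\infty^{-2} g_0$ and write $g_t$ for the metric path, with $g_0 = \bar g$, $\dot g_0 = h$, and $\ddot g_0 = h'$. For the weight, $w_t = b_\infty \e^{t v_t}$, so applying the Leibniz rule to $e^{t v_t}$ gives
\begin{align}
w_0 = b_\infty, \qquad \dot w_0 = b_\infty v, \qquad \ddot w_0 = b_\infty \bigl(2 v' + v^2\bigr).
\end{align}
The main obstacle is just careful bookkeeping, since both factors (the weight and the volume form) depend on $t$ and $v_t$ itself depends on $t$; no deeper input beyond Proposition \ref{p:fvar1} is required.

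For the first derivative, Proposition \ref{p:fvar1} applied at time $t$ gives
\begin{align}
A'(t) = \int_N \Bigl\{ 3 w_t^2 \dot w_t + \tfrac{1}{2} w_t^3 \, \Tr_{g_t}(\dot g_t) \Bigr\} \, d\mu_{g_t}.
\end{align}
I then differentiate once more under the integral, using $(d\mu_{g_t})' = \tfrac{1}{2} \Tr_{g_t}(\dot g_t) \, d\mu_{g_t}$. The one nontrivial input is
\begin{align}
\frac{d}{dt}\Big|_{t=0} \Tr_{g_t}(\dot g_t) = (g_t^{ij})'|_0 \, h_{ij} + g^{ij}(\ddot g_t)_{ij}|_0 = -|h|_{\bar g}^2 + \Tr_{\bar g}(h'),
\end{align}
which uses $(g_t^{ij})'|_0 = -h^{ij}$.

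Collecting all contributions at $t=0$, the terms $6w (\dot w)^2$ and $3 w^2 \ddot w$ produce $6 b_\infty^3 v^2 + 3 b_\infty^3(v^2 + 2v')$; the two cross terms between $(w_t^3)'$ and $\Tr_{g_t}(\dot g_t)$ give $3 b_\infty^3 v \, \Tr(h)$; the derivative of $\Tr_{g_t}(\dot g_t)$ gives $\tfrac{1}{2} b_\infty^3 (\Tr(h') - |h|^2)$; and the term $w^3 \bigl(\tfrac{1}{2}\Tr h\bigr)^2$ contributes $\tfrac{1}{4} b_\infty^3 (\Tr h)^2$. Summing,
\begin{align}
A''(0) = b_\infty^3 \int_N \Bigl\{ 9 v^2 + 6 v' + 3 v \, \Tr(h) + \tfrac{1}{4}(\Tr h)^2 + \tfrac{1}{2}\Tr(h') - \tfrac{1}{2} |h|^2 \Bigr\} \, d\mu_{\bar g}.
\end{align}

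Finally, the algebraic identity
\begin{align}
3 v \, \Tr(h) + 9 v^2 + \tfrac{1}{4}(\Tr h)^2 = 4 v \bigl( \tfrac{1}{2}\Tr(h) + 2v \bigr) + \bigl( \tfrac{1}{2}\Tr(h) + v \bigr)^2
\end{align}
rewrites the integrand in the form stated in the lemma, completing the proof. No delicate cancellations or integrations by parts occur; the argument is purely a differentiation exercise organized around the first-variation formula.
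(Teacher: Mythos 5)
Your computation is correct and follows essentially the same route as the paper: both differentiate the first-variation formula of Proposition \ref{p:fvar1} once more at $t=0$, using $(\Tr (h))' = \Tr (h') - |h|^2$ and the weighted volume-form derivative, and your final integrand agrees exactly with the lemma after the algebraic regrouping. Your convention $\dot g_0 = h$, $\ddot g_0 = h'$, $\dot w_0 = b_{\infty} v$, $\ddot w_0 = b_{\infty}(2v'+v^2)$ is precisely the one the paper's proof uses implicitly, so there is no discrepancy.
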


\begin{proof}
To simplify notation, set $\bar{g} = b_{\infty}^{-2} \, g_0 + t h$.
Proposition \ref{p:fvar1} gives 
\begin{align}
	A'  (t)& = b_{\infty}^3 \, \int_N \left\{   \e^{2tv} \, \left( \frac{1}{2} \, \Tr (h) +v +t v' \right)  + 2   (v+t v') \, \e^{2tv}
	 \right\} \, \e^{tv} \, d\mu_{\bar{g }}
	\, .
\end{align}
At $t=0$, the term in curly brackets becomes
\begin{align}
	\left( \frac{1}{2} \, \Tr (h) +v  \right)  + 2   v \, .
\end{align}
Since we also have
\begin{align}
	 \left( \Tr (h) \right)' = \left( \bar{g}^{ij} h_{ij} \right)' = \Tr (h') - |h|^2 \, , 
\end{align}
differentiating $A$ a second time at $t=0$ gives
\begin{align}
	\frac{A''}{b_{\infty}^3} & =    \int_N \left\{   \left( \frac{1}{2} \, \Tr (h) +v  \right)^2  + 4   v    \,
	  \left( \frac{1}{2} \, \Tr (h) +v  \right) + \frac{ \Tr (h') - |h|^2}{2}     + 4v^2 + 6 v' 
	 \right\} \,    d\mu_{\bar{g }_0}   
	\, .
\end{align}
\end{proof}

\subsection{The second variation of $B$}

\begin{Lem}	\label{e:B2}
The second variation $B''  = \frac{d^2}{dt^2} \big|_{t=0} \, B(b_{\infty}^{-2} \, g_0 + th , b_{\infty} \, \e^{tv_t})$ is
\begin{align}
	B'' &= b_{\infty} \, \int_N \left\{ b_{\infty}^2 \,  (n-2) \, \left[ (n-1)  \, \left( \frac{1}{2} \, \Tr (h) + v \right) - 2\,  \Tr (h) \right] \,
	\left( \frac{1}{2} \, \Tr (h) + v \right)
	  \right.  \notag \\
	& \qquad  - \langle \nabla (\delta h) , h \rangle   
	 + \frac{1}{2} \langle \Delta h , h \rangle + \frac{1}{2} \langle \Hess_{\Tr h} , h \rangle +
	 R_{ikj\ell} h_{k\ell} h^{ij}  \notag \\
	 & \qquad +  \langle h , \Hess_v \rangle  -   \Tr (h) \, \Delta v
	 + \left(  \delta^2 h   - \Delta \Tr (h)
	\right)  \, \left( \frac{1}{2} \, \Tr (h) + v \right) \notag  \\
	&\left. \qquad + b_{\infty}^2 \, (n-2) \, \left( \frac{n-3}{2} \, (  \Tr (h') - |h|^2) +2\, (n-1)\, v'  \right)  \right\} \, d\mu_{b_{\infty}^{-2} \, g_0}
	\, .
\end{align}
\end{Lem}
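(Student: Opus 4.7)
The plan is to differentiate the first variation formula of Proposition \ref{p:fvar1} a second time, using its pre-integration-by-parts form
$$
B'(t) = \int_N \bigl\{R'_{g_t} + R_{g_t}\,\phi_t\bigr\}\, w_t\, d\mu_{g_t},
$$
where $\phi_t = \frac{1}{2}\Tr_{g_t}(h) + \partial_t\log w_t$ and $R'_{g_t} = -\langle \Ric_{g_t}, h\rangle_{g_t} + \delta^2_{g_t} h - \Delta_{g_t}\Tr_{g_t}(h)$. Applying Leibniz and using $\frac{d}{dt}(w_t\, d\mu_{g_t}) = \phi_t\, w_t\, d\mu_{g_t}$ yields the master identity
$$
B''(0) = \int_N \bigl\{R''_g + 2\, R'_g(h)\,\phi_0 + R_{\bar g}\,\phi_0' + R_{\bar g}\,\phi_0^2\bigr\}\, b_\infty\, d\mu_{\bar g},
$$
where $\bar g = b_\infty^{-2}g_0$, $\phi_0 = \frac{1}{2}\Tr(h) + v$, and a direct computation gives $\phi_0'(0) = -\frac{1}{2}|h|^2 + \frac{1}{2}\Tr(h') + 2v'$.

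Next, I would substitute the Einstein data $R_{\bar g} = b_\infty^2(n-1)(n-2)$ and $\Ric_{\bar g} = b_\infty^2(n-2)\bar g$. The $R_{\bar g}\,\phi_0^2$ term, together with the $-\langle\Ric_{\bar g},h\rangle\,\phi_0$ piece of $2R'_g(h)\phi_0$ (which equals $-2b_\infty^2(n-2)\Tr(h)\cdot\phi_0$), combine into the first-line expression $b_\infty^2(n-2)[(n-1)\phi_0 - 2\Tr(h)]\phi_0$. The $R_{\bar g}\,\phi_0'$ contribution supplies the $2(n-1)v'$ term and part of the $(\Tr(h') - |h|^2)$ term on the last line. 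The leftover $2(\delta^2 h - \Delta\Tr(h))\phi_0$ from $2R'_g(h)\phi_0$ is handled by splitting $\phi_0 = \frac{1}{2}\Tr(h) + v$: one copy of the $v$-weighted factor is integrated by parts (exactly as in the proof of Proposition \ref{p:fvar1}) to yield $\langle h, \Hess_v\rangle - \Tr(h)\,\Delta v$, while the remaining unintegrated copy becomes the term $(\delta^2 h - \Delta\Tr(h))(\frac{1}{2}\Tr(h) + v)$ of the statement.

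The final ingredient is to expand $R''_g$, the second variation of scalar curvature, and integrate against the constant weight $b_\infty\, d\mu_{\bar g}$. Differentiating each factor of $R'_g$ once more in $t$ produces the linearization of the Ricci tensor, the derivative of the inverse metric inside the contraction $\langle\Ric,h\rangle_g$, and the variation of the operators $\delta^2$ and $\Delta\Tr$ acting on the fixed tensor $h$. After invoking the Einstein condition, the second Bianchi identity (to eliminate stray $\nabla\Ric$ contributions), and repeated integration by parts, these pieces collapse into $-\langle\nabla(\delta h),h\rangle + \frac{1}{2}\langle\Delta h, h\rangle + \frac{1}{2}\langle\Hess_{\Tr h}, h\rangle + R_{ikj\ell}h_{k\ell}h^{ij}$, plus the remaining share of the $\frac{n-3}{2}(\Tr(h') - |h|^2)$ coefficient. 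The principal obstacle is bookkeeping: these terms arise from several independent sources, and matching the output precisely to the claimed form depends on the Einstein identity to cancel residual Ricci contractions and on tracking which copies of second-order derivatives of $h$ are integrated by parts versus left in raw form.
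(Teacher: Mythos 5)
Your strategy is essentially the paper's: differentiate the first variation from Proposition \ref{p:fvar1} once more in $t$, use $\frac{d}{dt}(w\,d\mu_g)=\phi\,w\,d\mu_g$, and feed in the linearization formulas at the Einstein metric. The one real difference is your starting point: you differentiate the \emph{pre}-integration-by-parts form, keeping $R_g'=-\langle\Ric,h\rangle+\delta^2h-\Delta\Tr(h)$ with the derivatives on $h$, whereas the paper differentiates the form in which $\delta^2h$ and $\Delta\Tr(h)$ have already been thrown onto the weight ($\langle h,\Hess_w/w\rangle-\Tr(h)\Delta w/w$). The paper's choice buys a concrete simplification: since $w_0=b_\infty$ is constant, $\Hess_{w}$ and $\Delta w$ vanish at $t=0$, so the $t$-variation of the second-order operators never enters and no genuine second variation of scalar curvature has to be computed. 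In your route that term does enter: when you expand $R''_g$ you must include the variation of the operators $\delta^2_{g_t}$ and $\Delta_{g_t}\Tr_{g_t}$ acting on $h$, whose integral against the constant weight is $\tfrac12\int\langle\nabla\Tr(h),\delta h-\nabla\Tr(h)\rangle=-\tfrac12\int(\delta^2h-\Delta\Tr h)\,\Tr(h)$ after integration by parts. This is exactly the piece your sketch leaves unaccounted for: splitting $2(\delta^2h-\Delta\Tr h)\phi_0$ as you describe leaves an extra $+\tfrac12(\delta^2h-\Delta\Tr h)\Tr(h)$ beyond the terms appearing in the statement, and it is cancelled precisely by this operator-variation contribution hidden in $R''_g$ (not by the Einstein condition or Bianchi identity, which only clean up the Ricci terms). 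I checked that with this cancellation, and with your (correct, under the paper's convention $(\Tr h)'=\Tr(h')-|h|^2$) value $\phi_0'=2v'+\tfrac12(\Tr(h')-|h|^2)$, your master identity reproduces the stated formula, including the coefficient $\tfrac{n-3}{2}$; so the plan is sound, but you should make that one cancellation explicit rather than folding it silently into ``bookkeeping.''
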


\begin{proof}
To simplify notation, set $\bar{g} = b_{\infty}^{-2} \, g_0 + th$.
Proposition \ref{p:fvar1} gives that $ \frac{B' (t)}{b_{\infty}}$ is
\begin{align}	\label{e:4brs}
		  \int_N \left\{   - \langle \Ric_{\bar{g}} , h \rangle +
		  \langle h , \frac{ \Hess_{\e^{tv}}}{\e^{tv}} \rangle   - 
		 \Tr (h)\frac{\Delta \e^{tv}}{\e^{tv}}  + 
	R_{\bar{g}} \, \left( \frac{\Tr (h) }{2} + v + t v' \right) \right\} \, \e^{tv} \, d\mu_{\bar{g}} \,  	\, .
\end{align}
At $t=0$, $\Ric_{\bar{g}_0} = b_{\infty}^2 \, (n-2) \, \bar{g}_0$ and the term in curly brackets is equal to
\begin{align}
	- b_{\infty}^2 \,  (n-2) \, \Tr (h)   + b_{\infty}^2 \, 
	(n-1) \, (n-2) \, \left( \frac{1}{2} \, \Tr (h) + v \right) \, .
\end{align}
Using Lemma \ref{l:topping} and
$\Ric_{\bar{g}_0} = b_{\infty}^2 \, (n-2) \, \bar{g}_0$,   we get at $t=0$:
 \begin{align}
 	\left( \bar{g}^{ij} \right)' &= - h^{ij}  \, , \\
	R_{\bar{g}}' &=  \delta^2 \, h  - \langle \Ric_{\bar{g}_0} , h \rangle   - \Delta \, \Tr (h) = \delta^2 \, h - b_{\infty}^2 \, (n-2) \, \Tr  (h) - \Delta \, \Tr (h)
	  \, , \\
	 	\Ric_{ij} ' &= \frac{1}{2} \, 
	\left( \nabla_i  (\delta \, h)_j +   \nabla_j   (\delta \, h)_{i} +
	\Ric_{ik} h_{jk} + \Ric_{jk} h_{ik} - \Delta h_{ij} 
	- \Hess_{\Tr \, h}
	\right) - R_{ikj\ell} h_{k\ell} \notag \\
	&
	 = \frac{1}{2} \, 
	\left( \nabla_i  (\delta \, h)_j +   \nabla_j   (\delta \, h)_{i} +
	2\, b_{\infty}^2 \, (n-2) \, h_{ij}  - \Delta h_{ij}
	- \Hess_{\Tr \, h}
	\right) - R_{ikj\ell} h_{k\ell}
	 \, , \\
	\left( \Hess_{\e^{tv}} \right)_{ij}' &= \Hess_v - \frac{1}{2} \, 
	\left(\nabla_i \left( \Hess_{\e^{tv}} \right)_{jk} + 
	\nabla_j \left( \Hess_{\e^{tv}} \right)_{ik} - \nabla_k \, \left( \Hess_{\e^{tv}} \right)_{ij}
	\right) \, \nabla_k {\e^{tv}} \notag  \\
	&=\Hess_v \, .
 \end{align}
 (In the formula for $\Ric'$, we work in an othonormal frame and ignore the difference between upper and lower indices after differentiating.)
 
 We also need the formula for $(\Delta \e^{tv} )'$.  This follows from the first and last formulas above since
 $\Delta w =\bar{ g}^{ij} \, (\Hess_w)_{ij}$ so that
 \begin{align}
 	(\Delta {\e^{tv}})' &=  \bar{g}_0^{ij} \, \left( \Hess_{\e^{tv}} \right)_{ij}' 
	= \Delta v \, .
 \end{align}

We will differentiate the four terms in curly brackets in \eqr{e:4brs}  at $t=0$.  The first is 
\begin{align}
	 \langle \Ric_{\bar{g}} , h \rangle' &=  \langle \Ric_{\bar{g}} ', h \rangle +  \langle \Ric_{\bar{g}} , h' \rangle 
	 - R_{ij} h_{k\ell} h^{ik} \bar{g}^{j\ell} -  R_{ij} h_{k\ell} \bar{g}^{ik} h^{j\ell}  \notag \\
	 &= \langle \Ric_{\bar{g}} ', h \rangle + b_{\infty}^2 \,  (n-2) \, \Tr (h') - 2b_{\infty}^2 \,  (n-2) \, |h|^2 \\
	 &= b_{\infty}^2 \,  (n-2) \, \Tr (h') - 2b_{\infty}^2 \,  (n-2) \, |h|^2 + \langle \nabla (\delta h) , h \rangle + b_{\infty}^2 \,  (n-2) \, |h|^2 \notag \\
	 &\qquad 
	 - \frac{1}{2} \langle \Delta h , h \rangle - \frac{1}{2} \langle \Hess_{\Tr h} , h \rangle -
	 R_{ikj\ell} h_{k\ell} h^{ij}
	 \notag \, .
\end{align}
Simplifying this gives
\begin{align}
	 \langle \Ric_{\bar{g}} , h \rangle' & 
	 = b_{\infty}^2 \,  (n-2) \, \left[ \Tr (h') -   |h|^2 \right]  + \langle \nabla (\delta h) , h \rangle   \notag \\
	 &\qquad 
	 - \frac{1}{2} \langle \Delta h , h \rangle - \frac{1}{2} \langle \Hess_{\Tr h} , h \rangle -
	 R_{ikj\ell} h_{k\ell} h^{ij}
	  \, .
\end{align}
Since $\Hess_{\e^{tv}}$ vanishes at $t=0$, 
differentiating the second term gives
\begin{align}
	\left(  \langle h , \frac{ \Hess_{\e^{tv}}}{\e^{tv}} \rangle \right)' &= 
  \langle h ,  \Hess_{\e^{tv}} ' \rangle = \langle h , \Hess_v \rangle \, .
 \end{align}
 Similarly, the third term is
 \begin{align}
		\left(   \Tr (h)\frac{\Delta \e^{tv}}{\e^{tv}} \right)' &=  \Tr (h) \, \left(\Delta \e^{tv} \right)' 
		= \Tr (h) \, \Delta v \, .
\end{align}
Finally, the last term is
\begin{align}
	\left( R_{\bar{g}} \, \left( \frac{1}{2} \, \Tr (h) + v + t v' \right) \right)' &= R_{\bar{g}}' \, \left( \frac{1}{2} \, \Tr (h) + v \right) 
	+b_{\infty}^2 \,  (n-1)(n-2) \, \left( \frac{1}{2} \, \Tr (h) + v +t v' \right)' \notag \\
	&=  \left\{  \delta^2 h - b_{\infty}^2 \, (n-2) \, \Tr (h) - \Delta \Tr (h)
	\right\}  \, \left( \frac{1}{2} \, \Tr (h) + v \right)  \\
	&\qquad + b_{\infty}^2 \, (n-1) (n-2)\, \left( \frac{1}{2} \, (  \Tr (h') - |h|^2) + 2 v'  \right)  \, . \notag
\end{align}
Combining all of this gives
\begin{align}
	B'' &= b_{\infty} \, \int_N  \left\{ b_{\infty}^2 \, (n-2)\, \left[  (n-1)  \left( \frac{1}{2} \, \Tr (h) + v \right) -   \Tr (h) \right] \,
	\left( \frac{1}{2} \, \Tr (h) + v \right)
	  \right.  \notag \\
	&- b_{\infty}^2 \, 
	(n-2) \, \left( \Tr (h') -  |h|^2 \right)  - \langle \nabla (\delta h) , h \rangle   
	 + \frac{1}{2} \langle \Delta h , h \rangle + \frac{1}{2} \langle \Hess_{\Tr h} , h \rangle +
	 R_{ikj\ell} h_{k\ell} h^{ij}  \notag \\
	 &+  \langle h , \Hess_v \rangle  -   \Tr (h) \, \Delta v
	 + \left(  \delta^2 h - b_{\infty}^2 \,  (n-2) \, \Tr (h) - \Delta \Tr (h)
	\right)  \, \left( \frac{1}{2} \, \Tr (h) + v \right) \notag  \\
	&\left. \qquad + b_{\infty}^2 \,  (n-1)(n-2) \, \left( \frac{1}{2} \, (  \Tr (h') - |h|^2) + 2v'  \right)  \right\} \, d\mu_{\bar{g}}
	\, .
\end{align}
Simplifying this completes the proof.

\end{proof}

\subsection{The constraint on the variation}

Since the variation $ (b_{\infty}^{-2} \, g_t , b_{\infty} \, \e^{tv_t})$  is in
$ \cA_1$, there are constraints on $h, h', v$ and $v'$.  The next lemma records this.

\begin{Lem}	\label{l:constraint}
At $t=0$, we have that
\begin{align}
	\int_N \left\{ \frac{1}{2} \, \Tr (h) +v
	\right\} \, d\mu_{b_{\infty}^{-2} g_0} &= 0 \, , \\
	\int_N  \left\{ \left(\frac{1}{2} \, \Tr (h) +v \right)^2
	 + \frac{1}{2} \,   \Tr (h')  -  \frac{1}{2} |h|^2  + 2v'   
	\right\} \, d\mu_{b_{\infty}^{-2} g_0} &= 0 \, .
\end{align}
\end{Lem}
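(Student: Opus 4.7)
The plan is to read both identities directly off the defining constraint of $\cA_1$, which forces the scalar function
\begin{align}
A_1(t) := \int_N b_\infty\,\e^{tv_t}\,d\mu_{g_t}
\end{align}
to equal the constant $\Vol(\partial B_1(0))$ for all $t$ near $0$. Consequently $A_1'(0) = 0$ and $A_1''(0) = 0$, and each of these relations, divided through by $b_\infty$, is exactly one of the identities in the lemma.

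The first identity, $A_1'(0)/b_\infty = 0$, is just the specialization of Lemma \ref{l:obvious} to $(g,w) = (b_\infty^{-2} g_0, b_\infty)$, whose constant weight factors out of the integral. For the second, I would Taylor expand the integrand $w_t\,d\mu_{g_t}$ to second order and apply the Leibniz rule. The weight contributes $\dot{w}_t|_0 = b_\infty v$ and $\ddot{w}_t|_0 = b_\infty(v^2 + 2v')$, which follow from $\log(w_t/b_\infty) = tv + t^2 v' + O(t^3)$ after expanding $v_t = v + tv' + O(t^2)$. The volume form contributes $(d\mu_{g_t})'|_0 = \tfrac{1}{2}\Tr(h)\,d\mu_{b_\infty^{-2} g_0}$ together with
\begin{align}
(d\mu_{g_t})''|_0 = \Big[\tfrac{1}{4}(\Tr h)^2 - \tfrac{1}{2}|h|^2 + \tfrac{1}{2}\Tr(h')\Big]\,d\mu_{b_\infty^{-2} g_0},
\end{align}
obtained from the pointwise expansion of $\sqrt{\det((b_\infty^{-2}g_0)^{-1} g_t)}$ with the convention $\dot{g}_t|_0 = h$, $\ddot{g}_t|_0 = h'$ already in force in Lemmas \ref{l:A2} and \ref{e:B2}. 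Assembling via the Leibniz rule and completing the square $v^2 + v\,\Tr(h) + \tfrac{1}{4}(\Tr h)^2 = (v + \tfrac{1}{2}\Tr h)^2$ produces exactly the second integrand.

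There is no substantive analytic obstacle — the computation is a routine second-order Taylor expansion of the volume functional $A_1$. The only point to watch is the bookkeeping convention that ``$h'$'' denotes $\ddot g_t|_0$ rather than $\dot h_t|_0$; this is unambiguous because the same $\tfrac{1}{2}\Tr(h')$ coefficient appears in Lemmas \ref{l:A2} and \ref{e:B2}, providing an internal consistency check. Indeed, the second identity can alternatively be read off by replacing the factor $w^3$ in the definition of $A$ by $w$ in the proof of Lemma \ref{l:A2}, which drops exactly the weighted contributions arising from the extra $w^2$.
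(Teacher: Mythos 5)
Your proposal is correct and follows essentially the same route as the paper: since the path lies in $\cA_1$, the weighted volume $A_1(t)$ is constant, and the two identities are precisely $A_1'(0)=0$ and $A_1''(0)=0$, computed by expanding the weight and the volume form (your second-order bookkeeping, including the $\tfrac12\Tr(h')$, $-\tfrac12|h|^2$, and $2v'$ terms, reproduces the paper's formula exactly). The paper's proof is just a more compressed version of the same differentiation of $A_1$.
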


\begin{proof}
Set $\bar{g} = b_{\infty}^{-2} \, g_0 + th$.  
Since the path $(\bar{g} , b_{\infty} \, \e^{tv_t})$ is contained in $\cA_1$, the integral
 \begin{align}
 	A_1 (t) \equiv \int_N  \e^{tv} \, d\mu_{\bar{g}} 
\end{align}
is constant in $t$.  Differentiating this gives
\begin{align}	 
	0 = A_1  '(t) &=  \int_N \left( \frac{1}{2} \, \bar{g}^{ij} \, h_{ij}   + v + t v' \right) \, \e^{tv} \, d\mu_{\bar{g}} \, .
\end{align}
This gives the first claim.  Differentiating $A_1$ a second time at $t=0$ gives
\begin{align}
	0 = A_1 ''(0) = \int_N \left\{  \left(\frac{1}{2} \, \Tr (h) +v \right)^2
	 + \frac{1}{2} \,   \Tr (h')  -  \frac{1}{2} h^{ij} \, h_{ij} + 2 v'    
  \right\}   \, d\mu_{\bar{g}} \, .
\end{align}
\end{proof}

\subsection{The transverse trace-less second variation}

The functional $\cF$ is given by
\begin{align}
	\cF \equiv \frac{1}{2-n} \,  \left(  A - \frac{B }{(n-2)}   \right)  \, .
\end{align} 
 Since we have computed the second variations of $A$ and $B$, we get $\cF''$ as a consequence.  It is useful to divide this  into two cases, depending on the     variation $h$ of the metric.  In this subsection, we will consider the   case where $h$ is ``transverse-traceless'', i.e., 
when   
\begin{align}	\label{e:tt}
	\delta h = 0 {\text{ and }}  \Tr \, h = 0 \, .
\end{align}
The next proposition computes the second variation for transverse trace-less variations.{\footnote{When we apply this later, we will have $v=0$.}}

\begin{Pro}	\label{p:sv1}
If $h$ satisfies \eqr{e:tt}, then 
the second variation is  
\begin{align}
	(2-n) \, \cF''  & = - b_{\infty}  \, \int_N \left\{ \frac{1}{2(n-2)} \langle \cL \, h , h \rangle
	-6  \, b_{\infty}^2 \, v^2 \right\} \, d \mu_{b_{\infty}^{-2} {g}_0} \, , 
\end{align}
where $\cL$ is the Lichnerowicz operator
\begin{align}
	\left( \cL \, h \right)_{ij} = \left( \Delta \, h\right)_{ij}  + 2\, 
	  R_{ikj\ell}  h_{k\ell}  \, . 
\end{align}
\end{Pro}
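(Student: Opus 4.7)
The plan is to simply combine the second variation formulas for $A$ and $B$ (Lemmas preceding this) with the constraint from Lemma \ref{l:constraint}, after aggressively using the transverse-traceless hypothesis $\Tr h = 0$ and $\delta h = 0$ to kill most of the terms.

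First, I would substitute $\Tr h = 0$ into the formula for $A''$. Every term containing $\Tr h$ as a factor vanishes, leaving
\begin{align*}
A'' = b_\infty^3 \int_N \Bigl\{ 8 v^2 + 6 v' - \tfrac{1}{2}|h|^2 + \tfrac{1}{2}\Tr(h') \Bigr\} \, d\mu_{b_\infty^{-2} g_0}.
\end{align*}
Next I would substitute both $\Tr h = 0$ and $\delta h = 0$ into $B''$. The terms $\langle \nabla (\delta h), h \rangle$, $\delta^2 h$, $\Delta \Tr h$, $\langle \Hess_{\Tr h}, h\rangle$, $\Tr(h)\Delta v$, and the bracket $[(n-1)(\tfrac{1}{2}\Tr h + v) - 2\Tr h]\tfrac{1}{2}\Tr h$ all drop out. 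The cross term $\int \langle h, \Hess_v\rangle$ vanishes by integration by parts, since $\int h^{ij}\nabla_i\nabla_j v = -\int (\delta h)^j \nabla_j v = 0$. What remains combines cleanly: the piece $\tfrac{1}{2}\langle \Delta h, h\rangle + R_{ikj\ell} h_{k\ell} h^{ij}$ is precisely $\tfrac{1}{2}\langle \cL h, h\rangle$. So
\begin{align*}
B'' = b_\infty \int_N \Bigl\{ b_\infty^2 (n-1)(n-2) v^2 + \tfrac{1}{2} \langle \cL h, h \rangle + b_\infty^2 (n-2)\bigl(\tfrac{n-3}{2}(\Tr h' - |h|^2) + 2(n-1) v' \bigr) \Bigr\} d\mu_{b_\infty^{-2} g_0}.
\end{align*}

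The third step is to eliminate the auxiliary derivatives $v'$ and $\Tr(h')$, which should not appear in a genuine second-variation expression on $\cA_1$. Lemma \ref{l:constraint} with $\Tr h = 0$ gives
\begin{align*}
\int_N \bigl(\tfrac{1}{2}\Tr(h') - \tfrac{1}{2}|h|^2 \bigr) d\mu = -\int_N (v^2 + 2 v') \, d\mu.
\end{align*}
I would apply this identity once in $A''$ and once (with an extra factor of $n-3$) in $B''$ to convert $\tfrac{n-3}{2}(\Tr h' - |h|^2)$ into $-(n-3)(v^2 + 2v')$. Then form the combination
$(2-n)\cF'' = A'' - B''/(n-2)$: the $v'$ terms cancel against each other, and the $v^2$ coefficients collapse via $b_\infty^2 \bigl( 8 - 2 - 0\bigr)$ on the $A$-side minus $b_\infty^2 \bigl( (n-1) - (n-3)\bigr) = 2 b_\infty^2$ on the $B$-side, leaving a net coefficient of $6 b_\infty^2$ for $v^2$. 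The Lichnerowicz piece contributes $-\tfrac{1}{2(n-2)}\langle \cL h, h\rangle$. This yields exactly the stated formula.

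The main bookkeeping obstacle is the simultaneous cancellation of the $v'$ terms and the collapse of the $v^2$ coefficients; both require using the constraint in precisely the right form and tracking the factor $(n-2)$ dividing $B''$. Everything else is mechanical: the key conceptual inputs are already in place (transverse-traceless kills the divergence-type and trace-type terms, integration by parts kills the off-diagonal $\Hess_v$ coupling, and the constraint trades $h'$ and $v'$ for $v^2$).
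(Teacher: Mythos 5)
Your strategy is exactly the paper's: substitute $\Tr h = 0$, $\delta h = 0$ into the second variation formulas for $A$ and $B$, kill $\int \langle h , \Hess_v\rangle$ by integrating by parts against $\delta h = 0$, use the second constraint identity of Lemma \ref{l:constraint} to trade $\Tr(h')$, $|h|^2$, $v'$ for $v^2$, and then form $A'' - B''/(n-2)$ (the paper applies the constraint once to the combined expression rather than separately in $A''$ and $B''$, but that is cosmetic). Your reduction of $B''$ is correct, including the identification of $\tfrac12\langle \Delta h , h\rangle + R_{ikj\ell}h_{k\ell}h^{ij}$ with $\tfrac12 \langle \cL h , h\rangle$.

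However, your reduction of $A''$ is wrong, and the error propagates into bookkeeping that only appears to close. Setting $\Tr h = 0$ in Lemma \ref{l:A2} does \emph{not} kill every term involving $\Tr h$: the square $\left(\tfrac12 \Tr h + v\right)^2$ becomes $v^2$, so the correct coefficient is
\begin{align}
A'' = b_\infty^3 \int_N \left\{ 9\, v^2 + 6\, v' - \tfrac12 |h|^2 + \tfrac12 \Tr(h') \right\} d\mu_{b_\infty^{-2} g_0} \, ,
\end{align}
not $8\,v^2$. If you then apply the constraint consistently, the $A$-side $v^2$ coefficient becomes $9-1=8$ and the $v'$ coefficient $6-2=4$; on the $B$-side (after dividing by $n-2$) you get $(n-1)-(n-3)=2$ for $v^2$ and $2(n-1)-2(n-3)=4$ for $v'$, so the $v'$ terms cancel and the net $v^2$ coefficient is $8-2=6$, as claimed. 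With your stated $8\,v^2$ and a consistent application of the constraint inside $A''$ (which subtracts a further $1$ from the $v^2$ coefficient there), you would land on $5\, b_\infty^2 v^2$, not $6\, b_\infty^2 v^2$; your ``$8-2=6$'' tally reaches the right answer only because you also dropped the $-v^2$ produced by the constraint substitution in $A''$, i.e.\ two slips that happen to compensate. Fix the coefficient $9$ in $A''$ and the rest of your argument goes through and reproduces the paper's proof.
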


\begin{proof}
Set $\bar{g} = b_{\infty}^{-2} \, g_0 + th$.   Since $\Tr (h) = 0$, 
Lemma \ref{l:A2}
gives  
\begin{align}
	A'' & = b_{\infty}^3 \, \int_N \left\{  
	   9 \,    v^2  + 6   v' - \frac{|h|^2}{2} + \frac{\Tr (h')}{2}
	 \right\} \,    d\mu_{\bar{g}_0}
	\, .
\end{align}
Since $\Tr (h) = 0$ and $\delta h = 0$, Lemma \ref{e:B2} gives
\begin{align}
	 B''  &= b_{\infty} \, \int_N \left\{ b_{\infty}^2 \, (n-1)(n-2) \, v^2  + \frac{1}{2} \langle \Delta h , h
	\rangle + R_{ikj\ell} h_{k\ell} h^{ij} \right. \notag \\
	&\left.   
	+ b_{\infty}^2 \, (n-2) \, \left( \frac{n-3}{2} \, (  \Tr (h') - |h|^2) + 2\, (n-1)\, v'  \right)
	\right\} d\mu_{\bar{g}_0} \, , 
\end{align}
where we have also used that $ \int \langle h , \Hess_v \rangle = - \int \langle \delta h , \nabla v \rangle = 0 $.
Combining the two formulas gives that
\begin{align}
	(2-n) \, \cF''  &=  A'' - \frac{B''}{(n-2)}      \notag \\
	&= b_{\infty}  \, \int_N \left\{ b_{\infty}^2 \,  (10-n)\,    v^2  +  b_{\infty}^2 \, (4-n)\,  \left[  2\, v' - \frac{ |h|^2}{2} + \frac{  \Tr (h')}{2}
	\right]
	      - \frac{\langle \Delta h , h
	\rangle}{2(n-2)}  \right. \notag \\
	& \left. - \frac{ R_{ikj\ell} }{n-1} \, h_{k\ell} h^{ij}    
	   \right\} \, d\mu_{\bar{g}_0} \, . 
\end{align}
We want to eliminate the $v'$ and $h'$ terms.  Lemma \ref{l:constraint}
gives that
\begin{align}
	\int_N \left\{  2v' - \frac{|h|^2}{2} + \frac{
	  \Tr (h')}{2}    
	\right\} \, d\mu_{\bar{g}_0} &= -  \int_N \left\{   v^2 
	\right\} \, d\mu_{\bar{g}_0}  \, .
\end{align}
Substituting this gives
\begin{align}
	(2-n) \, \cF''  & = b_{\infty}  \, \int_N \left\{  6 \,   b_{\infty}^2 \,  v^2  	      - \frac{\langle \Delta h , h
	\rangle}{2(n-2)}   - \frac{ R_{ikj\ell} }{n-2} \, h_{k\ell} h^{ij}    
	  \right\} \, d\mu_{\bar{g}_0} \, . 
\end{align}
\end{proof}

\subsection{The conformal second variation}

We suppose next that 
\begin{align}	\label{e:conf}
	h = \phi \, b_{\infty}^{-2} \, g_0 
\end{align}
 at $t=0$ for a function $\phi$, so that
\begin{align}
	\Tr \, h &= (n-1) \, \phi \, , \\
	(\delta \, h) &=  \nabla \phi \, , \\
	\nabla \delta h &= \Hess_{\phi} \, , \\
	\delta^2 \, h &= \Delta \phi \, , \\
	\Delta h &= (\Delta \phi) \, b_{\infty}^{-2} \, g_0 \, .
\end{align}

\begin{Thm}	\label{t:conformal}
If $h$ satisfies \eqr{e:conf}, then the second variation is
\begin{align}
	(2-n) \, \cF''  & = b_{\infty}  \, \int_N \left\{  
	 \frac{n-3}{2} \, \phi \, \left[ \Delta \phi + (n-1)b_{\infty}^2 \phi \right]
	 + 2(n-1)b_{\infty}^2 \phi v 
	 +   \phi \, \Delta v + v \Delta \phi \right. \notag \\
	 &\qquad  \qquad \left.     + 6b_{\infty}^2 \, v^2   
	 \right\} \, d\mu_{ b_{\infty}^{-2} \, g_0}
	 \, . 
\end{align}

\end{Thm}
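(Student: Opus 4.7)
The strategy is to specialize Lemmas \ref{l:A2} and \ref{e:B2} to the conformal ansatz $h=\phi\,b_\infty^{-2}g_0$ and then form the combination $(2-n)\cF''=A''-B''/(n-2)$. The input data is entirely pointwise: from the listed identities I have $\Tr h=(n-1)\phi$, $|h|^2=(n-1)\phi^2$, $\delta h=\nabla\phi$, $\nabla(\delta h)=\Hess_\phi$, $\delta^2 h=\Delta\phi$ and $\Delta h=(\Delta\phi)\,\bar g$ (writing $\bar g = b_\infty^{-2}g_0$). These immediately give the pointwise identities
\begin{align*}
\langle\nabla(\delta h),h\rangle &= \phi\,\Delta\phi,\qquad \tfrac12\langle\Delta h,h\rangle = \tfrac{n-1}{2}\,\phi\,\Delta\phi,\\
\tfrac12\langle\Hess_{\Tr h},h\rangle &= \tfrac{n-1}{2}\,\phi\,\Delta\phi,\qquad \langle h,\Hess_v\rangle = \phi\,\Delta v,\\
\Tr h\cdot\Delta v &= (n-1)\,\phi\,\Delta v,\qquad \delta^2 h-\Delta\Tr h = -(n-2)\,\Delta\phi.
\end{align*}

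The curvature contraction is the one place where the Einstein condition $\Ric_{g_0}=(n-2)g_0$ is used in an essential way: for conformal $h$, $R_{ikj\ell}h^{ij}h_{k\ell}=\phi^2\,\bar g^{ij}\bar g^{k\ell}R_{ikj\ell}=\phi^2\,R_{\bar g}=(n-1)(n-2)\,b_\infty^2\,\phi^2$, since the double metric contraction of Riemann yields the scalar curvature. Substituting all this into Lemma \ref{l:A2} gives $A''$ as a pointwise expression in $\phi,v,\Tr h',v'$, and similarly for $B''$ from Lemma \ref{e:B2}. The last ingredient is the constraint from Lemma \ref{l:constraint}, which lets me trade the combination $\int\bigl(\tfrac12\Tr h'-\tfrac12|h|^2+2v'\bigr)$ for $-\int(\tfrac{n-1}{2}\phi+v)^2$; this eliminates the $h'$ and $v'$ contributions from both $A''$ and $B''$ because they enter only through the combinations $6v'+\tfrac12\Tr h'-\tfrac12|h|^2$ (in $A''$) and $\tfrac{n-3}{2}(\Tr h'-|h|^2)+2(n-1)v'$ (in $B''$), which are multiples of this constrained quantity plus a $v^2$ term.

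After these substitutions and using $\int\phi\,\Delta v$ as it stands (no integration by parts is needed, since the statement is symmetric in $\phi\Delta v+v\Delta\phi$), the combination $A''-B''/(n-2)$ collapses into the four groups that appear in the statement: a $\phi[\Delta\phi+(n-1)b_\infty^2\phi]$ term with coefficient $\tfrac{n-3}{2}$, a cross-term $2(n-1)b_\infty^2\phi v$, the symmetric Laplacian pairing $\phi\Delta v+v\Delta\phi$, and a pure $6b_\infty^2 v^2$ term. The coefficient $\tfrac{n-3}{2}$ in front of the $\phi\,\Delta\phi$ block arises because $\tfrac12\langle\Delta h,h\rangle+\tfrac12\langle\Hess_{\Tr h},h\rangle-\langle\nabla(\delta h),h\rangle=(n-2)\phi\Delta\phi$ while the $(\delta^2 h-\Delta\Tr h)(\tfrac12\Tr h+v)$ contribution supplies $-(n-2)\Delta\phi(\tfrac{n-1}{2}\phi+v)$; after division by $(n-2)$ these combine to the stated form, and the scalar-curvature contraction supplies the matching $(n-1)b_\infty^2\phi^2$ term.

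The main obstacle is purely algebraic bookkeeping: assembling the many zeroth-order polynomial contributions in $\phi$ and $v$ (from the $(n-1)(n-2)$-weighted products in $B''$, from the $9v^2$ and $\phi v$ terms in $A''$, from the $v^2$ piece produced by the constraint, and from the scalar-curvature contraction) and verifying that they collapse exactly into the coefficients $\tfrac{n-3}{2}(n-1)b_\infty^2\phi^2$, $2(n-1)b_\infty^2\phi v$, and $6b_\infty^2 v^2$ claimed. The derivative terms $\Tr h'$ and $v'$ must drop out completely; this is precisely the content of Lemma \ref{l:constraint} and is what singles out this particular linear combination $A-B/(n-2)$ as the natural one for the Lojasiewicz-Simon analysis.
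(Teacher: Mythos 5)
Your proposal is correct and follows essentially the same route as the paper: specialize Lemmas \ref{l:A2} and \ref{e:B2} to $h=\phi\, b_{\infty}^{-2}g_0$, contract the curvature term to the scalar curvature $b_{\infty}^2(n-1)(n-2)$ of the Einstein cross-section, invoke Lemma \ref{l:constraint}, and collect terms in $A''-B''/(n-2)$. One small imprecision: the constraint does not remove the derivative terms from $A''$ and $B''$ separately --- after extracting the constrained quantity each still carries a residual $4\,b_{\infty}^3\int v'$ --- and these cancel only in the combination $A''-B''/(n-2)$, whose remaining piece $(4-n)\,b_{\infty}^2\,[2v'+\frac{1}{2}\Tr(h')-\frac{1}{2}|h|^2]$ is then traded for $-(4-n)\,b_{\infty}^2\,\psi^2$ with $\psi=\frac{n-1}{2}\phi+v$, exactly as in the paper's proof.
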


\begin{proof}
To simplify notation, set $\psi =  \left( \frac{n-1}{2} \, \phi + v   \right)$ and $\bar{g} = b_{\infty}^{-2} \, g_0$.
Lemma \ref{l:A2}
gives  
\begin{align}
	A'' & = b_{\infty}^3 \, \int_N \left\{ 4 \,v \,  \left( \psi + v  \right) +
	  \psi^2  + 6   v' - \frac{|h|^2}{2} + \frac{\Tr (h')}{2}
	 \right\} \,    d\mu_{\bar{g}}  
	\, .
\end{align}
Lemma \ref{e:B2} gives
\begin{align}
	B'' &= b_{\infty} \, \int_N \left\{ b_{\infty}^2 \, (n-2) \, (n-1)\left[ \psi - 2\,  \phi \right] \,
	\psi
	   - \phi \, \Delta \phi   
	 + (n-1)   \, \phi \, \Delta \phi +
	\phi^2 \,  R_{ikj\ell} g_{k\ell} g^{ij} \right.  \notag \\
	 &+  \phi \, \Delta v -  (n-1) \, \phi \, \Delta v
	 + \left(  \Delta \phi   - (n-1) \, \Delta \phi
	\right)  \,\psi \notag  \\
	&\left. \qquad + b_{\infty}^2 \, (n-2) \, \left( \frac{n-3}{2} \, (  \Tr (h') - |h|^2) + 2(n-1)\, v'  \right)  \right\} \, d\mu_{\bar{g}} 
	\, .
\end{align}
Collecting terms, this becomes
\begin{align}
	B'' &= b_{\infty} \, (n-2) \, \int_N \left\{ b_{\infty}^2 \,  (n-1) \left[  \psi^2
	-2    \, \phi \, \psi \right] 
	    +     \phi \, \Delta \phi   +  b_{\infty}^2 \,( n-1)  \, 
	\phi^2  -   \phi \, \Delta v - \psi \, \Delta \phi  \right.
	   \notag  \\
	&\left. \qquad +   b_{\infty}^2 \, \left( \frac{n-3}{2} \, (  \Tr (h') - |h|^2) +2 (n-1)\, v'  \right)  \right\} \, d\mu_{\bar{g}} 
	\, .
\end{align}
Combining the two formulas gives that
\begin{align}
	(2-n) \, \cF''  &=  A'' - \frac{B'' }{(n-2)}    \notag \\
	&= b_{\infty}  \, \int_N \left\{  -  \phi \, \Delta \phi     +   \phi \, \Delta v + \psi \Delta \phi    \right. \\
	&\qquad \left. + b_{\infty}^2 \, \left[
	 4v^2 + (6-n)\, 
	   \psi^2  + (4 -n) \left[ 2 v' - \frac{|h|^2}{2} + \frac{\Tr (h')}{2}\right]    - (n-1)  \, \phi^2 \right]
	 \right\} \, d\mu_{\bar{g}}  \, , \notag
\end{align}
where the last equality also used that
\begin{align}
	4 v \, \psi + 2(n-1) \phi \, \psi = 4 \psi^2 \, .
\end{align}
We want to eliminate the $v'$ and $h'$ terms.  Lemma \ref{l:constraint}
gives that
\begin{align}
	\int_N \left\{   \frac{1}{2} \,   \Tr (h')  -  \frac{1}{2} |h|^2_g + 2 v'   
	\right\} \, d\mu_{\bar{g}}  &= - \int_N \left\{ \psi^2
	\right\} \, d\mu_{\bar{g}}  \, .
\end{align}
Putting this in gives
\begin{align}
	(2-n) \, \cF''  &= b_{\infty}  \, \int_N \left\{  -  \phi \, \Delta \phi     +   \phi \, \Delta v + \psi \Delta \phi   
	 + b_{\infty}^2 \, \left[
	 4v^2 + 2
	   \psi^2    - (n-1)  \, \phi^2 \right]
	 \right\} \, d\mu_{\bar{g}}  \, . \notag
\end{align}
Since $\psi =  \left( \frac{n-1}{2} \, \phi + v \right)$, we have
\begin{align}
	2
	   \psi^2  + 4 v^2  -	   (n-1)  \, 
	\phi^2   &= 2v^2 + 2(n-1) \phi v + \frac{(n-1)^2}{2} \phi^2 
	   + 4 v^2  -	  ( n -1) \, 
	\phi^2   \notag \\
	&= 6v^2  + \frac{(n-1)(n-3)}{2} \, \phi^2 + 2(n-1) \phi v \,  , \\
	-  \phi \, \Delta \phi   +   \phi \, \Delta v + \psi \Delta \phi &= 
	\frac{n-3}{2} \,   \phi \, \Delta \phi   +   \phi \, \Delta v + v \Delta \phi \, .
\end{align}
Substituting these two equations back in gives the claim.

\end{proof}

\subsection{The gradient of $\cF$ in the conformal directions}

The next proposition shows that the linearization of   $\nabla \cF$ maps conformal variations onto the span of conformal variations together with variations tangent to the action of the diffeomorphism group.

\begin{Pro}	\label{p:lingradF}
The first variation of $\nabla \cF$ along the path $(b_{\infty}^{-2} g_t, b_{\infty} \, \e^{tv_t})$ where
$b_{\infty}^{-2} g_t' = \phi \, b_{\infty}^{-2} \, g_0$ and $v_t' = v'$ can be written as 
\begin{align}
	 ( \nabla \cF)' =  \left( f_1 \, g_0 , f_2 \right) + \left( \Hess_{f_3} , 0 \right)  
	\, ,
\end{align}
where $f_1$, $f_2$ and $f_3$ are functions.
\end{Pro}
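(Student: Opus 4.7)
The plan is to start from the explicit gradient formula in Corollary \ref{c:gradF},
\[
(2-n)\,\nabla \cF \;=\; \left(\tfrac{1}{2}\phi_1\,\Psi(g) + \Psi(J),\; \phi_1\right)\,\nu,
\]
and differentiate term by term at $t=0$ along the conformal path $(\bar g + t\phi\bar g,\; b_\infty e^{tv})$, with $\bar g = b_\infty^{-2}g_0$. The crucial simplification, and the reason to expect the claimed decomposition, is that $(\bar g, b_\infty)$ is a critical point for $\cF$ (Corollary \ref{c:wtd}): concretely, the Einstein condition $\Ric_{g_0} = (n-2)g_0$ forces $J_1|_{t=0} = 0$, constancy of $w$ forces $J_2|_{t=0} = 0$, $\phi_1$ reduces to the constant $(4-n)b_\infty^2$, and $\nu$ reduces to $1$. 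Because $J$ vanishes at the base point, Lemma \ref{l:gradadjust} gives the simplification $[\Psi(J)]'|_{t=0} = J'|_{t=0}$, while the same lemma gives $[\Psi(g)]'|_{t=0} = -h = -\phi\,\bar g$.

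With these reductions in hand, I would next partition the terms of $(\nabla \cF)'|_{t=0}$ by type. The factors $\phi_1'|_{t=0}$ and $\nu'|_{t=0}$ are scalar functions, so when multiplied by $\Psi(g)|_{t=0} = \bar g$ they produce (function)$\cdot g_0$ contributions; the product $\phi_1|_{t=0}\,[\Psi(g)]'|_{t=0} = -(4-n)b_\infty^2\,\phi\,\bar g$ is likewise in the conformal direction; and the weight component $\phi_1\nu$ differentiates to a single function, which will be $f_2$ (up to the factor $2-n$). The entire burden is therefore to analyze $J'|_{t=0} = J_1' + J_2'$.

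For $J_1 = \Ric_g/(n-2) - w^2 g$, I would apply the standard linearization of Ricci specialized to the conformal variation $h = \phi\,\bar g$: since $\delta h$ is proportional to $d\phi$, the term $\nabla(\delta h)$ is a scalar multiple of $\Hess\phi$; the term $\Delta h_{ij} = (\Delta\phi)\bar g_{ij}$ is in the conformal direction; the contraction $R_{ikj\ell}h_{k\ell}$ collapses to $\phi\,\Ric_{ij} = \phi\,b_\infty^2(n-2)\bar g_{ij}$ using the Einstein condition; and the Einstein cross-terms also land in the conformal direction. After the combinatorial cancellations one obtains
\[
J_1'|_{t=0} \;=\; -\tfrac{n-3}{2(n-2)}\,\Hess\phi \;+\; (\text{function})\cdot\bar g.
\]
For $J_2 = \tfrac{1}{n-2}\bigl(\tfrac{\Delta w}{w}g - \tfrac{\Hess_w}{w}\bigr)$, since $w|_{t=0}$ is constant on $N$ both $\Delta w$ and $\Hess_w$ vanish there, so the quotient rule collapses and a direct calculation gives $J_2'|_{t=0} = \tfrac{\Delta v}{n-2}\,\bar g - \tfrac{1}{n-2}\Hess v$.

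Collecting the two pure Hessian contributions gives $\Hess_{f_3}$ with $f_3 = -\tfrac{1}{n-2}\bigl[\tfrac{n-3}{2}\phi + v\bigr]$, while all remaining contributions combine into a single scalar $f_1$ multiplying $g_0$, establishing the claimed decomposition. The main obstacle is essentially one of bookkeeping: one must verify that \emph{every} term arising from the linearization of $\Ric$ either produces the Hessian of $\phi$ or is proportional to $\bar g$, and in particular that no transverse trace-free piece survives. This succeeds precisely because of the conformal nature of $h$, which makes $\delta h$ a gradient and causes all curvature contractions with $h$ to reduce, via the Einstein condition, back to multiples of $\bar g$.
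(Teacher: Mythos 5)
Your proposal is correct and follows essentially the same route as the paper: differentiate the explicit formula of Corollary \ref{c:gradF} at the critical point, use Lemma \ref{l:gradadjust} to reduce $[\Psi(g)]'$ and $[\Psi(J)]'$ to $-\phi\,\bar g$ and $J'$, and then compute $\phi_1'$, $\nu'$, $J_1'$, $J_2'$ for the conformal variation, observing that every term is either a multiple of $\bar g_0$ (paired with a function in the weight slot) or a Hessian. Your coefficients for the Hessian parts, $\frac{3-n}{2(n-2)}\Hess_{\phi}-\frac{1}{n-2}\Hess_v$, agree with the paper's equation for $J'$, so the bookkeeping checks out.
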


\begin{proof}
Set $\bar{g}_t = b_{\infty}^{-2} \, g_t$; we omit the subscript when the meaning is clear.
Corollary \ref{c:gradF} gives 
\begin{align}
	(2-n) \, ( \nabla \cF) = \phi_1 \,  \left(   \frac{1}{2}  \,   \Psi (\bar{g}) , 1\right) \, \nu + \left( \Psi (J) ,  0 \right) \, \nu 
	\, .
\end{align}
At $t=0$, we know that
\begin{align}
	\nu = 1 , \, J = 0 , \, \Psi {\text{ is the identity}},   {\text{ and }} \phi_1 = (4-n) \, b_{\infty}^2 \, .
\end{align}
Lemma \ref{l:gradadjust}
gives that if $\bar{J}$ is a family of $2$-tensors depending on $t$, then 
\begin{align}
	\frac{d}{dt} \, \big|_{t=0} \, \Psi (\bar{J})_{ij}  = \bar{J}_{ij}' - \bar{g}_{ip}' \,  \bar{g}^{pn}  \, \bar{J}_{nj}  
	-    \bar{J}_{im}   \bar{g}^{mp} \, \bar{g}_{pj}'  \, .
\end{align}
Using this, we see that
\begin{align}
	[\Psi (\bar{g})]'  &= - \bar{g}'   \, , \\
	[\Psi (J)]' &= J'
	\, .
\end{align}
Thus, we see that at $t=0$ we have
\begin{align}	\label{e:463}
	(2-n) \, ( \nabla \cF)' &=  \frac{(n-4)}{2} \, b_{\infty}^2\,  \left(    \bar{g}' , 0 \right) + 
	  \left(   \frac{1}{2}  \,  \bar{g} , 1\right) \, \left[ (4-n) \, b_{\infty}^2 \,  \nu'+ \phi_1' \right]  + \left( J' ,  0 \right)  
	\, .
\end{align}
Next, we bring in the conformal nature of the variation in order to compute $\nu'$, $J'$,  and $\phi_1'$.  
If we write the metric $\bar{g}_t$ as
\begin{align}
	\bar{g}_t = b_{\infty}^{-2} \, \e^{t \phi} \, g_0 \, , 
\end{align}
then we have at $t=0$ that $\bar{g}_0 = b_{\infty}^{-2} \, g_0$ and $\bar{g}' = \phi \, \bar{g}_0$.  
Using this variation in the formulas for $\nu$, $\phi_1$,   and $J$ from Corollary \ref{c:gradF}
  gives
\begin{align}
	\nu &=  \e^{t \, \left( v+  \frac{(n-1)}{2} \, \phi \right)}  \, , \\
	\phi_1 & = 3\,b_{\infty}^2\,  \e^{2tv}  - \frac{R_{\bar{g}_t}}{n-2} \,   , 
\end{align}
and   the $2$-tensor $J = J_1 + J_2 $ is given by
\begin{align}
	J_1 &= \frac{      \Ric_{\bar{g}_t}   }{n-2} - b_{\infty}^2\,  \e^{2tv} \, \bar{g} \, , \\
	J_2 &= \frac{1}{n-2} \, \left(  
		\frac{\Delta \e^{tv}   }  {\e^{tv}} \,\bar{ g} 
		- \frac{ \Hess_{\e^{tv}}}   {\e^{tv}} 
		\right) \,   .
\end{align}

Using Lemma \ref{l:topping} and
$\Ric_{\bar{g}} = b_{\infty}^2 \, (n-2) \, \bar{g}$ and working in an orthonormal frame (so we do not distinguish upper and lower indices),   we get at $t=0$:
 \begin{align}
 	R_{\bar{g}}' &=  \delta^2 \, \bar{g}'  - \langle \Ric_{\bar{g}_0} , \bar{g}'    \rangle   - \Delta \, \Tr (\bar{g}') = (2-n) \, \left\{
	\Delta \phi +
	b_{\infty}^2 \,   (n-1) \, \phi 	\right\}  \, , \\
	 	\Ric_{ij} ' &= \frac{1}{2} \, 
	\left( \nabla_i  (\delta \, \bar{g}')_j +   \nabla_j   (\delta \, \bar{g}')_{i} +
	\Ric_{ik} \bar{g}'_{jk} + \Ric_{jk} \bar{g}'_{ik} - \Delta \bar{g}'_{ij} 
	- \Hess_{\Tr \, \bar{g}'}
	\right) - R_{ikj\ell} \bar{g}'_{k\ell} \notag \\
	&
	 =  \Hess_{\phi}  + b_{\infty}^2 \, (n-2) \, \phi \, \bar{g}
	 -
	 \frac{1}{2} \, 
	\left\{   ( \Delta \phi) \bar{g}
	+ (n-1) \, \Hess_{\phi}
	\right\}  - b_{\infty}^2 \, (n-2) \, \phi \, \bar{g}  \notag \\
	&=   \frac{1}{2} \, 
	\left\{  (3-n) \,  \Hess_{\phi} -  ( \Delta \phi) \bar{g}
	\right\}
	 \, , \\
	\left( \Hess_{\e^{tv}} \right)_{ij}' &= \Hess_v - \frac{1}{2} \, 
	\left(\nabla_i \left( \Hess_{\e^{tv}} \right)_{jk} + 
	\nabla_j \left( \Hess_{\e^{tv}} \right)_{ik} - \nabla_k \, \left( \Hess_{\e^{tv}} \right)_{ij}
	\right) \, \nabla_k {\e^{tv}} \notag  \\
	&=\Hess_v \, .
 \end{align}
By the last formula    and the general formula
 $\Delta u =\bar{ g}^{ij} \, (\Hess_u)_{ij}$, we get
 \begin{align}
 	(\Delta {\e^{tv}})' &=  \bar{g}_0^{ij} \, \left( \Hess_{\e^{tv}} \right)_{ij}' 
	= \Delta v   \, .
 \end{align}

 Using these formulas for the derivatives   in the definitions of $\phi_1$ and $J$, we   compute
\begin{align}
	\phi_1' & = 6 \,b_{\infty}^2\,  v  + \Delta \phi +
	b_{\infty}^2 \,   (n-1) \, \phi \, , \\
			J' &= \frac{      (3-n) \,  \Hess_{\phi} -  ( \Delta \phi) \bar{g}_0  }{2(n-2)} -   b_{\infty}^2\,  (2v+\phi) \, \bar{g}_0
		+  \frac{1}{n-2} \, \left(  
		 \Delta v   \,\bar{ g}_0
		-   \Hess_{v}   
		\right)  \notag \\
		&= \frac{      (3-n)   }{2(n-2)} \,  \Hess_{\phi} - \frac{\Hess_v}{n-2} + 
		\left( \frac{\Delta v}{n-2} - \frac{\Delta \phi}{2(n-2)} -   b_{\infty}^2 \, (2v+\phi) \right) \, \bar{g}_0 \, .
\end{align}
Finally, substituting these in \eqr{e:463} gives
\begin{align}	\label{e:esl}
	(2-n) \, ( \nabla \cF)' &= 
	\left[ (4-n) \, b_{\infty}^2 \,  \left[ v + \frac{n-1}{2} \, \phi \right]+ 6 \,b_{\infty}^2\,  v  + \Delta \phi +
	b_{\infty}^2 \,   (n-1) \, \phi  \right] \,   \left(   \frac{1}{2}  \,  \bar{g}_0 , 1\right)  \notag \\
	&\qquad+
	\left( \frac{      (3-n)   }{2(n-2)} \,  \Hess_{\phi} - \frac{\Hess_v}{n-2} , 	0
  \right)  \\
	&\qquad +
	\left[ \frac{\Delta v}{n-2} - \frac{\Delta \phi}{2(n-2)} -  b_{\infty}^2 \, (2v+\phi)
		+  \frac{(n-4)}{2} \, b_{\infty}^2\,    \phi   \right] \, 
	\left(	 \bar{g}_0
	,  0
	 \right) \notag \, .
\end{align}

\end{proof}

The previous proposition linearized the full gradient $\nabla \cF$ along a conformal variation.   The next corollary   linearizes the projection $\nabla_1 \cF$ of the gradient to $\cA_1$.

\begin{Cor}	\label{c:lF}
The first variation of $\nabla_1 \cF$ along the path $(b_{\infty}^{-2} g_t, b_{\infty} \, \e^{tv_t})$ where
$b_{\infty}^{-2} g_t' = \phi \, b_{\infty}^{-2} \, g_0$ and $v_t' = v'$ can be written as 
\begin{align}
	 ( \nabla_1 \cF)' =  \left( \bar{f}_1 \, g_0 , \bar{f}_2 \right) + \left( \Hess_{\bar{f}_3} , 0 \right)  
	\, ,
\end{align}
where $\bar{f}_1$, $\bar{f}_2$ and $\bar{f}_3$ are functions.

\end{Cor}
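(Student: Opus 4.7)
The plan is to reduce the statement directly to Proposition \ref{p:lingradF} by tracking how the projection onto $\cA_1$ affects the decomposition obtained there. Recall from \eqref{e:na1F} that
\begin{equation*}
\nabla_1 \cF = \nabla \cF - \frac{\langle \nabla \cF, \nabla A_1 \rangle}{\|\nabla A_1\|^2} \, \nabla A_1 .
\end{equation*}
Setting $\lambda(t) = \langle \nabla \cF, \nabla A_1 \rangle / \|\nabla A_1\|^2$, differentiating at $t = 0$, and using Proposition \ref{p:lingradF} to write $(\nabla \cF)' = (f_1 g_0, f_2) + (\Hess_{f_3}, 0)$ gives
\begin{equation*}
(\nabla_1 \cF)' = (f_1 g_0, f_2) + (\Hess_{f_3}, 0) - \lambda'(0) \, \nabla A_1 \big|_{0} - \lambda(0) \, (\nabla A_1)' \big|_{0} .
\end{equation*}
The task therefore reduces to showing that both correction terms contribute only to the first summand.

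By Corollary \ref{c:gA1}, $\nabla A_1 = \left( \frac{1}{2} \Psi(g), 1 \right) \nu$, which at the base point equals $\left( \frac{1}{2} b_\infty^{-2} g_0, 1 \right)$ and in particular has no Hessian component. Differentiating along the conformal path and applying Lemma \ref{l:gradadjust} (which yields $[\Psi(\bar{g})]' = -\bar{g}'$) together with the variation formula $\nu' = v + \frac{1}{2}\Tr(h) = v + \frac{n-1}{2}\phi$ at $t = 0$ produces
\begin{equation*}
(\nabla A_1)' \big|_{0} = \left( \frac{1}{2} \left( v + \frac{n-3}{2} \phi \right) b_\infty^{-2} g_0, \; v + \frac{n-1}{2} \phi \right) ,
\end{equation*}
which is again of the form $(\alpha \, g_0, \beta)$ for scalar functions $\alpha, \beta$ on $N$.

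Consequently both $\lambda'(0) \, \nabla A_1|_0$ and $\lambda(0) \, (\nabla A_1)'|_0$ are of the form $(\alpha g_0, \beta)$, so subtracting them from $(\nabla \cF)'$ modifies only the $(f_1 g_0, f_2)$ part of the decomposition while leaving the $\Hess_{f_3}$ term untouched. Setting $\bar{f}_3 = f_3$ and absorbing the corrections into new scalar functions $\bar{f}_1, \bar{f}_2$ yields the stated form. I do not expect any serious obstacle here; the argument is essentially bookkeeping, reflecting the fact that projection onto $\cA_1$ shifts $\nabla \cF$ only along the direction $\nabla A_1$, which carries a purely conformal tensor component and no Hessian contribution.
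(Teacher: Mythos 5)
Your proposal is correct and follows essentially the same route as the paper: differentiate the projection formula $\nabla_1 \cF = \nabla \cF - \langle \nabla \cF , \nabla A_1 \rangle \, \nabla A_1 / \|\nabla A_1\|^2$, compute $(\nabla A_1)'$ at $t=0$ via Lemma \ref{l:gradadjust} and the variation of $\nu$ (your formula matches the paper's $\left( \tfrac{n-1}{2}\phi + v \right)\left( \tfrac{1}{2}\bar{g}_0 , 1\right) - \tfrac{\phi}{2}\left( \bar{g}_0 , 0\right)$ exactly), and observe that both $\nabla A_1$ and $(\nabla A_1)'$ are conformal-type variations so only the $(f_1 g_0 , f_2)$ part of Proposition \ref{p:lingradF} is altered. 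The only cosmetic difference is that you package the scalar coefficient as $\lambda(t)$ rather than expanding all terms of the derivative explicitly, which is equivalent bookkeeping.
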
 

\begin{proof}
Set $\bar{g}_t = b_{\infty}^{-2} \, g_t$; we omit the subscript when the meaning is clear.
Within this proof,   $| \cdot |$ is the pointwise norm and $\| \cdot \|$ is the $L^2$ norm, while $\langle \cdot , \cdot \rangle$ 
is the $L^2$ inner product.

Since $\cA_1$ is a level set of the functional $A_1$,    the projection $\nabla_1 \cF$ of   $\nabla \cF$ is  
\begin{align}
	\nabla_1 \cF = \nabla \cF - \langle \nabla \cF , \nabla A_1 \rangle \, \frac{ \nabla A_1}{\|\nabla A_1\|^2} \, .
\end{align}
It follows that{\footnote{The gradients are computed with the fixed $L^2$ inner product $\langle \cdot , \cdot \rangle$ induced by the background
metric $\bar{g}_0$.}}
\begin{align}	\label{e:backii}
	(\nabla_1 \cF)'  &= (\nabla \cF )' - \langle (\nabla \cF )' , \nabla A_1 \rangle \, \frac{ \nabla A_1}{\|\nabla A_1 \|^2} 
	- \langle \nabla \cF , (\nabla A_1)' \rangle \, \frac{ \nabla A_1}{\|\nabla A_1 \|^2} 
	- \langle \nabla \cF , \nabla A_1 \rangle \, \frac{ (\nabla A_1)'}{ \|\nabla A_1 \|^2} \notag \\
	&\qquad
	+ 2\, \langle \nabla \cF , \nabla A_1 \rangle \, \langle \nabla A_1 , (\nabla A_1)' \rangle
	 \, \frac{ \nabla A_1}{ \|\nabla A_1 \|^4}  \, .
\end{align}
We next calculate $\nabla \cF$, $\nabla A_1$ and $(\nabla A_1)'$ at $t=0$.  First, 
Corollary \ref{c:gradF} gives at $t=0$
\begin{align}
	(2-n) \, ( \nabla \cF) = (4-n)  \, b_{\infty}^2 \,  \left(   \frac{1}{2}  \,   \bar{g}_0 , 1\right)  
	\, .
\end{align}
Next, Corollary \ref{c:gA1} gives that
the gradient of $A_1$ at $t$ is given by $\nabla A_1 = \left( \frac{1}{2} \, \Psi (\bar{g}) , 1 \right) \, \nu $.  In particular,  
at $t=0$, we have
    \begin{align}
	 \nabla A_1 &= \left( \frac{1}{2} \, \bar{g}_0 , 1 \right)  \, , \\
	 \left( \nabla A_1\right)' &= \left( \frac{n-1}{2} \, \phi + v \right) \, \left( \frac{1}{2} \, \bar{g}_0 , 1 \right)  - \frac{\phi}{2} \, 
	  \left(   \bar{g}_0 , 0 \right) \, ,
\end{align}
where the second equality also used Lemma \ref{l:gradadjust}
to see that
	$[\Psi (\bar{g})]'  = - \bar{g}' $.

 Observe that both $\nabla A_1$ and $(\nabla A_1)'$ give conformal variations of the metric.  The corollary now follows from this, 
 \eqr{e:backii} and 
 Proposition \ref{p:lingradF}.
 \end{proof}

\section{The   action of the diffeomorphism group}

Let $\cD$ be the space of $C^{3,\beta}$ diffeomorphisms on $N$.  The group $\cD$ acts by pull-back on both the space of metrics
 and the space functions, where the metric or function are pulled back by the diffeomorphism.  The tangent space $\cTDiff$ to this action is given by 
  \begin{align}
	\cTDiff & = \{ (\cL_V g_0 , 0) \, | \, V {\text{ is a $C^{3,\beta}$ vector field}} \} \, ,
 \end{align}
 where $\cL_V g_0$ is the Lie derivative of the metric $g_0$ with respect to $V$.  As observed by Berger and Ebin (see, e.g., (b) in corollary $32$ of the appendix of \cite{Be}), 
 it follows   that $\ca$ decomposes  as an orthogonal direct sum
\begin{align}
	\ca = \cTDiff \oplus \ca_1 \, , 
{\text{ where }}
 \ca_1 \equiv \left\{  (h,v) \in C^{2,\beta} \, | \, \delta \, h = 0 \right\} \, .
 \end{align}
  Here, the divergence $\delta$ is computed with respect to $g_0$.  
  
  We will be most interested in the subspace $\ca_1^0 \subset \ca_1$ of variations that are tangent to $\cA_1$, i.e., that preserve the weighted volume constraint
\begin{align}
	\ca^0 &= \left\{ (h,w) \, | \, \int \left( \frac{1}{2} \, \Tr (h) + w \right) \, d\mu_{g_0} = 0 \right\} \, , \\
	\ca^0_1 &= \ca_1 \cap \ca^0 \, .
\end{align}

There are two main results in this section, both related to the action of the diffeomorphism group.  The first is the use of the Ebin-Palais slice theorem to mod out by this action; this is described in subsection \ref{ss:EP}.  The second is the following theorem which shows that the linearization $L_{\cF}$ of $\nabla_1 \cF$ has finite dimensional kernel after we restrict it to 
$\ca_1^0$.  To state this precisely,   
define a bilinear form $B_{\cF}$ on $\cT^0 \times \cT^0$ by setting
 \begin{align}
 	B_{\cF}(x,y) = \langle L_{\cF} \, x , y \rangle \, .
 \end{align}

 \begin{Thm}	\label{p:fker}
 The restriction of  $B_{\cF}$ to $\ca_1^0$  is Fredholm.
 \end{Thm}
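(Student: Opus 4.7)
The plan is to decompose $\ca_1^0$ into a ``transverse--traceless'' block and a ``conformal $+$ weight'' block, verify that the linearization $L_{\cF}$ is block diagonal in this splitting, and observe that each diagonal block is elliptic (hence Fredholm). Concretely, by the Ebin--Palais slice theorem, $\ca_1^0$ parametrizes the tangent space to $\cA_1/\cD$ at the Einstein metric $b_\infty^{-2} g_0$. Using the Berger--Ebin decomposition on this Einstein background together with the weighted volume constraint, I would write
\begin{align}
  \ca_1^0 \;=\; V_{TT} \oplus V_{CW},
\end{align}
where $V_{TT}$ consists of pairs $(h,v)$ with $h$ transverse--traceless and $\int v \, d\mu_{g_0} = 0$, while $V_{CW}$ is the ``conformal--plus--weight'' complement, parametrized (after gauge fixing) by pairs of functions $(\phi,v)$ subject to the single linear constraint of Lemma~\ref{l:obvious}.

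The crucial structural step is that the off--diagonal blocks of $L_{\cF}$ vanish in this splitting. For the conformal direction this is essentially Corollary~\ref{c:lF}: the linearization of $\nabla \cF$ along a conformal variation lies in the conformal direction plus a $\Hess_{\bar f_3}$ term that is pure gauge, hence is killed when we project back to the slice $\ca_1^0$. Self-adjointness of $L_{\cF}$ (as the Hessian of the analytic functional $\cF$) then forces $L_{\cF}(V_{TT}) \perp V_{CW}$, i.e.\ $L_{\cF}(V_{TT}) \subset V_{TT}$. Alternatively, one can read the decoupling directly from Propositions~\ref{p:sv1} and~\ref{t:conformal}: when $h$ is trace--free, every candidate conformal--TT cross term in $\cF''$ carries a factor of $\Tr h$ and thus disappears.

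On the $V_{TT}$ block, Proposition~\ref{p:sv1} shows that $L_{\cF}$ acts as $c_1 \, \cL$ on $h$ and as $c_2 \, \mathrm{Id}$ on $v$, where $\cL$ is the Lichnerowicz operator and $c_1, c_2 \neq 0$. The operator $\cL$ is second--order elliptic and preserves the TT condition on an Einstein background (using the second Bianchi identity together with $\Ric_{g_0} = (n-2) g_0$), so it is Fredholm on $C^{2,\beta}$ TT tensors; the $v$--factor is a bounded isomorphism on a codimension--one subspace, hence trivially Fredholm. On the $V_{CW}$ block, Theorem~\ref{t:conformal} yields a $2 \times 2$ system for $(\phi,v)$ whose principal symbol is
\begin{align}
  \sigma(\xi) \;=\; \begin{pmatrix} -(n-3)\,|\xi|^2 & -|\xi|^2 \\ -|\xi|^2 & 0 \end{pmatrix},
  \qquad \det \sigma(\xi) \;=\; -\,|\xi|^4,
\end{align}
which is invertible for $\xi \neq 0$; this block is therefore elliptic, and Fredholm on $C^{2,\beta}$ modulo the single scalar constraint. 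A direct sum of Fredholm operators being Fredholm, the theorem follows.

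The main obstacle is the block--diagonal structure of $L_{\cF}$. Because naive conformal variations $\phi\,g_0$ are not divergence--free, every element of $V_{CW}$ inside the slice is a gauge--fixed combination of a conformal mode and an infinitesimal diffeomorphism, and one must check that $L_{\cF}$ maps such gauge--fixed modes back to themselves rather than leaking into $V_{TT}$. This is exactly what the explicit computation in Corollary~\ref{c:lF} accomplishes: it pins down the $\Hess$ correction in the image with precisely the right sign and structure to match the gauge needed at the source. The constraint from $\ca^0$ also has to be tracked carefully so that neither block picks up a spurious cokernel. Once these points are nailed down, the Fredholm conclusion follows from standard elliptic theory for operators on the closed manifold $N$.
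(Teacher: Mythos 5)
Your route is essentially the paper's: split $\ca_1^0$ into a transverse--traceless block and a conformal--plus--weight block corrected by gauge terms, kill the off--diagonal blocks using Proposition \ref{p:lingradF}/Corollary \ref{c:lF} together with the symmetry of $L_{\cF}$, and obtain Fredholmness of each diagonal block from ellipticity: the Lichnerowicz operator on TT tensors (Proposition \ref{p:sv1}) and the weakly elliptic $2\times 2$ system in $(\phi,v)$ from Theorem \ref{t:conformal} (your symbol determinant $-|\xi|^4$ is the paper's nonvanishing determinant for the second--order matrix in \eqr{e:matrix}).

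There is, however, a concrete flaw in how you set up the splitting. You put the mean--zero weight variations $(0,v)$ inside $V_{TT}$ while simultaneously keeping $v$ as a variable of $V_{CW}$, so your sum is not direct; worse, the claim that $L_{\cF}$ acts as $c_2\,\Id$ on the $v$--part of $V_{TT}$ is false. A pure weight variation is a conformal variation with $\phi=0$, and by Theorem \ref{t:conformal} (equivalently the matrix \eqr{e:matrix}) the image $L_{\cF}(0,v)$ has a metric component proportional to $\left(\Delta v + (n-1)\,b_{\infty}^2\, v\right) g_0$; the weight couples to the conformal factor, so your $V_{TT}$ is not invariant and block--diagonality fails for that splitting. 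The paper avoids this by taking $\cT_{tt}$ with weight component zero (see the footnote to Proposition \ref{p:sv1}) and carrying all weight variations in the block $\cTperp^0$, where the $\phi$--$v$ coupling is exactly the off--diagonal entry of the elliptic system. Relatedly, your ``alternative'' decoupling argument (``every cross term carries a factor of $\Tr h$'') cannot be read off from Propositions \ref{p:sv1} and \ref{t:conformal}, which compute $\cF''$ only for pure--type variations; the off--diagonal vanishing genuinely needs Proposition \ref{p:lingradF}/Corollary \ref{c:lF} plus symmetry, which you do also invoke as your primary argument. Finally, transferring Fredholmness from the elliptic $(\phi,v)$ operator to the gauge--corrected block is not automatic: since conformal modes are not divergence--free, one must write $x = x_c + x_{\cD}$, use $L_{\cF}\big|_{\cTDiff}=0$, solve in the conformal factor, correct by a diffeomorphism piece, and use $\cTperp^0 \cap \cT_{tt} = \{0\}$ to rule out a spurious cokernel; you gesture at this but it needs to be carried out as in the paper's proof.
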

 
Here, we  identify the quadratic form with the associated linear operator; it is really the associated linear operator that is Fredholm.  The theorem says  there is a finite dimensional kernel $K \subset \ca_1^0$, so that if $x \in \ca_1^0 \cap K^{\perp}$, then there is a unique $y_x \in \ca_1^0 \cap K^{\perp}$ so that 
\begin{align}
	\langle L_{\cF} \, y_x , z \rangle \equiv B_{\cF}(y_x,z) = \langle  x , z \rangle
	{\text{ for every }} z \in \ca_1^0 \, .
\end{align}
We will prove Theorem \ref{p:fker} at the end of this section.

\subsection{The action of $\cD$}

 Given $\eta$ in the diffeomorphism group $ \cD$,  $(g,w) \in \cA$, 
 and tangent vectors $X,Y$ at a point $p \in M$, then the action of $\eta$  is given by
 \begin{align}
	\eta^{\star} (g)_p (X,Y) &\equiv g_{\eta (p)} ( d\eta (X) , d\eta (Y)) \, , \\
	\eta^{\star} (w) (p) &= w (\eta (p)) \, .
\end{align}
 This action gives a map
 \begin{align}
 	\rho: \cD \times \cA \to \cA \, ,
 \end{align}
 where $\rho (\eta , (g,w) ) \equiv (\eta^{\star} (g) , \eta^{\star} (w) )$.  We will need three elementary 
 properties of this action:
 \begin{itemize}
 \item The action preserves $\cA_1$, i.e., if $\eta \in \cD$ and $\gamma \in \cA_1$, then $\rho (\eta , \gamma) \in \cA_1$.
 \item The action fixes the functional $\cF$.  
 \item The action is isometric with respect to the metric on $\cA$.
 \end{itemize}
 
 Given $\gamma \in \cA$, let $I_{\gamma}$ and $O_{\gamma}$ denote its isotropy group and orbit, respectively
 \begin{align}
 	I_{\gamma} &= \{ \eta \in \cD \, | \,  \rho (\eta , \gamma ) = \gamma \} \, , \\
	O_{\gamma} &= \{ \rho (\eta , \gamma )  \, | \, \eta \in \cD  \} \, .
\end{align}

\subsection{The slice theorem} 		\label{ss:EP}

The Ebin-Palais slice theorem, \cite{E}, gives a way to mod out by the action of the diffeomorphism group $\cD$.  In particular, the version due to Palais (which uses $C^{\beta}$ spaces, rather than Sobolev spaces as in Ebin) gives:
\begin{itemize}
\item A neighborhood $\tilde{\cU}_{1}$ of $0$ in the space of divergence-free symmetric $2$-tensors.
\item A neighborhood $\tilde{\cU}$ of $b_{\infty}^{-2} \, g_0$ in the space of metrics.
\item  A neighborhood $\tilde{\cU}_O$ of $b_{\infty}^{-2} \, g_0$ in the orbit of   $b_{\infty}^{-2} \, g_0$ under $\cD$.
\item A map $\chi :  \tilde{\cU}_O \to \cD$ to a neighborhood of the identity $\Id$  with
 $\chi (b_{\infty}^{-2} \, g_0) = \Id$.
\end{itemize}
so that the mapping  
\begin{align}
	F(u,h) \equiv \rho ( \chi (u) , b_{\infty}^{-2} \, g_0 + h) 
\end{align}
is a diffeomorphism from
 $  \tilde{\cU}_O \times \tilde{\cU}_1$ to $\tilde{\cU}$.  Here we are using a slight abuse of notation, as the action $\rho$ is actually on pairs of metrics and functions, but the meaning is clear.

This slice theorem allows us to mod out by the action of $\cD$ on the space of metrics, but it does not incorporate the second part of the action where the diffeomorphism acts on the function by composition.  When we incorporate the full action, we get  neighborhoods $\cU_1 \subset \ca_1$ of $(0,0)$ and
$\cU \subset \cA$ of $(b_{\infty}^{-2} \, g_0,b_{\infty})$, so that  
\begin{align}
	 F:  \tilde{\cU}_O \times {\cU}_1 \to {\cU} {\text{ is onto}}.
\end{align}
The slice theorem guarantees that this map  hits all of the metrics, so the point here is that it also covers a neighborhood of the constant function $b_{\infty}$ in the space of functions.  To see this, given a diffeomorphism $\eta$, note that push forward by $\eta$ takes $w \circ \eta^{-1}$ to $w$.

The last thing that we need to do here is to restrict to the space $\cA_1$ of normalized pairs of metrics and functions, i.e., to the subset of $\cA$ where $A_1 = \Vol (\partial B_1 (0))$.  

\begin{Lem}	\label{l:exp}
The analytic map $\exp$ on $ \ca^0_1$ given by
\begin{align}
	\exp (h,w) = \left(b_{\infty}^{-2} \, g_0 + h , \, \frac{\Vol (\partial B_1(0))}{A_1 (b_{\infty}^{-2} \, g_0 + h , b_{\infty} \, \e^w)} \, b_{\infty} \, \e^w 
	\right) 
\end{align}
   is a diffeomorphism from a neighborhood of $0$ to a neighborhood of
$(b_{\infty}^{-2} \, g_0 , b_{\infty})$ in $\cA_1$.
\end{Lem}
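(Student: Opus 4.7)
The plan is to apply the inverse function theorem in the $C^{2,\beta}$ Banach-space setting. First I would verify that $\exp$ is well-defined and analytic on a small neighborhood of $0 \in \ca_1^0$. Since $(b_{\infty}^{-2} g_0, b_{\infty}) \in \cA_1$, the denominator
\begin{equation*}
A_1(b_{\infty}^{-2} g_0 + h, b_{\infty} \e^w)
\end{equation*}
equals $\Vol(\partial B_1(0)) > 0$ at $(h,w)=(0,0)$ and hence stays bounded away from zero for $(h,w)$ small in $C^{2,\beta}$. Each building block of $\exp$ (affine perturbation of a metric, exponentiation of a scalar function, integration against the volume form, and division by a nonvanishing real number) is analytic on the appropriate $C^{2,\beta}$-spaces, so $\exp$ is analytic. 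By the very choice of the normalization factor $\phi(h,w) \equiv \Vol(\partial B_1(0))/A_1(b_{\infty}^{-2} g_0 + h, b_{\infty} \e^w)$, one has $\int_N \exp(h,w)_2 \, d\mu_{\exp(h,w)_1} = \Vol(\partial B_1(0))$, so $\exp$ actually lands in $\cA_1$.

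Next I would compute $d\exp$ at $(0,0)$. Differentiating the first component is trivial and yields $h$. For the second component, we differentiate $\phi(th, tw) \, b_{\infty} \e^{tw}$ at $t=0$ and, using $\phi(0,0)=1$, obtain $b_{\infty} w + b_{\infty} \, d\phi(h,w)$. The crucial point is that $d\phi \equiv 0$ on $\ca_1^0$: indeed, Corollary \ref{c:gA1} together with Lemma \ref{l:obvious} gives
\begin{equation*}
dA_1(h,w) = b_{\infty} \int_N \left( \tfrac{1}{2}\Tr(h) + w \right) d\mu_{b_{\infty}^{-2} g_0},
\end{equation*}
and this vanishes for $(h,w) \in \ca^0$ because $d\mu_{b_{\infty}^{-2} g_0}$ and $d\mu_{g_0}$ agree up to a constant conformal factor. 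Thus $d\exp(h,w) = (h, b_{\infty} w)$, which under the identification of tangent vectors described at the start of Section \ref{s:functs} is just the inclusion $\ca_1^0 \hookrightarrow T_{(b_{\infty}^{-2} g_0, b_{\infty})}\cA$.

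Finally I would invoke the Banach-space inverse function theorem. Its image sits inside $T\cA_1$ (tangency to $\cA_1$ is equivalent to the $\ca^0$-constraint), and by the Berger--Ebin orthogonal decomposition $\ca = \cTDiff \oplus \ca_1$ recalled in the section above, the image is precisely complementary to the tangent space of the $\cD$-orbit, i.e., it is the slice direction inside $\cA_1$. Since $d\exp$ is the identity onto its closed image, it is a topological isomorphism onto that image, and the inverse function theorem yields that $\exp$ is an analytic local diffeomorphism from a neighborhood of $0$ in $\ca_1^0$ onto a transverse-slice neighborhood of $(b_{\infty}^{-2} g_0, b_{\infty})$ in $\cA_1$. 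The only genuinely interesting point is the vanishing $d\phi|_{\ca_1^0} = 0$; without this, the normalization would distort the differential and destroy the isomorphism. This vanishing is nothing but the compatibility between the linear constraint defining $\ca^0$ and the first variation of the weighted-volume functional $A_1$, so the main obstacle is bookkeeping rather than a deep difficulty.
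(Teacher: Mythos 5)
Your proposal is correct and follows essentially the same route as the paper: analyticity of the building blocks, landing in $\cA_1$ by the very choice of the normalization factor, the observation that the variation of $A_1$ (hence of the normalization) vanishes along directions in $\ca_1^0$ so that $d\exp_0$ is the identity under the paper's tangent-space convention, and then the inverse function theorem. The only difference is presentational — you spell out the vanishing of $d\phi$ and the slice interpretation explicitly, which the paper handles in a parenthetical remark.
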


\begin{proof}
Analyticity follows since linear maps and exponentials are analytic and the functional $A_1$ is analytic since it is given as an integral where the integrand depends analytically.  The is defined so that $A_1 \circ \exp \equiv \Vol (\partial B_1 (0))$, so it automatically lands in $\cA_1$.  Furthermore, $\exp$ takes the origin to $(b_{\infty}^{-2} \, g_0 , b_{\infty})$.

Finally, we will show that $\exp$ is a 
  local diffeomorphism by using the implicit function theorem, \cite{N}.  To do this, first observe that the linearization at the origin is given by
  \begin{align}
  	\frac{d}{dt} \big|_{t=0} \, \exp (t h , tw) = \left(  h ,  b_{\infty} \, w \right) \, , 
  \end{align}
  where we used that the variation is tangent to $\cA_1$ so that the  derivative of $A_1$ vanished.  In particular, the linearization is the identity{\footnote{Recall our convention on the tangent space where we exponentiate the second factor.}}
  and the inverse function theorem applies.
\end{proof}

Combining all of this, we get the following slice theorem:
  
\begin{Cor}	\label{l:slice}
There is a neighborhood $\cU_1' $ of $(b_{\infty}^{-2} \, g_0,b_{\infty})$ in $\cA_1$ and a constant $C$, so that for each
 $y \in \cU_1' $,  there is $y_0 \in \ca^0_1$ and $\eta \in \cD$ so that $y = \rho (\eta , \exp (y_0))$ and $\| \eta \|_{C^{3,\beta}}
 \leq C$.
\end{Cor}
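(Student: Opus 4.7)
The plan is to combine the two main ingredients developed just above: the enhanced Ebin--Palais slice theorem (the version from subsection \ref{ss:EP} that incorporates the action on the weight) together with the $\exp$ map of Lemma \ref{l:exp}. Neither step requires new analysis; the work is purely to chain them together while carefully nesting neighborhoods.

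First I would shrink $\cU_1'$ so that it lies inside the neighborhood $\cU \subset \cA$ on which the enhanced slice map $F : \tilde{\cU}_O \times \cU_1 \to \cU$ is onto. Given $y \in \cU_1'$, the slice theorem then produces $\eta = \chi(u) \in \cD$ near the identity together with a pair $(h,v) \in \cU_1 \subset \ca_1$ near $(0,0)$ satisfying
\begin{align}
y = \rho(\eta, (b_{\infty}^{-2}\, g_0 + h, \, b_{\infty}\,\e^{v})).
\end{align}
Because $\rho$ preserves $\cA_1$ and $y \in \cA_1$, the pair $(b_{\infty}^{-2}\, g_0 + h, b_{\infty}\,\e^{v}) = \rho(\eta^{-1}, y)$ also lies in $\cA_1$ and is close to the basepoint $(b_{\infty}^{-2}\, g_0, b_{\infty})$.

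Second, I would invoke Lemma \ref{l:exp}: the map $\exp$ is a local diffeomorphism from a neighborhood of $0$ in $\ca_1^0$ onto a neighborhood of $(b_{\infty}^{-2}\, g_0, b_{\infty})$ in $\cA_1$. After shrinking $\cU_1'$ once more if needed so that the output of the slice step lands in this neighborhood, one can invert to obtain a unique $y_0 \in \ca_1^0$ with $\exp(y_0) = (b_{\infty}^{-2}\, g_0 + h, b_{\infty}\, \e^{v})$. This gives the desired decomposition $y = \rho(\eta, \exp(y_0))$.

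The uniform bound $\|\eta\|_{C^{3,\beta}} \leq C$ comes for free from continuity: the map $\chi$ takes values in the space of $C^{3,\beta}$ diffeomorphisms and sends $b_{\infty}^{-2}\, g_0$ to $\Id$, so on a small enough neighborhood its image is uniformly bounded in $C^{3,\beta}$. The only real (and modest) obstacle is coordinating the three successive shrinkings so that every inversion and inclusion is valid simultaneously; no new PDE or geometric estimates are required beyond those already packaged into the slice theorem and Lemma \ref{l:exp}.
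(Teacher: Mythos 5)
Your argument is correct and is essentially the paper's own: the corollary is obtained there by exactly this chaining of the enhanced Ebin--Palais slice map $F$ with the local diffeomorphism $\exp$ of Lemma \ref{l:exp}, using that $\rho$ preserves $\cA_1$ and that $\chi$ maps a small neighborhood of $b_{\infty}^{-2}\,g_0$ to diffeomorphisms uniformly bounded in $C^{3,\beta}$. Your care in nesting the neighborhoods is the only content of the step, so nothing further is needed.
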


 \subsection{The linearized operator}
 
 We need a little notation.  We will let $\cT_c$ denote the variations corresponding to the conformal directions and $\cT_{tt}$ denote the space of transverse traceless variations, so that
 \begin{align}
 	\cT_{tt} &= \{ (h,0) \in C^{2,\beta} \, | \delta h = 0 {\text{ and }} \Tr (h) =0 \} \, , \\
	\cT_c &= \{ (\phi \, g_0 , v )  \in C^{2,\beta} \} \, , \\
	 \cTDiff & = \{ (\cL_V g_0 , 0) \, | \, V \in C^{3,\beta} {\text{ is a vector field}} \} \, .
 \end{align}
  We add a superscript $0$ to denote the intersection  with $\cT^0$, so that
  $\cT_c^0 \equiv \cT_c \cap \cT^0$ consists of the conformal variations that are tangent to $\cA_1$.
  
  It will be useful to define two additional spaces.  The first is the space $\cT_{c\cD}$ 
  of variations coming from conformal diffeomorphisms
  \begin{align}
  	\cT_{c\cD} \equiv \cT_c \cap \cTDiff \, .
\end{align}
The last space that we will need are the variations $\cTperp^0$ in $\cT_1^0$ that can be generated from conformal variations and diffeomorphisms
\begin{align}
	\cTperp^0 = \cT_1 \cap \left( \cT_c^0 + \cTDiff \right) \, .
\end{align}
Note that $\cTperp^0$ is orthogonal to $\cT_{tt}$ since both $\cT_c$ and $\cTDiff$ are.  The next lemma shows that 
\begin{align}
	\cT_1^0 = \cTperp^0 \oplus \cT_{tt} \, .
\end{align}

\begin{Lem}	\label{l:spaces}
Given any $x \in\cT_1^0$, there exist $x_{tt} \in \cT_{tt}$,  $x_c \in \cT_c^0$,  and $x_{\cD} \in  \cTDiff$ so 
\begin{align}
	x= x_{tt} + x_c + x_{\cD} \,  .
\end{align}
Conversely, given any $x_c \in  \cT_c^0$,  there exists $x_{\cD} \in \cTDiff$ so that 
$x_c + x_{\cD} \in \cT_1^0$.  
\end{Lem}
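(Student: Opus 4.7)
The plan is to reduce both parts to two standard ingredients: the Berger-Ebin splitting $\cT = \cTDiff \oplus \cT_1$ recalled at the start of this section, and the orthogonal decomposition of symmetric $2$-tensors on the closed Einstein manifold $(N, g_0)$ into a transverse-traceless piece and a conformal-plus-Lie-derivative piece.

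For the converse I would use only Berger-Ebin. Given $x_c = (\phi g_0, v) \in \cT_c^0$, applying the decomposition to $(\phi g_0, v)$ produces a $C^{3,\beta}$ vector field $V$ and $(h_1, v) \in \cT_1$ with $\phi g_0 = h_1 + \cL_V g_0$. Set $x_{\cD} := (-\cL_V g_0, 0) \in \cTDiff$, so that $x_c + x_{\cD} = (h_1, v) \in \cT_1$. For the weighted volume constraint, expand $\Tr h_1 = (n-1)\phi - 2\dv V$; integrating against $d\mu_{g_0}$ and using $\int_N \dv V \, d\mu_{g_0} = 0$ on the closed manifold, the constraint $\int_N (\tfrac{1}{2}\Tr h_1 + v)\, d\mu_{g_0} = 0$ is equivalent to $\int_N (\tfrac{(n-1)\phi}{2} + v)\, d\mu_{g_0} = 0$, which holds because $x_c \in \cT_c^0$. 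Hence $x_c + x_{\cD} \in \cT_1^0$.

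For the forward statement, given $x = (h, v) \in \cT_1^0$, the key input is the orthogonal decomposition
\[
S^2 T^* N \;=\; \{\text{TT tensors}\}\;\oplus\;\big\{\phi g_0 + \cL_V g_0 : \phi \in C^{2,\beta},\, V \in C^{3,\beta}\big\},
\]
which follows from the Fredholm alternative applied to the first-order operator $T:(\phi, V) \mapsto \phi g_0 + \cL_V g_0$, whose formal $L^2$-adjoint $h \mapsto (\Tr h, -2\delta h)$ has kernel precisely the TT tensors. Applying this splitting to $h$ yields $h_{tt}$ TT together with $(\phi, V)$ so that $h = h_{tt} + \phi g_0 + \cL_V g_0$. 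Taking $x_{tt} := (h_{tt}, 0) \in \cT_{tt}$, $x_c := (\phi g_0, v) \in \cT_c$, and $x_{\cD} := (\cL_V g_0, 0) \in \cTDiff$ gives $x = x_{tt} + x_c + x_{\cD}$, and the verification $x_c \in \cT_c^0$ is identical to the converse case: since $\Tr h_{tt} = 0$, the identity $\Tr h = (n-1)\phi + 2\dv V$ and the vanishing of the integral of $\dv V$ on the closed $N$ transfer the constraint on $x$ directly to $x_c$.

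The only substantive ingredient beyond Berger-Ebin is closedness of the image of $T$ in the $C^{2,\beta}$ category---equivalently, existence of the TT decomposition at this regularity. I expect this to be the main technical obstacle only in a bookkeeping sense; the analytic content is a standard application of elliptic Fredholm theory on the compact manifold $N$, resting on the same ellipticity of $V \mapsto \delta(\cL_V g_0)$ that underlies Berger-Ebin itself, whose kernel one identifies with the Killing fields of $g_0$. With this splitting in hand, both parts of the lemma become an orchestration of the Berger-Ebin decomposition and the vanishing of $\int_N \dv V \, d\mu_{g_0}$ on the closed manifold $N$.
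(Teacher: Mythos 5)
Your proposal is correct and follows essentially the same route as the paper: the forward direction is exactly York's decomposition of $h$ into transverse-traceless, conformal, and Lie-derivative pieces (which the paper simply cites from \cite{Y} and Theorem $1.4$ of \cite{FM}, whereas you sketch its proof — noting that your ``Fredholm alternative for $T$'' step is precisely where one invokes that standard result), and the converse amounts to solving $\delta \cL_V g_0 = -\delta x_c$, i.e.\ the same ellipticity of $V \mapsto \delta \cL_V g_0$ that the paper uses and that underlies the Berger--Ebin splitting you quote. Your explicit trace identities for the weighted volume constraint are the same observation the paper phrases as ``$\cT_{tt}$ and $\cTDiff$ are tangent to $\cA_1$.''
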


\begin{proof}
Suppose that $x = (g,v)$.  York's decomposition of Riemannian metrics (see \cite{Y} or theorem $1.4$ in \cite{FM}) gives a transverse traceless
metric
$g_{tt}$, a conformal metric $g_c $, and a $C^{3,\beta}$ vector field $V$  so that
\begin{align}
	g = g_{tt} + g_c + \cL_{V} g_0 \, .
\end{align}
The first claim follows with 
  $x_{tt} = (g_{tt} , 0) \in \cT_{tt}$,  $x_c = (g_c , v) \in \cT_c$,  and $x_{\cD} = (\cL_{V} g_0 , 0) \in  \cTDiff$.  
  To see that $x_c \in \cT_c^0$ (and not just $\cT_c$), note that the spaces
   $\cT_{tt}$ and $\cTDiff$ are   tangent to $\cA_1$.
   
   For the second part, we need to find a vector field $V$ so that
   \begin{align}	\label{e:fredh}
   	\delta \cL_V g_0 = - \delta x_c \, .
   \end{align}
However, $\delta$ is (a multiple of) the adjoint of $\cL_{\left( \cdot \right)} g_0$, so the operator $V \to \delta \cL_V g_0$ is elliptic and, thus, Fredholm, and its kernel consists of Killing vector fields.    In particular, the kernel is orthogonal to the image of $\delta$, so we can solve \eqr{e:fredh} as claimed.
\end{proof}

We will  need the following   standard property of the linearized operator $L_{\cF}$.

\begin{Lem}	\label{l:linearized}
The operator $L_{\cF}$ is symmetric. 
\end{Lem}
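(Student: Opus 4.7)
The key observation is that $L_{\cF}$ is the Hessian of $\cF|_{\cA_1}$ at the point $(b_{\infty}^{-2} g_0, b_{\infty})$, which is a critical point of $\cF$ restricted to $\cA_1$ by Corollary \ref{c:wtd}. At any critical point of a $C^2$ functional on a Banach manifold, the Hessian is a well-defined symmetric bilinear form by Schwarz's theorem on equality of mixed partial derivatives, so the associated linear operator (with respect to the $L^2$ inner product that defines the gradient) is symmetric.

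More concretely, given tangent vectors $x$ and $y$ to $\cA_1$ at $(b_{\infty}^{-2} g_0, b_{\infty})$, the plan is to pick any smooth two-parameter family $\gamma(s,t) \in \cA_1$ with $\gamma(0,0) = (b_{\infty}^{-2} g_0, b_{\infty})$, $\partial_s \gamma|_{(0,0)} = x$, and $\partial_t \gamma|_{(0,0)} = y$, and to verify the chain of identities
\begin{align*}
\langle L_{\cF} x, y \rangle = \frac{\partial}{\partial s}\Big|_{s=0} \langle \nabla_1 \cF(\gamma(s,0)), y \rangle = \frac{\partial^2}{\partial s\, \partial t}\Big|_{(0,0)} \cF(\gamma(s,t)).
\end{align*}
The vanishing of $\nabla_1 \cF$ at $(b_{\infty}^{-2} g_0, b_{\infty})$ is what guarantees that the right-hand side depends only on $x$ and $y$ and not on higher order data of $\gamma$; commuting the two partial derivatives then yields $\langle L_{\cF} x, y \rangle = \langle L_{\cF} y, x \rangle$.

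As a cross-check, the symmetry can be read off the explicit second variation formulas already computed: the transverse-traceless block in Proposition \ref{p:sv1} is controlled by the Lichnerowicz operator $\cL$, which is manifestly self-adjoint in $L^2$; the conformal block in Theorem \ref{t:conformal} involves only symmetric pairings in $(\phi, v)$, such as $\int (\phi \Delta v + v \Delta \phi)\, d\mu$, $\int \phi \Delta \phi\, d\mu$ and $\int \phi v\, d\mu$; and Proposition \ref{p:lingradF} together with Corollary \ref{c:lF} shows that the cross-block between conformal and transverse-traceless directions vanishes after the $L^2$ pairing. I do not anticipate a serious obstacle here: the only subtlety is justifying that $\cF|_{\cA_1}$ is sufficiently regular to invoke Schwarz's theorem, which follows from the fact that $A$, $B$ and the weighted volume functional $A_1$ are all analytic maps between H\"older spaces, as is already implicit in the statement of Lemma \ref{l:exp}.
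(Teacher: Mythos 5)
Your argument is essentially the paper's proof: the paper also takes a two-parameter family $x(s,t)\in\cA_1$, writes $\frac{\partial^2}{\partial s\,\partial t}\cF(x)=\langle L_{\cF}\,x_t , x_s\rangle$, and concludes symmetry from the commutativity of mixed partials. Your additional remarks (the critical-point justification that the Hessian is well defined, and the cross-check against the explicit second-variation formulas) are consistent supplements rather than a different route.
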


\begin{proof}
Let $x(s,t) \in \cA_1$  be  a $2$-parameter variation depending on $s$ and $t$. We have
\begin{align}
	\frac{\partial^2}{\partial s \partial t} \cF (x) = \frac{\partial}{\partial s} \, \langle \nabla_1 \cF(x) , x_s \rangle
	=   \langle L_{\cF} \, x_t , x_s \rangle \, .
\end{align}
Since mixed partials commute, we get that $L_{\cF}$ is symmetric as claimed.
\end{proof}

The next proposition  describes $L_{\cF}$ on the   subspaces $\cT_c^0$, $\cT_{tt}$, $\cTDiff$
and $\cTperp^0$.  Part (D) says that the off-diagonal blocks of $L_{\cF}$ are zero.  The reader should keep in mind that 
$\cT_{tt}$ and $\cT_{\perp}$ are orthogonal and span $\cT_1^0$, but $\cT_{tt}^{\perp}$ is larger than $\cT_{\perp}$.  Namely, this orthogonal complement is done relative to the $L^2$ inner product, so it includes things with lower regularity.

 \begin{Pro}	\label{l:parts}
 The linearization $L_{\cF}$ has the following properties:
   \begin{enumerate}
   \item[(A)] The restriction of $B_{\cF}$  to $\cT_c^0$ is Fredholm.
   \item[(B)] The restriction of $B_{\cF}$  to $\cT_{tt}$ is Fredholm.
\item[(C)] $L_{\cF}$ is identically zero on $  \cTDiff $ and maps  to $ \cTDiff^{\perp}$.
\item[(D)] $L_{\cF}:    \cTperp^0 \to \cT_{tt}^{\perp} $ and
$L_{\cF}:  \cT_{tt} \to \left[   \cTperp^0 \right]^{\perp} $.
%\item[(E)] $L_{\cF}$ maps $\cT_{tt}$ to $\cT_{tt}$.
\end{enumerate}
 \end{Pro}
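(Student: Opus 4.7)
The plan is to dispatch the structural statements (C) and (D) first using diffeomorphism invariance and Corollary \ref{c:lF}, then handle the Fredholm claims (A) and (B) via elliptic theory on the closed manifold $N$.

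For (C), the starting observation is that $\cF$ is invariant under the $\cD$-action: both $A(g,w) = \int w^3 \, d\mu_g$ and $B(g,w) = \int R_g w\, d\mu_g$ are manifestly invariant under coherent pullback of $(g,w)$, and the constraint cutting out $\cA_1$ is also $\cD$-invariant. By Corollary \ref{c:wtd}, the basepoint $(b_{\infty}^{-2} g_0, b_{\infty})$ is a critical point of $\cF|_{\cA_1}$, so by $\cD$-equivariance the entire $\cD$-orbit through the basepoint consists of critical points and $\nabla_1 \cF \equiv 0$ along the orbit. Differentiating along the orbit direction, which is exactly $\cTDiff$, gives $L_{\cF}|_{\cTDiff} = 0$. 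The second assertion of (C) follows at once from the symmetry of $L_{\cF}$ (Lemma \ref{l:linearized}): for any $x$ and any $y \in \cTDiff$, $\langle L_{\cF} x, y \rangle = \langle x, L_{\cF} y \rangle = 0$, so $L_{\cF} x \in \cTDiff^{\perp}$.

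For (D), the essential input is Corollary \ref{c:lF}, which tells us that along any conformal path the linearization $(\nabla_1 \cF)'$ has the form $(\bar{f}_1 g_0, \bar{f}_2) + (\Hess_{\bar{f}_3}, 0)$. I then pair this against an arbitrary transverse-traceless $(h,0) \in \cT_{tt}$. The first piece contributes $\int \bar{f}_1 \Tr(h)\, d\mu_{g_0} = 0$ since $\Tr h = 0$, and integration by parts turns $\int \langle \Hess_{\bar{f}_3}, h \rangle_{g_0}\, d\mu_{g_0}$ into a multiple of $\int \langle \nabla \bar{f}_3, \delta h \rangle\, d\mu_{g_0} = 0$ since $\delta h = 0$. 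Hence $L_{\cF}(\cT_c^0) \subset \cT_{tt}^{\perp}$. Combined with $L_{\cF}(\cTDiff) = 0$ from (C) and the definition $\cTperp^0 = \cT_1 \cap (\cT_c^0 + \cTDiff)$, this yields $L_{\cF}(\cTperp^0) \subset \cT_{tt}^{\perp}$. The second mapping $L_{\cF}(\cT_{tt}) \subset (\cTperp^0)^{\perp}$ then falls out of the symmetry of $L_{\cF}$ by the same bracket manipulation used in (C).

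For (B), Proposition \ref{p:sv1} restricted to $\cT_{tt}$ (where $v = 0$) shows that the quadratic form associated to $L_{\cF}$ is a nonzero constant times $\int \langle \cL h, h \rangle\, d\mu_{g_0}$, where $\cL$ is the Lichnerowicz operator $(\cL h)_{ij} = \Delta h_{ij} + 2 R_{ikj\ell} h_{k\ell}$. This is a second-order elliptic self-adjoint operator, and on an Einstein background with $\Ric_{g_0} = (n-2) g_0$ a short curvature computation (using the second Bianchi identity together with the Einstein condition) shows that $\cL$ preserves transverse-tracelessness. Standard elliptic theory on the closed manifold $N$ then delivers Fredholmness of the restriction. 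For (A), Theorem \ref{t:conformal} produces a coupled quadratic form on $(\phi, v)$ whose highest-order part is $\frac{n-3}{2} \phi \Delta \phi + \phi \Delta v + v \Delta \phi$. The associated self-adjoint operator has principal symbol matrix proportional to $\begin{pmatrix} (n-3)/2 & 1 \\ 1 & 0 \end{pmatrix} |\xi|^2$, whose determinant is $-1 \neq 0$, so the system is Douglis--Nirenberg elliptic and hence Fredholm on the appropriate H\"older pair. The single codimension-one linear constraint defining $\cT_c^0 \subset \cT_c$ preserves Fredholmness.

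The main obstacle I anticipate is in (A): the coupled system is genuinely mixed and has a zero on the diagonal, so the principal-symbol computation must be carried out carefully, and the lower-order terms involving the Einstein constant $(n-1) b_\infty^2$ must be tracked to confirm self-adjointness and cleanly isolate the finite-dimensional kernel. Part (B) is analogous but simpler, and once (C) and (D) decouple the diagonal blocks along the decomposition $\ca_1^0 = \cTperp^0 \oplus \cT_{tt}$, (A) and (B) combine directly to yield the Fredholm conclusion of Theorem \ref{p:fker}.
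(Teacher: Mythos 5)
Your proposal is correct and follows essentially the same route as the paper: (C) from diffeomorphism invariance plus symmetry of $L_{\cF}$, (D) from the conformal linearization formula (Proposition \ref{p:lingradF}/Corollary \ref{c:lF}) together with orthogonality of $\cT_{tt}$ to conformal and Hessian pieces and symmetry, (B) from ellipticity of the Lichnerowicz operator via Proposition \ref{p:sv1}, and (A) from the $2\times 2$ block system of Theorem \ref{t:conformal} whose leading matrix has determinant $-1$. The only cosmetic difference is your explicit integration-by-parts pairing in (D) and the remark on $\cL$ preserving transverse-tracelessness, neither of which changes the argument.
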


 \begin{proof}
 {\bf{Proof of (A)}}:
 To prove this, define the quadratic form $Q_{c}: \cT_c^0 \to \RR$  by
 \begin{align}
 	Q_c (h,v) = \langle L_{\cF} (h,v) , (h,v) \rangle \, .
 \end{align}
 The claim is that the linear operator $L_c$ associated to $Q_c$ is Fredholm.
 
It follows from Theorem  \ref{t:conformal} that if $h = \phi \, b_{\infty}^{-2} \,  g_0$, then
\begin{align}
	Q_c (h,v) = \frac{1}{2-n} \, \langle L_c (\phi , v) , (\phi , v) \rangle \, , 
\end{align}
where the linear operator $L_c$ maps the pair of 
functions
$(\phi , v)$ to the pair of functions
\begin{align}
	  \left(  \frac{n-3}{2}\, \Delta \phi + b_{\infty}^2 \,  \frac{(n-1)(n-3)}{2}\, \phi  + b_{\infty}^2 \,  (n-1) \, v + \Delta v , \, 
	6b_{\infty}^2 \,  v + b_{\infty}^2 \, ( n-1) \phi + \Delta \phi   \right) \, .  \notag
\end{align}
In block form, we can write this as the symmetric linear operator
\begin{equation}	\label{e:matrix}
\left( \begin{array}{cc}
\frac{n-3}{2}\, \left(  \Delta  + b_{\infty}^2 \,  ( n-1) \right)   &
\Delta + b_{\infty}^2 \,  (n -1)
  \\
   \Delta + b_{\infty}^2 \,  (n-1)   & 
      6 b_{\infty}^2 \,   \end{array} \right)  \, .
\end{equation}
It suffices to show that this linear second order operator is   elliptic.  For this, we need only consider the second order part which can be written as
\begin{equation}	 
\left( \begin{array}{cc}
\frac{n-3}{2}  &
1
  \\
  1  & 
     0  \end{array} \right) \, \Delta \, .
\end{equation}
Since $\Delta$ is elliptic, it suffices to show that the matrix in front of $\Delta$ is non-degenerate.{\footnote{There are several different notions of ellipticity for systems.  Weak ellipticity requires only non degeneracy of the matrix and is sufficient to imply elliptic estimates and that the map is Fredholm.  Strong ellipticity requires that the matrix is positive definite; this gives additional properties like the maximum principle.}}
     This follows since the determinant of this matrix is
$-1$.

 {\bf{Proof of (B)}}:
 Define a quadratic form $Q_{tt}: \cT_{tt} \to \RR$ by
 \begin{align}
 	Q_{tt} (h,0) = \langle L_{\cF} (h,0) , (h,0) \rangle \, .
 \end{align}
 It follows from Proposition \ref{p:sv1} that $Q_{tt}$ is given by
\begin{align}
	Q_{tt} (h,0) = \frac{1}{2(n-2)^2} \,  \langle 
	  ( \cL  h , 0) , (h, 0) \rangle   \, , 
\end{align}
where $\cL$ is the Lichnerowicz operator
\begin{align}
	\left( \cL \, h \right)_{ij} = \left( \Delta \, h\right)_{ij}  + 2\, 
	  R_{ikj\ell}  h_{k\ell}  \, . 
\end{align}
Since $\cL$ is elliptic,  the linear operator associated to $Q_{tt}$ is Fredholm, giving (B).

 {\bf{Proof of (C)}}:
Since the diffeomorphism group preserves $\cF$ and, thus, maps critical points to critical points,
it follows that $L_{\cF} :  \cTDiff \to 0$.  Since   $L_{\cF}$ is symmetric
by Lemma \ref{l:linearized}, it follows that
$L_{\cF}$ maps to $  \cTDiff^{\perp}$.

 {\bf{Proof of (D)}}:
 Since $\cT_{tt}$ is perpendicular to both 
  Hessians (these are tangent to $\cTDiff$) and to conformal variations, 
Proposition \ref{p:lingradF}  implies that 
\begin{align} 
	L_{\cF}: \cT_{c} \cap \cT^0 \to \cT_{tt}^{\perp} \, .  
\end{align}
Combining this with (C), we conclude that
\begin{align} 
	L_{\cF}: \cTperp^0 \equiv  \left(  \cTDiff + \cT_{c} \right) \cap \cT^0 \to \cT_{tt}^{\perp} \, .  
\end{align}
The last claim follows from this and the symmetry of $L_{\cF}$.
\end{proof}

 We are now ready to prove Theorem \ref{p:fker}.
 
 \begin{proof}[Proof of Theorem \ref{p:fker}]
 Let $L$ denote the linear operator associated to the restriction of $\cB_{\cF}$ to $\cT_1^0$, so that
 \begin{align}
 	\langle L \, x  , y \rangle = \cB_{\cF} (x,y) \equiv \langle \cL_{\cF} x , y \rangle
 \end{align}
 for $x,y \in \cT_1^0$.    $L$ is  symmetric since $\cL_{\cF}$ is.  Moreover, $L$ maps $\cT_1^0$ to the $C^{\alpha}$ closure of
 $\cT_1^0$.

 To prove the theorem, we will show that:
 \begin{itemize}
 \item $L$ has a finite dimensional kernel $K$.
 \item Given  $x$ in (the $C^{\alpha}$ closure of) $ \cT_1^0 \cap K^{\perp}$, there is a unique
  $y \in   \cT_1^0 \cap K^{\perp}$ so that 
  \begin{align}
  	L \, y = x \, .
\end{align}
 \end{itemize}
We will decompose the map $L$ into blocks according to the orthogonal decomposition
 \begin{align}
 	\cT_1^0 = \cT_{tt} \oplus \cTperp^0  
 \end{align}
 given by Lemma \ref{l:spaces}.
Namely,  (D) in Proposition \ref{l:parts} implies that $L$ ``preserves'' this splitting.{\footnote{The spaces are defined to be in $C^{2,\alpha}$, so the image of $L$ is merely in $C^{\alpha}$; cf. (D)  in Proposition \ref{l:parts}.}}    Let $L_{tt}$ and $L_{\perp}$ denote the restrictions of $L$ to $\cT_{tt}$ and $\cTperp$, respectively.
  Let  $K_{\perp}$ and $K_{tt}$ be the kernels of 
 $K_{\perp}$ and $K_{tt}$, respectively.   By (D)  in Proposition \ref{l:parts}, we have
  \begin{align}
  	K =  K_{\perp} \oplus K_{tt} \, .
  \end{align}
 Since the off-diagonal blocks vanish, we need only show that $L_{tt}$ and $L_{\perp}$ have the two desired properties.
  This is immediate for $L_{tt}$ by (B) in Proposition \ref{l:parts}.  The rest of the proof will be to show that $L_{\perp}$ also has these properties.
  
  We will need a few preliminaries.  Define the map $\Pi_c: \cT^0 \to \cT_c^0$   by
  \begin{align}
  	\Pi_c (g,v) = \left( \frac{\Tr (g)}{n-1} \, \bar{g}_0 , v \right) \, ,
  \end{align}
  where $\bar{g}_0 = b_{\infty}^{-2} \, g_0$ is the background metric and the trace is computed relative to $\bar{g}_0$.
  The map $\Pi_c$ projects the two-tensor to a diagonal two-tensor with the same trace; it is easy to see that this  preserves $\cT^0$.
 Let $L_c$ be the linear map associated to the restriction of $\cB_{\cF}$ to $\cT_c^0$.  If $x_c \in \cT_c^0$, then it is easy to see that
 \begin{align}
 	L_c \, x_c = \Pi_c \, \left( L_{\cF} x_c \right) \, .
 \end{align}
   The map $L_c$ is Fredholm by 
 (A)   in Proposition \ref{l:parts}, so the kernel $K_c$   of $L_c$ is finite dimensional and $L_c$ is invertible on (the $C^{\alpha}$ closure of)
 $K_c^{\perp}$.
 
  Suppose now that $x , y \in \cTperp^0$.  Lemma \ref{l:spaces}
gives   $x_c , y_c \in \cT_c^0$  and $x_{\cD},y_{\cD} \in  \cTDiff$ so that
\begin{align}
	x=  x_c + x_{\cD} {\text{ and }} y=   y_c + y_{\cD} \,  .
\end{align}
Furthermore, $x_c$ and $y_c$ are unique up to elements of $\cT_{c\cD}$.
Part (C)  in Proposition \ref{l:parts} gives that $L_{\cF} x_{\cD} = 0$ and $L_{\cF} x_c$ is orthogonal to $\cTDiff$,  so we get
\begin{align}
	\langle L_{\perp} x , y \rangle =  \langle L (x_c + x_{\cD}) , (y_c + y_{\cD}) \rangle 
	= \langle L_{\cF} x_c , y_c \rangle = \langle L_c x_c , y_c \rangle \, .
\end{align}
Thus, if $ x \in K_{\perp}$, then $x_c$ is in the finite dimensional space $K_c$ (by (A) in 
Proposition \ref{l:parts}).  It follows that $K_{\perp}$ is also finite dimensional.

Next, suppose that $y$ is orthogonal to $K_{\perp}$.  Given any $x \in K_{\perp}$, then since $\cTDiff$ is orthogonal to $\cTperp^0$, we 
 get 
\begin{align}
	0 = \langle x_c + x_{\cD} , y \rangle = \langle x_c , y \rangle = \langle x_c , \Pi_c (y) \rangle \, .
\end{align}
In particular, $\Pi_c (y)$ is orthogonal to $K_c$.  Since $L_c$ is Fredholm ((A) in Proposition \ref{l:parts}), we get
$z_c$ so that $L_c z_c = \Pi_c (y)$.  The second part of Lemma \ref{l:parts} then gives $z_{\cD}$ so that
\begin{align}
	z = z_c + z_{\cD} \in \cT_1^0 \, .
\end{align}
Since $L_{\cF} z_{\cD} = 0$, we have $\Pi_c \left( L  z \right) = L_c z_c = \Pi_c (y)$.  In particular, 
\begin{align}
	(y - L z ) \in \cTperp^0 \subset  \cT_1^0
\end{align}
 is trace-free and transverse, so it belongs to $\cT_{tt}$.  But $\cTperp^0$ is perpendicular to $\cT_{tt}$, so we 
 conclude that $L z = y$ as desired.
 \end{proof}

 \section{A general Lojasiewicz-Simon inequality}

The Lojasiewicz-Simon inequality of \cite{S1} is set up for analytic   functionals that are uniformly convex in the gradient, 
such as the area or energy functionals.  Our functional does not quite fit into this framework since it depends on second derivatives and is not convex, 
so we will need a generalization.  Suppose therefore that we have:
\begin{enumerate}
\item A closed subspace $E$ of $L^2$ maps to a finite dimensional vector space and an analytic functional $G$ defined on a neighborhood $\cO_E$  of $0$ in  $C^{2,\beta} \cap E$.
\item The gradient of $G$ is a $C^1$ map  $\nabla G : \cO_E \to C^{\beta} \cap E$ with $\nabla G (0) = 0$ and 
\begin{align}
	\left\| \nabla G (x)  - \nabla G (y) \right\|_{L^2} \leq C \, \|x - y\|_{W^{2,2}} \, .
\end{align}
  
\item  The linearization $L$ of $\nabla G$ at $0$ is symmetric, bounded from $C^{2,\beta} \cap E$ to $C^{\beta} \cap E$ and from $W^{2,2} \cap E$ to $L^2 \cap E$,  and is Fredholm from $C^{2,\beta} \cap E$ to $C^{\beta} \cap E$.
\end{enumerate} 

One consequence of (3) is that $L$ has finite dimensional kernel $K \subset C^{2,\beta} \cap E$.

\vskip2mm
In (2), $C^1$ means that there is a Frechet derivative at each point and this varies continuously.  Recall that if $V$ is a map from a Banach space $X$ to another Banach space $Y$ and $x \in X$, then a linear map $V_x:X \to Y$ is the Frechet derivative of $V$ at $x$ if
\begin{align}
	\frac{ \| V(x+u) - V(x) - V_x (u) \|_{Y} }{\| u \|_X} \to 0 {\text{ as }} \| u \|_X \to 0 \, .
\end{align}

\vskip2mm
The main result of this section is the following
Lojasiewicz-Simon inequality.

\begin{Thm}	\label{t:LS}
If $G$ satisfies  (1), (2) and (3), 
there exists $\alpha \in (0,1)$ so that for all $x \in E$ sufficiently small
\begin{align}
	|G (x) - G (0)|^{2- \alpha} \leq \| \nabla G (x) \|_{L^2}^2 \, .
\end{align}
\end{Thm}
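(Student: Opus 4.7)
The plan is to carry out a Lyapunov-Schmidt reduction in the spirit of Simon \cite{S1}, reducing Theorem \ref{t:LS} to the classical finite-dimensional Lojasiewicz inequality \cite{L} on the finite-dimensional kernel $K = \ker L$ furnished by (3). First I would decompose $E = K \oplus K^{\perp}$ orthogonally in $L^2$, with projections $\Pi_K, \Pi_{\perp}$. Elliptic regularity implicit in (3) shows $K$ consists of smooth elements, so $\Pi_K$ and $\Pi_{\perp}$ are bounded on every scale in sight. Symmetry of $L$ gives $\mathrm{range}(L) \subset K^{\perp}$, and the Fredholm property then makes the restriction $\widetilde L := \Pi_{\perp} L$ a bounded linear bijection from $K^{\perp} \cap C^{2,\beta} \cap E$ to $K^{\perp} \cap C^{\beta} \cap E$ with bounded inverse.

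Next I would apply the analytic implicit function theorem to the map
\begin{align*}
F(x_K, u) := \Pi_{\perp} \, \nabla G(x_K + u), \qquad F: K \times (K^{\perp} \cap C^{2,\beta} \cap E) \to K^{\perp} \cap C^{\beta} \cap E.
\end{align*}
By (1) and (2), $F$ is analytic with $F(0,0) = 0$, and $D_u F(0,0) = \widetilde L$ is invertible, hence there is a real analytic map $\phi: U \to K^{\perp} \cap C^{2,\beta}$ on a neighborhood $U$ of $0$ in $K$ with $\phi(0) = 0$ and $\Pi_{\perp} \nabla G(x_K + \phi(x_K)) \equiv 0$. Setting $f(x_K) := G(x_K + \phi(x_K))$ produces a real analytic function on a finite-dimensional neighborhood, to which the classical Lojasiewicz inequality applies, yielding $\alpha \in (0,1)$ and $c > 0$ with
\begin{align*}
|f(x_K) - f(0)|^{2-\alpha} \leq c\, |\nabla f(x_K)|_K^2 \qquad \text{for all } x_K \text{ near } 0.
\end{align*}

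To transfer this back to $G$, write $x = x_K + u$ and $\bar x = x_K + \phi(x_K)$ for $x \in E$ small. The chain rule combined with $\Pi_{\perp} \nabla G(\bar x) = 0$ gives $\nabla f(x_K) = \Pi_K \nabla G(\bar x)$, so $|\nabla f(x_K)|_K \leq \|\nabla G(\bar x)\|_{L^2}$. Expanding $\nabla G(x) = \nabla G(\bar x) + L(u - \phi(x_K)) + R$ with $\|R\|_{L^2} \leq C\|u - \phi(x_K)\|_{W^{2,2}}^2$ (from (2)), applying $\Pi_{\perp}$ and inverting $\widetilde L$, one gets $\|u - \phi(x_K)\|_{W^{2,2}} \leq C\,\|\nabla G(x)\|_{L^2}$ after absorbing the quadratic error for $x$ small, whence the Lipschitz bound in (2) also yields $\|\nabla G(\bar x)\|_{L^2} \leq C\,\|\nabla G(x)\|_{L^2}$. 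Taylor expanding $G$ along the segment from $\bar x$ to $x$ and using $\Pi_{\perp} \nabla G(\bar x) = 0$ produces $|G(x) - f(x_K)| \leq C \|u - \phi(x_K)\|_{W^{2,2}}^2 \leq C'\, \|\nabla G(x)\|_{L^2}^2$. Combining the finite-dimensional Lojasiewicz inequality with these two transfer estimates, together with the convexity bound $|G(x) - G(0)|^{2-\alpha} \leq 2^{1-\alpha}\bigl(|f(x_K) - f(0)|^{2-\alpha} + |G(x) - f(x_K)|^{2-\alpha}\bigr)$ and the fact that $2-\alpha > 1$, absorbs the extra term into $\|\nabla G(x)\|_{L^2}^2$ and gives the theorem.

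The main obstacle I expect is the careful bookkeeping across the four function spaces in (1)--(3). The implicit function theorem and analytic reduction naturally live on the $C^{2,\beta} \to C^{\beta}$ scale where $L$ is Fredholm, while the desired inequality and the Lipschitz hypothesis in (2) involve $L^2$ and $W^{2,2}$. What has to be verified is that $\widetilde L$, viewed as a closed symmetric operator on $K^{\perp} \cap L^2$ with domain $K^{\perp} \cap W^{2,2}$, has a bounded inverse to $K^{\perp} \cap L^2$. This follows from the Fredholm alternative once one checks that the kernel of $L$ in $W^{2,2}$ coincides with $K$ (via elliptic regularity, again using Fredholmness from $C^{2,\beta}$ to $C^{\beta}$) and that $L$ is self-adjoint in the $W^{2,2}$--$L^2$ setting, at which point finite-dimensionality and smoothness of $K$ make all the projection bounds uniform across scales.
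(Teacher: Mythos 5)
Your overall skeleton — Lyapunov--Schmidt reduction to the finite-dimensional kernel $K$, the classical Lojasiewicz inequality for the reduced analytic function $f$, and two transfer estimates ($|\nabla f(\Pi_K x)| \lesssim \|\nabla G(x)\|_{L^2}$ and $|G(x)-f(\Pi_K x)| \lesssim \|\nabla G(x)\|_{L^2}^2$) — is the same as the paper's. But your implementation of the reduction is the classical Simon one (solve $\Pi_{\perp}\nabla G(x_K+u)=0$ for $u=\phi(x_K)$), and the step where you pass from $\|\nabla G(x)\|_{L^2}$ to $\|u-\phi(x_K)\|_{W^{2,2}}$ has a genuine gap as you derive it from (1)--(3). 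You write $\nabla G(x)=\nabla G(\bar x)+L(u-\phi(x_K))+R$ with $\|R\|_{L^2}\leq C\|u-\phi(x_K)\|_{W^{2,2}}^2$ ``from (2)''; but (2) only asserts that $\nabla G$ is Lipschitz from $W^{2,2}$ to $L^2$ and $C^1$ on the H\"older scale, so no quadratic (indeed no first-order Taylor) remainder on the $W^{2,2}\to L^2$ scale is available, and in addition $L$ is the linearization at $0$, not at $\bar x$, so even a small-error expansion would require continuity of the linearization in the $W^{2,2}\to L^2$ operator norm, which is not hypothesized. Likewise, inverting $\widetilde L$ with a bounded inverse from $K^{\perp}\cap L^2$ to $K^{\perp}\cap W^{2,2}$ is not contained in (3) (which gives Fredholmness only from $C^{2,\beta}$ to $C^{\beta}$ and mere boundedness from $W^{2,2}$ to $L^2$); your final paragraph defers this to elliptic regularity and self-adjointness, i.e.\ to structure of the concrete application rather than to the abstract hypotheses the theorem is stated under. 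A H\"older-scale version of your coercivity estimate is provable from (1)--(3), but it controls $\|u-\phi\|_{C^{2,\beta}}$ by $\|\nabla G(x)\|_{C^{\beta}}$, which is useless for the $L^2$ right-hand side of the inequality.

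The paper avoids precisely this coercivity/lower-bound issue by a different form of the reduction: it inverts the full map $\cN=\nabla G+\Pi_K$ (whose linearization $L+\Pi_K$ is an isomorphism on the H\"older scale), obtaining $\Phi=\cN^{-1}$ with the Lipschitz bound $\|\Phi(x)-\Phi(y)\|_{W^{2,2}}\leq C\|x-y\|_{L^2}$, and then exploits the identity $x=\Phi(\Pi_K x+\nabla G(x))$. This converts the estimate you need into a \emph{forward} Lipschitz estimate: $\|x-\Phi(\Pi_K x)\|_{W^{2,2}}\leq C\|\nabla G(x)\|_{L^2}$, with no second-order expansion of $\nabla G$ and no lower bound on increments of $\nabla G$; the two transfer lemmas (Lemmas \ref{l:grad} and \ref{l:F}) then follow from the Lipschitz properties alone, integrating along the segment $y_t=\Pi_K x+t\nabla G(x)$ in the reduced coordinates. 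If you want to keep your version of the reduction, you must either add to (2)--(3) the Sobolev-scale differentiability of $\nabla G$ and invertibility of $\widetilde L$ (which changes the theorem), or restructure the transfer step along the lines of the paper's $\cN$, $\Phi$ argument.
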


\vskip2mm
  Let $\Pi_K$ be projection onto $K$ and 
define the mapping $\cN$ by $\cN = \nabla G + \Pi_K$.  The next lemma is Lyapunov-Schmidt reduction.

\begin{Lem}	\label{l:reduction}
There is an open set 
$\cO \subset C^{\beta} \cap E$ about $0$ and a  map $\Phi : \cO \to C^{2,\beta} \cap E$ with $\Phi (0) =0$ so that 
\begin{itemize}
\item $\Phi \circ \cN (x) = x $ and $ \cN \circ \Phi (x) = x$.
\item   $\| \Phi (x) \|_{C^{2,\beta}} \leq C \| x \|_{C^{\beta}}$ and
  $\|\Phi (x) - \Phi (y) \|_{W^{2,2}} \leq C \, \|x-y\|_{L^2}$.
\item The function $f= G \circ \Phi$ is analytic.
\end{itemize}
\end{Lem}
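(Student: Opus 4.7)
The strategy is the classical Lyapunov-Schmidt reduction: invert $\cN$ by the inverse function theorem and read off the three stated properties separately. First I would split $E = K \oplus K^{\perp}$ orthogonally in $L^2$. Since $L$ is symmetric and Fredholm from $C^{2,\beta} \cap E$ to $C^{\beta} \cap E$ with finite-dimensional kernel $K$, a standard argument shows that $L$ restricts to an isomorphism from $K^{\perp} \cap C^{2,\beta}$ onto $K^{\perp} \cap C^{\beta}$. The linearization of $\cN = \nabla G + \Pi_K$ at $0$ is $L + \Pi_K$: on $K^{\perp}$ it agrees with $L$, while on $K$ it reduces to the identity. Hence $L + \Pi_K$ is a Banach-space isomorphism from $C^{2,\beta} \cap E$ onto $C^{\beta} \cap E$.

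With this in hand, I would apply the $C^1$ Banach inverse function theorem to $\cN$ near $0$ to produce an open neighborhood $\cO \subset C^{\beta} \cap E$ of $0$ and a $C^1$ local inverse $\Phi : \cO \to C^{2,\beta} \cap E$ with $\Phi(0) = 0$, $\cN \circ \Phi = \mathrm{id}$, and $\Phi \circ \cN = \mathrm{id}$. The bound $\|\Phi(x)\|_{C^{2,\beta}} \leq C \|x\|_{C^{\beta}}$ is then automatic from the mean value theorem since $d\Phi$ is bounded near $0$. For the analyticity of $f = G \circ \Phi$, the point is that $G$ is analytic on $\cO_E$ by (1), so $\nabla G$ is analytic as a Banach-space map by termwise differentiation of its convergent power series; since $\Pi_K$ is bounded and linear, $\cN$ is also analytic, and the analytic inverse function theorem gives $\Phi$ analytic, whence $f = G \circ \Phi$ is analytic as a composition of analytic maps.

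The Lipschitz estimate $\|\Phi(x) - \Phi(y)\|_{W^{2,2}} \leq C \|x - y\|_{L^2}$ is the step I expect to need the most care, since it asks to control $\Phi$ in norms weaker than those appearing in the Fredholm hypothesis (3). The idea is that (2) makes $\nabla G$, and hence $\cN$, Lipschitz from $W^{2,2} \cap E$ into $L^2 \cap E$, while (3) says $L$ is bounded from $W^{2,2} \cap E$ to $L^2 \cap E$ and, together with finite-dimensionality of $K$, is invertible on $K^{\perp}$ in this pair. Writing the telescoping identity
$$\Phi(x) - \Phi(y) = (L + \Pi_K)^{-1}\bigl[ (x - y) - \bigl(\cN(\Phi(x)) - \cN(\Phi(y)) - (L+\Pi_K)(\Phi(x)-\Phi(y))\bigr)\bigr],$$
the bracketed correction is superlinear at $0$ by $C^1$ smoothness of $\nabla G$, so on a possibly smaller neighborhood it can be absorbed into the left-hand side in the $W^{2,2}$ norm, yielding the estimate. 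Uniqueness of the inverse forces this weak-norm extension to coincide with the $C^{2,\beta}$-valued $\Phi$ constructed above, so all three properties hold for the same map $\Phi$.
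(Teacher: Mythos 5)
Your construction of $\Phi$ follows the paper's route: you identify $d\cN_0 = L + \Pi_K$, show it is an isomorphism from $C^{2,\beta}\cap E$ onto $C^{\beta}\cap E$ (the paper argues Fredholm plus symmetric plus injective; your block splitting along $E = K\oplus K^{\perp}$ is the same argument in different words), invoke the $C^1$ inverse/implicit function theorem to produce $\cO$ and $\Phi$, obtain $\|\Phi(x)\|_{C^{2,\beta}}\leq C\|x\|_{C^{\beta}}$ from the mean value theorem since $d\Phi_y=[d\cN_{\Phi(y)}]^{-1}$ is bounded near $0$, and get analyticity of $\Phi$, hence of $f=G\circ\Phi$, from the analytic inverse function theorem (the paper cites the remark on p.~36 of Nirenberg for exactly this). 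Up to that point the proposal matches the paper.

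The gap is in your argument for the bullet $\|\Phi(x)-\Phi(y)\|_{W^{2,2}}\leq C\|x-y\|_{L^2}$. First, the displayed identity is circular: since $\cN(\Phi(x))-\cN(\Phi(y))=x-y$ exactly, the bracketed expression equals $(L+\Pi_K)(\Phi(x)-\Phi(y))$ on the nose, so the identity carries no information beyond invertibility; the estimate you want is precisely the assertion that $(L+\Pi_K)^{-1}$ (or $[d\cN]^{-1}$ along the relevant points) is bounded from $L^2\cap E$ to $W^{2,2}\cap E$, and hypotheses (1)--(3) do not state this -- they give only the forward bound of $L$ from $W^{2,2}\cap E$ to $L^2\cap E$ and Fredholmness in the H\"older pair. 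Second, the absorption step fails as described: ``superlinearity'' of $\cN(u)-\cN(w)-(L+\Pi_K)(u-w)$ in the weak norms would require continuity of the Fr\'echet derivative of $\nabla G$ as a map in the $W^{2,2}\to L^2$ topology, whereas (2) asserts $C^1$ only in the $C^{2,\beta}\to C^{\beta}$ topology together with a Lipschitz bound in the Sobolev pair whose constant is fixed, not small; so the correction term is only $O(\|u-w\|_{W^{2,2}})$ in $L^2$ with a non-small constant and cannot be absorbed into the left-hand side. The paper handles this bullet by the same mean-value mechanism as the H\"older bound: it uses that $\Phi$ is $C^1$ with $d\Phi_y=[d\cN_{\Phi(y)}]^{-1}$ and the integral mean value theorem on Banach spaces, asserting that the second Lipschitz bound ``follows in the same way'' from the $W^{2,2}$--$L^2$ Lipschitz estimate for $\nabla G$ and the trivial boundedness of $\Pi_K$ on $L^2$, i.e.\ the control of the differential and its inverse in the weak norms is what carries the estimate (in the application this is available because $L_{\cF}$ is elliptic). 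To repair your version you would need to prove a uniform $L^2\to W^{2,2}$ bound for $[d\cN]^{-1}$ near $0$, or follow the paper's mean-value route; as written, this step does not close.
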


\begin{proof}
Following \cite{S1},   the mapping     $\cN = \nabla G + \Pi_K$ is $C^1$ from $C^{2,\beta} \cap E$ to $C^{\beta} \cap E$ 
and  the Frechet derivative at $0$  is
\begin{align}
	d\cN_0 = L + \Pi_K  \, .
\end{align}
We will show that $d\cN_0 = L + \Pi_K$ is an isomorphism.  First, 
since $L$ is Fredholm and $\Pi_K$ is compact (it has finite rank), the sum
$L + \Pi_K$ is also Fredholm.   Since both $L$ and $\Pi_K$ are symmetric, so is
$L + \Pi_K$ and, thus, 
  it is an isomorphism if and only if it is injective.  
 Finally, since $K$ is the kernel of the symmetric operator
  $L$, we see that $L$ maps to $K^{\perp}$ and, thus,   $L + \Pi_K$ is injective.
  We conclude that $d\cN_0$ is an isomorphism from  $C^{2,\beta} \cap E$ onto  $C^{\beta} \cap E$  
 and the inverse $\left[ d \cN_0 \right]^{-1}$ is a bounded linear mapping from $C^{\beta} \cap E$ to $C^{2,\beta} \cap E$. 
 
 The implicit function theorem (theorem $2.7.2$ in \cite{N}) gives an open set 
$\cO \subset C^{\beta} \cap E$ about $0$ and a $C^1$ inverse map 
$\Phi : \cO  \to C^{2,\beta} \cap E$ with $\Phi (0) = 0$ and  
\begin{align}
	 \Phi \circ \cN (x) = x {\text{ and }}  \cN \circ \Phi (x) = x \, .
\end{align}
The Frechet derivative of $\Phi$ is continuous and is given by
\begin{align}
	d \Phi_y =   \left[ d \cN_{\Phi (y)}
	\right]^{-1} 
	\, .
\end{align}
Since $\Phi$ is $C^1$, the integral mean value theorem  on Banach spaces (see page $34$ in \cite{N}) gives a constant $C$ so that for $x, y \in \cO$
\begin{align}
	\|\Phi (x) - \Phi (y) \|_{C^{2,\alpha}} \leq C \, \|x-y\|_{C^{\beta}} \, .
\end{align}
Using this with $y = \Phi (y) = 0$ gives $\|\Phi (x)   \|_{C^{2,\beta}} \leq C \, \|x\|_{C^{\beta}}$.  The Lipschitz bound for $\Phi$ as a map from 
 $L^2$ to $W^{2,2}$ follows in the same way using the $W^{2,2}$ estimate  for $\nabla G$ and the trivial boundedness of $\Pi_K$ on $L^2$.

Finally, 
  by the remark on page $36$ of \cite{N}, the map $\Phi$ is analytic.
\end{proof}
 
 The next lemma gives a lower bound for   $\nabla G (x)$
 in terms of   $\nabla f$ at  $\Pi_K (x)$.

\begin{Lem}	\label{l:grad}
There exists $C$ so that for every sufficiently small $x \in C^{2,\beta} \cap  E$
\begin{equation}
	\|\nabla f   (\Pi_K (x))  \|_{L^2}^2 \leq C \, \| \nabla G (x)  \|_{L^2}^2 \, .
\end{equation}
\end{Lem}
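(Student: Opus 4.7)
The plan is to exploit the chain rule together with the two Lipschitz bounds from Lemma \ref{l:reduction}. Set $y = \Pi_K(x)$; this is a small element of the finite-dimensional space $K$, so $\Phi(y)$ is defined. Because $f = G \circ \Phi$ and $\Phi$ is Lipschitz from $L^2 \cap E$ into $W^{2,2} \cap E$ while $\nabla G$ is Lipschitz from $W^{2,2} \cap E$ into $L^2 \cap E$ by hypothesis (2), the composition $f$ is $C^1$ with respect to the $L^2$ topology, and the chain rule yields the $L^2$ identity
\begin{equation*}
	\nabla f(y) = (d\Phi_y)^{\ast}\, \nabla G(\Phi(y)),
\end{equation*}
where the adjoint is taken in $L^2$. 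The task splits into bounding the operator $(d\Phi_y)^{\ast}$ on $L^2$ and controlling $\nabla G(\Phi(y))$ by $\nabla G(x)$.

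For the first factor, the Lipschitz estimate $\|\Phi(a) - \Phi(b)\|_{W^{2,2}} \leq C \|a-b\|_{L^2}$ of Lemma \ref{l:reduction} implies that $d\Phi_y$ is a bounded linear map $L^2 \cap E \to W^{2,2} \cap E$, and in particular $L^2 \cap E \to L^2 \cap E$, with norm uniformly bounded in $y$ small. Therefore its $L^2$ adjoint $(d\Phi_y)^{\ast}$ is also bounded on $L^2$ with a uniform constant, giving
\begin{equation*}
	\|\nabla f(\Pi_K(x))\|_{L^2} \leq C \, \|\nabla G(\Phi(\Pi_K(x)))\|_{L^2}.
\end{equation*}

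For the second factor, I use that $\cN \circ \Phi = \mathrm{Id}$. Since $\cN(x) = \nabla G(x) + \Pi_K(x)$ and $\cN(\Phi(\Pi_K(x))) = \Pi_K(x)$, I get
\begin{equation*}
	\cN(x) - \cN(\Phi(\Pi_K(x))) = \nabla G(x),
\end{equation*}
and applying $\Phi$ together with its Lipschitz bound yields
\begin{equation*}
	\|x - \Phi(\Pi_K(x))\|_{W^{2,2}} \leq C \, \|\nabla G(x)\|_{L^2}.
\end{equation*}
Feeding this into the Lipschitz estimate for $\nabla G$ between $W^{2,2}$ and $L^2$ gives $\|\nabla G(x) - \nabla G(\Phi(\Pi_K(x)))\|_{L^2} \leq C\|\nabla G(x)\|_{L^2}$, hence $\|\nabla G(\Phi(\Pi_K(x)))\|_{L^2} \leq (1+C)\,\|\nabla G(x)\|_{L^2}$. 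Combining with the previous display and squaring produces the claim.

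The only subtlety I expect is bookkeeping in the functional-analytic setup: verifying that $f$ indeed admits an $L^2$ gradient and that the chain rule identity involving $(d\Phi_y)^{\ast}$ is valid on $L^2 \cap E$. Both follow from hypothesis (2) and the two Lipschitz bounds in Lemma \ref{l:reduction}, so the argument is essentially a routine diagram chase once those are in hand; no further analytic input is needed.
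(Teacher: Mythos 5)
Your proposal is correct and takes essentially the same route as the paper: the chain rule together with the Lipschitz bound for $\Phi$ reduces the claim to bounding $\nabla G$ at $\Phi(\Pi_K(x))$, and then the identity $x = \Phi\left(\Pi_K(x) + \nabla G(x)\right)$ combined with the Lipschitz bounds for $\Phi$ (from $L^2$ to $W^{2,2}$) and for $\nabla G$ (from $W^{2,2}$ to $L^2$) compares this with $\nabla G(x)$. The only cosmetic difference is that you phrase the first step via the $L^2$-adjoint of $d\Phi_y$, which the paper leaves implicit.
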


\begin{proof}
Suppose first that $y \in K$.  Since $f = G \circ \Phi$, 
 it follows from the chain rule and the Lipschitz bound for $\Phi$ that 
 \begin{align}
 	\|\nabla f  (y)   \|_{L^2}^2 \leq C_2 \, \|\nabla G \circ \Phi (y)  \|_{L^2}^2  \, .
 \end{align}
Thus, given any $x$ (not necessarily in $K$), applying this with $y= \Pi_K (x)$ gives
\begin{align}	\label{e:step1of1}
	\|\nabla f (\Pi_K (x))  \|_{L^2}^2  \leq C_2 \, \| \nabla G \circ \Phi \circ \Pi_K (x) \|_{L^2}^2  \, .
\end{align}
This is close to what we want, except that $\nabla G$ is evaluated at $\Phi \circ \Pi_K (x)$ instead of at $x$.

Since $x = \Phi \circ \left( \Pi_K (x) + \nabla G (x) \right)$,   the Lipschitz bounds for $\nabla G$ and $\Phi$ give 
\begin{align}	\label{e:step2of1}
	  \| \nabla G \left(  \Phi \circ \Pi_K (x)\right) - \nabla G (x)  \|_{L^2}   &=  
	   \| \nabla G \left(  \Phi ( \Pi_K (x) )\right) - \nabla G \left(  \Phi (  \Pi_K (x) + \nabla G (x) )\right)  \|_{L^2} \notag \\
	   &\leq C \, 
	   \|    \Phi ( \Pi_K (x) ) -   \Phi (  \Pi_K (x) + \nabla G (x) )    \|_{W^{2,2}} \\
	   &\leq C \,     \| \nabla G  (x)  \|_{L^2}     \, , \notag
\end{align}
completing the proof.
\end{proof}

We next bound  the difference between $G$ and $G \circ \Phi \circ \Pi_K$.

\begin{Lem}	\label{l:F}
There exists $C$ so that for every sufficiently small $x \in C^{2,\beta} \cap  E$
\begin{equation}
	 \left| G (x) - f(\Pi_K (x)) \right| \leq C \, \| \nabla G (x)  \|_{L^2}^2 \, .
\end{equation}
\end{Lem}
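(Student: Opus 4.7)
The plan is to connect $x$ to $\Phi(\Pi_K(x))$ by a carefully chosen path in $E$ and apply the fundamental theorem of calculus, using the defining identity $\cN\circ\Phi = \mathrm{id}$ to control $\nabla G$ along the way.

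First, I would use that $x = \Phi(\cN(x)) = \Phi(\Pi_K(x) + \nabla G(x))$, so that $x$ and $\Phi(\Pi_K(x))$ are the endpoints of the explicit path
\begin{equation*}
\gamma(t) = \Phi(z_t), \qquad z_t = \Pi_K(x) + t\,\nabla G(x), \quad t\in[0,1].
\end{equation*}
By the fundamental theorem of calculus applied to $t\mapsto G(\gamma(t))$ (using that $\nabla G$ is the Frechet gradient and that $\gamma$ is Lipschitz into $W^{2,2}$),
\begin{equation*}
G(x) - f(\Pi_K(x)) = G(\gamma(1)) - G(\gamma(0)) = \int_0^1 \langle \nabla G(\gamma(t)),\, \gamma'(t)\rangle\, dt.
\end{equation*}
It then suffices to bound the integrand pointwise by $C\|\nabla G(x)\|_{L^2}^2$.

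Next, I would estimate the two factors separately. For $\gamma'(t)$, the Lipschitz bound $\|\Phi(z) - \Phi(w)\|_{W^{2,2}} \leq C\|z-w\|_{L^2}$ from Lemma~\ref{l:reduction} yields
\begin{equation*}
\|\gamma'(t)\|_{W^{2,2}} \leq C\,\|z_1 - z_0\|_{L^2} = C\,\|\nabla G(x)\|_{L^2}.
\end{equation*}
For $\nabla G(\gamma(t))$, I would use the key algebraic identity that comes from $\cN\circ\Phi = \mathrm{id}$: for any small $z$ in the image,
\begin{equation*}
\nabla G(\Phi(z)) = z - \Pi_K(\Phi(z)).
\end{equation*}
Applied to $z = z_t$ and using $\gamma(1) = x$ so that $\Pi_K(x) = \Pi_K(\gamma(1))$, this gives
\begin{equation*}
\nabla G(\gamma(t)) = \Pi_K\bigl(\gamma(1) - \gamma(t)\bigr) + t\,\nabla G(x).
\end{equation*}
Since $\Pi_K$ is bounded on $L^2$ and $\Phi$ is Lipschitz from $L^2$ into $W^{2,2}$,
\begin{equation*}
\|\nabla G(\gamma(t))\|_{L^2} \leq \|\gamma(1) - \gamma(t)\|_{L^2} + \|\nabla G(x)\|_{L^2} \leq C(1-t)\|\nabla G(x)\|_{L^2} + \|\nabla G(x)\|_{L^2}.
\end{equation*}

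Combining Cauchy--Schwarz on the pairing with the two bounds, each integrand is at most $C\|\nabla G(x)\|_{L^2}^2$, and integrating over $t\in[0,1]$ yields the lemma. The main delicate point — the only place where the structure of the problem really enters — is the bound on $\|\nabla G(\gamma(t))\|_{L^2}$; here it is essential that $\gamma(1) = x$ exactly (a consequence of $\Phi\circ\cN = \mathrm{id}$), which converts the otherwise unrelated quantity $\Pi_K(x) - \Pi_K(\gamma(t))$ into a $\Pi_K$-projection of a Lipschitz path, thereby making it $O(\|\nabla G(x)\|_{L^2})$ rather than $O(1)$.
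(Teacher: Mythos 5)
Your argument is correct, and it shares the paper's skeleton: the same path $z_t=\Pi_K(x)+t\,\nabla G(x)$, the fundamental theorem of calculus, and Cauchy--Schwarz; indeed, by the chain rule your integrand $\langle \nabla G(\gamma(t)),\gamma'(t)\rangle$ coincides with the paper's $\langle \nabla f(y_t),\nabla G(x)\rangle$ for $f=G\circ\Phi$. Where you genuinely diverge is the key estimate on the gradient factor. The paper bounds $\|\nabla f(y_t)\|_{L^2}$ perturbatively, using that $\nabla f$ is Lipschitz from $L^2$ to $L^2$ together with the endpoint bound $\|\nabla f(y_1)\|_{L^2}\leq C\,\|\nabla G(x)\|_{L^2}$ carried over from the proof of Lemma \ref{l:grad}. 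You instead use the exact identity $\nabla G(\Phi(z))=z-\Pi_K(\Phi(z))$, which is an algebraic consequence of $\cN\circ\Phi=\mathrm{id}$, and combine it with $\gamma(1)=x$ and the $L^2$-to-$W^{2,2}$ Lipschitz bound for $\Phi$ to get $\|\nabla G(\gamma(t))\|_{L^2}\leq C\,\|\nabla G(x)\|_{L^2}$ directly. This buys you a proof that is self-contained given Lemma \ref{l:reduction}: you need neither Lemma \ref{l:grad} nor the Lipschitz continuity of $\nabla f$ (whose verification requires controlling $d\Phi$ in addition to $\nabla G\circ\Phi$), and the crucial bound becomes an identity of the Lyapunov--Schmidt construction rather than a two-step perturbation off the endpoint $y_1$; the paper's route, conversely, is shorter on the page because it recycles estimates already set up for Lemma \ref{l:grad}. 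One point worth making explicit in your writeup: the chain-rule differentiation of $t\mapsto G(\gamma(t))$ is legitimate because $z_t$ is an affine path in $C^{\beta}\cap E$ which stays in $\cO$ for $x$ small, and $\Phi$ is $C^1$ from $C^{\beta}$ to $C^{2,\beta}$, so $\gamma$ is a $C^1$ path in the domain of $G$; this is the same implicit step the paper takes.
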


\begin{proof}
 Define 
 the one-parameter family   $t \to y_t$  by
\begin{equation}
	y_t = \Pi_K (x) +t \,   \nabla G (x) \, ,
\end{equation}
so that $\Phi (y_1) = x$,  $ y_0  = \Pi_K (x)$, and $\frac{d}{dt} \, y_t = \nabla G (x)$.

Combining the definition of $f$ and  the fundamental theorem of calculus gives
\begin{align}
	 G (x) - f(\Pi_K (x)) &= G (\Phi (y_1)) - f(y_0)=  f(y_1) - f(y_0)
	  =  \int_0^1 \frac{d}{dt} \, f (y_t) \, dt  \notag \\
	 &=  \int_0^1 \langle \nabla f (y_t) ,   \nabla G (x)  \rangle
	  \, dt  \, .
\end{align}
Hence, 
the lemma follows from Cauchy-Schwarz  once we show that
\begin{align}	\label{e:gootcs}
	  \| \nabla f (y_t) \|_{L^2} \leq C \, \| \nabla G (x) \|_{L^2} \, .
\end{align}
To show this, note first that $\nabla f$ is Lipschitz from $L^2$ to $L^2$ by the chain rule (since $\Phi$ is Lipschitz from $L^2$ to $W^{2,2}$ and $\nabla G$ is from $W^{2,2}$ to $L^2$).  In particular, we have
\begin{align}
	\| \nabla f (y_t) - \nabla f (y_1) \|_{L^2} \leq C \, \| y_t - y_1 \|_{L^2} \leq C \, \| \nabla G (x) \|_{L^2} \, .
\end{align}
Finally, \eqr{e:gootcs} follows from this and the fact that $\| \nabla f (y_1) \|_{L^2} \leq C \, \| \nabla G (x) \|_{L^2}$
 which we already established using the chain rule in the proof of the last lemma.
 \end{proof}

 We will now prove the  Lojasiewicz-Simon  inequality using the two lemmas and
  the finite dimensional Lojasiewicz inequality  applied to the
  restriction $f_K \equiv f \big|_K$ of the analytic function $f$ to the finite dimensional vector space $K$.  
  %A point $y \in K$ can be written as
 % $y = \sum y_i \, \phi_i$ where $\phi_i$ is an $L^2$-orthonormal basis for $K$.
  
%

\begin{proof}[Proof of Theorem \ref{t:LS}]
Let $x \in E$ be sufficiently small.  Applying   Lemma \ref{l:grad} and
  the finite dimensional Lojasiewicz inequality (which applies to $f_K$) gives
\begin{align}	\label{e:step1}
	C \, \| \nabla G (x)  \|_{L^2}^2  &\geq \|\nabla f   (\Pi_K (x))  \|_{L^2}^2 \geq
	\left| \nabla f_K (\Pi_K (x)) \right|^2 \geq \left| f_K (\Pi_K (x)) - f_K(0) \right|^{2 -  \alpha  } \notag \\
	& =
	\left| f (\Pi_K (x)) - G(0) \right|^{2 -  \alpha  }
	\, .
\end{align}
The estimate now follows from the triangle inequality and  Lemma \ref{l:F} which gives
\begin{align}
	 \left| f (\Pi_K (x)) - G (x) \right|  \leq	  C \, \| \nabla G (x)  \|_{L^2}^2  \, .
\end{align}
\end{proof}

 \section{The Lojasiewicz-Simon inequality for $\cF$}
 
 Finally, in this section, we will prove that $\cF$ satisfies a Lojasiewicz-Simon inequality.  We cannot argue directly 
 on $\cF$ since the diffeomorphism group creates an infinite dimensional kernel for the linearized operator.  However, the slice theorem of Ebin allows us to mod out by this action and then 
  prove such an inequality which will in turn imply one for $\cF$.

 \subsection{Modding out by the group action}
 
We will  prove a Lojasiewicz-Simon inequality for  $G: \ca_1^0 \to \RR$ given by
\begin{align}
	G (x) = \cF \circ \exp (x) \, ,
\end{align}
where $\exp:  \ca_1^0 \to \cA_1$ is given by  Lemma \ref{l:exp}.
Since $\cF$ and $\exp$ are both analytic, so is $G$.  

By definition, the gradient $\nabla G$ of $G$ is given by
\begin{align}	\label{e:nablaG}
	\langle \nabla G (x) , y \rangle &= \frac{d}{dt} \big|_{t=0} \,  \cF \circ \exp (x+ty) 
	= \langle \nabla_1 \cF (\exp (x) ) , d\exp_{x} (y) \rangle \notag \\
	& = \langle (  d\exp_{x})^t \, \nabla_1 \cF (\exp (x) ) , y \rangle \, ,
\end{align}
where $(d\exp_{x})^t $ is the transpose of $d\exp_{x}$.

\begin{Pro}	\label{p:LS}
A  Lojasiewicz-Simon inequality for $G$ implies one for $\cF$ on $\cA_1$. 
\end{Pro}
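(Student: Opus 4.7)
The plan is to use the slice theorem (Corollary \ref{l:slice}) to reduce any point of $\cA_1$ near the critical point $(b_\infty^{-2} g_0, b_\infty)$ to a point in the slice $\ca_1^0$ where the Lojasiewicz-Simon inequality for $G$ applies, then transport this inequality back to $\cA_1$ using the three properties of the $\cD$-action listed in the excerpt: (i) $\cD$ preserves $\cA_1$; (ii) $\cD$ fixes $\cF$; (iii) $\cD$ acts isometrically.

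Given $y \in \cU_1' \subset \cA_1$ sufficiently close to $(b_\infty^{-2}g_0, b_\infty)$, Corollary \ref{l:slice} produces $y_0 \in \ca_1^0$ small and $\eta \in \cD$ with $\|\eta\|_{C^{3,\beta}} \leq C$ such that $y = \rho(\eta, \exp(y_0))$. By property (ii),
\begin{align}
\cF(y) = \cF(\exp(y_0)) = G(y_0), \qquad \cF(b_\infty^{-2} g_0, b_\infty) = G(0).
\end{align}
Applying the Lojasiewicz-Simon inequality for $G$ (which holds by assumption for all sufficiently small $y_0$, possible after shrinking $\cU_1'$) gives
\begin{align}
\left| \cF(y) - \cF(b_\infty^{-2} g_0, b_\infty) \right|^{2-\alpha} = |G(y_0) - G(0)|^{2-\alpha} \leq \|\nabla G(y_0)\|_{L^2}^2.
\end{align}

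The remaining step is to bound $\|\nabla G(y_0)\|_{L^2}$ by $\|\nabla_1 \cF(y)\|_{L^2}$. Using the explicit formula \eqr{e:nablaG}, $\nabla G(y_0) = (d\exp_{y_0})^t \nabla_1 \cF(\exp(y_0))$. Since $\exp$ is a local diffeomorphism with $d\exp_0$ equal to the identity (by the proof of Lemma \ref{l:exp}), the operator norm of $(d\exp_{y_0})^t$ is bounded by a uniform constant on a small enough neighborhood of $0$. Thus
\begin{align}
\|\nabla G(y_0)\|_{L^2} \leq C \, \|\nabla_1 \cF(\exp(y_0))\|_{L^2}.
\end{align}
Now $\exp(y_0) = \rho(\eta^{-1}, y)$ and $\cF$ is $\cD$-invariant on $\cA_1$; combined with property (iii) that $\cD$ acts isometrically on the ambient $L^2$ inner product, the gradient $\nabla_1 \cF$ transforms by the isometric $\cD$-action, so $\|\nabla_1 \cF(\exp(y_0))\|_{L^2} = \|\nabla_1 \cF(y)\|_{L^2}$. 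Combining these bounds completes the proof.

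The main (small) obstacle is verifying that the gradient on the level set $\cA_1$ is $\cD$-equivariant in the appropriate isometric sense, not just the unrestricted gradient. This follows because the projection in \eqr{e:na1F} is defined in terms of the $\cD$-invariant functional $A_1$ and the $L^2$ inner product that $\cD$ preserves, so the projection itself commutes with the $\cD$-action; alternatively, one observes directly that $\cA_1$ is $\cD$-invariant and $\cF|_{\cA_1}$ is $\cD$-invariant, so its intrinsic gradient is equivariant under the induced isometric action on $\cA_1$. Once this is in hand, the inequality transports without loss.
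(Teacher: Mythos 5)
Your overall route is the paper's: Corollary \ref{l:slice} to write $y=\rho(\eta,\exp(y_0))$ with $y_0\in\ca_1^0$ and $\|\eta\|_{C^{3,\beta}}\le C$, the $\cD$-invariance of $\cF$ to get $\cF(y)=G(y_0)$, the Lojasiewicz-Simon inequality for $G$, and the bound $\|\nabla G(y_0)\|_{L^2}\le C\,\|\nabla_1\cF(\exp(y_0))\|_{L^2}$ coming from \eqr{e:nablaG} and the boundedness of $d\exp$. The one step that does not hold as you state it is the last one: the exact equality $\|\nabla_1\cF(\exp(y_0))\|_{L^2}=\|\nabla_1\cF(y)\|_{L^2}$ ``by the isometric action''. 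The action of $\cD$ is isometric for the natural inner product $\langle\cdot,\cdot\rangle_{(g,w)}$, which varies with the point $(g,w)$; but the gradients $\nabla\cF$ and $\nabla_1\cF$, and the $L^2$ norms appearing in the Lojasiewicz-Simon inequality, are all computed with respect to the \emph{fixed} background inner product at $(b_{\infty}^{-2}g_0,b_{\infty})$ --- this is exactly how Corollary \ref{c:gradF} is derived via $\Psi$ and $\nu$, and it is stated explicitly in the footnote in the proof of Corollary \ref{c:lF}. Pullback by a diffeomorphism $\eta$ is not an isometry of this fixed inner product unless $\eta$ is an isometry of $g_0$, so neither the projection in \eqr{e:na1F} nor the gradient is $\cD$-equivariant in the exact sense you invoke; your fallback remark that the projection commutes with the action ``because $\cD$ preserves the $L^2$ inner product'' rests on the same false premise.

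The repair is the one the paper makes, and you already have the needed ingredient, namely the uniform bound $\|\eta\|_{C^{3,\beta}}\le C$ from Corollary \ref{l:slice}. For $x$ tangent to $\cA_1$ at $\exp(y_0)$, differentiate $\cF(\exp(y_0)+tx)=\cF(\rho(\eta,\exp(y_0)+tx))$ and use that $\rho(\eta,\cdot)$ preserves $\cA_1$ to obtain $\langle\nabla_1\cF(\exp(y_0)),x\rangle=\langle\left(d\rho(\eta,\cdot)_{\exp(y_0)}\right)^t\,\nabla_1\cF(y),x\rangle$; since $d\rho(\eta,\cdot)_{\exp(y_0)}$ is bounded in terms of $\|\eta\|_{C^{3,\beta}}\le C$, this gives $\|\nabla_1\cF(\exp(y_0))\|_{L^2}\le C'\,\|\nabla_1\cF(y)\|_{L^2}$, which is all the proposition needs, as the multiplicative constant is harmless in the Lojasiewicz-Simon inequality (absorb it by shrinking the neighborhood or adjusting $\alpha$). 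With that substitution your argument coincides with the paper's proof.
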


\begin{proof}

Corollary \ref{l:slice}
gives a neighborhood $\cU_1' $ of $(b_{\infty}^{-2} \, g_0,b_{\infty})$ in $\cA_1$ and a constant $C$, so that for each
 $y \in \cU_1' $,  there is $y_0 \in \ca^0_1$ and $\eta \in \cD$ so that $y = \rho (\eta , \exp (y_0))$ and $\| \eta \|_{C^{3,\beta}}
 \leq C$.  In particular, 
  the invariance of $\cF$ under the group action
gives that
\begin{align}
	\cF (y) = G (y_0)  \, .
\end{align}
Therefore,  the  Lojasiewicz-Simon inequality for $G$ and \eqr{e:nablaG}   give
\begin{align}
	\left| \cF (y) - \cF (b_{\infty}^{-2} \, g_0,b_{\infty}) \right|^{2-\alpha} &= \left| G ( y_0 ) -  G (0) \right|^{2-\alpha} \leq 
	\| \nabla G (y_0)  \|_{L^2}^2 \notag \\
	& \leq C_{\exp}  \, \| \nabla_1 \cF (\exp (y_0)) \|_{L^2}^2 \, ,
\end{align}
where $C_{\exp}$ comes from the bound for the differential of $\exp$.  

Finally, we need to bound $ \nabla_1 \cF$ at  $\exp (y_0)$
by the value at $y$.  To do this,  let $x$ be tangent to $\cA_1$ at $\exp (y_0)$ and use 
  the invariance of $\cF$ under the action to get that
\begin{align}	 
	\langle \nabla_1 \cF  (\exp (y_0)) , x \rangle &= \frac{d}{dt} \big|_{t=0} \,  \cF  ( \exp (y_0) + t x)
	= \frac{d}{dt} \big|_{t=0} \,  \cF  ( \rho (\eta , \exp (y_0) + t x) )
	  \notag \\
	& = \langle \nabla_1 \cF ( \rho (\eta , \exp (y_0) ) ) , d \rho( \eta , \cdot)_{\exp (y_0)} (x)  \rangle \\
	&= \langle \left(  d \rho( \eta , \cdot)_{\exp (y_0)}  \right)^t \, \nabla_1 \cF (y) ,  x  \rangle \, , \notag 
\end{align}
where the third equality used that the action preserves $\cA_1$ to get $\nabla_1 \cF$ instead of $\nabla \cF$.  
Since $\| \eta \|_{C^{3,\beta}}
 \leq C$, the differential $d \rho( \eta , \cdot)_{\exp (y_0)}$ is bounded independent of $x$ and we conclude that
 \begin{align}	 
	\| \nabla_1 \cF  (\exp (y_0)) \|_{L^2} \leq C' \, \| \nabla_1 \cF (  y ) \|_{L^2} \, ,
\end{align}
completing the proof.
\end{proof}

\subsection{Verifying the properties}

We now need to verify that 
\begin{align}	\label{e:defGc}
	G = \cF \circ \exp: \ca_1^0 \to \RR
\end{align}
   has the  properties needed for Theorem \ref{t:LS}.   Recall that we need $3$ properties:
\begin{enumerate}
\item $G$ is analytic on an open neighborhood $\cO_E$ of $0$ in $C^{2,\beta} \cap \ca_1^0$.
\item   $\nabla G$ is $C^1$ from  $ \cO_E $ to $ C^{\beta}$ with
 $\nabla G (0) = 0$ and 
\begin{align}
	\left\| \nabla G (x)  - \nabla G (y) \right\|_{L^2} \leq C \, \|x - y\|_{W^{2,2}} \, .
\end{align}
\item  The linearization $L_G$ of $\nabla G$ at $0$ is symmetric,
 bounded from $C^{2,\beta} \cap \ca_1^0$ to $C^{\beta}$ and from $W^{2,2} \cap \ca_1^0$ to $L^2$, and
is Fredholm.
 \end{enumerate} 

\begin{Lem}	\label{l:G3}
$G$ defined in \eqr{e:defGc} satisfies (1), (2) and (3).
\end{Lem}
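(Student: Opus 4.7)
The plan is to verify each of the three required properties essentially by assembling results already established, exploiting the fact that $\exp$ is analytic with identity differential at $0$, while most of the analytic and Fredholm content has already been extracted in the preceding sections.

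For the analyticity (1), I would first note that $\exp$ is analytic by Lemma~\ref{l:exp}. The functional $\cF = \frac{1}{2-n}\bigl(A - B/(n-2)\bigr)$ is built from integrals whose integrands depend polynomially on $g$, the inverse $g^{-1}$, the volume density $\sqrt{\det g}$, the weight $w$, and the derivatives $\nabla w$, $\Hess_w$, together with the scalar curvature $R_g$. On a small $C^{2,\beta}$-neighborhood of $(b_\infty^{-2} g_0, b_\infty)$, where $\det g$ and $w$ remain uniformly bounded away from zero, each of these ingredients depends analytically on finitely many derivatives of the inputs, so $\cF$ is analytic and $G = \cF \circ \exp$ is analytic on a neighborhood of $0$ in $C^{2,\beta} \cap \ca_1^0$.

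For (2), formula \eqr{e:nablaG} together with Lemma~\ref{l:exp} (which gives $d\exp_0 = \Id$) and Corollary~\ref{c:wtd} (critical point property) yield $\nabla G(0) = 0$. The explicit form of $\nabla_1 \cF$ in Corollary~\ref{c:gradF} shows that this gradient is a second-order analytic differential expression in $(g,w)$ involving $\Ric_g$, $R_g$, $\Hess_w$, and $\Delta w$, so $\nabla_1 \cF$ is a $C^1$ map from $C^{2,\beta} \cap \cA_1$ to $C^{\beta}$; post-composition with the analytic bounded operator $(d\exp_x)^t$ then gives the $C^1$ map required of $\nabla G$. The Lipschitz estimate $\|\nabla G(x) - \nabla G(y)\|_{L^2} \leq C\|x - y\|_{W^{2,2}}$ follows because, on a bounded $C^{2,\beta}$-neighborhood, the differences of these differential expressions are linear in the second derivatives of $x-y$ with uniformly controlled coefficients in the background H\"older norms. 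For (3), the linearization $L_G$ at $0$ coincides with the restriction of the linearization of $\nabla_1\cF$ at $(b_\infty^{-2} g_0, b_\infty)$ to $\ca_1^0$, again because $d\exp_0$ is the identity; it is symmetric by Lemma~\ref{l:linearized}; it is bounded from $C^{2,\beta} \cap \ca_1^0$ to $C^{\beta}$ and from $W^{2,2} \cap \ca_1^0$ to $L^2$ because Proposition~\ref{p:sv1}, Theorem~\ref{t:conformal}, and Proposition~\ref{p:lingradF} exhibit it as a second-order linear operator with coefficients that are smooth functions of the background data; and it is Fredholm by Theorem~\ref{p:fker}.

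The only real subtlety in this verification is that $L_\cF$ has genuinely different character on conformal versus transverse-traceless variations and vanishes identically on directions tangent to the diffeomorphism group, so ellipticity is visible only after the block decomposition carried out in Proposition~\ref{l:parts}. That analysis is already encapsulated in the Fredholm conclusion of Theorem~\ref{p:fker}, so what remains in the present lemma amounts to checking that pre- and post-composition with $\exp$ (and, implicitly, with the projection onto $\ca_1^0$ used in defining $\nabla_1\cF$) respects the relevant H\"older and Sobolev regularities. Given the analyticity of $\exp$ in Lemma~\ref{l:exp} and the fact that $d\exp_0 = \Id$ so that $\exp$ is a bi-Lipschitz diffeomorphism between the respective function-space neighborhoods, this last check is routine bookkeeping.
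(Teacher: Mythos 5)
Your proposal is correct and follows essentially the same route as the paper: analyticity of $G=\cF\circ\exp$ from Lemma \ref{l:exp} and the integral form of $\cF$, the gradient formula \eqr{e:nablaG} with the critical-point property and the explicit expression for $\nabla_1\cF$ to get (2), and the identification of $L_G$ with the restriction of $L_{\cF}$ to $\ca_1^0$ via $d\exp_0=\Id$, so that symmetry comes from Lemma \ref{l:linearized} and Fredholmness from Theorem \ref{p:fker}. The only cosmetic difference is that the paper deduces the boundedness of $L_G$ directly from the Lipschitz bounds in (2) rather than re-invoking the second-variation formulas.
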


\begin{proof}
We deal with these in order.

\vskip1mm
\noindent
{\bf{Proof of (1)}}:
Property (1) is automatic since   $\exp$ is analytic from $C^{2,\beta}$ to $C^{2,\beta}$ and $\cF$ is analytic 
from $C^{2,\beta}$ to $\RR$.  The analyticity of $\cF$ follows since it is given as an integral of an analytic (in fact algebraic) function of
the weight and the metric, as well as their first and second derivatives (the second derivatives come in from the scalar curvature), cf. \cite{S1}.

\vskip1mm
\noindent
{\bf{Proof of (2)}}:
 Since $\exp (0) = (b_{\infty}^{-2} \, g_0 , b_{\infty})$ is a critical point for $\cF$,     $\nabla G (0)=0$.
By \eqr{e:nablaG},  
\begin{align}	\label{e:todo1}
	\nabla G (x)  = (  d\exp_{x})^t \, \nabla_1 \cF (\exp (x) ) \, .
\end{align}
It follows from the formula \eqr{e:na1F} for $\nabla_1 \cF$ and Corollaries \ref{c:gradF}
and
\ref{c:gA1} that $\nabla_1 \cF$ is $C^1$ from  a neighborhood of $0$ in $C^{2,\beta}$ to $C^{\beta}$ and also Lipschitz (in this neighborhood) from $W^{2,2}$ to $L^2$.
Since $\exp$ is smooth, the formula \eqr{e:todo1} implies that $\nabla G$ has the same properties.

\vskip1mm
\noindent
{\bf{Proof of (3)}}:
The Lipschitz bounds on $\nabla G$ from (2) imply the boundedness of $L_G$ 
from $C^{2,\beta} \cap \ca_1^0$ to $C^{\beta} \cap \ca_1^0$ and from $W^{2,2} \cap \ca_1^0$.
Using \eqr{e:nablaG}, plus the fact that $\exp(0)$ is a critical point for $\cF$, 
we can calculate the linearization $L_G$ of $\nabla G$ at $0$ by
\begin{align}
	\langle L_G (x) , y \rangle &= \frac{d}{dt} \big|_{t=0} \, \langle \nabla G (tx) , y \rangle 
	= \frac{d}{dt} \big|_{t=0} \, \langle \nabla_1 \cF (\exp (tx) ) , d\exp_{tx} (y) \rangle \notag  \\
	&= \langle L_{\cF} (d\exp_0 (x) ) , d\exp_{0} (y) \rangle = \langle L_{\cF} (x) , y \rangle \equiv B_{\cF} (x,y)
	\, ,
\end{align}
where the first equality in the second line used that $d\exp_0$ is the identity on $\ca_1^0 $.    Since $L_{\cF}$ maps to $\ca_1^0$, we conclude that $L_G$ is just the restriction of $L_{\cF}$  to $\ca_1^0$.  Thus, 
$L_G$ is symmetric
since $L_{\cF}$ is and   $L_{G}$ is Fredholm  by   Theorem \ref{p:fker}.

\end{proof}

\appendix

 \section{The weighted total scalar curvature functional}
 
 We will need the following calculations from \cite{Tp} for the changes of geometric quantities under deformation of a metric.
 The derivative at $t=0$ will be denoted by a prime; for example,  $R'$ denotes the derivative of the scalar curvature $R$ at $t=0$.  
 
 \begin{Lem}	\label{l:topping}
 Let $g + t \, h$ be a one-parameter family of metrics on a closed manifold and $u+tv$ a one-parameter family of functions.  Then 
 \begin{align}
 	\left( (g+t\,h)^{ij} \right)' &= - h^{ij}  \, , \\
	\left( \left| \nabla (u+tv) \right|^2 \right)' &= - h (\nabla u , \nabla u) + 2 \, \langle \nabla u , \nabla v \rangle \, , \\
	d\mu' &= \frac{1}{2} \, \Tr (h) \, d \mu \, , \\
	R' &= - \langle \Ric , h \rangle + \delta^2 \, h - \Delta \, \Tr (h) \, , 
	%\\
	%\left( R \, d\mu \right)' &=  \left( - \langle \Ric , h \rangle + \delta^2 \, h - \Delta \, \Tr (h)  + 
	% \frac{1}{2} \, \Tr (h) \, R \, \right) d \mu  \, .
 \end{align}
 where $\delta$ is the divergence operator and $\delta^2$ comes from applying it twice.  These will suffice for first variation formulas.  
 
We will need the following additional formulas for the second variation; to simplify notation, we compute these at an orthonormal frame so that we do not need to keep track of upper or lower indices:
 \begin{align}
 	\Ric_{ij} ' &= \frac{1}{2} \, 
	\left( \nabla_i  (\delta \, h)_j +   \nabla_j   (\delta \, h_{i}) +
	\Ric_{ik} h_{jk} + \Ric_{jk} h_{ik} - \Delta h_{ij} 
	- \Hess_{\Tr \, h}
	\right) - R_{ikj\ell} h_{k\ell}  \, , \\
	&\left( \Hess_{u+tv} \right)_{ij}' = \Hess_v - \frac{1}{2} \, 
	\left(\nabla_i \left( \Hess_u \right)_{jk} + 
	\nabla_j \left( \Hess_u \right)_{ik} - \nabla_k \, \left( \Hess_u \right)_{ij}
	\right) \, \nabla_k u
 \end{align}
 \end{Lem}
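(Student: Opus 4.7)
\textbf{Proof proposal for Lemma \ref{l:topping}.}
All of these formulas are standard and follow from the single underlying computation of the variation of the Christoffel symbols. My plan is to derive that formula first, and then extract each identity as a corollary. Concretely, since $\Gamma_{ij}^k$ is a difference of two connection symbols when comparing the Levi-Civita connection of $g+th$ to that of $g$, one gets at $t=0$
\begin{equation}
(\Gamma_{ij}^k)' = \tfrac{1}{2}\, g^{k\ell}\left(\nabla_i h_{j\ell} + \nabla_j h_{i\ell} - \nabla_\ell h_{ij}\right),
\end{equation}
where $\nabla$ is the Levi-Civita connection of $g$. This is proven by polarizing the classical coordinate formula and noting that the difference of two connections is a tensor, so it suffices to verify it at a point in normal coordinates, where $\partial g = 0$ and only the derivatives of $h$ contribute.

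With $(\Gamma)'$ in hand, the first four identities are short. For $(g^{ij})'$, differentiate $g^{ij}g_{jk} = \delta^i_k$ to get $(g^{ij})' = -g^{ip}g^{jq}h_{pq}$, which equals $-h^{ij}$ after raising indices. For $(|\nabla(u+tv)|^2)'$, write $|\nabla(u+tv)|^2 = (g+th)^{ij}(u+tv)_i(u+tv)_j$ and differentiate; the cross terms give the stated expression since $u_i$ is independent of $t$. For $d\mu'$, use Jacobi's formula $(\log\det g)' = \Tr(g^{-1}h) = \Tr h$, so $(\sqrt{\det g})' = \tfrac{1}{2}\Tr(h)\sqrt{\det g}$. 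For $R'$, start from $R = g^{ij}\Ric_{ij}$; the variation of the Ricci tensor follows from the Palatini identity $\Ric_{ij}' = \nabla_k (\Gamma_{ij}^k)' - \nabla_i (\Gamma_{jk}^k)'$, so tracing gives $g^{ij}\Ric_{ij}' = \delta^2 h - \Delta \Tr h$, and the additional term $-\langle \Ric, h\rangle$ comes from differentiating the $g^{ij}$ in front of $\Ric_{ij}$.

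The second variation formulas are the real work, but they too reduce to substituting $(\Gamma)'$ into known identities. For $\Ric_{ij}'$, I would start from the Palatini expression, commute covariant derivatives (generating the Ricci and Riemann terms $\Ric_{ik}h_{jk} + \Ric_{jk}h_{ik}$ and $-R_{ikj\ell} h_{k\ell}$), and reorganize the derivatives of $h$ into the symmetric combination $\tfrac{1}{2}\bigl(\nabla_i(\delta h)_j + \nabla_j(\delta h)_i - \Delta h_{ij} - \Hess_{\Tr h}\bigr)$; this last step is where almost all of the index juggling lives. For $(\Hess_{u+tv})_{ij}'$, use $(\Hess_w)_{ij} = \partial_i \partial_j w - \Gamma_{ij}^k \partial_k w$ with $w = u+tv$, differentiate at $t=0$, and substitute $(\Gamma_{ij}^k)' \partial_k u$ using the formula derived above; the $\partial_i \partial_j v$ piece assembles with the $t=0$ Christoffels to give $\Hess_v$, while the $(\Gamma)'\, \nabla u$ piece gives exactly the displayed correction.

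The only real obstacle is careful bookkeeping in the Ricci second variation: there are many commutator terms (from $[\nabla_i, \nabla_k]$ acting on $h$) that must be absorbed into the curvature corrections, and sign and symmetrization errors are easy to make. I would do that step in normal coordinates at a point so that $\Gamma = 0$ but $\partial\Gamma \neq 0$, write out each of the six index contractions explicitly, and then reassemble covariantly using the second Bianchi identity where needed. No deeper ideas are required; the lemma is a direct consequence of the Christoffel variation formula and the definitions.
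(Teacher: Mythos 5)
Your outline is correct, but note that the paper does not prove this lemma at all: Appendix A simply quotes these variation formulas from Topping's lecture notes \cite{Tp}. So your proposal supplies the derivation that the paper outsources, and it is the standard one — vary the Christoffel symbols, get $(\Gamma_{ij}^k)' = \tfrac12 g^{k\ell}(\nabla_i h_{j\ell}+\nabla_j h_{i\ell}-\nabla_\ell h_{ij})$, and then read off each identity: $(g^{ij})'$ from differentiating $g^{ij}g_{jk}=\delta^i_k$, $d\mu'$ from Jacobi's formula, $\Ric'$ from the Palatini identity plus commuting covariant derivatives (which is where the $\Ric_{ik}h_{jk}+\Ric_{jk}h_{ik}$ and $-R_{ikj\ell}h_{k\ell}$ terms arise), and $R'$ by tracing and adding the $-\langle\Ric,h\rangle$ term from the varying $g^{ij}$. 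I checked the traced Palatini computation you indicate and it does give $g^{ij}\Ric_{ij}'=\delta^2h-\Delta\Tr h$ with $\delta^2 h=\nabla^i\nabla^j h_{ij}$, matching the stated $R'$ (the sign convention for $\delta$ is immaterial there since it appears squared, but your derivation fixes $(\delta h)_j=\nabla^i h_{ij}$ in the $\Ric'$ formula, which is the convention the paper needs). This is essentially the same argument as in the cited reference, so there is no conceptual divergence — you are just making the citation self-contained, which is fine.

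One small but worthwhile observation: your derivation of the Hessian variation yields
\begin{align}
\left( \Hess_{u+tv} \right)_{ij}' = (\Hess_v)_{ij} - \frac{1}{2}\left(\nabla_i h_{jk}+\nabla_j h_{ik}-\nabla_k h_{ij}\right)\nabla_k u \, ,
\end{align}
i.e.\ the correction term involves $h$, not $\Hess_u$ as written in the displayed statement; the display appears to contain a slip. So your claim that the $(\Gamma)'\,\nabla u$ piece "gives exactly the displayed correction" is not literally true — it gives the correct formula above. This causes no harm downstream: in every place the paper uses this identity the function is $u=$ constant at $t=0$ (e.g.\ $\e^{tv}$ at $t=0$), so $\nabla u=0$ and the correction term vanishes either way. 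Otherwise your plan, including doing the $\Ric'$ bookkeeping in normal coordinates at a point, is sound and complete in outline.
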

 
  Note that $h^{ij}$ is given by using the background metric $g$ to raise the indices on the tensor $h$, i.e., $h^{ij} = g^{ik} g^{j\ell} h_{k\ell}$.

   \section{Some computations and identities for the trace free Hessian}
  
  In this appendix, we collect some calculations and identities for the trace free Hessian $B_b$ of $b^2$ where $b^2$ satisfies $\Delta b^2 = 2n \, |\nabla b|^2$ on an $n$-dimensional Ricci flat manifold $(M,g)$.
  
  \subsection{The trace-free Hessian}
Throughout this section,  the function $b$ satisfies 
\begin{equation}
	\Delta b^2 = 2n \, |\nabla b|^2
\end{equation}
 and we 
define the tensor $B_b$ to be the trace-free part of the Hessian of $b^2$, i.e., 
\begin{align}
	B_b =   \Hess_{b^2}   - 2\, |\nabla b|^2 \, g  \, .
\end{align}
We will use that $ \Hess_{b^2} = 2 \, b \, \Hess_b + 2 \, \nabla b \otimes \nabla b$, so that
\begin{align}	\label{e:uhessu}
	2 \, b \, \Hess_b =  \Hess_{b^2} - 2 \, \nabla b \otimes \nabla b = B_b + 2\, \left(   |\nabla b|^2 \, g  
	-   \nabla b \otimes \nabla b  \right) \, .
\end{align}

The next lemma computes the gradient of $|\nabla b|^2$ in terms of $B_b$.

\begin{Lem}	\label{l:k1}
We have   $b \, \nabla |\nabla b|^2 = B_b(\nabla b)$, where $B_b(\nabla b)$ is   given by
$\langle B_b(\nabla b) , v \rangle \equiv B_b(\nabla b , v)$. 
\end{Lem}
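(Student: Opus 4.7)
The plan is a direct computation using the decomposition \eqr{e:uhessu} together with the standard identity expressing $\nabla |\nabla b|^2$ through the Hessian of $b$.

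First I would compute the gradient of $|\nabla b|^2$ intrinsically. Since $|\nabla b|^2 = g(\nabla b, \nabla b)$, compatibility of the Levi-Civita connection with the metric gives, for every tangent vector $X$,
\begin{equation*}
\langle \nabla |\nabla b|^2 , X \rangle = 2 \, g( \nabla_X \nabla b , \nabla b ) = 2 \, \Hess_b (X , \nabla b ) = 2 \, \Hess_b ( \nabla b , X) \, ,
\end{equation*}
where the last equality used the symmetry of $\Hess_b$. Consequently, as vector fields,
\begin{equation*}
\nabla |\nabla b|^2 = 2 \, \Hess_b ( \nabla b ) \, ,
\end{equation*}
in the notation of the statement.

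Next I would multiply through by $b$ and substitute the identity \eqr{e:uhessu}. That identity gives
\begin{equation*}
2 \, b \, \Hess_b ( \nabla b ) = B_b ( \nabla b ) + 2 \, \left( |\nabla b|^2 \, g - \nabla b \otimes \nabla b \right) ( \nabla b ) \, .
\end{equation*}
Since $(\nabla b \otimes \nabla b)( \nabla b ) = \langle \nabla b , \nabla b \rangle \, \nabla b = |\nabla b|^2 \, \nabla b$, the two correction terms cancel, and we obtain
\begin{equation*}
b \, \nabla |\nabla b|^2 = 2 \, b \, \Hess_b ( \nabla b ) = B_b ( \nabla b ) \, ,
\end{equation*}
as desired.

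There is no real obstacle here: the result is a pointwise algebraic identity once one rewrites $\Hess_{b^2}$ in terms of $b \, \Hess_b$ and $\nabla b \otimes \nabla b$, and it does not even use the PDE $\Delta b^2 = 2n \, |\nabla b|^2$ (only the definition $B_b = \Hess_{b^2} - 2\, |\nabla b|^2 \, g$, which is what \eqr{e:uhessu} encodes). The only care needed is distinguishing the vector $B_b(\nabla b)$, defined by contracting the symmetric $2$-tensor with $\nabla b$, from the scalar $B_b(\nabla b,\nabla b)$ that appears elsewhere in the paper; the convention is fixed by the statement.
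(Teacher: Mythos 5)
Your proof is correct and is essentially identical to the paper's: both start from $\nabla |\nabla b|^2 = 2\,\Hess_b(\nabla b,\cdot)$, multiply by $b$, and use \eqr{e:uhessu} so that the terms $|\nabla b|^2\,\nabla b$ cancel. Your closing remark that only the algebraic definition $B_b = \Hess_{b^2} - 2\,|\nabla b|^2\, g$ is needed (not the equation for $\Delta b^2$) is also accurate.
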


\begin{proof}
Since $\nabla |\nabla b|^2 = 2\, \Hess_b (\nabla b , \cdot )$, equation \eqr{e:uhessu} gives
\begin{align}
	b \, \nabla |\nabla b|^2 = 2\, b \, \Hess_b (\nabla b , \cdot ) = 
	 B_b (\nabla b , \cdot) + 2\, \left(   |\nabla b|^2 \, \nabla b  
	-  |\nabla b|^2 \,  \nabla b  \right) = B_b(\nabla b , \cdot) \, .
\end{align}
\end{proof}

\begin{Cor}	\label{c:k1}
We have    $2\, b \, \nabla |\nabla b| =   B_b(\nn)$ where $\nn = \frac{\nabla b}{|\nabla b|}$
 and $4 \, b^2 \, \left| \nabla |\nabla b| \right|^2  = \left| B_b(\nn)  \right|^2$.
\end{Cor}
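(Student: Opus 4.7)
The plan is to derive Corollary \ref{c:k1} directly from Lemma \ref{l:k1} by an elementary manipulation, exploiting the relation between the gradient of $|\nabla b|^2$ and the gradient of $|\nabla b|$, together with linearity of $B_b$ in its argument.

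First I would rewrite the gradient of the square using the chain rule:
\begin{align}
\nabla |\nabla b|^2 = 2\,|\nabla b|\,\nabla |\nabla b|\,.
\end{align}
Substituting this into the identity of Lemma \ref{l:k1} gives
\begin{align}
2\,b\,|\nabla b|\,\nabla |\nabla b| = B_b(\nabla b) = |\nabla b|\,B_b(\nn)\,,
\end{align}
where the last equality uses the linearity of the tensor action $v \mapsto B_b(v,\cdot)$ together with $\nabla b = |\nabla b|\,\nn$. In the setting of the paper we work in an annular region that is close to a fixed Ricci-flat cone with smooth cross-section, so $|\nabla b|$ is close to the positive constant $b_\infty$ and in particular nonvanishing; therefore I may divide through by $|\nabla b|$ to obtain the first claim
\begin{align}
2\,b\,\nabla |\nabla b| = B_b(\nn)\,.
\end{align}

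For the second assertion, I would simply take pointwise norms of both sides of the just-established identity and square:
\begin{align}
4\,b^2\,\bigl|\nabla |\nabla b|\bigr|^2 = \bigl|B_b(\nn)\bigr|^2\,.
\end{align}

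No real obstacle arises here, since the calculation is an immediate consequence of Lemma \ref{l:k1} and the chain rule; the only point to be mindful of is that one avoids dividing by $|\nabla b|$ where it could vanish, which is handled by the standing assumption that we are working on annular scales where $|\nabla b|$ is uniformly close to $b_\infty > 0$.
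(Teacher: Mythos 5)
Your proof is correct and is essentially the paper's own argument: both use the chain rule identity $\nabla |\nabla b|^2 = 2\,|\nabla b|\,\nabla |\nabla b|$ together with Lemma \ref{l:k1} and the linearity $B_b(\nabla b)=|\nabla b|\,B_b(\nn)$, then cancel a factor of $|\nabla b|$. Your remark about $|\nabla b|$ being nonvanishing on the relevant annular scales is a harmless extra precaution that the paper leaves implicit.
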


\begin{proof}
Since $b \, \nabla |\nabla b|^2 = 2 \, b \, |\nabla b| \, \nabla |\nabla b|$, this
  follows from Lemma \ref{l:k1}.
\end{proof}

The next lemma computes the  divergence of $B_b$.

\begin{Lem}	\label{l:k2}
The divergence of $B_b$ is 
\begin{equation}
	\delta B_b =   (2n-2) \, \nabla |\nabla b|^2 = 
	  (2n-2) \,  b^{-1} \, B_b( \nabla b) \, .
\end{equation}
\end{Lem}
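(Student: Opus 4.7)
The plan is to compute $\delta B_b$ by splitting $B_b = \Hess_{b^2} - 2|\nabla b|^2 \, g$ and handling the two pieces separately, using only Ricci-flatness of $M$, the commutation of covariant derivatives, and the equation $\Delta b^2 = 2n\,|\nabla b|^2$.

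First I would handle the Hessian term. For any smooth function $f$, commuting covariant derivatives gives $\nabla^i \nabla_i \nabla_j f = \nabla_j \nabla^i \nabla_i f + \Ric_{ij}\, \nabla^i f$, i.e.\ $\delta(\Hess_f) = \nabla \Delta f + \Ric(\nabla f,\cdot)$. Since $M$ is Ricci-flat, the Ricci term drops out, so
\begin{equation*}
   \delta\,(\Hess_{b^2}) \;=\; \nabla \Delta b^2 \;=\; 2n\,\nabla |\nabla b|^2,
\end{equation*}
where the last equality is the defining equation for $b$.

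Next I would compute the divergence of the second piece. With the sign convention of Lemma \ref{l:topping} (under which $\delta^2 h = \nabla^i \nabla^j h_{ij}$), the divergence of a conformal tensor $\phi\, g$ is $(\delta(\phi\,g))_j = \nabla^i(\phi\, g_{ij}) = \nabla_j \phi$. Applying this with $\phi = 2|\nabla b|^2$ gives $\delta(2|\nabla b|^2 g) = 2\,\nabla |\nabla b|^2$. Subtracting,
\begin{equation*}
   \delta B_b \;=\; 2n\,\nabla |\nabla b|^2 \;-\; 2\,\nabla |\nabla b|^2 \;=\; (2n-2)\,\nabla |\nabla b|^2,
\end{equation*}
which is the first equality of the lemma. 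The second equality, $(2n-2)\nabla |\nabla b|^2 = (2n-2)\,b^{-1} B_b(\nabla b)$, follows immediately from Lemma \ref{l:k1}, which already rewrites $\nabla |\nabla b|^2$ in terms of $B_b(\nabla b)$.

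There is no real obstacle here: the computation is mechanical once one notices that Ricci-flatness removes the curvature term from the commutator, and that $\delta(\phi g)$ contributes only a single gradient term that cancels one factor from the $2n$ coming from $\Delta b^2$. The main care needed is keeping the sign and normalization convention of $\delta$ consistent with Lemma \ref{l:topping}, so that the factor $(2n-2)$ appears with the correct sign rather than, say, $(2-2n)$.
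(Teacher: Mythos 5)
Your argument is correct and is essentially the paper's own proof: the paper also computes $(\delta B_b)_i = (b^2)_{ijj} - 2(|\nabla b|^2)_i$, uses Ricci-flatness to commute derivatives so that $(b^2)_{ijj} = (\Delta b^2)_i$, substitutes $\Delta b^2 = 2n\,|\nabla b|^2$ to get $(2n-2)\,\nabla |\nabla b|^2$, and then invokes Lemma \ref{l:k1} for the second equality. No gaps.
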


\begin{proof}
Fix a point $p \in M$ and let $e_i$ be an orthonormal frame at $p$ with $\nabla_{e_i} e_j (p) = 0$.  
Since $M$ is Ricci flat, we get for any function $w$ that
\begin{align}	\label{e:ricciw}
	\nabla \Delta w = \Delta \nabla w \, .
\end{align}
Using the definition of $B_b$, the fact that $g$ is parallel, and \eqr{e:ricciw} with $w=b^2$ gives
\begin{align}
	\left( \delta B_b \right)_i \equiv (B_b)_{ij,j} = (b^2)_{ijj} - 2\left( |\nabla b|^2 \right)_i =   \left( \Delta b^2 \right)_i 
	- 2\left( |\nabla b|^2 \right)_i \, .
\end{align}
Thus,   $\delta B_b =   \nabla ( \Delta b^2 - 2 \, |\nabla b|^2)$.
The lemma follows since $\Delta b^2 = 2n \, |\nabla b|^2$.
\end{proof}

Using this, we can compute the Laplacian of $|\nabla b|^2$.

\begin{Lem}		\label{l:deltanu}
We have 
\begin{align}
	b^2 \, \Delta |\nabla b|^2  &= \frac{1}{2} \, \left| B_b \right|^2 + (2n-4) \,    B_b (\nabla b , \nabla b)
	  \notag \\
	&= \frac{1}{2} \, \left| B_b \right|^2 + (n-2) \,   \langle \nabla |\nabla b|^2 , \nabla b^2 \rangle
	\, .
\end{align}
\end{Lem}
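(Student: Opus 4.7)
The plan is to derive the identity via the Bochner formula applied to $w = b$, using that $M$ is Ricci-flat and that the PDE $\Delta b^2 = 2n\,|\nabla b|^2$ determines $\Delta b$ explicitly. Since the Ricci curvature vanishes, Bochner gives
\begin{align}
\tfrac{1}{2}\,\Delta |\nabla b|^2 = |\Hess_b|^2 + \langle \nabla \Delta b , \nabla b \rangle \, .
\end{align}
From $\Delta b^2 = 2b\,\Delta b + 2|\nabla b|^2 = 2n|\nabla b|^2$, I would solve for $\Delta b = (n-1)\,|\nabla b|^2 / b$. Differentiating this and pairing with $\nabla b$, then using Lemma \ref{l:k1} to rewrite $\langle \nabla |\nabla b|^2 , \nabla b \rangle = b^{-1} B_b(\nabla b, \nabla b)$, yields
\begin{align}
\langle \nabla \Delta b , \nabla b \rangle = \frac{n-1}{b^2}\,B_b(\nabla b, \nabla b) - \frac{(n-1)\,|\nabla b|^4}{b^2} \, .
\end{align}

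Next I would handle $|\Hess_b|^2$. The identity \eqr{e:uhessu} gives
\begin{align}
2b\,\Hess_b = B_b + 2\,|\nabla b|^2\,g - 2\,\nabla b \otimes \nabla b \, ,
\end{align}
so squaring and using that $B_b$ is trace-free (which kills the cross term $\langle B_b , g\rangle$) leads after a short calculation to
\begin{align}
4b^2\,|\Hess_b|^2 = |B_b|^2 + 4(n-1)\,|\nabla b|^4 - 4\,B_b(\nabla b, \nabla b) \, .
\end{align}
Substituting this together with the expression for $\langle \nabla \Delta b, \nabla b\rangle$ back into the Bochner formula, the $(n-1)\,|\nabla b|^4/b^2$ terms cancel and the $B_b(\nabla b,\nabla b)$ terms combine to produce $(n-2)/b^2$ times $B_b(\nabla b,\nabla b)$. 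Multiplying through by $2b^2$ gives the first equality
\begin{align}
b^2\,\Delta |\nabla b|^2 = \tfrac{1}{2}\,|B_b|^2 + (2n-4)\,B_b(\nabla b, \nabla b) \, .
\end{align}

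For the second equality, I would simply apply Lemma \ref{l:k1} once more: since $b\,\nabla |\nabla b|^2 = B_b(\nabla b)$, pairing with $\nabla b^2 = 2b\,\nabla b$ gives $\langle \nabla |\nabla b|^2, \nabla b^2\rangle = 2\,B_b(\nabla b, \nabla b)$, so $(2n-4)\,B_b(\nabla b,\nabla b) = (n-2)\,\langle \nabla |\nabla b|^2, \nabla b^2\rangle$, finishing the proof. There is no serious obstacle here; the only place requiring care is the expansion of $|\Hess_b|^2$, where the trace-freeness of $B_b$ must be used to eliminate the $\langle B_b, g\rangle$ cross term and the $|C|^2 = 4n|\nabla b|^4$ versus $\langle C,D\rangle = -4|\nabla b|^4$ terms must be bookkept correctly.
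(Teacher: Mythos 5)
Your derivation is correct, and the numerology checks out: with $\Ric=0$ the Bochner formula gives $\tfrac12\Delta|\nabla b|^2=|\Hess_b|^2+\langle\nabla\Delta b,\nabla b\rangle$, your expansion $4b^2|\Hess_b|^2=|B_b|^2+4(n-1)|\nabla b|^4-4B_b(\nabla b,\nabla b)$ follows from \eqr{e:uhessu} and trace-freeness, and your formula $\langle\nabla\Delta b,\nabla b\rangle=\frac{n-1}{b^2}B_b(\nabla b,\nabla b)-\frac{(n-1)|\nabla b|^4}{b^2}$ follows from $\Delta b=(n-1)|\nabla b|^2/b$ and Lemma \ref{l:k1}; the $|\nabla b|^4$ terms cancel and the coefficient $2n-4$ emerges as you say, and the second equality is immediate from Lemma \ref{l:k1}. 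The route is, however, different from the paper's. The paper never introduces $\Delta b$ or $|\Hess_b|^2$: it writes $b^2\Delta|\nabla b|^2=b^2\,\dv\left(b^{-1}B_b(\nabla b)\right)$ using Lemma \ref{l:k1}, expands the divergence, and then substitutes the identity $\delta B_b=(2n-2)\,b^{-1}B_b(\nabla b)$ of Lemma \ref{l:k2} together with \eqr{e:uhessu} and $\langle B_b,g\rangle=0$. The curvature hypothesis enters there only through Lemma \ref{l:k2} (via $\nabla\Delta w=\Delta\nabla w$ on Ricci-flat manifolds), which is the same commutation that underlies your Bochner formula, so the two arguments use the same geometric input packaged differently: the paper's computation recycles the already-established divergence identity and stays entirely at the level of $B_b$, while yours is self-contained modulo the standard Bochner identity at the cost of re-deriving, in effect, what Lemma \ref{l:k2} encodes and of the extra bookkeeping in squaring $2b\,\Hess_b$. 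Either proof is acceptable.
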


\begin{proof}
Using the definition of the Laplacian, then Lemma \ref{l:k1}, and then Lemma \ref{l:k2} gives
\begin{align}
	b^2 \, \Delta |\nabla b|^2 &= b^2 \, \dv  \, \nabla |\nabla b|^2 = b^2\,  \dv \, \left( b^{-1} \, B_b(\nabla b) \right)   = b\, \langle \delta B_b , \nabla b \rangle + 
	   \langle B_b , b\, \Hess_b \rangle -   B_b (\nabla b , \nabla b) \notag \\
	 &= 
	 (2n-2) \,   B_b(\nabla b , \nabla b)   +
	\langle B_b , \left\{ \frac{1}{2} \, B_b  +  \left(   |\nabla b|^2 \, g  
	-   \nabla b \otimes \nabla b  \right)  \right\} \rangle -   B_b (\nabla b , \nabla b) \,  .  
\end{align}
Using   $\langle B_b , g \rangle = 0$ since $B_b$ is trace-free, and noting that
$\langle B_b , \nabla b \otimes \nabla b \rangle = B_b(\nabla b , \nabla b)$ gives
\begin{align}
	b^2 \, \Delta |\nabla b|^2  =  	 (2n-4) \, B_b(\nabla b , \nabla b) + 
	\frac{1}{2} \, \left| B_b \right|^2  
	\, .  \notag
\end{align}
 This gives the first equality.  To get the second equality, use that  $b \, \nabla |\nabla b|^2 = B_b(\nabla b)$ by Lemma \ref{l:k1} to write
\begin{align}
	2\, B_b(\nabla b , \nabla b) = 2\, \langle B_b(\nabla b) , \nabla b \rangle = 2\, b \, \langle  \nabla |\nabla b|^2 , \nabla b \rangle
	=  \langle \nabla |\nabla b|^2 , \nabla b^2 \rangle  \, .
\end{align}
\end{proof}

\subsection{The trace-free second fundamental form}

The second fundamental form $\II$ of the level sets of $b$ is given by
\begin{align}
	\II (e_i , e_j) \equiv \langle \nabla_{e_i} \nn , e_j \rangle \, , 
\end{align}
where $e_i$ is a tangent frame and $\nn = \frac{ \nabla b}{|\nabla b|}$ is the unit normal.   It follows that
\begin{align}
	2 \, b \, |\nabla b| \,  \II (e_i , e_j) = \langle \nabla_{e_i}   \nabla b^2   , e_j \rangle
	= \Hess_{b^2} (e_i , e_j)  \, .
\end{align}

\begin{Lem}	\label{l:tracefree}
 The trace-free second fundamental form $\II_0$ and mean curvature $H$ are  
\begin{align}
	2 \, b \, |\nabla b| \,  \II_0  &= B_b  + \frac{B_b(\nn , \nn)}{n-1}     \, g^T \, , \\
	2 \, b \, |\nabla b| \, H & = 2(n-1) \, |\nabla b|^2  - B_b(\nn , \nn) \, , 
\end{align}
where  $\Hess_{b^2}$ and $B_b$ are restricted to   tangent vectors and $g^T$ is the metric on the level set.
\end{Lem}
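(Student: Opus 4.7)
The plan is straightforward: both formulas follow from combining the given identity $2b|\nabla b|\,\II(e_i,e_j) = \Hess_{b^2}(e_i,e_j)$ with the decomposition $\Hess_{b^2} = B_b + 2|\nabla b|^2\, g$ and then carefully separating the tangential piece from the normal piece.

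First I will compute the mean curvature. Restricting to a tangent frame $\{e_1,\dots,e_{n-1}\}$ on the level set and taking the trace with respect to $g^T$ gives
\begin{align}
	2 b |\nabla b|\, H = \Tr_{g^T} \Hess_{b^2}\big|_T = \Tr_{g^T} B_b\big|_T + 2|\nabla b|^2 (n-1)\,.
\end{align}
To evaluate $\Tr_{g^T} B_b\big|_T$, I will complete the tangent frame by $\nn = \nabla b/|\nabla b|$ and use that $B_b$ is trace-free on the full manifold:
\begin{align}
	0 = \Tr B_b = B_b(\nn,\nn) + \Tr_{g^T} B_b\big|_T\,,
\end{align}
so $\Tr_{g^T} B_b|_T = -B_b(\nn,\nn)$. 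Substituting yields the stated formula for $2b|\nabla b|\, H$.

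Second, the trace-free part of the second fundamental form is by definition $\II_0 = \II - \frac{H}{n-1}\,g^T$. Multiplying by $2b|\nabla b|$ and using the first identity together with the formula for $H$ just derived gives
\begin{align}
	2b|\nabla b|\, \II_0 &= \Hess_{b^2}\big|_T - \frac{1}{n-1}\left[ 2(n-1)|\nabla b|^2 - B_b(\nn,\nn)\right] g^T \notag \\
	&= B_b\big|_T + 2|\nabla b|^2 g^T - 2|\nabla b|^2 g^T + \frac{B_b(\nn,\nn)}{n-1}\,g^T\,,
\end{align}
which collapses to the desired expression. There is no real obstacle here; the only thing to be careful about is bookkeeping between the full trace-free tensor $B_b$ on $M$ and its tangential restriction, which is handled by the single identity $\Tr_{g^T} B_b|_T = -B_b(\nn,\nn)$ coming from $\Tr B_b = 0$.
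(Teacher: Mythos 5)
Your proof is correct and follows essentially the same route as the paper: the paper also computes $2b|\nabla b|H$ as the tangential trace of $\Hess_{b^2}$ (written there as $\Delta b^2 - \Hess_{b^2}(\nn,\nn) = 2n|\nabla b|^2 - \Hess_{b^2}(\nn,\nn)$, which is exactly your use of $\Tr B_b = 0$ in different bookkeeping) and then obtains $\II_0$ by subtracting $\frac{H}{n-1}\,g^T$ and cancelling the $2|\nabla b|^2\,g^T$ terms.
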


\begin{proof}
The mean curvature $H$ is the trace of $\II$ over the $e_i$'s.  We have
\begin{align}
	2 \, b \, |\nabla b| \, H &= \Delta b^2 - \Hess_{b^2} (\nn  , \nn) = 2n \, |\nabla b|^2 - \Hess_{b^2} (\nn  , \nn) \notag \\
	&= 2(n-1) \, |\nabla b|^2 + \left( 2\, |\nabla b|^2 -  \Hess_{b^2} (\nn  , \nn)  \right) \\
	&= 2(n-1) \, |\nabla b|^2  - B_b(\nn , \nn) \, , \notag
\end{align}
giving the first claim.
The trace-free second fundamental form $\II_0$ is  
\begin{align}
	2 \, b \, |\nabla b| \,  \II_0 &= 2 \, b \, |\nabla b| \,  \left(  \II - \frac{H}{n-1} \, g^T\right)  = 
	\Hess_{b^2}   -   2\, |\nabla b|^2 \, g^T + \frac{B(\nn , \nn)}{n-1}     \, g^T \notag \\
	&= B_b  + \frac{B_b(\nn , \nn)}{n-1}     \, g^T \, ,
\end{align}
where  $\Hess_{b^2}$ and $B_b$ are restricted to   tangent vectors.
 \end{proof}
 
 \begin{Lem}	\label{l:decompB}
 If $B_0$ denotes the restriction of the tensor $B_b$ to tangent vectors, then 
 \begin{equation}	
 	\left| B_b \right|^2 = \left| B_0 \right|^2 + 2\, \left| B_b(\nn)^T \right|^2 + (B_b(\nn , \nn ))^2 \, .
\end{equation}
 \end{Lem}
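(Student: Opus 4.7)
The plan is to prove this by choosing an orthonormal frame adapted to the level set of $b$ and expanding $|B_b|^2$ directly in this frame. Specifically, I would pick $\{\nn, e_1, \ldots, e_{n-1}\}$ orthonormal at the point of interest, where $\nn = \nabla b / |\nabla b|$ is the unit normal to the level set and $\{e_i\}_{i=1}^{n-1}$ span the tangent space to the level set.

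Since $B_b$ is a symmetric $2$-tensor, its squared norm in this frame expands as
\begin{align}
|B_b|^2 = B_b(\nn,\nn)^2 + 2\sum_{i=1}^{n-1} B_b(\nn, e_i)^2 + \sum_{i,j=1}^{n-1} B_b(e_i, e_j)^2,
\end{align}
where the factor of $2$ on the cross term comes from the symmetry $B_b(\nn, e_i) = B_b(e_i, \nn)$. The three pieces are then identified with the three terms in the statement: the last sum is $|B_0|^2$ by the definition of the restriction of $B_b$ to tangent vectors; the middle sum equals $|B_b(\nn)^T|^2$, since the tangential part of the vector $B_b(\nn)$ (defined by $\langle B_b(\nn), v\rangle = B_b(\nn, v)$ exactly as in Lemma \ref{l:k1}) has components $B_b(\nn, e_i)$ in the chosen tangent frame; and the first term is $(B_b(\nn,\nn))^2$ unchanged.

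This is a pointwise algebraic identity that requires no PDE input and no use of the Ricci-flat assumption or of $\Delta b^2 = 2n|\nabla b|^2$. There is essentially no obstacle: once the adapted orthonormal frame is fixed, the decomposition follows from expanding a symmetric bilinear form in an orthonormal basis and collecting the normal/normal, normal/tangent, and tangent/tangent pieces separately. The only mild point to note is that the identity is frame-independent because each of $|B_0|^2$, $|B_b(\nn)^T|^2$, and $B_b(\nn,\nn)$ is intrinsically defined (independent of the particular tangential orthonormal basis chosen), so verifying it in any one adapted frame suffices.
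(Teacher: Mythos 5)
Your proof is correct: expanding the symmetric $2$-tensor $B_b$ in an orthonormal frame adapted to the level set and sorting the normal/normal, normal/tangent, and tangent/tangent components is exactly the elementary pointwise computation the paper has in mind (it omits the proof of this lemma precisely because it is this frame expansion). No PDE input or curvature hypothesis is needed, as you note.
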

 
% \begin{proof}
 %Since $B$ is symmetric, 
%we have
% \begin{align}
% 	\left| B \right|^2 = \sum_{i,j \leq (n-1)} \, \, (B(e_i , e_j))^2 + 2 \, \sum_{i \leq (n-1)} (B(\nn , e_i))^2 + (B(\nn , \nn))^2 \, .
%\end{align}
%  \end{proof}

 \begin{Lem}	\label{l:B0}
If we let $B_0$ denote the restriction of $B_b$ to tangent vectors, then 
 \begin{align}
 	4 \, b^2 \, |\nabla b|^2 \, \left| \II_0 \right|^2 &=   |B_0|^2 - \frac{(B_b(\nn , \nn))^2}{n-1}
	=   \left| B_b \right|^2 
	  -   2\, \left| B_b(\nn)^T \right|^2 - \frac{n}{n-1} \, (B_b(\nn , \nn))^2   \notag \\
	  &=  \left| B_b \right|^2 
	  -   \frac{n}{n-1} \, \left| B_b(\nn)  \right|^2  -   \frac{n-2}{n-1} \, \left| B_b(\nn)^T \right|^2 \, .
 \end{align}
 \end{Lem}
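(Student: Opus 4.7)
The proof is essentially a direct algebraic unpacking, using Lemma \ref{l:tracefree} together with the decomposition of $B_b$ given in Lemma \ref{l:decompB}. The plan has three short pieces, one for each of the three claimed equalities.

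For the first equality, I would start from Lemma \ref{l:tracefree}, which gives
\begin{equation*}
	2b|\nabla b|\, \II_0 \;=\; B_0 + \frac{B_b(\nn,\nn)}{n-1}\, g^T,
\end{equation*}
where $B_0$ is the restriction of $B_b$ to tangent vectors of the level set. Squaring and using $|g^T|^2 = n-1$ gives three terms; the cross term is $\tfrac{2 B_b(\nn,\nn)}{n-1}\,\langle B_0, g^T\rangle$, and since $B_b$ is traceless we have $\langle B_0, g^T\rangle = \operatorname{tr}_{g^T} B_0 = -B_b(\nn,\nn)$. Collecting,
\begin{equation*}
	4 b^2|\nabla b|^2 |\II_0|^2
	\;=\; |B_0|^2 - \frac{2(B_b(\nn,\nn))^2}{n-1} + \frac{(B_b(\nn,\nn))^2}{n-1}
	\;=\; |B_0|^2 - \frac{(B_b(\nn,\nn))^2}{n-1}.
\end{equation*}

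For the second equality, I would just substitute the identity of Lemma \ref{l:decompB},
\begin{equation*}
	|B_0|^2 \;=\; |B_b|^2 - 2|B_b(\nn)^T|^2 - (B_b(\nn,\nn))^2,
\end{equation*}
into the right-hand side of the first equality and combine the two $(B_b(\nn,\nn))^2$ terms:
\begin{equation*}
	-(B_b(\nn,\nn))^2 - \frac{(B_b(\nn,\nn))^2}{n-1} = -\frac{n}{n-1}(B_b(\nn,\nn))^2,
\end{equation*}
producing the second expression exactly as stated.

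For the third equality, I would use the orthogonal decomposition of $B_b(\nn)$ into its tangential and normal components, which yields the Pythagorean identity $|B_b(\nn)|^2 = |B_b(\nn)^T|^2 + (B_b(\nn,\nn))^2$. Substituting $(B_b(\nn,\nn))^2 = |B_b(\nn)|^2 - |B_b(\nn)^T|^2$ into the second expression and simplifying,
\begin{equation*}
	-2|B_b(\nn)^T|^2 - \frac{n}{n-1}\bigl(|B_b(\nn)|^2 - |B_b(\nn)^T|^2\bigr)
	= -\frac{n}{n-1}|B_b(\nn)|^2 - \frac{n-2}{n-1}|B_b(\nn)^T|^2,
\end{equation*}
which gives the third expression. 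There is no real obstacle here; the only place one has to be a little careful is keeping track of the fact that $B_b$ is traceless (used in computing the cross term) and that $|g^T|^2 = n-1$ rather than $n$.
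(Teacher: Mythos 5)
Your proof is correct, and it is exactly the computation the paper has in mind: Lemma \ref{l:B0} is stated there without proof precisely because it follows by squaring the identity of Lemma \ref{l:tracefree} (using tracelessness of $B_b$ and $|g^T|^2 = n-1$) and then substituting Lemma \ref{l:decompB} together with the orthogonal splitting $|B_b(\nn)|^2 = |B_b(\nn)^T|^2 + (B_b(\nn,\nn))^2$, which is what you do. No gaps.
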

 
% \begin{proof}
%   Lemma \ref{l:tracefree} can be written as 
%   \begin{align}
%	2 \, u \, |\nabla u| \,  \II_0  = B_0  + \frac{B(\nn , \nn)}{n-1}     \, g_0 \, ,
%\end{align}
% Using that $B$ is trace-free, we get that
% \begin{equation}
% 	\langle B_0 , g_0 \rangle = \Tr (B_0) = \Tr (B) - B(\nn , \nn) = - B(\nn , \nn) \, , 
% \end{equation}
% so we get that
% \begin{align}
 %	4 \, u^2 \, |\nabla u|^2 \, \left| \II_0 \right|^2 &=   
%	 \left| B_0 \right|^2  + \frac{\left(  B(\nn , \nn)  \right)^2}{ (n-1)^2} \, |g_0|^2  + 2\, \frac{B(\nn , \nn)}{n-1} \langle B_0 , g_0 \rangle \notag \\
%	 &=   \left| B_0 \right|^2  + \frac{\left(  B(\nn , \nn)  \right)^2}{ n-1} - 2\, \frac{(B(\nn , \nn))^2}{n-1}
%	 \, .
 %\end{align}
% This gives the first equality.  The second equality follows from Lemma \ref{l:decompB}.  The last equality 
 %follows from $\left| B(\nn) \right|^2 = \left|  B(\nn)^T \right|^2 + (B(\nn , \nn))^2$.
 %\end{proof}
 
 The next lemma computes the scalar curvature $R_{g^T}$ where $g^T$ is the induced metric on the level sets of $b$.
 
 \begin{Lem}	\label{l:R1}
 The scalar curvature $R_{g^T}$ is given by
  \begin{align}
 	4 \, b^2 \, |\nabla b|^2 \, R_{g^T} &  = 
	4 (n-1)(n-2) \, |\nabla b|^4 - 4(n-2) |\nabla b|^2 \, B_b(\nn , \nn)  \notag \\
	&\qquad - 
	 \left| B_b \right|^2 
	  +  2 \, \left| B_b(\nn)  \right|^2  
	 \, .
 \end{align}
 \end{Lem}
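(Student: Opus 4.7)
The plan is to start from the Gauss equation for the hypersurface $\{b = \text{const}\}$ in $M$, use Ricci-flatness of $M$ to eliminate the ambient curvature terms, and then substitute the formulas for $H$ and $|\II_0|^2$ already established in Lemmas \ref{l:tracefree} and \ref{l:B0}. The only work beyond bookkeeping is a short algebraic identity at the end.

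First, I would recall that for a two-sided hypersurface $\Sigma \subset M^n$ with unit normal $\nn$, mean curvature $H$, and second fundamental form $\II$, the Gauss equation  gives
\begin{equation}
	R_{g^T} = R_M - 2 \, \Ric_M(\nn, \nn) + H^2 - |\II|^2 \, .
\end{equation}
Since $M$ is Ricci-flat, both $R_M$ and $\Ric_M(\nn, \nn)$ vanish, so $R_{g^T} = H^2 - |\II|^2$. Writing $\II = \II_0 + \frac{H}{n-1} g^T$, this becomes
\begin{equation}
	R_{g^T} = \frac{n-2}{n-1} \, H^2 - |\II_0|^2 \, .
\end{equation}

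Next, I would substitute the formulas from Lemmas \ref{l:tracefree} and \ref{l:B0}. Squaring the identity $2 b |\nabla b| H = 2(n-1) |\nabla b|^2 - B_b(\nn, \nn)$ gives
\begin{equation}
	4 b^2 |\nabla b|^2 H^2 = 4(n-1)^2 |\nabla b|^4 - 4(n-1) |\nabla b|^2 B_b(\nn, \nn) + B_b(\nn, \nn)^2 \, ,
\end{equation}
and Lemma \ref{l:B0} expresses $4 b^2 |\nabla b|^2 |\II_0|^2$ in terms of $|B_b|^2$, $|B_b(\nn)|^2$, and $|B_b(\nn)^T|^2$. Combining these after multiplying by $4 b^2 |\nabla b|^2$ produces the claimed right-hand side, provided one verifies
\begin{equation}
	\frac{n-2}{n-1} \, B_b(\nn, \nn)^2 + \frac{n}{n-1} \, |B_b(\nn)|^2 + \frac{n-2}{n-1} \, |B_b(\nn)^T|^2 = 2 \, |B_b(\nn)|^2 \, .
\end{equation}
This last identity follows at once from the pointwise decomposition $|B_b(\nn)|^2 = B_b(\nn, \nn)^2 + |B_b(\nn)^T|^2$: the left-hand side collects to $\frac{2n-2}{n-1}(B_b(\nn, \nn)^2 + |B_b(\nn)^T|^2) = 2|B_b(\nn)|^2$.

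There is no real obstacle here — the Gauss equation plus Ricci-flatness does all the geometric work, and the remaining content is the algebraic translation from the geometric quantities $(H, \II_0)$ to the analytic quantity $B_b$, which is already packaged in Lemmas \ref{l:tracefree} and \ref{l:B0}. The only point to be careful about is the normal/tangential decomposition of $B_b$ (to avoid double counting $B_b(\nn, \nn)$ inside $|B_b(\nn)|^2$), and that is exactly where Lemma \ref{l:decompB} plays its role.
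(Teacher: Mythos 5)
Your proposal is correct and follows essentially the same route as the paper: traced Gauss equation plus Ricci-flatness to get $R_{g^T} = \frac{n-2}{n-1}H^2 - |\II_0|^2$, then substitution of the formulas from Lemmas \ref{l:tracefree} and \ref{l:B0} and the same final algebraic collection into $2\,|B_b(\nn)|^2$. One cosmetic remark: the identity $|B_b(\nn)|^2 = (B_b(\nn,\nn))^2 + |B_b(\nn)^T|^2$ you invoke at the end is just the orthogonal splitting of the vector $B_b(\nn)$ and is not the content of Lemma \ref{l:decompB} (which decomposes the full norm $|B_b|^2$), but this does not affect the argument.
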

 
 \begin{proof}
Using that $\II_0$ and $g^T$ are pointwise orthogonal and $\left| g^T \right|^2 = (n-1)$, we get
 \begin{align}
 	\left| \II \right|^2 &= \left| \II_0 + \frac{H}{n-1} \, g^T \right|^2 = 
	\left| \II_0   \right|^2 +  \frac{H^2}{n-1}  \, .
 \end{align}
 Since $M$ is Ricci flat, the Gauss equation gives  
 \begin{align}	\label{e:R1}
 	R_{g^T} = H^2 - \left| \II \right|^2 = H^2 - \left| \II_0   \right|^2 -  \frac{H^2}{n-1} = \frac{n-2}{n-1} \, H^2 - \left| \II_0   \right|^2 \, .
 \end{align}
 To handle this, we first compute $H^2$
 \begin{align}	\label{e:weave}
 	4 \, b^2 \, |\nabla b|^2 \, H^2 & = \left[ 2(n-1) \, |\nabla b|^2  - B_b(\nn , \nn) \right]^2 \notag \\
	&= 4 (n-1)^2 \, |\nabla b|^4 - 4(n-1) |\nabla b|^2 \, B_b(\nn , \nn) + \left( B_b(\nn , \nn) \right)^2 \, .
 \end{align}
 Combining this with the calculation of $| \II_0 |^2$ from Lemma \ref{l:B0} gives
  \begin{align}
 	4 \, b^2 \, |\nabla b|^2 \, R_{g^T} &  = 
	4 (n-1)(n-2) \, |\nabla b|^4 - 4(n-2) |\nabla b|^2 \, B_b(\nn , \nn) + \frac{n-2}{n-1} \, 
	\left( B_b(\nn , \nn) \right)^2 \notag \\
	&\qquad - 
	 \left| B_b \right|^2 
	  +   \frac{n}{n-1} \, \left| B_b(\nn)  \right|^2  +   \frac{n-2}{n-1} \, \left| B_b(\nn)^T \right|^2
	 \, .
 \end{align}
 Finally, simplifying this gives
    \begin{align}
 	4 \, b^2 \, |\nabla b|^2 \, R_{g^T} &  = 
	4 (n-1)(n-2) \, |\nabla b|^4 - 4(n-2) |\nabla b|^2 \, B_b(\nn , \nn)  \notag \\
	&\qquad - 
	 \left| B_b \right|^2 
	  +  2 \, \left| B_b(\nn)  \right|^2  
	 \, .
 \end{align}
\end{proof}

We will also need to compute the Ricci curvature $\Ric^T$ of the level sets.   

\begin{Lem}	\label{l:riccitan}
The Ricci curvature $\Ric^T$ of the level sets is given by
\begin{align}
	b^2 \, \Ric^T = (n-2) \,  |\nabla b|^2 \, g^T + \cE \, , 
\end{align}
where the error term $\cE$ is bounded by a constant times $|B_b| + b\, \left| \nabla B_b \right|$.
\end{Lem}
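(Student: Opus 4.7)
The plan is to combine the Gauss equation with the Ricci-flat condition. Tracing Gauss over a tangent orthonormal frame $\{e_i\}$ with unit normal $\nn = \nabla b/|\nabla b|$ gives
\begin{equation}
    \Ric^T(Y,W) = \sum_i R(e_i,Y,e_i,W) + H\,\II(Y,W) - \sum_k \II(Y,e_k)\,\II(e_k,W),
\end{equation}
where $R$ denotes the Riemann tensor of $M$. Since $\Ric=0$ on $M$, the tangential Ricci trace equals $-R(\nn,Y,\nn,W)$, and it suffices to analyze the three resulting terms on the right.

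For the second fundamental form contribution I would use Lemma \ref{l:tracefree} to write $\II = \tfrac{|\nabla b|}{b}\,g^T + E_1$ and $H = (n-1)\tfrac{|\nabla b|}{b} + E_2$, where the sub-leading pieces $E_1,E_2$ are controlled pointwise by $\tfrac{|B_b|}{b|\nabla b|}$. A direct substitution shows that the leading cross-terms in $H\,\II - \II\cdot\II$ collapse to $(n-2)\tfrac{|\nabla b|^2}{b^2}\,g^T$, while every remaining term has size $O(|B_b|/b^2)$, using that $|\nabla b|$ is bounded above and below by constants close to $b_\infty$ on the scales under consideration. Multiplying through by $b^2$ produces the asserted leading term together with an $O(|B_b|)$ error.

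The crucial and most delicate step is bounding $b^2 |R(\nn,Y,\nn,W)|$ by $C(|B_b| + b|\nabla B_b|)$ for $Y,W$ tangent to the level set. For this I would apply the Ricci commutation identity: for any function $u$,
\begin{equation}
    \nabla_k (\Hess_u)_{ij} - \nabla_j (\Hess_u)_{ik} = R(e_j,e_k,e_i,\nabla u)
\end{equation}
(up to a sign convention). Substituting $u=b^2$ with $\Hess_{b^2} = B_b + 2|\nabla b|^2\,g$ and then using Lemma \ref{l:k1} to rewrite $\nabla|\nabla b|^2 = b^{-1} B_b(\nabla b)$ expresses $R(\cdot,\cdot,\cdot,\nabla b)$ as a linear combination of components of $\nabla B_b$ and $b^{-1} B_b$. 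Setting $e_k = \nn$ (parallel to $\nabla b$) and choosing $e_i,e_j$ tangent makes the cross-terms proportional to $g_{i\nn}$ vanish, and dividing by $|\nabla b|$ (bounded below) gives the desired pointwise bound.

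Assembling the three pieces yields
$
\cE = b^2\bigl(H\,\II - \II\cdot\II\bigr) - (n-2)|\nabla b|^2\,g^T - b^2 R(\nn,\cdot,\nn,\cdot),
$
with the first two terms of size $O(|B_b|)$ and the last of size $O(|B_b| + b|\nabla B_b|)$. The main obstacle is the Riemann tensor control: the $H\,\II$ bookkeeping is routine once Lemma \ref{l:tracefree} is in hand, but bounding $R(\nn,\cdot,\nn,\cdot)$ genuinely requires differentiating $B_b$, which is exactly why $b\,|\nabla B_b|$ must appear in the error.
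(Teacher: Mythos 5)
Your proposal is correct, and its skeleton coincides with the paper's: trace the Gauss equation over a tangent frame, use Ricci-flatness of $M$ to reduce the tangential curvature sum to the single radial term $-R(\nn,Y,\nn,W)$, control $H\,\II - \II\cdot\II$ via Lemma \ref{l:tracefree}, and bound $b^2\,R(\nn,\cdot,\nn,\cdot)$ by $C\left(|B_b| + b\,|\nabla B_b|\right)$. The one genuine divergence is how you obtain that last bound: the paper expands $\langle R(\nabla b^2,e)\nabla b^2 , e\rangle$ directly from the definition of the curvature tensor and metric compatibility, and verifies by hand that the leading $|\nabla b|^4$ contributions cancel against $\Hess_{b^2}(e,e)$, whereas you invoke the commutation identity $\nabla_k (\Hess_u)_{ij} - \nabla_j (\Hess_u)_{ik} = R_{kji\ell}\,u_\ell$ with $u=b^2$; since $\nabla b^2$ is normal, choosing $e_k = \nn$ and $e_i,e_j$ tangent yields (up to sign) $2b\,|\nabla b|\,R(\nn,e_j,e_i,\nn)$, and substituting $\Hess_{b^2} = B_b + 2|\nabla b|^2 g$ together with Lemma \ref{l:k1} bounds this by components of $\nabla B_b$ plus $|B_b|/b$, after which dividing by $|\nabla b|$ (bounded below on the scales considered) gives the estimate. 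This is cleaner and avoids the paper's cancellation bookkeeping — the paper's expansion is in effect a hands-on proof of this identity in the case at hand — at the cost of being slightly less self-contained. Two minor points you should make explicit: the quadratic error terms ($E_1^2$, $E_1E_2$) in the $H\,\II - \II\cdot\II$ computation are only $O(|B_b|/b^2)$ because $|B_b|$ (equivalently $b|\II_0|$ and $b|H|$) is a priori uniformly bounded on these nearly-conical scales, an assumption the paper also invokes; and in the commutation identity the term $\nabla_j(\Hess_{b^2})(e_i,\nn)$ picks up $2\,\nabla_j|\nabla b|^2\,\langle e_i,\nn\rangle = 0$ for tangent $e_i$, which is the vanishing of cross-terms you allude to.
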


\begin{proof}
Let ${R}$ and $R^T$ denote the curvature tensor of $M$ and the level set of $b$, respectively.  Choose an orthonormal frame $e_i$ where $e_n = \frac{\nabla b}{|\nabla b|}$ is the unit normal  and $e_1 , \dots , e_{n-1}$ diagonalize the second fundamental form $\II$; let $\lambda_i$ be the eigenvalue corresponding to $e_i$.

 For $i \ne j$ (and $i,j < n$), the Gauss equation gives
\begin{equation}
	R^T_{ijij} = {R}_{ijij} + \lambda_i \, \lambda_j \, .
\end{equation}
Summing over $j<n$ gives the Ricci curvature of the level set in the $e_i , e_i$ direction
\begin{equation}
	\Ric^T_{ii} = \sum_{i \ne j < n} \, \left({R}_{ijij} + \lambda_i \, \lambda_j \right)
		= {\Ric}_{ii} - {R}_{inin} + \lambda_i \, \left( H - \lambda_i \right) \, .
\end{equation}
Using that $M$ is Ricci flat, this becomes
\begin{equation}
	\Ric^T_{ii}  =  - {R}_{inin} + \lambda_i \,  H - \lambda_i^2 
	   \, ,
\end{equation}
where we used that $H= \sum_{i<n} \lambda_i$.  Using that $\lambda_i = \II_0 (e_i , e_i) + \frac{H}{n-1}$, we get 
\begin{align}	\label{e:ricT}
	\Ric^T_{ii}  &=  - {R}_{inin} + H \,  \II_0 (e_i , e_i) + \frac{H^2}{n-1}
	  - \left( \II_0 (e_i , e_i) + \frac{H}{n-1} \right)^2 \, .
\end{align}
Lemma \ref{l:tracefree} gives that
\begin{align}
	\left| H - \frac{(n-1)|\nabla b|}{b} \right| + \left| \II_0 \right|  \leq C \, \frac{\left| B_b \right|}{b} \, .
\end{align}
Using this in \eqr{e:ricT} and noting that both $\left| B_b \right|$ and $b \, |H|$ are uniformly bounded gives
\begin{align}
	 \Ric^T_{ii}    &=  - {R}_{inin} + (n-2) \, \frac{|\nabla b|^2}{b^2}  + \frac{\cE}{b^2}  \, ,
\end{align}
where the error term  $\cE$ is   bounded by a constant times $B_b$.

To complete the proof, we will bound
 the ``radial'' extrinsic curvature term $R_{inin}$  in terms of the trace-free Hessian $B_b$.
Let $e$ be a tangent vector to the level set $b=R$; we can assume that $\nabla_{\nabla b} e = 0$.  The definition of the curvature tensor gives 
\begin{align}
	4\, b^2 \, \langle R(\nabla b , e) \nabla b , e \rangle &= 
	\langle R(\nabla b^2 , e) \nabla b^2 , e \rangle  \notag \\
	&=  \langle \nabla_e \nabla_{\nabla b^2} \nabla b^2 , e \rangle
	- \langle \nabla_{\nabla b^2}  \nabla_e \nabla b^2 , e \rangle
	+ \langle \nabla_{[\nabla b^2 , e]}    \nabla b^2 , e \rangle \\
	&= \langle \nabla_e \Hess_{b^2} (\nabla b^2) , e \rangle
	- \langle \nabla_{\nabla b^2}  \Hess_{b^2}(e) , e \rangle
	-  \Hess_{b^2} \left( \Hess_{  b^2}(e)    , e \right)
	\notag \, .
\end{align}
Next, we use metric compatibility (and the fact that $\nabla_{\nabla b} e = 0$) to get
\begin{align}
	4\, b^2 \, \langle R(\nabla b , e) \nabla b , e \rangle &= 
	 \nabla_e \left(  \Hess_{b^2} (\nabla b^2 , e ) \right) -
	   \Hess_{b^2} (\nabla b^2  , \nabla_e e )
	-   \nabla_{\nabla b^2}  \left( \Hess_{b^2}(e , e ) \right) \\
	&\qquad
	-  \Hess_{b^2} \left( \Hess_{  b^2}(e)    , e \right)
	\notag \, .
\end{align}
Bringing in that $\Hess_{b^2} = B_b + 2 |\nabla b|^2 \, g$, we can write this as
\begin{align}
	4\, b^2 \, \langle R(\nabla b , e) \nabla b , e \rangle &= 
	 \nabla_e \left(  B_{b} (\nabla b^2 , e ) \right) -
	   B_b  (\nabla b^2  , \nabla_e e ) - 2|\nabla b|^2 \, \langle \nabla b^2 , \nabla_e e \rangle 
	   \notag \\
	&\qquad -   \nabla_{\nabla b^2}  \left( B_{b}(e , e ) \right) -  2\, \nabla_{\nabla b^2}    |\nabla b|^2   
		-    B_{b} \left( B_b (e) + 2|\nabla b|^2 e    , e \right) \\
		&\qquad 
		- 2 |\nabla b|^2 \,     B_b (e,e) -  4 |\nabla b|^4
	\notag \, .
\end{align}
The right-hand side has eight terms.  Terms 1, 2, 4, 5, 6 and 7 are all bounded by 
$C \, \left( \left| B_b \right| + b \, \left| \nabla B_b \right| \right)$ (here we also used that
  $\nabla |\nabla b|$ can also be bounded in terms of $B_b$).    Thus, we get that
  \begin{align}
	4\, b^2 \, \langle R(\nabla b , e) \nabla b , e \rangle &= 
	 - 2|\nabla b|^2 \, \langle \nabla b^2 , \nabla_e e \rangle 
	    -  4 |\nabla b|^4 + \cE_0 
	  \, ,
\end{align}
where $\cE_0 \leq C \, \left( \left| B_b \right| + b \, \left| \nabla B_b \right| \right)$.
  Using that $\nabla b$ and $e$ are orthogonal, we get
\begin{align}
	\langle \nabla b^2 , \nabla_e e \rangle = - \langle \nabla_e \nabla b^2 ,   e \rangle 
	= - \Hess_{b^2} (e,e) = - B_{b} (e,e) - 2|\nabla b|^2
	\, , 
\end{align}
 and plugging this in completes the proof.
\end{proof}


\begin{thebibliography}{A}

\bibitem[AA]{AA}
W.K. Allard and F.J. Almgren, Jr, 
On the radial behavior of minimal surfaces and the uniqueness of their tangent cones. 
Ann. of Math. (2) 113 (1981), no. 2, 215--265.

\bibitem[Be]{Be}
A. 
Besse,  Einstein manifolds. Springer-Verlag, Berlin, 1987.

\bibitem[BGS]{BGS}
C.P. Boyer, K. Galicki, and S. Simanca, 
Canonical Sasakian metrics.  
Comm. Math. Phys. 279 (2008), no. 3, 705--733.

%\bibitem[CaHI]{CaHI}
%H.-D. Cao, S. Hamilton, and T. Ilmanen, 
%Gaussian densities and stability for some Ricci solitons, preprint 2004.

%\bibitem[CaZ]{CaZ} 
%H.-D. Cao and M. Zhu,  
%On second variation of Perelman's Ricci shrinker entropy, 
%preprint 2011.  

\bibitem[ChC1]{ChC1}
J. Cheeger and T.H. Colding,
Lower bounds on Ricci curvature and the
almost rigidity of warped products.  Ann. of Math. (2)  144  (1996),
no. 1, 189--237.

\bibitem[ChC2]{ChC2}
J. Cheeger and T.H. Colding,
On the structure of spaces with Ricci curvature
bounded below. I.  J. Differential Geom.  46  (1997),  no. 3, 406--480.

\bibitem[ChT]{ChT}
J. Cheeger and G. Tian,
On the cone structure at infinity of Ricci flat manifolds with
Euclidean volume growth and quadratic curvature decay. Invent. Math. 118 (1994), no. 3, 493--571.

%\bibitem[CgY]{CgY}
%S.Y. Cheng and S. T. Yau, 
%Differential equations on Riemannian manifolds and their geometric applications. 
%Comm. Pure Appl. Math. 28 (1975), no. 3, 333--354.

%\bibitem[C1]{C1}
%T.H. Colding,
%Shape of manifolds with positive Ricci curvature,
%Invent. Math. 124 Fasc. 1--3 (1996), 175--191.

%\bibitem[C2]{C2}
%T.H. Colding,
%Large manifolds with positive Ricci curvature,
%Invent. Math. 124 Fasc. 1--3 (1996), 193--214.

\bibitem[C1]{C1}
T.H. Colding,
Ricci curvature and volume convergence.
Ann. of Math. (2)  145  (1997),  no. 3, 477--501.

\bibitem[C2]{C2}
T.H. Colding, 
New monotonicity formulas for Ricci curvature and applications; I, 
Acta Mathematica, to appear, 
http://arxiv.org/abs/1111.4715.

\bibitem[CM1]{CM1}
T.H. Colding and W.P. Minicozzi II, 
Harmonic functions with polynomial growth, 
Jour. Diff. Geom. vol 45 (1997) 1--77.

\bibitem[CM2]{CM2}
T.H. Colding and W.P. Minicozzi II, 
Large scale behavior of kernels of Schr\"odinger operators. Amer. J. Math. 119 (1997), no. 6, 1355--1398.

\bibitem[CM3]{CM3}
T.H. Colding and W.P. Minicozzi II, 
Monotonicity - analytic and geometric implications, preprint, arxiv.org/abs/1205.6768.

\bibitem[CN1]{CN1}
T.H. Colding and A. Naber, 
Sharp H\"older continuity of tangent cones for spaces with a lower Ricci curvature bound and applications, 
http://arxiv.org/abs/1102.5003, Annals of Math., 176 (2012) 1--57.

\bibitem[CN2]{CN2}
T.H. Colding and A. Naber, 
Characterization of tangent cones of noncollapsed limits with lower Ricci bounds and applications, 
http://arxiv.org/abs/1108.3244, GAFA, to appear.

\bibitem[DS]{DS}
S. Donaldson and S. Sun,
Gromov-Hausdorff limits of K\"ahler manifolds and algebraic geometry, preprint,  arXiv:1206.2609.


\bibitem[E]{E}
D. 
Ebin,  The manifold of Riemannian metrics. 1970 Global Analysis (Proc. Sympos. Pure Math., Vol. XV, Berkeley, Calif., 1968) pp. 11--40 Amer. Math. Soc., Providence, R.I.

\bibitem[FM]{FM}
A. Fischer and J. Marsden,  
The manifold of conformally equivalent metrics. Canad. J. Math. 29 (1977), no. 1, 193--209. 

%\bibitem[E1]{E1} 
%K. Ecker, A local mean value formula for mean curvature flow, Annals of Mathematics, 154, (2001), 503 -- 525

%\bibitem[E2]{E2} 
%K. Ecker, A formula relating entropy monotonicity to Harnack inequalities. 
%Communications in Analysis and Geometry, Volume 15, Number 5, 1025 - 1061, 2008

%\bibitem[EKNT]{EKNT}  
%K. Ecker, D. Knopf, L. Ni, P. Topping,  
%Local monotonicity and mean value formulas for evolving Riemannian manifolds,  
%J. Reine Angew. Math.  616 (2008), 89--130.

%\bibitem[Gi]{Gi}
%M. Giaquinta, 
%Multiple integrals in the calculus of variations and nonlinear elliptic systems. Annals of Mathematics Studies, 105. Princeton University Press, Princeton, NJ, %1983.

\bibitem[G]{G}
M. Gromov,
Metric structures for Riemannian and non-Riemannian spaces.
With appendices by M. Katz, P. Pansu and S. Semmes.  Birkh\"auser Boston, Inc., Boston, MA, 2007.

\bibitem[GLP]{GLP}
M. Gromov, J. Lafontaine, and P. Pansu,
Structures metriques pour les varieties riemanniennces.
Paris: Cedid/Fernand Nathan, 1981.

\bibitem[H]{H} 
R.M. Hardt, 
Singularities of harmonic maps. 
Bull. Amer. Math. Soc. (N.S.) 34 (1997), no. 1, 15--34. 

\bibitem[K1]{K1}
P. Kronheimer, 
The construction of ALE spaces as hyper-K\"ahler quotients. 
J. Differential Geom. 29 (1989), no. 3, 665--683.

\bibitem[K2]{K2}
P. Kronheimer, 
A Torelli-type theorem for gravitational instantons. 
J. Differential Geom. 29 (1989), no. 3, 685--697. 

\bibitem[L]{L} 
S. Lojasiewicz, 
Ensembles semi-analytiques, IHES notes (1965). 

%\bibitem[KL]{KL}  
%B. Kleiner and J. Lott,  
%Notes on Perelman's papers, 
%Geometry \& Topology 12 (2008) 2587--2855

%\bibitem[LTW]{LTW}
%P. Li, L.-F. Tam, and J. Wang, Sharp bounds for the GreenÕs function and the heat kernel, Math. Res. Lett. 4 (1997), no. 4, 589--602.

%\bibitem[LY]{LY}
% P. Li and S.T. Yau,
 %On the parabolic kernel of the Schr\"odinger operator,
 %Acta Math. 156 (1986), no. 3-4, 153--201.
 
\bibitem[MS1]{MS1}  
D.  Martelli and J. Sparks, 
Resolutions of non-regular Ricci-flat K\"ahler cones. 
J. Geom. Phys. 59 (2009), no. 8, 1175--1195.

\bibitem[MS2]{MS2}  
D.  Martelli and J. Sparks, 
Toric geometry, Sasaki-Einstein manifolds and a new infinite class of AdS/CFT duals. 
Comm. Math. Phys. 262 (2006), no. 1, 51--89.
 
\bibitem[MSY1]{MSY1}  
D. Martelli,  J. Sparks, and S.T. Yau,  
The geometric dual of $\alpha$-maximisation for toric Sasaki-Einstein manifolds. 
Comm. Math. Phys. 268 (2006), no. 1, 39--65.

\bibitem[MSY2]{MSY2}  
D. Martelli,  J. Sparks, and S.T. Yau,  
Sasaki-Einstein manifolds and volume minimisation. 
Comm. Math. Phys. 280 (2008), no. 3, 611--673.

\bibitem[Mo]{Mo}
J. Moser,
On the volume elements on a manifold, 
Trans. Amer. Math. Soc. 120 (1965), 286-294.

\bibitem[N]{N}
L. Nirenberg, 
Topics in Nonlinear Functional Analysis, 
Lecture Notes, Courant Inst. of Math., New York, 1974.

%\bibitem[N1]{N1}
%L. Ni, 
%The entropy formula for linear heat equation, 
%J. Geom. Anal., 14 (2004), 87--100; 

%\bibitem[N2]{N2}
%L. Ni,  
%Addenda to "The entropy formula for linear heat equation" , 
%J. Geom. Anal., 14 (2004), 369--374.

%\bibitem[N3]{N3}
%L. Ni,  
%The large time asymptotics of the entropy, 
%Complex Analysis. Trends in Mathematics. 2010, 301--306.

%\bibitem[N4]{N4} 
%L. Ni,  
%A note on Perelman's LYH type inequality, 
%Comm. Anal. Geom. 14 (2006), 883--905.


\bibitem[P1]{P1}
G. Perelman The entropy formula for the Ricci flow and its geometric applications, 
arXiv: math.DG/ 0211159.

\bibitem[P2]{P2}
G. Perelman,
A complete Riemannian manifold of positive
Ricci curvature with Euclidean volume growth and nonunique asymptotic
cone.  Comparison geometry (Berkeley, CA, 1993--94),  165--166, Math.
Sci. Res. Inst. Publ., 30, Cambridge Univ. Press, Cambridge, 1997.


%\bibitem[R]{R}
%E.R. Reifenberg, 
%Solution of the Plateau Problem for $m$-dimensional surfaces of varying topological type. 
%Acta Math. 104 1960 1--92.

%\bibitem[Sh]{Sh}
%Z. Shen, Complete manifolds with nonnegative Ricci curvature and large volume growth. Invent. Math. 125 (1996), no. 3, 393--404.

\bibitem[S1]{S1} 
L. Simon, Asymptotics for a class of evolution equations, with applications to geometric problems, Annals of Mathematics 118 (1983), 525--571.

\bibitem[S2]{S2} 
L. Simon,  A general asymptotic decay lemma for elliptic problems. Handbook of geometric analysis. No. 1, 381--411, Adv. Lect. Math. (ALM), 7, Int. Press, Somerville, MA, 2008.

\bibitem[TY1]{TY1}
G. Tian and S-T. Yau, Complete K\"ahler manifolds with zero Ricci curavature. I. J. Amer. Math. Soc. 3 (1990), no. 3, 579--609.
\bibitem[TY2]{TY2}
G. Tian and S-T. Yau, Complete K\"ahler manifolds with zero Ricci curavature. II. Invent. Math. 106 (1991), no. 1, 27--60.

\bibitem[Tp]{Tp}  
P.M. Topping, Lectures on the Ricci flow. 
L.M.S. Lecture note series 325 C.U.P. (2006).

%\bibitem[T]{T}
%T. Toro, 
%Geometric conditions and existence of bi-Lipschitz parameterizations. 
%Duke Math. J. 77 (1995), no. 1, 193--227.

\bibitem[Ya]{Ya}  
B. Yang,  
The uniqueness of tangent cones for Yang-Mills connections with isolated singularities.  
Adv. Math. 180 (2003), no. 2, 648--691.

\bibitem[Y]{Y}  
J. York,  
Conformally invariant orthogonal decomposition of symmetric tensors on Riemannian manifolds and the initial-value problem of general relativity. 
J. Mathematical Phys. 14 (1973), 456--464.

\bibitem[W]{W}  
B. White, The mathematics of F. J. Almgren, Jr, The Journal of Geometric Analysis 8 (5): (1998) 681--702.


\end{thebibliography}
\end{document}